\documentclass[a4paper,reqno]{amsart}

\usepackage[utf8]{inputenc}
\usepackage{amssymb}
\usepackage{mathrsfs}
\usepackage{mathtools}
\usepackage{enumerate}
\usepackage{amsmath}
\usepackage{comment}
\usepackage{xcolor}
\usepackage{hyperref}
\usepackage{enumitem}

\newtheorem{theorem}{Theorem}[section]

\newtheorem{lemma}[theorem]{Lemma}
\newtheorem{proposition}[theorem]{Proposition}

\theoremstyle{definition}
\newtheorem{definition}[theorem]{Definition}
\newtheorem{remark}[theorem]{Remark}

\numberwithin{equation}{section}
\newcommand\RR{\mathbb{R}}
\newcommand\NN{\mathbb{N}}

\renewcommand{\le}{\leqslant}
\renewcommand{\ge}{\geqslant}
\allowdisplaybreaks[4]

\begin{document}
	\parindent=0pt
	\title[Rigidity of the Multi-Bubble Solutions]{Rigidity  of the multi-bubble solutions to the energy critical wave equation in dimension five}
\author[J. Jendrej]{Jacek Jendrej}
\address{Institut de Math\'{e}matiques de Jussieu, Sorbonne Universit\'{e}, Universit\'{e} Paris Cit\'{e}
4 place Jussieu, 75005 Paris, France,\ \& Faculty of Applied Mathematics, AGH University of Krak\'ow, al. Adama Mickiewicza 30, 30-059 Krak\'ow, Poland.}
\email{jendrej@imj-prg.fr}
\author[C.Zhang]{Chencheng Zhang}
\address{School of Mathematical Sciences,
University of Science and Technology of China, Hefei 230026, Anhui, China}
\email{zccmaths@mail.ustc.edu.cn}
 \author[L.Zhao]{Lifeng Zhao}
\address{School of Mathematical Sciences,
University of Science and Technology of China, Hefei 230026, Anhui, China}.
\email{zhaolf@ustc.edu.cn}	

\begin{abstract}
We study the asymptotic dynamics of multi-bubble solutions to the focusing energy-critical wave equation in five dimensions. Assuming that the solution asymptotically decomposes into a finite superposition of spatially separated bubbles with comparable scales, we prove a rigidity result that describes the precise long-time behavior of these scales.

More precisely, we show that all scaling parameters are necessarily of order $t^{-2}$, and that the corresponding renormalized modulation vector converges to a connected component of a finite-dimensional algebraic set determined by the limiting spatial configuration of the bubbles. This algebraic system encodes the strong interactions between the polynomial tails of the bubbles and governs the effective asymptotic dynamics of the multi-bubble regime.
\end{abstract}

	\maketitle

    \section{Introduction}
    We consider the energy-critical focusing wave equation in 5 dimensions:
    \begin{equation}\label{NLW}
        \partial^{2}_{t}u(t,x)=\Delta u(t,x)+f(u(t,x)),\quad t\in\RR,\ x\in\RR^{5},
    \end{equation}
    where $f(u):=|u|^{\frac{4}{3}}u$. Let $F(u):=\frac{3}{10}|u|^{\frac{10}{3}}.$ The energy functional related to this equation 
    \[
    E(\vec{u}(t)):=\int\limits_{\RR^{5}}\left(\frac{1}{2}|\partial_{t}u|^{2}+\frac{1}{2}|\nabla u|^{2}-F(u)\right){\rm{d}}x\]
    is well-defined for $\vec{u}(t):=(u(t),\partial_{t}u(t))\in\dot{H}^{1}(\RR^{5})\times L^{2}(\RR^{5})$ by the Sobolev inequality 
    \begin{equation}\label{sobolev}
    \lVert u\rVert_{L^{\frac{10}{3}}}\leq C \lVert\nabla u\rVert_{L^{2}}.
    \end{equation}
    We will write vectors with two components as $\vec{v}=(v,\dot{v})$, noting that the notation $\dot{v}$ will not, in general, refer to a time derivative of $v$ but rather just to  the second component of $\vec{v}$. With this notation (\ref{NLW}) can be rephrased as a Hamiltonian system
    \begin{equation}\label{NLW1}
		\left\{\begin{aligned}
      & \frac{d}{dt}\vec{u}(t)=J\circ {\rm{D}}E(\vec{u}(t)),\\
      & \vec{u}(t_0)=(u_{0},\dot{u}_{0})\in\dot{H}^{1}(\RR^{5})\times L^{2}(\RR^{5}),
		\end{aligned}\right.
	\end{equation}
    where
    \begin{equation*}
        J=\begin{pmatrix}
            0 &1\\
            -1 &0
        \end{pmatrix}, \quad {\rm{D}}E(\vec{u}(t))=\begin{pmatrix}
            -\Delta u(t)-f(u(t))\\
            \partial_{t}u(t)
        \end{pmatrix}.
    \end{equation*}
    We recall that this equation is locally well-posed in the energy space $\dot{H}^{1}\times L^{2}$ (see for example \cite{GSV,SS,SS1} and references therein). In particular, for any initial data $(u_{0},\dot{u}_{0})$ there exists a maximal interval of existence $(T_{-},T_{+})$, $-\infty\leq T_{-}<t_{0}<T_{+}\leq{+\infty}$, and a unique solution $\vec{u}(t)\in C((T_{-},T_{+});\dot{H}^{1}\times L^{2})\cap L^{\frac{7}{3}} _{\rm{loc}}((T_{-},T_{+});L^{\frac{14}{3}}(\RR^{5}))$. For such solutions, the energy $E(\vec{u}(t))$ is constant in time.\\
    For a function $v:\RR^{5}\rightarrow\RR$ and $\lambda>0$, $z\in\RR^{5}$, we denote
    \begin{equation*}
        v_{\lambda,z}(x):=\frac{1}{\lambda^{\frac{3}{2}}}v\left(\frac{x-z}{\lambda}\right),\quad v_{\underline{\lambda,z}}(x):=\frac{1}{\lambda^{\frac{5}{2}}}v\left(\frac{x-z}{\lambda}\right).
    \end{equation*}
    A change of variables shows that 
    \begin{equation*}
        E\left((u_{0})_{\lambda,z},(\dot{u}_{0})_{\underline{\lambda,z}}\right)=E(u_{0},\dot{u}_{0}).
    \end{equation*}
    Equation (\ref{NLW1}) is invariant under the same scaling. If $(u,\partial_{t}u)$ is a solution of (\ref{NLW1}) and $\lambda>0$, $z\in\RR^{5}$, then
    \[
    t\mapsto\left(u\left(\frac{t}{\lambda}+t_{0}\right)_{\lambda,z},\partial_{t}u\left(\frac{t}{\lambda}+t_{0}\right)_{\underline{\lambda,z}}\right)
    \]
    is also a solution with initial data $\left((u_{0})_{\lambda,z},(\dot{u}_{0})_{\underline{\lambda,z}}\right)$ at time $t=0$. This is why equation (\ref{NLW1}) is called energy-critical.\\
    We also introduce the infinitesimal generators of scale change:
    \[\Lambda v:=-\frac{\partial}{\partial\lambda}v_{\lambda,0}\bigg|_{\lambda=1}=\left(\frac{3}{2}+x\cdot\nabla \right)v\quad (\dot{H}^{1}\ {\rm{scaling}}),\]
    \[\underline{\Lambda}v:=-\frac{\partial}{\partial\lambda}v_{\underline{\lambda,0}}\bigg|_{\lambda=1}=\left(\frac{5}{2}+x\cdot\nabla\right)v\quad (L^{2}\ {\rm{scaling}}).\]
    A fundamental object in the study of (\ref{NLW1}) is the family of solutions $\vec{u}(t)=(W_{\lambda,z},0)$, where 
    \[W(x):=\left(1+\frac{|x|^{2}}{15}\right)^{-\frac{3}{2}},\quad x\in\RR^{5}\]
    is the ground state solution of the elliptic equation
    \begin{equation*}
        \Delta W=-W^{\frac{7}{3}}\quad {\rm{on}}\ \RR^{5}.
    \end{equation*}
    Up to scaling and translation, $W(x)$ is the unique positive solution to this elliptic equation.\\
    It is well-known that the ground state $W$ attains  the optimal constant in the critical Sobolev inequality \eqref{sobolev}, see \cite{Aub1976,Tal1976}. 
 Moreover, $W$ is the threshold element for global existence and scattering in the Kenig--Merle theory for the focusing energy-critical wave equation, see \cite{KenMer2008}. Beyond this critical threshold, the global-in-time behavior ceases to be purely dispersive, opening the door to a rich variety of nonlinear phenomena that demand a systematic global classification.\\
 The definitive guiding principle for describing such complex long-time dynamics is the soliton resolution conjecture, which predicts that, as time approaches the endpoint of the lifespan (which may be finite or infinite), a solution should decompose into a finite sum of decoupled solitons and a dispersive radiation term. For the focusing energy-critical wave equation, the radial case of this conjecture has been completely settled, see \cite{DKM2012,DKM2013,Rod2016,DKM2023,DKMM2022,JL2023a,CDKM2024}. In the
non-radial setting, the full continuous-in-time resolution remains open. The main general result beyond symmetry is a sequential soliton resolution theorem, which gives such a decomposition along a sequence of times, see \cite{DJKM2017}. \\
Although soliton resolution gives the expected asymptotic form of a general bounded solution, it usually does not determine the number of solitons nor the precise asymptotic behavior of their geometric parameters, apart from the decoupling between the bubbles and the radiation.  This leaves open a more quantitative problem: to understand which soliton configurations are actually realized by solutions, and how the corresponding parameters evolve.\\
The first situation in which such quantitative questions can be addressed is the dynamics near a single copy of the ground state. Solutions with initial data close to $(W,0)$ were studied by Krieger--Schlag \cite{KS2007} and Beceanu \cite{Bec2014}, who constructed, in suitable topologies, manifolds of initial data for which the solution converges to the soliton family and gave a precise description of these solutions. The long-time behavior of all the solutions with initial data close to $(W,0)$ in the energy space was later classified by Duyckaerts--Merle \cite{DM} and Krieger--Nakanishi--Schlag \cite{KNS2013}.  These works exhibit a rich variety of one-bubble type-II dynamics and concentration behaviors. Simultaneously, one-bubble type-II blow-up and concentration regimes have been constructed for the energy-critical wave equation.  In dimension $3$, this includes finite-time blow-up solutions with polynomial or more exotic scaling laws, as well as infinite-time concentration and non-scattering regimes, see \cite{KST2009,DK2013,KS2014,DHKS2014}.  In dimension $4$,
Hillairet--Rapha\"el \cite{HR2012} constructed finite-time type-II blow-up solutions with a logarithmically corrected rate.  In dimension $5$, the first author \cite{Jen2017a} constructed radial one-bubble type-II blow-up solutions with prescribed asymptotic profiles and explicit concentration rates. More recently, Samuelian \cite{Sam2026} constructed finite-time one-bubble type-II blow-up solutions with prescribed polynomial rates in dimensions $4$ and $5$.  These works demonstrate that even the one-bubble regime exhibits delicate modulation dynamics.\\ 
Beyond the intricate mechanics of a single blow-up core, a natural progression is to explore the multi-bubble regime. In this multi-bubble regime, the dynamics are no longer governed only by the instability of a single ground state, but also by the nonlinear interactions between different bubbles.  For the energy-critical wave equation, several multi-bubble regimes have been constructed. In high dimensions, the first author \cite{Jen2019} constructed radial two-bubble solutions that decompose asymptotically into a concentrating bubble superposed with a standing soliton of the same sign.  Another important regime is given by the traveling multi-soliton constructions of Martel and Merle. They first constructed, in dimension $5$, solutions containing an arbitrary number of bounded traveling solitons under suitable restrictions on their
speeds \cite{MM2016}, and later proved the inelasticity of soliton
interactions in the same setting \cite{MM2018}.  More recently, they
constructed multi-solitons with arbitrary parameters for the
$5$-dimensional energy-critical wave equation \cite{MM2025}.  We also mention the recent construction of multi-soliton solutions in dimension $3$ by Kadar \cite{Kad2024}, which is based on a different approach. A further regime, introduced by the first author and Martel \cite{JM}, consists of
non-radial pure multi-bubbles concentrating at distinct fixed spatial points. This regime is different from the same-center two-bubble dynamics, where the interaction is essentially radial, and differs from traveling multi-bubbles, where the leading parameters are the velocities and relative trajectories of the solitons.\\
These constructions show that multi-bubble dynamics for the energy-critical wave equation are highly diverse.  Depending on the interaction regime, the dominant parameters may be the relative scales, velocities, signs, or spatial configuration of the bubbles.  Thus a general rigidity theory covering all possible multi-bubble dynamics seems far out of reach at present.  We therefore restrict our attention to the dynamical framework introduced by the first author and Martel \cite{JM}: non-radial pure multi-bubbles whose centers converge to distinct spatial points and whose scaling parameters remain comparable. This framework represents an important class of multi-bubble dynamics.  Indeed, similar fixed-center, same-scale pure multi-bubble configurations have also been constructed in both parabolic and dispersive models, see for instance
\cite{Mer1990, MR2018, CPM2020, CSZ2024, RSZ2024}.  It is therefore natural to study the rigidity of pure multi-bubble solutions constructed in \cite{JM}.\\
More precisely, we study the rigidity of the pure multi-bubble solutions as $t\to+\infty$.  Namely, we consider solutions $  (u,\partial_tu):[0,+\infty) \to \dot H^1(\RR^5)\times L^2(\RR^5)$ satisfying
\begin{equation}\label{Bub1}
    \lim_{t\to+\infty}\left(\left\|u(t)-\sum_{k=1}^{K}W_{\mu_k(t),y_k(t)}
\right\|_{\dot H^1(\RR^5)}+ \|\partial_tu(t)\|_{L^2(\RR^5)}
 \right)=0,
\end{equation}
where $K\geq2$, and $\mu_k:[0,+\infty)\to(0,+\infty),\       y_k:[0,+\infty)\to\RR^5
$ are continuous functions.\\
A complete classification of all solutions satisfying \eqref{Bub1} alone seems to be a very difficult problem.  We therefore impose the additional assumptions corresponding to the fixed-center, comparable-scale concentrating regime:
\begin{equation}\label{asp}
\begin{aligned}
    &\lim_{t\to+\infty}\mu_k(t)=0,
    \qquad 1\leq k\leq K,\\
    &C_1\leq \frac{\mu_k(t)}{\mu_j(t)}\leq C_2,
    \qquad 1\leq k,j\leq K,\quad t\geq0,\\
    &\lim_{t\to+\infty}y_k(t)=z_k,
    \qquad 1\leq k\leq K,
\end{aligned}
\end{equation}
where $C_1,C_2>0$ are fixed constants and $z_1,\ldots,z_K$ are pairwise distinct points in $\RR^5$.  Thus the bubbles concentrate at comparable scales and their centers converge to distinct limiting points.  The second condition excludes tower-type regimes and scale cascades, while the third one rules out collisions of the limiting centers.  Thus all bubbles remain in a collective interaction regime determined by their spatial configuration.
\begin{remark}
The condition $\mu_k(t)\to0$ in \eqref{asp} is included to emphasize that we
work in a concentrating multi-bubble regime.  It is not an independent
assumption: under \eqref{Bub1} together with the comparable-scale and
non-colliding-center assumptions in \eqref{asp}, one necessarily has
$\mu_k(t)\to0$ for all $1\leq k\leq K$.  Equivalently, the bubbles become
pairwise asymptotically orthogonal with respect to the distance $\delta$
introduced below.  The proof is given in Appendix~B.
\end{remark}
The existence of solutions satisfying \eqref{Bub1}--\eqref{asp} was proved by the first author and Martel \cite{JM}.  More precisely, for any integer $K\geq2$ and any pairwise distinct points $   z_1,\ldots,z_K\in\RR^5$, they constructed a global forward-in-time solution which satisfies
\begin{equation}\label{intro:JM-solution}
 \left\|
u(t)-\sum_{k=1}^{K}
 \frac{1}{(c_k t^{-2})^{\frac32}}
 W\left(\frac{\cdot-z_k}{c_k t^{-2}}\right)
\right\|_{\dot H^1(\RR^5)}
+\|\partial_tu(t)\|_{L^2(\RR^5)}   \lesssim t^{-\frac13},   \qquad t\to+\infty,
\end{equation}
where $c_1,\ldots,c_K>0$ are constants depending on the mutual distances
between the points $z_1,\ldots,z_K$.\\
Motivated by this construction, the first author and Martel conjectured that the rate $t^{-2}$ is rigid in this regime.  The present paper proves this rigidity statement.  Under  assumptions \eqref{Bub1}--\eqref{asp}, we show that the scaling parameters of the bubbles must be of order $t^{-2}$.
Beyond this leading-order rigidity, we also obtain the convergence of the suitably rescaled modulation parameters to a finite-dimensional algebraic set determined by the limiting centers $z_1,\ldots,z_K$.
To the best of our knowledge, this is the first rigidity result for non-radial pure multi-bubble solutions of the energy-critical wave equation in which the scaling parameters and the spatial translation parameters enter the asymptotic dynamics. 
\subsection{Main result}
We now state the main result.
 \begin{theorem}\label{main theorem}
Let $\vec{u}(t):[0,+\infty)\rightarrow\dot{H}^{1} (\RR^{5})\times L^{2}(\RR^{5})$ be a solution to (\ref{NLW1}) satisfying (\ref{Bub1}), (\ref{asp}). Then for $t$ large enough and some fixed constant $C_{1}>0,C_{2}>0$, there exist  modulated parameters $\lambda_{k}(t),b_{k}(t),x_{k}(t)\in C^{1}$  satisfying
\begin{equation}\label{estimates for parameters}
    C_{1}t^{-2}\leq \lambda_{k}(t)\leq C_{2}t^{-2},\ C_{1}t^{-3}\leq b_{k}(t)\leq C_{2}t^{-3},\ \lim_{t\rightarrow+\infty}t^{2}|x_{k}(t)-z_{k}|=0,
\end{equation}
and 
\begin{equation}\label{estimate for remainder terms}
   \lim_{t\rightarrow+\infty}t^{3}\left(\bigg\lVert u(t)-\sum_{k=1}^{K}W_{\lambda_{k},x_{k}}\bigg\rVert_{\dot{H}^{1}(\RR^{5})}+
       \bigg\lVert\partial_{t}u(t)-\sum_{k=1}^{K}b_{k}(\Lambda W)_{\underline{\lambda_{k},x_{k}}}\bigg\rVert_{L^{2}(\RR^{5})}\right)=0.
\end{equation}
Moreover,  the set \begin{equation*}
      Eq(F)=\left\{(\vec{a},\vec{c})=((a_{k})_{1\leq k \leq K},(c_{k})_{1\leq k\leq K})\left|\begin{aligned}
      & 2a_{k}=c_{k},\\
      & 3c_{k}=\kappa\sum_{j\neq k, 1\leq j\leq K}|z_{j}-z_{k}|^{-3}a_{k}^{\frac{1}{2}}a^{\frac{3}{2}}_{j},\\
      & a_{k}>0,\ c_{k}>0,\ \forall\ 1\leq k\leq K.
		\end{aligned}\right.\right\}
 \end{equation*}
is non-empty, and if we denote $\vec{\lambda}(t)=((\lambda_{1}(t),\cdots,\lambda_{K}(t)),\ \vec{b}(t)=(b_{1}(t),\cdots, b_{K}(t))$, then $(t^{2}\vec{\lambda}(t),t^{3}\vec{b}(t))$ converges to a connected component of Eq(F).
\end{theorem}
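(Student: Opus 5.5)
We plan to prove Theorem~\ref{main theorem} by modulation analysis near the sum of bubbles, followed by a study of the resulting finite-dimensional ODE system.

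\textbf{Step 1: modulation.} Using \eqref{Bub1}--\eqref{asp} and the implicit function theorem, for $t$ large we decompose
\[
u=\sum_k W_{\lambda_k,x_k}+g,\qquad \partial_t u=\sum_k b_k(\Lambda W)_{\underline{\lambda_k,x_k}}+\dot g .
\]
The parameters $\lambda_k,x_k,b_k\in C^1$ are fixed by orthogonality conditions of $(g,\dot g)$ against the kernel directions $\Lambda W$ and $\partial_{x_i}W$. To account for the unstable eigenvalue of $L=-\Delta-\tfrac73W^{4/3}$, we also introduce the stable and unstable coordinates $a_k^\pm=\langle \alpha^\pm_{\lambda_k,x_k},(g,\dot g)\rangle$.

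We then compute the modulation equations:
\[
\lambda_k'=-b_k+O(\|\vec g\|^2),\qquad |x_k'|\lesssim \|\vec g\|\,\text{(small)},
\]
\[
b_k'=\frac{\kappa}{\lambda_k}\sum_{j\neq k}|z_j-z_k|^{-3}\lambda_k^{1/2}\lambda_j^{3/2}\cdot\frac{1}{\lambda_k}\text{-type term}+\text{error}.
\]
The main term in the $b_k$ equation comes from the interaction $\langle f'(W_k)W_j,\Lambda W_k\rangle$. The tail $W_{\lambda_j,x_j}(x_k)\approx 15^{3/2}\lambda_j^{3/2}|z_j-z_k|^{-3}$ yields a force of size $\lambda_k^{1/2}\lambda_j^{3/2}/\lambda_k^2$. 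After the renormalization $\lambda_k=t^{-2}a_k$, $b_k=t^{-3}c_k$, this is precisely the system $\lambda_k'=-b_k$, $b_k'\approx \kappa\lambda_k^{-2}\sum_{j\neq k}|z_j-z_k|^{-3}\lambda_k^{1/2}\lambda_j^{3/2}$, whose self-similar fixed points are $Eq(F)$.

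\textbf{Step 2: energy and coercivity.} We build a mixed energy--virial functional $H(t)$, namely the energy localized around each bubble plus a virial correction as in \cite{JM}. This gives
\[
\|\vec g\|^2\lesssim \sum_k (a_k^\pm)^2+\sum_k b_k^2+\text{(interaction)}^2 .
\]
The unstable modes obey $\frac{d}{dt}a_k^\pm=\pm\frac{\nu}{\lambda_k}a_k^\pm+O(\cdot)$, and since the solution exists for all $t\to+\infty$, a standard argument shows they are bounded by the forcing. Hence $\|\vec g\|\ll \max_k b_k+\sum_j\lambda_j^{3/2}$.

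\textbf{Step 3: sharp rate.} With $\lambda=\max_k\lambda_k$ and comparable scales, the ODE gives $b'\simeq \lambda^{-1/2}$ and $\lambda'\simeq -b$, with $b>0$ eventually. Conservation of the approximate energy $b^2-C\lambda$ (a first integral, since $\lambda\to0$, $b\to0$) forces $b\simeq\lambda^{1/2}$. Then $\lambda'\simeq-\lambda^{1/2}$ integrates to $\lambda\simeq t^{-2}$ and $b\simeq t^{-3}$.

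We expect the main obstacle to be closing the error bounds at the precise order: all errors must be $o(t^{-3})$ for $g$ and $o(t^{-4})$ in $b'$. We would attempt this by a bootstrap on $[t,\infty)$, integrating the error backward from $+\infty$ and using the fact that the asymptotic condition fixes the data at infinity. The statement $t^2|x_k-z_k|\to0$ follows by integrating $|x_k'|=o(t^{-3})$ from infinity. Similarly, \eqref{estimate for remainder terms} follows from $\|\vec g\|=o(t^{-3})$.

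\textbf{Step 4: convergence to $Eq(F)$.} In the logarithmic time $s=\log t$, the vector $(A,C)=(t^2\vec\lambda,t^3\vec b)$ satisfies the autonomous system
\[
A_s=2A-C+o(1),\qquad C_s=3C-\kappa\,\Phi(A)+o(1),
\]
where $\Phi_k(A)=\sum_{j\neq k}|z_j-z_k|^{-3}a_k^{1/2}a_j^{3/2}\cdot a_k^{-2}\cdot a_k^2$ is renormalized consistently, so that the equilibria are exactly $Eq(F)$. By Step 3 the trajectory is bounded and bounded away from the boundary. This system is a gradient-like flow: it has a Lyapunov function built from the interaction potential $\sum_{j\neq k}|z_j-z_k|^{-3}a_k^{3/2}a_j^{3/2}$ together with the kinetic part. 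For the asymptotically autonomous flow, the $\omega$-limit set is therefore connected, invariant, and contained in the equilibrium set, hence lies in a single connected component of $Eq(F)$.

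Nonemptiness of $Eq(F)$ follows by minimizing $\sum_k a_k^2$ subject to fixed interaction potential, or equivalently by a Perron--Frobenius / Brouwer fixed-point argument on the homogeneous map $a\mapsto\big(\tfrac{\kappa}{6}\sum_{j\neq k}|z_j-z_k|^{-3}a_j^{3/2}\big)^2$ on the simplex. Alternatively, it follows from the existence of the solutions of \cite{JM} combined with the convergence just proved.
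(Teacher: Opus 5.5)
Your architecture (modulation with $b_k$, energy and coercivity, control of $a_k^\pm$, rates, then an asymptotically autonomous system in $\log t$ with a Lyapunov function) matches the paper's. However, the decisive step is missing.

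\textbf{The remainder is only controlled at critical size.} Exact energy conservation, $E(\vec u)\equiv KE(W,0)$ (deduced from \eqref{Bub1}), plus coercivity gives $\|\vec g\|^2+\sum_k b_k^2\lesssim\lambda^3+\sum_k(a_k^\pm)^2$. Note that $b_k^2$ sits on the left here, not on the right as in your Step 2. So even with $a_k^\pm$ controlled, one only gets $\|\vec g\|\lesssim\lambda^{3/2}\sim b$, and your claim $\|\vec g\|\ll\max_k b_k+\sum_j\lambda_j^{3/2}$ does not follow.

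At this size the error $\lambda_k^{-1}\|\vec g\|^2$ in the $b_k'$ equation is as large as the interaction force and has no sign. Also, with orthogonality conditions bounded on $\dot H^1$, $\lambda_k'+b_k$ is only $O(\|\dot g\|)$, not $O(\|\vec g\|^2)$. So a bootstrap integrating errors from $+\infty$ just returns $O(t^{-3})$, and your approximate energy is not conserved to the needed order.

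The paper's mechanism has three steps:
\begin{enumerate}
\item Correct $\lambda_k$ by a localized projection and $b_k$ by a localized virial term $\langle\dot g,\underline{A}_k g\rangle$, see \eqref{pk}. This converts the critical quadratic term into $-\lambda_k^{-1}\int_{|x-x_k|<R\lambda_k}(|\nabla g|^2-f'(W_{\lambda_k,x_k})g^2)$. By localized coercivity it yields a \emph{one-sided} bound $p_k'\le-\kappa\sum_{j\neq k}|z_j-z_k|^{-3}\lambda_k^{1/2}\lambda_j^{3/2}+\epsilon\lambda^2$, with $\epsilon$ arbitrarily small.
\item This bound, together with monotonicity of $\zeta_1$, gives $\lambda_k\sim t^{-2}$ and $b_k\sim t^{-3}$.
\item Since $p_k>0$, the functional $E_1=\sum_k p_k^2-\frac{4}{3}\kappa\sum_{j<k}|z_j-z_k|^{-3}\zeta_j^{3/2}\zeta_k^{3/2}$ obeys $E_1'\le\epsilon t^{-7}$. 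Integrating from $+\infty$ gives $E_1\ge-\epsilon t^{-6}$.
\end{enumerate}
This lower bound on $\sum_k b_k^2$ matches exactly the upper bound in the refined energy inequality \eqref{refined energy estimate}, and the squeeze forces $\|\vec g\|^2=o(t^{-6})$. Only then do the modulation equations close with $o(t^{-3})$ and $o(t^{-4})$ errors.

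Likewise, the stable modes are not slaved by a ``standard argument''. The free part $e^{-\int\nu/\lambda}a_k^-(t_0)$ has to be beaten against $\lambda^{3/2}$ before the rate of $\lambda$ is known. This is why Section~\ref{stable and unstable directions} needs contradiction and rising-sun arguments.

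\textbf{Two further errors.} First, your dynamics has the wrong scaling and sign. Pairing $f'(W_{\lambda_k,x_k})W_{\lambda_j,x_j}$ with $(\Lambda W)_{\underline{\lambda_k,x_k}}$ gives $b_k'\approx-\kappa\sum_{j\neq k}|z_j-z_k|^{-3}\lambda_k^{1/2}\lambda_j^{3/2}$, which has size $\lambda^2$ and is negative because $\langle\Lambda W,f'(W)\rangle<0$. It is not $+\kappa\lambda_k^{-2}(\cdots)$. The first integral is $E_1$ above, which yields $b\simeq\lambda^{3/2}$ and $\lambda'\simeq-\lambda^{3/2}$, hence $t^{-2}$. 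Your $b\simeq\lambda^{1/2}$, $\lambda'\simeq-\lambda^{1/2}$ would make $\lambda$ vanish in finite time and contradicts $b\simeq t^{-3}$.

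Second, in Step 4 a Lyapunov function for the limit flow does not by itself put the $\omega$-limit set of a perturbed trajectory inside $Eq(F)$. The paper uses the explicit $L$ of \eqref{lyapounv}, with $\frac{d}{dt}L=5\sum_k(2\alpha_k-\beta_k)^2$. It combines this with internal chain transitivity of $\omega(X)$ under the limit flow, and with the fact, via Sard, that $L(Eq(F))$ has empty interior. The latter holds because equilibria correspond to critical points of $3\sum_k\alpha_k^2-\frac{2}{3}\kappa\sum_{i<j}|z_i-z_j|^{-3}\alpha_i^{3/2}\alpha_j^{3/2}$.

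Your non-emptiness arguments for $Eq(F)$ (constrained minimization, or Brouwer on the simplex) are valid alternatives to citing Jendrej--Martel.
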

\begin{remark}
The introduction of the auxiliary parameters $b_k$ is a key point in the
analysis.  It allows us to construct a refined modulation decomposition whose remainder is much smaller than the critical size $t^{-3}$.  This gain is crucial: with such a precise expansion, one can derive the asymptotic modulation equations for the parameters and then study the convergence of the suitably rescaled parameters to the finite-dimensional algebraic system in the main theorem.
\end{remark}
\begin{remark}
		The emergence of the algebraic system $Eq(F)$ highlights a profound structural property of the multi-bubble dynamics in the five-dimensional energy-critical setting. Rather than being merely a technical byproduct of the modulation estimates,, $Eq(F)$ represents a finite-dimensional reduction of the infinite-dimensional flow as $t \to +\infty$. Specifically, the system quantifies the strong interactions between bubbles: the coupling term $\sum_{j\neq k} |z_j - z_k|^{-3} a_k^{1/2} a_j^{3/2}$ exactly captures the overlap effects of the polynomial tails of the ground states $W$. Furthermore, the convergence of the rescaled parameters to a connected component of $Eq(F)$ establishes  strong asymptotic rigidity, indicating that such multi-bubble configurations can only exist if their spatial centers and blow-down rates satisfy these precise algebraic constraints.
\end{remark}

	The algebraic set $Eq(F)$ exhibits a highly complex topological structure that heavily depends on the number of bubbles $K$ and their spatial configuration $\{z_k\}_{k=1}^K$. As $K$ increases, the geometry of $Eq(F)$ transitions from a completely rigid singleton to a potentially non-discrete manifold.

\begin{remark}
    When $K=2$, a direct computation shows that $Eq(F)$ is a singleton:
\[
Eq(F)=\left\{\left(\frac{6|z_{1}-z_{2}|^{3}}{\kappa},\frac{6|z_{1}-z_{2}|^{3}}{\kappa}, \frac{12|z_{1}-z_{2}|^{3}}{\kappa},\frac{12|z_{1}-z_{2}|^{3}}{\kappa}\right)\right\}.
\]
Consequently, we have
	\[\lambda_{1}(t)=\lambda_{2}(t)=t^{-2}\left(\frac{6|z_{1}-z_{2}|^{3}}{\kappa}+o_{t\rightarrow+\infty}(1)\right),
	\]
	\[b_{1}(t)=b_{2}(t)=t^{-3}\left(\frac{12|z_{1}-z_{2}|^{3}}{\kappa}+o_{t\rightarrow+\infty}(1)\right).\]
	
	When $K=3$, we show that every point of the set $Eq(F)$ is isolated  (see Proposition A.1). Hence, in this case, the vector $(t^{2}\vec{\lambda}(t),t^{3}\vec{b}(t))$ converges to a fixed point.
\end{remark}

\begin{remark}
	For general $K\geq 3$, it follows from Theorem \ref{main theorem} that the set of limit points of the $t^{-2}$-renormalized vector $t^{2}\vec{\lambda}(t)$ depends only on the choice of the points $z_{1},z_{2},\cdots,z_{K}$, which is in perfect agreement with the form of the solutions constructed by Jendrej and Martel \cite{JM}. In particular, if every point of $Eq(F)$ is isolated, then $(t^{2}\vec{\lambda}(t),t^{3}\vec{b}(t))$ converges to a fixed point.
\end{remark}

\begin{remark}
    However, for larger $K$, the nonlinear algebraic constraints governing $Eq(F)$ become highly complex and degenerate. The set $Eq(F)$ is \emph{not} necessarily discrete; in fact, we can explicitly construct a configuration for $K=10$ for which $Eq(F)$ forms a non-discrete set (see Proposition A.2). In such cases, rather than settling at a fixed point, the rescaled parameters may, in principle, drift along a non-trivial connected component, reflecting an intricate dynamical instability of large bubble clusters. Moreover, motivated by Proposition A.2, we {\it conjecture} that there exist $K$-bubble solutions to the five-dimensional energy-critical wave equation whose rescaled parameter vectors $(t^{2}\vec{\lambda}(t),t^{3}\vec{b}(t))$ fail to stabilize to a unique fixed point. Instead, as time $t \to +\infty$, these parameters exhibit sustained oscillations and drift perpetually along the closed one-dimensional manifold. 
\end{remark}

\begin{remark}
The convergence of the rescaled parameters to the algebraic set $Eq(F)$ has a striking conceptual parallel with the classical $n$-body problem in celestial mechanics, see \cite[Chapter V]{wintner1941}. It is a well-known phenomenon that when a system of $n$ bodies undergoes a simultaneous collision---collapsing to a single point at some finite time $T$---the spatial configuration exhibits strict polynomial decay. Specifically, the polar moment of inertia decays asymptotically as $(T-t)^{4/3}$, meaning the spatial configuration must be dynamically rescaled by the factor $(T-t)^{-2/3}$ to capture the asymptotic geometry. Under this rescaling, the collapsing system converges to the family of \emph{central configurations}, which, much like our set $Eq(F)$, are stationary states determined by a finite-dimensional system of algebraic equations balancing the mutual interaction forces. 
%TFurthermore, in the $n$-body problem, it remains a notoriously difficult open question whether a collapsing orbit must converge to a \emph{unique} central configuration (up to natural invariances), or if it can drift along a continuum of such configurations. This classical uncertainty in celestial mechanics provides a profound historical context for our results: while we prove convergence to a unique fixed point for $K \le 3$ due to the discreteness of $Eq(F)$, our explicit construction of a non-discrete continuum for $K=10$ rigorously demonstrates that, for larger systems, the rescaled parameters might only converge to a connected component, theoretically allowing for non-trivial geometric drift in the asymptotic limit.
\end{remark}

\begin{remark} In the classical \(n\)-body problem, it remains a notoriously difficult open question---encompassing Wintner's limit conjecture and Smale's 6th problem---whether a collapsing orbit must converge to a \emph{unique} central configuration (up to natural invariances), or whether it may drift along a continuum of such configurations. Our result suggests a striking analogy with this phenomenon. In the present setting, the spatial poles \(z_k\) play the role of the masses, while the algebraic set \(Eq(F)\) naturally corresponds to the set of central configurations governing the effective asymptotic dynamics. From this perspective, it is natural to expect that the long-time behavior of multi-bubble solutions should exhibit a similar rigidity phenomenon. More precisely, one may conjecture that for a generic configuration of spatial centers \(\{z_k\}_{k=1}^K\subset\mathbb R^5\), the equilibrium set \(Eq(F)\) is discrete, and that for generic initial data in the manifold of multi-bubble solutions, the renormalized modulation parameters
\[
(t^{2}\vec{\lambda}(t),\, t^{3}\vec{b}(t))
\]
converge to a unique equilibrium point of \(Eq(F)\) as \(t\to+\infty\).    
\end{remark}

\begin{remark}    
Beyond the rigidity results, a fundamental question remains regarding the complete classification of all solutions satisfying \eqref{Bub1}. The family of solutions constructed in this work constitutes a $5K$-dimensional manifold, which is naturally parameterized by the asymptotic spatial centers $z_k \in \mathbb{R}^5$ of the $K$ bubbles. Taking into account that the linearized operator around each bubble possesses exactly one stable direction, we conjecture that the set of all solutions satisfying these asymptotic assumptions forms a $6K$-dimensional manifold.
\end{remark}
\begin{remark}[Other works on multi-bubble rigidity and classification]\label{rem:related-rigidity} Here we collect several rigidity and classification results for pure
multi-bubble or multi-soliton dynamics in related models.
For the energy-critical wave equation itself, we mention the recent radial
results of Shen \cite{Shen2026a,Shen2026b} in dimension $3$: pure multi-bubble type-II blow-up solutions do not exist, nor do radial global or type-II blow-up solutions with two or more bubbles.  These results are different in nature from our result, but they also reflect the rigidity of multi-bubble dynamics.\\
Related problems have also been studied in other critical 
models.  For equivariant wave maps, we refer to
\cite{JL2018,JL2023b,JLR2022} for threshold two-bubble dynamics, uniqueness
of two-bubble solutions, and bubbling dynamics with prescribed radiation.
For scalar field models in dimension $1+1$, strongly interacting
kink-antikink pairs and more general kink clusters were studied in
\cite{JKL2022,JL2024}.  For generalized KdV equations, classification and
uniqueness-type results for solutions converging to multi-solitons, including
strongly interacting two-soliton dynamics, were obtained in
\cite{Combet2011,Jen2025}.  For the mass-critical nonlinear Schr\"odinger
equation, uniqueness results for multi-bubble blow-up solutions and
multi-solitons were proved in \cite{CSZ2023}.\\
We also mention recent parabolic results.  Kim--Merle \cite{KM2025}
classified bubble-tree dynamics for high-equivariance harmonic map heat flows
and radial energy-critical nonlinear heat equations.  In a subsequent work,
they proved rigidity results for non-radial multi-bubble dynamics of the
high-dimensional energy-critical nonlinear heat equation \cite{KM2026}.
\end{remark}
\subsection{Strategy of the proof.}
\mbox{} \par 

We first present a brief outline of the paper, focusing on the proof of Theorem~\ref{main theorem}.\\

In Section~\ref{preliminaries}, we recall some coercivity estimates and then establish several technical lemmas, including estimates for spatial integrals, pointwise estimates of the nonlinear terms, and bounds on the distances between distinct bubbles. \\

The proof of Theorem~\ref{main theorem} is then divided into two main parts.

The first part spans from Section~\ref{modulation around the multi-solitons} to Section~\ref{order of the modulation parameters and the remainder terms}, where we establish the order of the modulation parameters, derive refined estimates for the remainder terms, and obtain the asymptotic ODE systems governing the modulation parameters. In Section~3,  we modulate the initial parameters and derive the modulation equations. In Section~4, we refine the choice of the modulation parameters. In Section~5, using conservation of energy, we establish energy estimates and then derive a precise relationship between the modulation parameters and the remainder terms. In Section~6, we establish control over the stable and unstable directions and show that these terms are all of order strictly smaller than the critical exponent, as anticipated. Section~7 concludes the first part of the proof.

The second part consists of Section~\ref{ode systems for the parameters}, where we analyze the asymptotic ODE system and prove that the rescaled modulation parameters converge to the set of solutions of a system of algebraic equations.

Finally, in Appendix~A, we study the structure of the solution set of the
aforementioned algebraic system. We show that for $K=2,3$, all such solutions
are isolated, while for $K\geq 4$, there may exist nontrivial continuous
families of solutions, and we provide an explicit example for $K=10$.
In Appendix~B, we show that the concentrating condition
$\mu_k(t)\to0$ in \eqref{asp} is redundant.\\

In what follows, we describe the core proof strategy and outline the key technical steps used to establish Theorem~\ref{main theorem}.\\
\textbf{Step 1: Modulation argument.} We assume that $\vec{u}(t)=(u(t),\partial_{t}u(t))$ is a solution to the equation \eqref{NLW1} which satisfies \eqref{Bub1} and \eqref{asp}. By standard modulation arguments, we obtain unique $C^{1}$ parameters $\lambda_{k}(t)$, $x_{k}(t)$ and $b_{k}(t)$ and error terms $\vec{g}(t)=(g(t),\dot{g}(t))$ with
 \begin{align*}
        & g(t):=u(t)-\sum_{k=1}^{K}W_{\lambda_{k},x_{k}},\\
         & \dot{g}(t):=\partial_{t}u(t)-\sum_{k=1}^{K}b_{k}\left(\Lambda W\right)_{\underline{\lambda_{k},x_{k}}}.
     \end{align*}
     \begin{equation*}
      \langle (\Delta \Lambda W)_{\lambda_{k},x_{k}},g\rangle=0,\
         \langle (\nabla W)_{\lambda_{k},x_{k}},g\rangle=0,\
         \langle (\Lambda W)_{\underline{\lambda_{k},x_{k}}},\dot{g}\rangle=0,\ \text{for}\ k=1,\cdots,K.
\end{equation*}
It should be noted that we additionally introduce the new parameter $b_{k}$ in the modulation procedure, which formally satisfies $b_{k}(t)\simeq -\lambda'_{k}$. The key advantage of introducing this parameter is that it enables a significant improvement of the decay properties of the remainder. Indeed, as we can see from the refined energy estimate \eqref{refined energy estimate}, the introduction of $b_{k}$ effectively reduces the energy loss.\\
Differentiating the orthogonality conditions and using the equation satisfied by $\vec{g}(t)$ give preliminary estimates for $\lambda_{k}'(t)$ and $b_{k}'(t)$ (see \eqref{lamkbk} and \eqref{bk'}). However, as one might expect, these standard arguments are  not sufficient to understand the dynamics in a useful way due to the choice of the orthogonality condition $ \langle (\Delta \Lambda W)_{\lambda_{k},x_{k}},g\rangle=0 $  and the presence of terms of critical size. In fact, since $\Delta\Lambda W\notin \text{ker} L$, the modulation parameters $\lambda_{k}(t)$ are imprecise proxies for the true dynamics. To account for such imprecision, we introduce a correction to $\lambda_{k}(t)$ as follows, defining
\begin{equation*}
         \zeta_{k}(t):=\lambda_{k}(t)-\frac{1}{\lVert \Lambda W\rVert^{2}_{L^{2}}}\bigg\langle\chi\left(\frac{\cdot-x_{k}(t)}{\lambda_{k}(t)M}\right)\left(\Lambda W\right)_{\underline{\lambda_{k},x_{k}}},g(t)\bigg\rangle,
     \end{equation*}
      where $\chi\in C^{\infty}_{c}(\RR^{5})$ satisfies $\chi\equiv 1$ on $|x|\leq 1$, ${\rm{supp}}\chi\subset \{|x|\leq 2\}$,  $M>0$ is a sufficiently large constant. To cancel the terms with critical size, we introduce a localized virial correction to $b_{k}(t)$, defining
     \begin{align*}
         p_{k}(t):=b_{k}(t)
         &-\frac{1}{\lVert\Lambda W\rVert^{2}_{L^{2}}}\frac{b_{k}(t)}{\lambda_{k}(t)}\bigg\langle \chi\left(\frac{\cdot-x_{k}(t)}{\lambda_{k}(t)M}\right)({\underline{\Lambda}}\Lambda W)_{\underline{\lambda_{k},x_{k}}},g(t)\bigg\rangle \\
         &+\frac{1}{\lVert \Lambda W\rVert^{2}_{L^{2}}}\langle \dot{g}(t),{\underline{A}}_{k}g(t)\rangle,
         \end{align*}
     where ${\underline{A}}_{k}$ is a truncated (to scale $\lambda_{k}(t)$ and center at $x_{k}(t)$) version of $\underline{\Lambda}=x\cdot \nabla+\frac{5}{2}$, the generator of $L^{2}$ scaling. The use of refined modulation parameters to obtain dynamical control of interacting bubbles for energy-critical equations was introduced by the first author in the context of a two-bubble construction for NLS in \cite{Jen2017}, see also \cite{Jen2019,Jen2025,JKL2022,JL2018,JL2022,JL2023a,JL2023b,JL2025,JM} for  other applications. Compared with existing results for wave maps and wave equations, the additional introduction of the parameter $b_{k}$ in our modulation procedure gives rise to a corresponding extra term in the corrections. The notion of local virial corrections to modulation parameters was first introduced by Rapha\"{e}l and Szeftel in \cite{RS2011} in a different context.\\
     \mbox{} \par 
\textbf{Step 2: Energy estimates.}
After completing the modulation arguments, we first use energy conservation to prove that under assumptions \eqref{Bub1} and \eqref{asp}, the energy of the solution satisfies $E(u(t),\partial_{t}u(t))= KE(W,0)$. We then expand the energy as follows:
\[
E\left(u,\partial_{t}u\right)=E\left(\vec{U}\right)+\left\langle DE\left(\vec{U}\right),\vec{g}\right\rangle+\frac{1}{2}\left\langle D^{2}E(\vec{U})\vec{g},\vec{g}\right\rangle+O\left(\lVert g\rVert^{3}_{\dot{H}^{1}}\right),
\]
where \[\displaystyle \vec{U}=\left(\sum_{k=1}^{K}W_{\lambda_{k},x_{k}},\sum_{k=1}^{K}b_{k}\left(\Lambda W\right)_{\underline{\lambda_{k},x_{k}}}\right).\]
Combining this with the coercivity estimate for the linearized operator, and up to terms that are much smaller than the critical size, we derive the following energy estimate:
\[
\lVert \vec{g}\rVert^{2}_{\dot{H}^{1}\times L^{2}}+\sum_{k=1}^{K}|b_{k}|^{2}\leq \frac{4\kappa}{3}\sum_{1\leq j<k\leq K}|z_{j}-z_{k}|^{-3}\lambda_{j}^{\frac{3}{2}}\lambda_{k}^{\frac{3}{2}}+\sum_{k=1}^{K}(a_{k}^{+})^{2}+(a_{k}^{-})^{2}.
\]
Unlike the treatments of the radially symmetric wave map problem and the kink-antikink problem in \cite{JKL2022,JL2018}, where the linearized operators of both equations admit no negative directions, the linearized operator of the energy-critical wave equation possesses negative directions. This necessitates additional control over the terms corresponding to these negative directions, which constitutes one of the core difficulties of this part. Among existing results on the rigidity characterization of energy-critical wave equations, there is no effective method suitable for controlling the negative directions in our problem. To address this, we adapt the idea of handling negative directions from the first author's work \cite{Jen2025} on the characterization of two-bubble solutions for the generalized Korteweg-de Vries (GKDV) equation,  and apply it, for the first time, to the study of energy-critical wave equations (see Section~\ref{stable and unstable directions} for details). This yields the expected estimate:
\[
\sum_{k=1}^{K}(a_{k}^{+})^{2}+(a_{k}^{-})^{2}\ll \lambda_{k}^{3}.
\]
Combining the above negative direction control with the energy estimate established earlier, we immediately deduce a priori bound on the remainder term and the modulation parameters:
\[
\lVert \vec{g}\rVert^{2}_{\dot{H}^{1}\times L^{2}}+\sum_{k=1}^{K}|b_{k}|^{2}\lesssim \sum_{k=1}^{K}\lambda_{k}^{3}.
\]
     \mbox{} \par
\textbf{Step 3: Order estimates for modulation parameters and decay of the remainder term.} With the preparations from the first two steps, we are ready to prove the first core estimate of this paper:
\[
\lambda_{k}(t)\sim t^{-2},\quad b_{k}(t)\sim t^{-3},\quad \lVert\vec{g}(t)\rVert_{\dot{H}^{1}\times L^{2}}\ll t^{-3}.
\]
The order estimates for the first two modulation parameters $\lambda_{k}(t)$ and $b_{k}(t)$ are derived by combining the previously established modified parameter evolution equations with the energy estimate. The decay estimate for the remainder term, however, relies crucially on the additional parameters $b_k(t)$ introduced during the modulation procedure. These parameters compensate for the energy loss, allowing the remainder to decay faster than the critical rate $t^{-3}$, which provides the essential foundation for our derivation of the asymptotic ODE system. In the work of the first author and Lawrie on threshold solutions for wave maps \cite{JL2018}, because of energy loss, only a lower bound on the derivative of the modified parameter $p_k(t)$ could be obtained for its evolution equation, with no upper bound available. Consequently, they could only characterize the order of the modulation parameters but could not study their convergence after appropriate normalization. 
In contrast, in our work, since the energy loss is compensated and the remainder decays faster than the critical rate $t^{-3}$, substituting this into the original modulation equations \eqref{lamkbk} and \eqref{bk'} yields the following asymptotic ODE system:
\begin{equation*}
	\left\{
	\begin{aligned}
		& \lambda'_{k}(s)+b_{k}(s)=o(s^{-3}),\\
		& b'_{k}(s)+\kappa \sum_{\substack{1\leq j\leq K \\ j\neq k}}|z_{j}-z_{k}|^{-3}\lambda_{k}^{\frac{1}{2}}(s)\lambda^{\frac{3}{2}}_{j}(s)=o(s^{-4}),
	\end{aligned}
	\right.
	\quad \text{as } s\rightarrow+\infty.
\end{equation*}
 \mbox{} \par
 \textbf{Step 4: Convergence of modulation parameters after rescaling.} Once the asymptotic orders of the modulation parameters have been established, we study their convergence after an appropriate rescaling, which is equivalent to investigating the long-time dynamical behavior of the asymptotic ODE system derived above. Building on the estimates $\lambda_{k}(s)\sim s^{-2}$ and $b_{k}(s)\sim s^{-3}$ proven in Step 3, we introduce the following change of variables to study the convergence of the parameters:
\[
\alpha_{k}=s^{2}\lambda_{k}(s),\quad \beta_{k}=s^{3}b_{k}(s),\quad s=e^{t},
\]
which transforms the original asymptotic ODE system into the following asymptotically autonomous ODE system:
\begin{equation*}
	\left\{
	\begin{aligned}
		& \alpha'_{k}(t)=2\alpha_{k}(t)-\beta_{k}(t)+o(1),\\
		& \beta'_{k}(t)=3\beta_{k}(t)-\kappa\sum_{\substack{1\leq j\leq K \\ j\neq k}}|z_{j}-z_{k}|^{-3}\alpha_{k}^{\frac{1}{2}}(t)\alpha^{\frac{3}{2}}_{j}(t)+o(1),
	\end{aligned}
	\right.
	\quad \text{as } t\rightarrow+\infty,
\end{equation*}
where the parameters satisfy $\alpha_{k}(t)\sim 1$ and $\beta_{k}(t)\sim 1$. Directly studying the convergence of solutions to this system as $t\rightarrow+\infty$ is challenging due to the insufficient decay of the remainder terms. To this end, we adapt ideas from the theory of asymptotically autonomous semiflows in dynamical systems (see for instance \cite{BHS2005,HSZ,MST}). However, we do not directly invoke any abstract theorem from this theory. Due to the special structure of the modulated PDE problem, several differences from the standard asymptotically autonomous setting arise in the proof.

First, we denote the $\omega$-limit set of a solution $X(t)$ to the above asymptotically autonomous system by
\[
\omega(X):=\{p\in\mathbb{R}^{2K}: \exists\ t_{n}\rightarrow+\infty \text{ such that } X(t_{n})\rightarrow p\}.
\]
We prove that solutions to the autonomous equation with initial data at any point of $\omega(X)$ are globally defined and generate a flow $\varphi(t,p)$ on $\omega(X)$.

Second, we establish that $\omega(X)$ is internally chain transitive with respect to the flow $\varphi(t,p)$. This step differs from existing results in the literature: most previous works assume the existence of a global semiflow, whereas our flow is not defined for all points in the full space or for all positive times. Consequently, we need to perform a more refined analysis of points that do not belong to $\omega(X)$ in the proof (see Lemma \ref{ICT}).

Finally, we use the internal chain transitivity property to establish a connection between $\omega(X)$ and the set of equilibrium points of the autonomous system. The key ingredient in this step is the construction of a Lyapunov function (see \eqref{lyapounv}). Using the fact that this Lyapunov function is strictly increasing along any non-equilibrium trajectory of the autonomous system, we conclude that $\omega(X)$ must be contained in the set of equilibrium points of the autonomous equation, as otherwise we would obtain a contradiction with the internal chain transitivity of $\omega(X)$. This completes the proof of the theorem.

\subsection{Notation}
 \mbox{} \par 
 \begin{itemize}
 \item We denote by $B(z,r)$ the ball in $\RR^{5}$ with center $z$ and radius $r\geq 0$. We also denote by $B_{H}(z,r)$ the ball in a Hilbert space $H$ with center $z$ and radius $r\geq 0$.
     \item The bracket $\langle\cdot,\cdot\rangle$ denotes the distributional pairing and the inner product in $L^2$ and $L^2\times L^2$.
\item We fix a smooth radially symmetric function $\chi\in C^{\infty}_{c}(\RR^5)$ on $\RR^5$ such that $0\leq \chi\leq 1$, $\chi(x)\equiv 1$ if $|x|\leq 1$, and $\chi(x)\equiv 0$ if $|x|\geq 2$.
\item We denote by
\[
\kappa=-15^{\frac{3}{2}}\frac{\langle\Lambda W,f'(W)\rangle}{\left\lVert\Lambda W\right\rVert^{2}_{L^2}}=\frac{3}{2}15^{\frac{3}{2}}\frac{\int W^{\frac{7}{3}}\text{d}x}{\left\lVert\Lambda W\right\rVert^{2}_{L^{2}}}=\frac{128\sqrt{5}}{7\pi}.
\]
the constant appearing in the modulation equations.
\item Let $K\geq 2$ denote the number of bubbles. We denote by
\[\boldsymbol{\lambda}=\vec{\lambda}=(\lambda_k)_{1\leq k\leq K}=(\lambda_{1},\cdots,\lambda_{K})\in\RR^K,\]
the vectors $\vec{a},\vec{b},\vec{c},\boldsymbol{\mu},\boldsymbol{b},\vec{\alpha},\vec{\beta}$ are defined in the same manner.\\
For $x_{1},\cdots,x_{K}\in\RR^{5}$, set
\[
\boldsymbol{x}=(x_{1},\cdots,x_{K})\in\RR^{5K},
\]
the vector $\boldsymbol{y}$ is defined in the same manner.
\item For $A\in\RR$ and $B\geq 0$, we write $A\lesssim B$ or $A=O(B)$ if $|A|\leq CB$ for some universal constant $C>0$. If $A,B\geq 0$, we denote $A\sim B$ if $A\lesssim B$ and $B\lesssim A$, and $A\simeq B$ if $\displaystyle\lim_{t\rightarrow+\infty}\frac{A}{B}=1$. We write $A=o(B)$ for $\displaystyle\lim_{t\rightarrow+\infty}\frac{A}{B}=0$.
 \end{itemize}

	\subsection{Acknowledgments}
    		J. Jendrej was supported by the ERC project INSOLIT (No. 101117126). L. Zhao was supported by National Natural Science Foundation of China (No. 12271497 and No. 12341102).
            
\section{Preliminaries}\label{preliminaries}
\subsection{Properties of the linearized operator}
\vspace{0.5cm}
Linearizing the system (\ref{NLW1}) around $\vec{W}=(W,0)$, one obtains
\begin{equation*}
\partial_{t}\vec{g}=J\circ{{\rm{D}}}^{2}E(\vec{W})\vec{g}=\begin{pmatrix}
        0\ &{\rm{Id}}\\
        -L\ &0
    \end{pmatrix}\vec{g}
\end{equation*}
  where $L$ is given by
    \begin{equation*}
        L g:=-\Delta g-f'(W)g=-\Delta g-\frac{7}{3}W^{\frac{4}{3}}g.
    \end{equation*}
     It is well-known  that ${\rm{ker}}L={\rm{span}}\{\Lambda W,\nabla W\}$ in $\dot{H}^{1}$ and $L$  has exactly one strictly negative simple eigenvalue, which we denote $-\nu^{2}(\nu>0)$ (see for instance \cite{DM}). We denote the corresponding positive eigenfunction  by $\mathcal{Y}$, normalized so that $\lVert\mathcal{Y}\rVert_{L^{2}}=1$. By elliptic regularity $\mathcal{Y}$ is smooth, and by Agmon estimates, it decays exponentially. Self-adjointness of $L$ implies that
     \begin{equation}
         \langle\mathcal{Y},\Lambda W\rangle=\langle\mathcal{Y},\nabla W\rangle=0.
     \end{equation}
     We define 
     \begin{equation*}
         \mathcal{Y}^{-}:=\left(\frac{1}{\nu}\mathcal{Y},-\mathcal{Y}\right),\ \mathcal{Y}^{+}:=\left(\frac{1}{\nu}\mathcal{Y},\mathcal{Y}\right),\ \alpha^{-}:=\frac{1}{2}\left(\nu \mathcal{Y},-\mathcal{Y}\right),\ \alpha^{+}:=\frac{1}{2}(\nu \mathcal{Y},\mathcal{Y}).
     \end{equation*}
     A short computation shows that
     \begin{equation*}
         J\circ{\rm{D}}^{2}E(\vec{W})\mathcal{Y}^{-}=-\nu \mathcal{Y}^{-},\quad J\circ{\rm{D}}^{2}E(\vec{W})\mathcal{Y}^{+}=\nu \mathcal{Y}^{+}
     \end{equation*}
     and for any $\vec{g}\in \dot{H}^{1}\times L^{2}$
     \begin{equation}
         \langle \alpha^{-},J\circ{\rm{D}}E(\vec{W})\vec{g}\rangle=-\nu\langle\alpha^{-},\vec{g}\rangle,\quad \langle\alpha^{+},J\circ{\rm{D}}E(\vec{W})\vec{g}\rangle=\nu\langle\alpha^{+},\vec{g}\rangle.
     \end{equation}
     Note that $\langle \alpha^{-},\mathcal{Y}^{-}\rangle=\langle \alpha^{+},\mathcal{Y}^{+}\rangle=1$ and $\langle \alpha^{-},\mathcal{Y}^{+}\rangle=\langle\alpha^{+},\mathcal{Y^{-}}\rangle=0$.\\
    For $\lambda>0,z\in\RR^{5}$, the rescaled versions of these objects are
     \[
     \mathcal{Y}^{-}_{\lambda,z}:=\left(\frac{1}{\nu}\mathcal{Y}_{\lambda,z},-\mathcal{Y}_{\underline{\lambda,z}}\right),\   \mathcal{Y}^{+}_{\lambda,z}:=\left(\frac{1}{\nu}\mathcal{Y}_{\lambda,z},\mathcal{Y}_{\underline{\lambda,z}}\right),\ 
     \]
     and
     \[\alpha^{-}_{\lambda,z}:=\frac{\nu}{2\lambda}J\mathcal{Y}^{+}_{\lambda,z}=\frac{1}{2}\left(\frac{\nu}{\lambda}\mathcal{Y}_{\underline{\lambda,z}},-\mathcal{Y}_{\underline{\lambda,z}}\right),\ \alpha^{+}_{\lambda,z}:=-\frac{\nu}{2\lambda}J\mathcal{Y}^{-}_{\lambda,z}=\frac{1}{2}\left(\frac{\nu}{\lambda}\mathcal{Y}_{\underline{\lambda,z}},\mathcal{Y}_{\underline{\lambda,z}}\right).\]
     These choices of scalings ensure that $\langle \alpha^{-}_{\lambda,z},\mathcal{Y}^{-}_{\lambda,z}\rangle=\langle \alpha^{+}_{\lambda,z},\mathcal{Y}^{+}_{\lambda,z}\rangle=1$. We have
       \begin{equation*}
       J\circ{\rm{D}}^{2}E(\vec{W}_{\lambda,z})\mathcal{Y}^{-}_{\lambda,z}=-\frac{\nu}{\lambda} \mathcal{Y}^{-}_{\lambda,z},\quad J\circ{\rm{D}}^{2}E(\vec{W}_{\lambda,z})\mathcal{Y}^{+}_{\lambda,z}=\frac{\nu}{\lambda}\mathcal{Y}^{+}_{\lambda,z}
     \end{equation*}
     and for any $\vec{h}\in \dot{H}^{1}\times L^{2}$,
     \begin{equation}\label{negative inner}
         \langle \alpha^{-}_{\lambda,z},J\circ{\rm{D}}^{2}E(\vec{W}_{\lambda,z})\vec{h}\rangle=-\frac{\nu}{\lambda}\langle\alpha^{-}_{\lambda,z},\vec{h}\rangle,\quad \langle\alpha^{+}_{\lambda,z},J\circ{\rm{D}}^{2}E(\vec{W}_{\lambda,z})\vec{h}\rangle=\frac{\nu}{\lambda}\langle\alpha^{+}_{\lambda,z},\vec{h}\rangle.
     \end{equation}
     \subsection{Coercivity estimates}
     For $g\in\dot{H}^{1}(\RR^{5})$ we have the associated quadratic form
     \[
     \langle g,Lg\rangle:=\int\limits_{\RR^{5}}(|\nabla g|^{2}-f'(W)g^{2})dx.
     \]
     We record the following  coercivity lemma from \cite{JM}.
     \begin{lemma}{\rm{(\cite[Lemma 8]{JM})}}
         There exists $\eta>0$ such that, for any $g\in\dot{H}^{1}(\RR^{5})$,
         \[
         \int\limits_{\RR^{5}}(|\nabla g|^{2}-f'(W)g^{2}){\rm{d}}x\geq \eta\lVert\nabla g\rVert^{2}_{L^{2}}-\left((\nu^{2}+1)\langle Y,g\rangle^{2}+\langle \Delta \Lambda W,g\rangle^{2}+|\langle\nabla W,g\rangle|^{2}\right).
         \]
     \end{lemma}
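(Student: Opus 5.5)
This is a coercivity statement, and I would prove it by contradiction combined with the spectral structure of $L$ recalled above: $\ker L=\operatorname{span}\{\Lambda W,\nabla W\}$, and $L$ has exactly one simple negative eigenvalue $-\nu^2$ with eigenfunction $\mathcal{Y}$.

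\textbf{Step 1: coercivity on a subspace of finite codimension.} I would first establish the following claim: there exists $\eta_0>0$ such that
\[
\langle g,Lg\rangle\ge \eta_0\lVert\nabla g\rVert_{L^2}^2
\quad\text{whenever}\quad
\langle \mathcal{Y},g\rangle=\langle\Delta\Lambda W,g\rangle=\langle\nabla W,g\rangle=0 .
\]
To prove it, write $L=-\Delta-V$ with $V=\frac73W^{4/3}\lesssim\langle x\rangle^{-4}$. Then $g\mapsto\int Vg^2$ is compact on $\dot H^1(\RR^5)$, because $V\in L^{5/2}$ and it decays. The quadratic form is therefore a compact perturbation of $\lVert\nabla g\rVert^2$, and its negative and null directions are spanned by $\mathcal{Y},\Lambda W,\partial_iW$.

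The orthogonality conditions are stated with respect to the functionals $\Delta\Lambda W$ and $\nabla W$, not the kernel elements themselves. Note that $\langle-\Delta\Lambda W,\Lambda W\rangle=\lVert\nabla\Lambda W\rVert^2\neq0$ and $\langle \nabla W,\partial_iW\rangle$ is a nondegenerate diagonal matrix. Hence these functionals are transversal to the kernel. Also, $\langle\mathcal{Y},\Lambda W\rangle=\langle\mathcal{Y},\nabla W\rangle=0$.

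The standard argument runs as follows. Suppose $g_n$ satisfies the orthogonality conditions, $\lVert\nabla g_n\rVert=1$ and $\langle g_n,Lg_n\rangle\to0$. Extract a weak limit $g$. By compactness of the potential term, $\int Vg^2=1-\lim\langle g_n,Lg_n\rangle\ge 1$ in the limit, so $g\ne0$. By weak lower semicontinuity, $\langle g,Lg\rangle\le0$.

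Minimizing the form on the closed subspace $\{\langle\mathcal{Y},\cdot\rangle=0\}$ shows that it is nonnegative there, because $\mathcal{Y}$ is the only negative direction. So $g$ is a minimizer, and the Euler–Lagrange equation gives $Lg\in\operatorname{span}\{\mathcal{Y},\Delta\Lambda W,\nabla W\}$. Pairing with $\Lambda W$ and $\partial_iW$, and using that $L$ is self-adjoint, forces the coefficients of $\Delta\Lambda W$ and $\nabla W$ to vanish. Thus $Lg=c\mathcal{Y}$. The orthogonality $\langle g,\mathcal{Y}\rangle=0$ then gives $g\in\ker L$. Combined with the orthogonality against $\Delta\Lambda W$ and $\nabla W$, this yields $g=0$, a contradiction.

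\textbf{Step 2: removing the orthogonality conditions.} For general $g$, I decompose
\[
g=h+a\mathcal{Y}+b\Lambda W+c\cdot\nabla W,
\]
choosing the coefficients so that $h$ satisfies the three orthogonality conditions. This is a finite linear system whose matrix is triangular up to small terms, since $\langle\mathcal{Y},\Lambda W\rangle=\langle\mathcal{Y},\nabla W\rangle=0$. Its solution gives
\[
|a|\lesssim|\langle\mathcal{Y},g\rangle|,\qquad
|b|+|c|\lesssim|\langle\Delta\Lambda W,g\rangle|+|\langle\nabla W,g\rangle|+|\langle\mathcal{Y},g\rangle| .
\]
Next I expand the form. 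The kernel directions drop out of $L$, the cross term $\langle h,L\mathcal{Y}\rangle=-\nu^2\langle h,\mathcal{Y}\rangle$ vanishes, and so
\[
\langle g,Lg\rangle=\langle h,Lh\rangle-\nu^2a^2 .
\]
Using Step 1 together with $\lVert\nabla g\rVert^2\le 2\lVert\nabla h\rVert^2+C(a^2+b^2+|c|^2)$, I get
\[
\langle g,Lg\rangle\ge \tfrac{\eta_0}{2}\lVert\nabla g\rVert^2-C\bigl(\langle\mathcal{Y},g\rangle^2+\langle\Delta\Lambda W,g\rangle^2+|\langle\nabla W,g\rangle|^2\bigr).
\]

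\textbf{Step 3: matching the stated constants.} The main obstacle is the exact constants $(\nu^2+1)$ and $1$ in front of the projections. Steps 1–2 give an arbitrary constant $C$, and matching the statement requires normalizing. I would try exploiting the freedom in $\eta$: since the projection functionals are fixed, replace $g$ by components and use $\eta$ small. The exact coefficients likely come from the normalizations in \cite[Lemma 8]{JM}, for example $\lVert\mathcal{Y}\rVert_{L^2}=1$ and a suitable normalization of $\Lambda W$. Failing that, I would accept the lemma as quoted from \cite{JM}, since only the qualitative form is used later.
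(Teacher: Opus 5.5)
The paper gives no proof of this lemma; it is imported from \cite[Lemma 8]{JM}. Your Steps 1 and 2 are the standard argument and are correct. What they actually prove, however, is
\[
\langle g,Lg\rangle\ge \tfrac{\eta_0}{2}\lVert\nabla g\rVert_{L^2}^2-C\bigl(\langle\mathcal{Y},g\rangle^2+\langle\Delta\Lambda W,g\rangle^2+|\langle\nabla W,g\rangle|^2\bigr)
\]
with an uncontrolled constant $C$. Step 3 then leaves the stated coefficients $\nu^2+1$, $1$, $1$ unproven and falls back on the citation. For the lemma as stated this is a genuine gap, since a large $C$ gives a strictly weaker inequality. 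The coefficients are not a matter of normalizing $\Lambda W$ or $\nabla W$. They are lost at the end of Step 2, where you use $\lVert\nabla g\rVert_{L^2}^2\le 2\lVert\nabla h\rVert_{L^2}^2+C(a^2+b^2+|c|^2)$ and move $C(a^2+b^2+|c|^2)$ to the right-hand side. You should instead keep the projection terms on the left and pay for them with a small $\eta$.

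The repair uses only identities you already have. Since $\lVert\mathcal{Y}\rVert_{L^2}=1$ and $\langle\mathcal{Y},h\rangle=\langle\mathcal{Y},\Lambda W\rangle=\langle\mathcal{Y},\nabla W\rangle=0$, you have exactly $a=\langle\mathcal{Y},g\rangle$. Your expansion therefore gives
\begin{align*}
&\langle g,Lg\rangle+(\nu^2+1)\langle\mathcal{Y},g\rangle^2+\langle\Delta\Lambda W,g\rangle^2+|\langle\nabla W,g\rangle|^2\\
&\qquad=\langle h,Lh\rangle+a^2+\langle\Delta\Lambda W,g\rangle^2+|\langle\nabla W,g\rangle|^2\\
&\qquad\ge\eta_0\lVert\nabla h\rVert_{L^2}^2+a^2+\langle\Delta\Lambda W,g\rangle^2+|\langle\nabla W,g\rangle|^2 .
\end{align*}
The linear system is in fact exactly triangular, not triangular up to small terms. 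By parity, $\langle\Delta\Lambda W,\nabla W\rangle=\langle\Lambda W,\nabla W\rangle=0$, so
\[
\langle\Delta\Lambda W,g\rangle=a\langle\Delta\Lambda W,\mathcal{Y}\rangle-b\lVert\nabla\Lambda W\rVert_{L^2}^2,
\qquad
\langle\partial_iW,g\rangle=\tfrac15\lVert\nabla W\rVert_{L^2}^2c_i .
\]
Hence $b^2+|c|^2\lesssim a^2+\langle\Delta\Lambda W,g\rangle^2+|\langle\nabla W,g\rangle|^2$, and so
\[
\lVert\nabla g\rVert_{L^2}^2\lesssim\lVert\nabla h\rVert_{L^2}^2+a^2+\langle\Delta\Lambda W,g\rangle^2+|\langle\nabla W,g\rangle|^2 .
\]
The last line of the display above therefore dominates $\eta\lVert\nabla g\rVert_{L^2}^2$ once $\eta$ is small, which is exactly the lemma.

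This also explains the constants. The coefficient $\nu^2+1$ only needs to exceed $\nu^2$, so as to beat the exact negative term $-\nu^2\langle\mathcal{Y},g\rangle^2$; this is where $\lVert\mathcal{Y}\rVert_{L^2}=1$ enters. The coefficients of the other two projections can be any positive numbers.
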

     \begin{lemma}{\rm{(\cite[Lemma 9]{JM})}}
         For any $\eta>0$ there exists $R=R(\eta)>0$ such that for all $g\in \dot{H}^{1}(\RR^{5})$,
         \begin{equation}\label{cor local}
             \int\limits_{|x|\leq R}|\nabla g|^{2}dx-\int\limits_{\RR^{5}}f'(W)g^{2}dx\geq-\eta \lVert\nabla g\rVert^{2}_{L^{2}}-\nu^{2}\langle Y,g\rangle^{2}.
         \end{equation}
     \end{lemma}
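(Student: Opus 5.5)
The statement to prove is the localized coercivity estimate \eqref{cor local}. Since the quadratic form $\int|\nabla g|^2-f'(W)g^2$ is not coercive, I would argue by contradiction with a compactness argument, using the global coercivity of the previous lemma and the fact that $f'(W)\lesssim \langle x\rangle^{-4}$ decays.

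\textbf{Setup.} Suppose the estimate fails for some $\eta>0$. Then there are $R_n\to\infty$ and $g_n\in\dot H^1$ with $\|\nabla g_n\|_{L^2}=1$ such that
\[
\int_{|x|\le R_n}|\nabla g_n|^2-\int f'(W)g_n^2<-\eta-\nu^2\langle \mathcal Y,g_n\rangle^2 .
\]
Pass to a subsequence with $g_n\rightharpoonup g$ weakly in $\dot H^1$. Because $f'(W)\in L^{5/2}$ decays like $|x|^{-4}$, the map $g\mapsto\int f'(W)g^2$ is weakly continuous on $\dot H^1$: use Rellich on balls and smallness of $\|f'(W)\|_{L^{5/2}(|x|>\rho)}$ together with Sobolev. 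Likewise $\langle\mathcal Y,g_n\rangle\to\langle\mathcal Y,g\rangle$. Weak lower semicontinuity on each fixed ball $B(0,\rho)\subset B(0,R_n)$, followed by $\rho\to\infty$, gives $\liminf\int_{|x|\le R_n}|\nabla g_n|^2\ge\|\nabla g\|^2_{L^2}$. Passing to the limit therefore yields
\[
\langle g,Lg\rangle+\nu^2\langle\mathcal Y,g\rangle^2\le-\eta<0 .
\]

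\textbf{Key step.} It remains to show that $Q(g):=\langle g,Lg\rangle+\nu^2\langle\mathcal Y,g\rangle^2\ge0$ for every $g\in\dot H^1$. I would decompose $g=c\mathcal Y+h$ with $h\perp_{L^2}\mathcal Y$, which is legitimate since $\mathcal Y$ decays exponentially. Self-adjointness gives $\langle \mathcal Y,Lh\rangle=-\nu^2\langle\mathcal Y,h\rangle=0$, so $\langle g,Lg\rangle=-\nu^2c^2+\langle h,Lh\rangle$. Since $L$ has exactly one negative eigenvalue, $\langle h,Lh\rangle\ge0$ on the $L^2$-orthogonal complement of $\mathcal Y$. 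With $\|\mathcal Y\|_{L^2}=1$ we have $\langle\mathcal Y,g\rangle=c$, hence $Q(g)=\langle h,Lh\rangle\ge0$. This contradicts the limit inequality above.

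\textbf{Main obstacle.} The delicate point is justifying $\langle h,Lh\rangle\ge0$ for $h\in\dot H^1$ rather than $H^1$. I would take this from the spectral description of $L$ (its kernel and single negative eigenvalue) recorded above, or from the previous lemma: imposing orthogonality to $\mathcal Y$, and noting that the kernel directions $\Lambda W,\nabla W$ only make the form vanish, one gets nonnegativity. A density argument in $\dot H^1$ then extends it, since $L$ is a bounded form perturbation there.
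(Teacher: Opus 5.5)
Your proof is correct. The paper itself gives no proof of this statement; it quotes it from \cite[Lemma 9]{JM}, so there is no in-paper argument to compare against. Your contradiction/compactness argument is the standard proof of localized coercivity estimates of this kind, and it is complete. Its ingredients are: normalize $\lVert\nabla g_n\rVert_{L^2}=1$, so the $-\eta$ term survives even if $g_n\rightharpoonup 0$; use compactness of $g\mapsto\int f'(W)g^2$ on $\dot H^1$ and lower semicontinuity of the localized Dirichlet integral; and contradict $\langle g,Lg\rangle+\nu^2\langle\mathcal{Y},g\rangle^2\geq 0$.

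Of your two justifications of the key step, the one through the preceding coercivity lemma (\cite[Lemma 8]{JM}) is the cleaner one, because it works directly in $\dot H^1$ and avoids the $H^1$ versus $\dot H^1$ density issue. To make it precise, take $h\perp\mathcal{Y}$ and subtract $a\Lambda W+b\cdot\nabla W$, so that $k=h-a\Lambda W-b\cdot\nabla W$ satisfies $\langle\Delta\Lambda W,k\rangle=\langle\nabla W,k\rangle=0$. This is possible because $\langle\Delta\Lambda W,\Lambda W\rangle=-\lVert\nabla\Lambda W\rVert_{L^2}^2\neq 0$ and $\langle\partial_iW,\partial_jW\rangle=\frac15\lVert\nabla W\rVert_{L^2}^2\delta_{ij}$, while the cross pairings vanish by parity. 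Then:
\begin{itemize}
\item $\langle\mathcal{Y},k\rangle=0$, since $\mathcal{Y}$ is orthogonal to $\Lambda W$ and $\nabla W$;
\item the quadratic form is unchanged, since $\Lambda W,\nabla W\in\ker L$;
\item that lemma gives $\langle h,Lh\rangle=\langle k,Lk\rangle\geq\eta\lVert\nabla k\rVert_{L^2}^2\geq 0$.
\end{itemize}
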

     For $\lambda,\mu\in(0,+\infty)$ and $x,y\in\RR^{5}$, we define
     \begin{equation}\label{distance between the parameters}
         \delta((\lambda,x),(\mu,y)):=\left|\log\left(\frac{\lambda}{\mu}\right)\right|+\frac{|x-y|}{\lambda}.
     \end{equation}
     Now we state the coercivity estimate for multiple potentials.
     \begin{lemma}{\rm{(\cite[Lemma 10]{JM})}}
         There exists $\eta>0$ such that the following holds. Let $(\lambda_{k},x_{k})\in (0,+\infty)\times \RR^{5}$ for $k=1,\cdots, K$ satisfy $\delta((\lambda_{j},x_{j}),(\lambda_{k},x_{k}))\geq \eta^{-1}$ for all $j\neq k$. Let $U\in \dot{H}^{1}(\RR^{5})$ satisfy
         \begin{equation*}
             \left\lVert U-\sum_{k=1}^{K}W_{\lambda_{k},x_{k}}\right\rVert_{\dot{H}^{1}}\leq \eta.
         \end{equation*}
         Then for any $g\in \dot{H}^{1}(\RR^{5})$,
         \begin{equation}\label{cor mul}
             \begin{aligned}
             \int\limits_{\RR^{5}}\left(|\nabla g|^{2}-f'(U)g^{2}\right){\rm{d}}x\geq &\eta\lVert\nabla g\rVert^{2}_{L^{2}}-\sum_{k=1}^{K}\{(\nu^{2}+1)\langle\lambda_{k}^{-2}Y_{\lambda_{k},x_{k}},g\rangle^{2}\\
             &+\langle \lambda_{k}^{-2}(\Delta\Lambda W)_{\lambda_{k},x_{k}},g\rangle^{2}+|\langle \lambda_{k}^{-2}(\nabla W)_{\lambda_{k},x_{k}},g\rangle|^{2} \}.
         \end{aligned}
         \end{equation}  
     \end{lemma}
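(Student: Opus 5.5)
The statement to prove is the multi-potential coercivity estimate \eqref{cor mul}, quoted from \cite[Lemma 10]{JM}. I would reduce it to the single-bubble estimates (the two preceding lemmas) by a localization argument, arguing by contradiction with a small $\eta$.

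\textbf{Step 1: reduction to $U=\sum_k W_{\lambda_k,x_k}$.}
We have $|f'(U)-f'(\sum_k W_{\lambda_k,x_k})|\lesssim |U-\sum_k W_{\lambda_k,x_k}|^{4/3}$, since $f'$ is Hölder of order $4/3$. By Hölder and Sobolev,
\[
\Big|\int \big(f'(U)-f'(\textstyle\sum_k W_{\lambda_k,x_k})\big)g^2\,{\rm d}x\Big|\lesssim \eta^{4/3}\lVert\nabla g\rVert^2_{L^2}.
\]
This error is absorbed when $\eta$ is small. Similarly, once the bubbles are separated ($\delta\ge\eta^{-1}$), we have
\[
\Big\lVert f'\big(\textstyle\sum_k W_{\lambda_k,x_k}\big)-\sum_k f'(W_{\lambda_k,x_k})\Big\rVert_{L^{5/2}}\to0
\]
as $\eta\to0$. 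This holds because the $L^{5/2}$ norm of the cross terms is controlled by the overlap of asymptotically orthogonal profiles. It therefore suffices to prove the estimate with potential $\sum_k f'(W_{\lambda_k,x_k})$.

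\textbf{Step 2: partition of unity adapted to the bubbles.}
Fix $R$ from \eqref{cor local} applied with a small $\eta_0$. For each $k$, choose a cutoff $\phi_k$ equal to $1$ on the region where bubble $k$ lives: roughly $B(x_k,R\lambda_k)$, enlarged logarithmically up to a scale where other bubbles do not intrude. Use a log-type cutoff so that $\lVert\nabla\phi_k\rVert_{L^5}$ is small; in dimension $5$, Hardy and Sobolev let such a cutoff cost only $o(1)\lVert\nabla g\rVert^2$. Separation $\delta\ge\eta^{-1}$ lets us make the supports of the $\phi_k$ pairwise disjoint. This needs some care in the tower case, where $x_j\approx x_k$ but the scales are very different: one then uses annuli around the smaller bubble excluded from the larger bubble's region.

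Then
\[
\int|\nabla g|^2\ge\sum_k\int|\nabla(\phi_k g)|^2-o(1)\lVert\nabla g\rVert^2,
\]
and the mass of $f'(W_{\lambda_k,x_k})$ outside the region where $\phi_k=1$ contributes $o(1)\lVert\nabla g\rVert^2$.

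\textbf{Step 3: single-bubble estimate on each piece.}
Apply the rescaled first lemma (\cite[Lemma 8]{JM}) to $\phi_k g$. The orthogonality pairings $\langle \lambda_k^{-2}Y_{\lambda_k,x_k},\phi_k g\rangle$ and the analogous ones with $\Delta\Lambda W$ and $\nabla W$ differ from the pairings with $g$ by $o(1)\lVert\nabla g\rVert$. This is because $Y$ decays exponentially, while $\Delta\Lambda W$ and $\nabla W$ lie in $\dot H^{-1}$ with tails that are small outside large balls. To recover a fraction $\eta\lVert\nabla g\rVert^2$ of the full norm rather than only $\sum_k\lVert\nabla(\phi_kg)\rVert^2$, combine two estimates: the outer region where no potential is felt gives $\int|\nabla g|^2$ directly, and the localized inequality \eqref{cor local} controls the negative part. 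Concretely, write the quadratic form as a convex combination $\theta\cdot(\text{global})+(1-\theta)\cdot(\text{localized})$.

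\textbf{Main obstacle.} The hardest step is the geometric construction of the cutoffs in Step 2 for arbitrary configurations, mixing spatial separation and scale separation, with only $\delta\ge\eta^{-1}$ available. As a cleaner alternative, I would first try a compactness/contradiction argument. Take sequences $\eta_n\to0$ and normalized $g_n$ violating the inequality, then apply profile decomposition along each bubble's scale. Each rescaled $g_n$ converges weakly to some $h_k$ satisfying the orthogonality conditions, so $\langle Lh_k,h_k\rangle\gtrsim\lVert\nabla h_k\rVert^2$. The remaining part feels no potential, by the asymptotic orthogonality of the bubbles. Summing these via the Pythagorean expansion of $\lVert\nabla g_n\rVert^2$ gives the contradiction.
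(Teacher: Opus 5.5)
The paper does not prove this lemma; it is quoted from \cite[Lemma 10]{JM}, so your proposal has to be judged on its own. Your overall plan is sound, and the compactness route you list as a fallback is the one that closes cleanly. However, its key step is misstated: negating the estimate does \emph{not} make the weak limits $h_k$ satisfy the orthogonality conditions.

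Suppose $\lVert\nabla g_n\rVert_{L^2}=1$ and $g_n$ violates the estimate with $\eta_n\to0$. Write $Q_n(g)=\int(|\nabla g|^2-f'(U_n)g^2)$ and let $P_n$ be the sum of the squared pairings. You only get $Q_n(g_n)<\eta_n-P_n(g_n)$, and $P_n$ is merely bounded, so $\langle Lh_k,h_k\rangle\gtrsim\lVert\nabla h_k\rVert^2$ is unjustified.

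The fix is to keep the pairings. Set $\tilde g_{n,k}(y)=\lambda_{k,n}^{3/2}g_n(x_{k,n}+\lambda_{k,n}y)\rightharpoonup h_k$. Then $\langle\lambda_k^{-2}Y_{\lambda_k,x_k},g_n\rangle=\langle Y,\tilde g_{n,k}\rangle$, and likewise for $\Delta\Lambda W$ and $\nabla W$. Since these profiles lie in $L^{10/7}\subset\dot H^{-1}$, you get $P_n(g_n)\to\sum_kP(h_k)$, where $P(h)=(\nu^2+1)\langle Y,h\rangle^2+\langle\Delta\Lambda W,h\rangle^2+|\langle\nabla W,h\rangle|^2$.

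Your Step 1, together with weak continuity of $h\mapsto\int f'(W)h^2$ on $\dot H^1$, gives $\int f'(U_n)g_n^2=\sum_k\int f'(W)h_k^2+o(1)$. Asymptotic orthogonality of the parameters gives $\sum_k\lVert\nabla h_k\rVert^2\le1$. Applying \cite[Lemma 8]{JM} to each $h_k$, with constant $\eta_0\le1$, yields
\[
Q_n(g_n)=1-\sum_k\lVert\nabla h_k\rVert^2+\sum_k\langle Lh_k,h_k\rangle+o(1)\ge\eta_0-P_n(g_n)+o(1),
\]
which contradicts $Q_n(g_n)<\eta_n-P_n(g_n)$. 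This produces exactly the constants $\nu^2+1,1,1$ and makes your \emph{convex combination} device unnecessary.

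Two further corrections.

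\textbf{Step 1.} $f'$ is not H\"older of order $4/3$; a H\"older exponent above $1$ forces a constant function. The function $|u|^{4/3}$ is only $C^{1,1/3}$. The correct bound $|f'(U)-f'(V)|\lesssim|U-V|^{4/3}+|U-V|\,|V|^{1/3}$ gives, via H\"older and Sobolev, an error $O(\eta)\lVert\nabla g\rVert_{L^2}^2$ rather than $O(\eta^{4/3})$. This is still absorbable, so nothing breaks.

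\textbf{Localization route.} Constructing pairwise disjoint log-type cutoffs for arbitrary $\delta$-separated configurations (towers, several small bubbles inside a large one) is exactly the part you leave open, so that route is not yet a proof. The compactness argument above bypasses it. If you do pursue it, the full norm comes for free from $\int|\nabla g|^2\ge\sum_k\int\phi_k^2|\nabla g|^2+\int(1-\sum_k\phi_k^2)|\nabla g|^2$. The exact pairing constants also survive, because the cross terms are bounded by $\lVert\nabla g\rVert_{L^2}\cdot o(1)\lVert\nabla g\rVert_{L^2}$.
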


\subsection{Estimates for spatial integrals}

Let $K\geq 2$ and $z_{1},\cdots,z_{K}$ be $K$ points of $\RR^{5}$ distinct two by two. Set
\begin{equation}\label{definition of d}
    d:=\frac{1}{8}\min_{j\neq k}|z_{j}-z_{k}|>0\quad {\rm{and}}\quad \boldsymbol{z}=(z_{1},\cdots,z_{K}).
\end{equation}
Throughout the paper, the  constants in various estimates are allowed to depend on $d$.
We first gather several integral estimates in the following technical lemma.
\begin{lemma}\label{WW}
 Let $W(x)$ be the ground state, and let $z,y\in\RR^{5}$ and $\lambda,\mu>0$ satisfy
    \begin{equation*}
        |z-y|\geq 2d,\quad \lambda\sim \mu,\quad \lambda,\mu\ll 1,
    \end{equation*}
  Then the following estimates hold:
\begin{equation*}
    \left|\left\langle(\Lambda W)_{\lambda,z},(\Lambda W)_{\mu,y}\right\rangle\right|+\left|\left\langle(\underline{\Lambda}\Lambda W)_{\lambda,z},(\Lambda W)_{\mu,y}\right\rangle\right|\lesssim \lambda^{\frac{3}{2}}\mu^{\frac{3}{2}},
\end{equation*}
\begin{equation*}
    \left|\left\langle(\nabla W)_{\lambda,z},(\Lambda W)_{\mu,y}\right\rangle\right|+\left|\left\langle(\nabla \Lambda W)_{\lambda,z},(\Lambda W)_{\mu,y}\right\rangle\right|+\left|\left\langle(\nabla W)_{\lambda,z},(\underline{\Lambda}\Lambda W)_{\mu,y}\right\rangle\right|\lesssim \lambda^{\frac{5}{2}}\mu^{\frac{3}{2}},
\end{equation*}
\begin{equation*}
    \left|\left\langle(\nabla W)_{\lambda,z},(\nabla W)_{\mu,y}\right\rangle\right|\lesssim\lambda^{\frac{5}{2}}\mu^{\frac{5}{2}}\quad,\left|\left\langle(\Delta \Lambda W)_{\lambda,z},(\Lambda W)_{\mu,y}\right\rangle\right|\lesssim\lambda^{\frac{7}{2}}\mu^{\frac{3}{2}},
\end{equation*}
\begin{equation*}
    \left|\left\langle(\Delta \Lambda W)_{\lambda,z},(\nabla W)_{\mu,y}\right\rangle\right|\lesssim\lambda^{\frac{7}{2}}\mu^{\frac{5}{2}},\quad\left\lVert W_{\lambda,z} W^{\frac{4}{3}}_{\mu,y}\right\rVert_{L^{2}}\lesssim\lambda^{\frac{3}{2}}\mu^{\frac{1}{2}},
\end{equation*}
\begin{equation*}
  \left|\left\langle W_{\lambda,z}^{\frac{5
  }{3}},W_{\mu,y}^{\frac{5}{3}}\right\rangle\right|\lesssim\lambda^{\frac{5}{2}}\mu^{\frac{5}{2}}|\log\lambda|,\  \left\lVert W^{2}_{\lambda,z}W^{\frac{4}{3}}_{\mu,y}\right\rVert_{L^{1}}\lesssim \lambda^{2}\mu^{2},\ \left\lVert W_{\lambda,z} W^{\frac{4}{3}}_{\mu,y}\right\rVert_{L^{\frac{10}{7}}}\lesssim\lambda^{\frac{3}{2}}\mu^{\frac{3}{2}}.
\end{equation*}
\end{lemma}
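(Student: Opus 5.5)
The plan is to reduce every estimate to a single model computation: an integral of a product of two profiles, one concentrated at scale $\lambda$ near $z$ and the other at scale $\mu$ near $y$, with $|z-y|\ge 2d$. We record the decay rates first:
\[
|W(x)|,\ |\Lambda W(x)|,\ |\underline{\Lambda}\Lambda W(x)|\lesssim \langle x\rangle^{-3},\qquad |\nabla W|,\ |\nabla\Lambda W|\lesssim\langle x\rangle^{-4},\qquad |\Delta\Lambda W|\lesssim \langle x\rangle^{-5}.
\]
The bound on $\Lambda W$ uses the cancellation that makes $\Lambda W\sim c|x|^{-3}$. For $\Delta\Lambda W$ we use $\Delta\Lambda W=-f'(W)\Lambda W$, which actually gives $\langle x\rangle^{-7}$. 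We then split $\RR^5$ into $B(z,d)$, $B(y,d)$ and the complement. On $B(z,d)$ the $\mu$-profile is evaluated at distance $\gtrsim d$ from $y$. It is therefore pointwise $\lesssim \mu^{-3/2}(\mu/d)^{m}$ (or with $\mu^{-5/2}$ for the $L^2$-scaled versions), where $m$ is its decay exponent. The remaining integral of the $\lambda$-profile over $B(z,d)$ is computed by rescaling $x=z+\lambda\xi$.

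\textbf{The pairings.} Take $\langle(\Lambda W)_{\lambda,z},(\Lambda W)_{\mu,y}\rangle$ with the $\dot H^1$-scaling. On $B(z,d)$ we get $\mu^{3/2}\cdot\lambda^{-3/2}\int_{|x|<d}\langle x/\lambda\rangle^{-3}dx\lesssim\mu^{3/2}\lambda^{-3/2}\lambda^3 d^2\lesssim\lambda^{3/2}\mu^{3/2}$. The same bound holds on $B(y,d)$ by symmetry, and on the complement both factors are $O(\lambda^{3/2})$ and $O(\mu^{3/2})$ pointwise with integrable decay $|x|^{-6}$ at infinity. Each further derivative on the $\lambda$-profile adds one power of decay and one factor $\lambda^{-1}$ from the chain rule. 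With the normalizations used in the statement, the local integral then picks up one more power of $\lambda$. This yields $\lambda^{5/2}\mu^{3/2}$ for $\nabla W$ and $\nabla\Lambda W$, $\lambda^{7/2}\mu^{3/2}$ for $\Delta\Lambda W$, and the analogous mixed products. The integral $\int\langle\xi\rangle^{-4}d\xi$ converges in $\RR^5$, and where it diverges logarithmically one instead uses the faster true decay. The case $\langle\nabla W,\nabla W\rangle$ is symmetric. One point needs checking: the normalization conventions (which of $v_{\lambda,z}$ and $v_{\underline{\lambda,z}}$ is meant) must match the claimed powers. If a claimed power seems off by a factor, I would reinterpret the scaling accordingly, since these are bounds in terms of $\lambda\sim\mu$.

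\textbf{The nonlinear terms.} For $\|W_{\lambda,z}W_{\mu,y}^{4/3}\|_{L^2}$ the dominant region is $B(y,d)$. There $W_{\lambda,z}\lesssim\lambda^{3/2}$, and $\|W^{4/3}_{\mu,y}\|_{L^2}=\mu^{-2}\mu^{5/2}\|W^{4/3}\|_{L^2}=\mu^{1/2}$, using $W^{8/3}\sim\langle x\rangle^{-8}$, which is integrable. On $B(z,d)$ we have $W_{\mu,y}^{4/3}\lesssim\mu^2$ and $\|W_{\lambda,z}\|_{L^2(B(z,d))}\lesssim\lambda^{-3/2}(\lambda^{6}\cdot d^{-1})^{1/2}\sim\lambda^{3/2}$, which is smaller. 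The $L^{10/7}$ and $L^1$ bounds are identical computations with exponents $10/7$ and $1$. For $L^1$, on $B(z,d)$ we get $\mu^2\|W^2_{\lambda,z}\|_{L^1(B)}\lesssim\mu^2\lambda^{-3}\lambda^{6}d^{-1}$, and on $B(y,d)$ we get $\lambda^3\|W^{4/3}_{\mu,y}\|_{L^1(B)}\lesssim\lambda^3\mu^{-2}\mu^{4}d$. Both are $\lesssim\lambda^2\mu^2$ since $\lambda\sim\mu\ll1$.

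\textbf{The logarithmic term.} The only delicate point, and where I expect the main care is needed, is $\langle W^{5/3}_{\lambda,z},W^{5/3}_{\mu,y}\rangle$. Here $W^{5/3}\sim|x|^{-5}$ is borderline non-integrable in $\RR^5$. On $B(z,d)$ one gets $\mu^{5/2}\lambda^{-5/2}\int_{|x|<d}\langle x/\lambda\rangle^{-5}dx=\mu^{5/2}\lambda^{5/2}\int_{|\xi|<d/\lambda}\langle\xi\rangle^{-5}d\xi\lesssim\lambda^{5/2}\mu^{5/2}|\log\lambda|$. The symmetric region gives $|\log\mu|\sim|\log\lambda|$. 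The complement gives $\lambda^{5/2}\mu^{5/2}\int_{\text{far}}|x-z|^{-5}|x-y|^{-5}dx\lesssim\lambda^{5/2}\mu^{5/2}$. This produces exactly the stated logarithmic loss.
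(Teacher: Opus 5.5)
Your approach is the paper's. Both arguments bound the profiles pointwise: the paper writes $|\Lambda W|+|\underline{\Lambda}\Lambda W|\lesssim W$, $|\nabla W|+|\nabla\Lambda W|\lesssim W^{4/3}$, and $|\Delta\Lambda W|\lesssim W^{7/3}$ via $L\Lambda W=0$, which are exactly your $\langle x\rangle^{-3},\langle x\rangle^{-4},\langle x\rangle^{-7}$. Both then split into $B(z,d)$, $B(y,d)$ and the rest, bounding the far profile pointwise on each ball. On the exterior the paper uses Cauchy--Schwarz with $\lVert W_{\lambda,z}\rVert_{L^2(|x-z|\ge d)}\lesssim\lambda^{3/2}$, while you integrate the product of the two tails; both work. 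All the final bounds you state are true, and the logarithmic term is handled correctly. However, several computations you wrote down are false and need correcting.

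\textbf{Derivative pairings.} $\int_{\RR^5}\langle\xi\rangle^{-4}\,d\xi$ diverges. The gain of exactly one power of $\lambda$ comes from its linear growth $\int_{|\xi|<d/\lambda}\langle\xi\rangle^{-4}\,d\xi\sim d/\lambda$, versus $(d/\lambda)^2$ for $\langle\xi\rangle^{-3}$. This gives $\int_{B(z,d)}|(\nabla W)_{\lambda,z}|\lesssim\lambda^{7/2}\cdot d\lambda^{-1}$; if the integral converged you would get $\lambda^{7/2}$ instead.

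There is also no chain-rule factor. Here $(\nabla W)_{\lambda,z}=\lambda^{-3/2}(\nabla W)(\frac{\cdot-z}{\lambda})$ is the $\dot H^1$ rescaling of the function $\nabla W$, not $\nabla(W_{\lambda,z})=\lambda^{-1}(\nabla W)_{\lambda,z}$. With an extra $\lambda^{-1}$ you would only get $\lambda^{3/2}\mu^{3/2}$. With the normalization fixed by the definition, the stated powers come out exactly, so there is nothing to ``reinterpret''. Reinterpreting would not be legitimate anyway: $\lambda\sim\mu$ lets you exchange $\lambda$ and $\mu$, but not discard a factor $\lambda\ll1$.

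\textbf{Nonlinear terms on $B(z,d)$.} Here you used tail values instead of core values. Since $W\in L^2(\RR^5)$, $\lVert W_{\lambda,z}\rVert_{L^2(B(z,d))}\sim\lambda$ and $\lVert W_{\lambda,z}^2\rVert_{L^1(B(z,d))}\sim\lambda^2$. The quantities $\lambda^{3/2}d^{-1/2}$ and $\lambda^3d^{-1}$ you wrote are the corresponding norms over $\{|x-z|\ge d\}$. The corrected contributions are:
\begin{itemize}
\item $\lambda\mu^2\ll\lambda^{3/2}\mu^{1/2}$ for the $L^2$ norm, which is harmless;
\item $\lambda^2\mu^2$ for the $L^1$ norm, which is precisely the claimed bound.
\end{itemize}
So for the $L^1$ norm this ball is the dominant region, of exact order $\lambda^2\mu^2$ when $|z-y|$ is bounded as in all applications. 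Your assertion that it contributes $O(\lambda^3\mu^2)$ is false, even though the conclusion survives.

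\textbf{Minor point.} No cancellation is needed for $|\Lambda W|\lesssim\langle x\rangle^{-3}$, since $|x\cdot\nabla W|\lesssim\langle x\rangle^{-3}$ directly. Indeed $\Lambda W\sim-\tfrac32 W$ at infinity.
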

\begin{proof}
    By the definition of $W(x)$, we have the following bounds:
    \begin{align}\label{estimate on W}
       | W(x)|+|\Lambda W(x)|+|\underline{\Lambda}\Lambda W(x)|\lesssim W(x);\notag\\
       |\nabla W(x)|+|\nabla\Lambda W(x)|\lesssim W^{\frac{4}{3}}(x);\\
       |\Delta\Lambda W(x)|=|f'(W(x))\Lambda W(x)|\lesssim W^{\frac{7}{3}}(x).\notag
    \end{align}
    Here we have used the relation $L\Lambda W=0$ in the last line. It then  suffices to show \[\left|\left\langle(\Lambda W)_{\lambda,z},(\Lambda W)_{\mu,y}\right\rangle\right|\lesssim\lambda^{\frac{3}{2}}\mu^{\frac{3}{2}}.
    \] The other estimates can be obtained in the same manner. From \eqref{estimate on W}, we have
    \begin{equation*}
        \left|\left\langle(\Lambda W)_{\lambda,z},(\Lambda W)_{\mu,y}\right\rangle\right|\lesssim\int_{\RR^{5}}\frac{1}{\lambda^{\frac{3}{2}}}W\left(\frac{x-z}{\lambda}\right)\frac{1}{\mu^\frac{3}{2}} W\left(\frac{x-y}{\mu}\right)\text{d}x.
    \end{equation*}
    Now, we estimate the above integral
    in three regions respectively:
    In the region $|x-z|<d$, since $|y-z|\geq 2d$,  we have $|x-y|\geq d$, hence 
    \[\frac{1}{\mu^\frac{3}{2}} W\left(\frac{x-y}{\mu}\right)\lesssim\mu^\frac{3}{2}\left(\frac{\mu}{d}\right)^{3}\lesssim\mu^{\frac{3}{2}},\]
    and then
    \begin{equation*}
    \int_{|x-z|<d}\frac{1}{\lambda^{\frac{3}{2}}}W\left(\frac{x-z}{\lambda}\right)\frac{1}{\mu^\frac{3}{2}} W\left(\frac{x-y}{\mu}\right)\text{d}x\lesssim\mu^{\frac{3}{2}}  \int_{|x|<\frac{d}{\lambda}} \lambda^{\frac{7}{2}}W(x)\text{d} x\lesssim\mu^{\frac{3}{2}}\lambda^{\frac{3}{2}}.
    \end{equation*}
    In the region $|x-y|<d$, by the same argument as before, we have
    \begin{equation*}
        \int_{|x-y|<d}\frac{1}{\lambda^{\frac{3}{2}}}W\left(\frac{x-z}{\lambda}\right)\frac{1}{\mu^\frac{3}{2}} W\left(\frac{x-y}{\mu}\right)\text{d}x\lesssim\mu^{\frac{3}{2}}\lambda^{\frac{3}{2}}.
    \end{equation*}
    In the region $\{|x-y|\geq d\}\cup\{|x-z|\geq d\}$, by the Cauchy-Schwarz inequality, we obtain
    \begin{align*}
        &\int_{\{|x-y|\geq d\}\cup\{|x-z|\geq d\}}\frac{1}{\lambda^{\frac{3}{2}}}W\left(\frac{x-z}{\lambda}\right)\frac{1}{\mu^\frac{3}{2}} W\left(\frac{x-y}{\mu}\right)\text{d}x\\
        \lesssim&\left(\int_{|x-z|\geq d}\frac{1}{\lambda^{3}}W^{2}\left(\frac{x-z}{\lambda}\right)\text{d}x\right)^{\frac{1}{2}}\left(\int_{|x-y|\geq d}\frac{1}{\mu^{3}}W^{2}\left(\frac{x-y}{\mu}\right)\text{d}x\right)^{\frac{1}{2}}\lesssim\mu^{\frac{3}{2}}\lambda^{\frac{3}{2}}.
    \end{align*}
Combining the three estimates gives the desired bound.
\end{proof}
\subsection{Pointwise estimates} We present some pointwise inequalities in the next elementary lemma, which will be used in various places in the rest of the manuscript. 
 \begin{lemma}
     For $f(u)=|u|^{\frac{4}{3}}u$ and $F(u)=\frac{3}{10}|u|^{\frac{10}{3}}$, Let $u\in\RR$ and $\ v_{j}\geq 0$. Then
     \begin{equation}\label{nonlinear 1}
         \left|f\left(\sum_{j=1}^{K}v_{j}\right)-\sum_{j=1}^{K}f\left(v_{j}\right)\right|\lesssim\sum_{j\neq l}|v_{j}||v_{l}|^{\frac{4}{3}},
     \end{equation}
     \begin{equation}\label{nonlinear 2}
         \left|f\left(u+\sum_{j=1}^{K}v_{j}\right)-f(u)-\sum_{j=1}^{K}f(v_{j})-f'(u)\sum_{j=1}^{K}v_{j}\right|\lesssim|u|^{\frac{2}{3}}\sum_{j=1}^{K}|v_{j}|^{\frac{5}{3}}+\sum_{j\neq l}|v_{j}||v_{l}|^{\frac{4}{3}},
     \end{equation}
     \begin{equation}\label{nonlinear 3}
         \left|f\left(u+\sum_{j=1}^{K}v_{j}\right)-f\left(\sum_{j=1}^{K}v_{j}\right)-f'\left(\sum_{j=1}^{K}v_{j}\right)u\right|\lesssim|u|^{2}\sum_{j=1}^{K}|v_{k}|^{\frac{1}{3}}+|u|^{\frac{7}{3}},
     \end{equation}
     \begin{equation}\label{nonlinear 4}
         \left|f'\left(\sum_{j=1}^{K}v_{j}\right)-f'(v_{k})\right|\lesssim\left(\sum_{j\neq k}|v_{j}|\right)\sum_{j=1}^{K}|v_{j}|^{\frac{1}{3}}
     \end{equation}
     \begin{equation}\label{nonlinear 5}
         \left|F\left(\sum_{j=1}^{K}v_{j}\right)-\sum_{j=1}^{K}F\left(v_{j}\right)-\sum_{k=1}^{K}f(v_{k})\sum_{j\neq k}v_{j}\right|\lesssim\sum_{j\neq k}|v_{j}|^{2}|v_{k}|^{\frac{4}{3}}.
     \end{equation}
 \end{lemma}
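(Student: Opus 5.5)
These are elementary pointwise inequalities. I would prove all five by homogeneity and a case split according to which argument is dominant. The basic tools are $|f'(s)|\lesssim |s|^{4/3}$ and $|f''(s)|\lesssim |s|^{1/3}$, together with the Hölder continuity of $f'$ of order $\frac43$ and of $f''$ of order $\frac13$, i.e. $|f''(a)-f''(b)|\lesssim |a-b|^{1/3}$. Throughout, set $V=\sum_j v_j$.

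For \eqref{nonlinear 1} I would argue by induction on $K$, so the key case is $K=2$: $|f(a+b)-f(a)-f(b)|\lesssim ab^{4/3}+ba^{4/3}$ for $a,b\ge0$. By symmetry assume $b\le a$. The mean value theorem gives $|f(a+b)-f(a)|\lesssim b\,a^{4/3}$, and $f(b)\le b\,b^{4/3}\le b\,a^{4/3}$. For the inductive step, write $V=v_K+V'$ with $V'=\sum_{j<K}v_j$. Apply the two-term bound, then use $V'^{4/3}\lesssim\sum_{j<K} v_j^{4/3}$ and the induction hypothesis.

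Estimate \eqref{nonlinear 4} also follows from the mean value theorem. We have $|f'(V)-f'(v_k)|\le (V-v_k)\sup_{[v_k,V]}|f''|\lesssim (\sum_{j\ne k}v_j)V^{1/3}$, and $V^{1/3}\le\sum_j v_j^{1/3}$.

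For \eqref{nonlinear 3}, Taylor's formula with integral remainder gives
\[
\Bigl|f(V+u)-f(V)-f'(V)u\Bigr|\lesssim u^2\sup_{|s|\le|u|}|f''(V+s)|\lesssim u^2(V^{1/3}+|u|^{1/3}),
\]
which is the claim. For \eqref{nonlinear 2}, I would write the left side as
\[
\bigl[f(u+V)-f(u)-f'(u)V-f(V)\bigr]+\Bigl[f(V)-\sum_j f(v_j)\Bigr].
\]
The second bracket is handled by \eqref{nonlinear 1}. For the first, split into cases:
\begin{itemize}[label={}]
\item If $V\le |u|$, the Taylor remainder is at most $V^2|u|^{1/3}\le |u|^{2/3}V^{5/3}$, and $|f(V)|=V^{7/3}\le |u|^{2/3}V^{5/3}$.
\item If $V\ge|u|$, expand around $V$ instead: the bracket equals $[f(u+V)-f(V)]-f(u)-f'(u)V$. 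Here $|f(u+V)-f(V)|\lesssim |u|V^{4/3}\le |u|^{2/3}V^{5/3}$, and $|f'(u)|V\lesssim |u|^{4/3}V\le |u|^{2/3}V^{5/3}$, and $|f(u)|$ is even smaller.
\end{itemize}
Finally, $V^{5/3}\lesssim\sum_j v_j^{5/3}$.

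Estimate \eqref{nonlinear 5} is analogous at one order higher, using $F'=f$ and $|F''|\lesssim |s|^{4/3}$. For two terms with $b\le a$,
\[
F(a+b)-F(a)-F(b)-f(a)b-f(b)a=\bigl[F(a+b)-F(a)-f(a)b\bigr]-F(b)-f(b)a .
\]
The bracket is $O(b^2a^{4/3})$ by Taylor. The other terms satisfy $F(b)\lesssim b^{10/3}\le b^2a^{4/3}$ and $f(b)a=b^{7/3}a$, which is bounded by $a^2b^{4/3}$ since $b\le a$. The general $K$ case follows by the same induction as in \eqref{nonlinear 1}. The cross terms $f(V')v_K$ are compared with $\sum_{k<K}f(v_k)v_K$ using \eqref{nonlinear 1}, which produces errors of size $v_jv_l^{4/3}v_K$. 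Each such product is bounded by a sum of terms of the form $v_j^2v_l^{4/3}$ via Young's inequality (weighted AM–GM over the three factors).

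I expect the only real care to lie in the case splits in \eqref{nonlinear 2} and the inductive bookkeeping in \eqref{nonlinear 5}. Everything else is routine.
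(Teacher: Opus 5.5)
Your proposal is correct. For \eqref{nonlinear 5} it is the paper's argument: the same induction on $K$, via
\[
S_K=S_{K-1}+\bigl[F(V'+v_K)-F(V')-F(v_K)-f(V')v_K-f(v_K)V'\bigr]+v_K\bigl(f(V')-\textstyle\sum_{k<K}f(v_k)\bigr),
\]
with the last term controlled by \eqref{nonlinear 1} and AM--GM. In fact $v_Kv_jv_l^{4/3}\le\frac12\bigl(v_K^2v_l^{4/3}+v_j^2v_l^{4/3}\bigr)$ already suffices there.

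The one difference is the two-term base case. The paper sets $x=b/a$ and checks the resulting one-variable expression as $x\to0^+$ and $x\to+\infty$, using continuity in between. You instead assume $b\le a$ and bound each piece directly by Taylor's formula; this is equivalent and a bit more explicit.

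For \eqref{nonlinear 1}--\eqref{nonlinear 4}, the paper only cites Jendrej--Martel or appeals to a ``standard Taylor expansion''. Your arguments, notably the split $V\le|u|$ versus $V\ge|u|$ for \eqref{nonlinear 2}, correctly supply those details.

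One cosmetic point: ``H\"older continuity of $f'$ of order $\frac43$'' is meaningless taken literally, since a H\"older exponent above $1$ forces the function to be constant. You mean $f'\in C^{1,1/3}$, and that fact is never actually used in your proof.
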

 \begin{proof}
     (\ref{nonlinear 1}) and (\ref{nonlinear 2}) have already been shown in \cite{JM}. (\ref{nonlinear 3}) and (\ref{nonlinear 4}) follow directly from the standard Taylor expansion. It remains to prove (\ref{nonlinear 5}).
     We argue by induction. First, for $K=2$, we claim that for any $a,b\geq 0$
    \begin{equation}\label{K=2 in}
        \left|F\left(a+b\right)-F(a)-F(b)-f(a)b-f(b)a\right|\lesssim a^{2}b^{\frac{4}{3}}+b^{2}a^{\frac{4}{3}}.
    \end{equation}
    We may assume that both $a,b>0$, for otherwise, the inequality holds naturally. Set $\displaystyle x=\frac{b}{a}>0$; then it suffices to prove that
    \begin{equation*}
        \left|(1+x)^{\frac{10}{3}}-x^{\frac{10}{3}}-1-\frac{10}{3}x^{\frac{7}{3}}-\frac{10}{3}x\right|\lesssim x^{2}+x^{\frac{4}{3}}.
    \end{equation*}
    By continuity, it suffices to check the inequality as $x\to0^+$ and $x\to+\infty$. As $x\to0^+$, Taylor expansion gives
    \begin{align*}
        &\left|(1+x)^{\frac{10}{3}}-x^{\frac{10}{3}}-1-\frac{10}{3}x^{\frac{7}{3}}-\frac{10}{3}x\right|\\
        =&\left|1+\frac{10}{3}x+O(x^{2})-x^{\frac{10}{3}}-1-\frac{10}{3}x^{\frac{7}{3}}-\frac{10}{3}x\right|\lesssim x^{2}.
    \end{align*}
    As $x\rightarrow+\infty$, again by Taylor expansion,
    \begin{align*}
          &\left|(1+x)^{\frac{10}{3}}-x^{\frac{10}{3}}-1-\frac{10}{3}x^{\frac{7}{3}}-\frac{10}{3}x\right|\\
          =&\left|x^{\frac{10}{3}}\left(1+\frac{10}{3}\frac{1}{x}+O\left(\frac{1}{x^{2}}\right)\right)-1-\frac{10}{3}x^{\frac{7}{3}}-\frac{10}{3}x\right|\lesssim x^{\frac{4}{3}}.
    \end{align*}
    Hence, (\ref{K=2 in}) holds. For general $K\geq 3$, suppose that the conclusion holds for $K-1$. For simplicity, denote
    \begin{equation*}
        u=\sum_{j=1}^{K-1}v_{j},\  S_{K-1}=F\left(\sum_{j=1}^{K-1}v_{j}\right)-\sum_{j=1}^{K-1}F\left(v_{j}\right)-\sum_{k=1}^{K-1}f(v_{k})\sum_{j\neq k}v_{j}.
    \end{equation*}
    Then the left side of (\ref{nonlinear 5}) which we denote by $S_{K}$,  can be rewritten as 
    \begin{align*}
            S_{K}&=S_{K-1}+\left(F\left(u+v_{K}\right)-F(u)-F(v_{K})-f(u) v_{K}-f(v_{K})u\right)\\
            &+v_{K}\left(f\left(\sum_{k=1}^{K-1}v_{k}\right)-\sum_{k=1}^{K-1}f(v_{k})\right).
    \end{align*}
For the second term on the right-hand side, from (\ref{K=2 in}),
    \begin{align*}
        \left|F\left(u+v_{K}\right)-F(u)-F(v_{K})-f(u) v_{K}-f(v_{K})u\right|\lesssim v_{K}^{2}u^{\frac{4}{3}}+u^{2}v_{K}^{\frac{4}{3}}\lesssim\sum_{j\neq k}v_{j}^{2}v_{k}^{\frac{4}{3}}.
    \end{align*}
    For the third term, from (\ref{nonlinear 1}), we have
    \begin{align*}
        \left|v_{K}\left(f\left(\sum_{k=1}^{K-1}v_{k}\right)-\sum_{k=1}^{K-1}f(v_{k})\right)\right|\lesssim v_{K}\sum_{1\leq j\neq l\leq K-1}v_{j}v_{l}^{\frac{4}{3}}\lesssim\sum_{j\neq k}v_{j}^{2}v_{k}^{\frac{4}{3}}.
    \end{align*}
    Substituting the above two estimates into $S_K$ and applying the induction hypothesis, we obtain the conclusion.
 \end{proof}
 \subsection{Distance between different bubbles}
 In this subsection, we derive estimates for the distances between different bubbles, which will play a key role in the subsequent modulation argument.
\begin{lemma}[Distance between different bubbles]\label{distance between different bubbles}
Let $\lambda,\mu>0$ and $x,y\in\mathbb{R}^{5}$, and let
$\delta((\lambda,x),(\mu,y))$ be defined in \eqref{distance between the parameters}.
Then
\begin{enumerate}
\item[\textup{(1).}]
$\| W_{\lambda,x}-W_{\mu,y} \|_{\dot{H}^{1}}
\lesssim \delta((\lambda,x),(\mu,y)).$
\item[\textup{(2).}]
For any $\varepsilon>0$, there exists  $\eta(\varepsilon)>0$ such that
\[
\delta((\lambda,x),(\mu,y))\geq \varepsilon\Longrightarrow \left\lVert W_{\lambda,x}-W_{\mu,y}\right\rVert_{\dot{H}^{1}}\geq\eta(\varepsilon).
\]
\item[\textup{(3).}]
There exist $\varepsilon_{0}, C>0$ such that
\[
\| W_{\lambda,x}-W_{\mu,y} \|_{\dot{H}^{1}} \leq \varepsilon_{0}
\;\Rightarrow\;
\delta((\lambda,x),(\mu,y))
\leq C \| W_{\lambda,x}-W_{\mu,y} \|_{\dot{H}^{1}}.
\]
\end{enumerate}
\begin{proof}
    \textbf{Proof of (1).} Let
    \[
    r:=\frac{\lambda}{\mu},\qquad a:=\frac{x-y}{\mu},\qquad s:=\log r.
    \]
  It remains to prove that
    \begin{equation}\label{simple  form for the distance}
        \left\lVert W-W_{e^{s},a}\right\rVert_{\dot{H}^{1}}\lesssim |s|+|a|.
    \end{equation}
    In the region $|s|+|a|> 1$, \eqref{simple  form for the distance} holds since
    \[
      \left\lVert W-W_{e^{s},a}\right\rVert_{\dot{H}^{1}}\leq 2\left\lVert W\right\rVert_{\dot{H}^{1}}\leq2\left\lVert W\right\rVert_{\dot{H}^{1}}(|s|+|a|).
    \]
    In the region $|s|+|a|\leq 1$, observing that
    \begin{align*}
        W-W_{e^{s},a}&=-\int_{0}^{1}\frac{d}{dt}W_{e^{ts},ta} \text{d}t\\
        &=\int_{0}^{1}s(\Lambda W)_{e^{ts},ta}+a\cdot(\nabla W)_{\underline{e^{ts},ta}}\text{d}t.
    \end{align*}
    Hence,
    \begin{align*}
        \left\lVert W-W_{e^{s},a}\right\rVert_{\dot{H}^{1}}\leq \int_{0}^{1} |s|\left\lVert(\Lambda W)_{e^{ts},ta}\right\rVert_{\dot{H}^{1}}+|a|\left\lVert(\nabla W)_{\underline{e^{ts},ta}}\right\rVert_{\dot{H}^{1}}\text{d}t\lesssim|a|+|s|.
    \end{align*}
    \textbf{Proof of (2).} By rescaling, we only need to prove that
    \[|\log r|+|a|\geq \varepsilon\Longrightarrow \left\lVert W-W_{r,a}\right\rVert_{\dot{H}^{1}}\geq\eta(\varepsilon).\]
    We argue by contradiction and suppose that there exist $\varepsilon>0$ and a sequence $(s_{n},a_{n})$ such that
    \begin{equation}\label{contradiction assumption}
        |\log r_{n}|+|a_{n}|\geq \varepsilon,\qquad \left\lVert W-W_{r_{n},a_{n}}\right\rVert_{\dot{H}^{1}}\rightarrow0.
    \end{equation}
    Passing to a subsequence if necessary, we claim that $(r_{n},a_{n})\rightarrow(r^{*},a^{*})$ as $n\rightarrow+\infty$ for some $r^{*}>0$ and $a^*\in\RR^{5}$. Otherwise, one of the following three possibilities must hold
    \[r_{n}\rightarrow0,\quad \text{or}\quad r_{n}\rightarrow+\infty,\quad \text{or}\quad|a_{n}|\rightarrow+\infty,\quad \text{as}\quad n\rightarrow+\infty.\]
    Now, we show that each of the above cases implies
    \[W_{r_{n},a_{n}}\rightharpoonup0\quad \text{in}\ \dot{H}^{1}\quad \text{as}\quad n\rightarrow+\infty,\]
    which contradicts the fact that $W_{r_{n},a_{n}}\rightarrow W$ in $\dot{H}^{1}$.\\
    For any $\phi\in C_c^\infty(\RR^5)$ and integrating by parts, we have
    \[\left\langle W_{r_{n},a_{n}},\phi\right\rangle_{\dot{H}^{1}}=\int \nabla W_{r_{n},a_{n}}\cdot\nabla \phi=-\int\Delta W_{r_{n},a_{n}}\phi=\int W_{r_{n},a_{n}}^{\frac{7}{3}}\phi.\]
   Suppose $\text{supp}\ \phi\subset B(0,R)$, then 
   \begin{equation}\label{preliminary estimate}
       \left|\left\langle W_{r_{n},a_{n}},\phi\right\rangle_{\dot{H}^{1}}\right|\leq \left\lVert\phi\right\rVert_{\infty}\int_{|x|\leq R}W_{r_{n},a_{n}}^{\frac{7}{3}}(x)\text{d}x.
   \end{equation}
   Next, we discuss each of the three aforementioned cases separately.\\
   If $r_{n}\rightarrow0$, by a change of variables, we have
   \[\left|\left\langle W_{r_{n},a_{n}},\phi\right\rangle_{\dot{H}^{1}}\right|\leq \left\lVert\phi\right\rVert_{\infty}r_{n}^{\frac{3}{2}}\int_{\RR^{5}}W^{\frac{7}{3}}(x)\text{d}x\rightarrow 0.\]
   If $r_{n}\rightarrow+\infty$, by the definition of $W$, we have $W(x)\leq 1$ for all $x\in\RR^{5}$, and thus, by \eqref{preliminary estimate}
   \[\left|\left\langle W_{r_{n},a_{n}},\phi\right\rangle_{\dot{H}^{1}}\right|\leq \left\lVert\phi\right\rVert_{\infty}\left|B(0,R)\right|r_{n}^{-\frac{7}{2}}\rightarrow 0.\]
   If $|a_{n}|\rightarrow+\infty$, then for $n$ large enough and any $|x|\leq R$,  $|x-a_{n}|\geq |a_{n}|-R$. Using the rough bound
   \[W^{\frac{7}{3}}(x)\lesssim (1+|x|)^{-7},\]
    we have
    \begin{equation*}
        W_{r_{n},a_{n}}^{\frac{7}{3}}(x)\lesssim r_{n}^{-\frac{7}{2}}\left(1+\frac{|a_{n}|-R}{r_{n}}\right)^{-7}\lesssim\left(|a_{n}|-R\right)^{-\frac{7}{2}}.
    \end{equation*}
    Hence,
    \[ \left|\left\langle W_{r_{n},a_{n}},\phi\right\rangle_{\dot{H}^{1}}\right|\lesssim\left\lVert\phi\right\rVert_{\infty}|B(0,R)|(|a_{n}|-R)^{-\frac{7}{2}}\rightarrow0.\]
   Now, since $(r_{n},a_{n})\rightarrow(r^{*},a^{*})$ as $n\rightarrow+\infty$, we have
   \begin{equation*}
       W_{r_{n},a_{n}}\rightarrow W_{r^*,a^*}\quad \text{in}\ \dot{H}^{1},
   \end{equation*}
   which together with \eqref{contradiction assumption} gives that
   \begin{equation*}
       W(x)=W_{r^*,a^*}(x)\quad \text{in}\ \dot{H}^{1}.
   \end{equation*}
   Therefore, $r^*=1$, $a^*=0$ and hence
   \[|\log r_{n}|+|a_{n}|\rightarrow 0\]
   which contradicts \eqref{contradiction assumption}.\\
   \textbf{Proof of (3).} Using the notations above, we only need to prove that
   \[
   \left\lVert W-W_{r,a}\right\rVert_{\dot{H}^{1}}\leq\varepsilon_{0}\Longrightarrow |\log r|+|a|\leq C \left\lVert W-W_{r,a}\right\rVert_{\dot{H}^{1}}
   \]
   holds  for some $\varepsilon_{0}>0$ and $C>0$. Set
   \[U_{\rho}:=\{(s,a):|s|+|a|<\rho\},\qquad s=\log r,\]
   where $\rho>0$ will be chosen sufficiently small later. By the previous proof, there exists $\varepsilon_{0}(\rho)>0$ such that
   \[
   \left\lVert W-W_{e^{s},a}\right\rVert_{\dot{H}^{1}}\leq \varepsilon_{0}(\rho)\Longrightarrow(s,a)\in U_{\rho}.
   \]
   Consider the smooth function
   \[
   \mathcal{G}(s,a):=\left(\left\langle\Delta\Lambda W,W-W_{e^{s},a}\right\rangle,\left\langle\nabla W,W-W_{e^{s},a}\right\rangle\right)\in\RR^{6}
   \]
   A direct computation gives $\mathcal{G}(0,0)=0$ and
   \[
   D_{s,a}\mathcal{G}\upharpoonright_{(0,0)}=\text{diag}\left(-\lVert\nabla\Lambda W\rVert^{2}_{L^{2}},\frac{1}{5}\lVert\nabla W\rVert^{2}_{L^{2}}I_{5}\right).
   \]
By the local Inverse Function Theorem, there exist $\rho>0$ and $C>0$ such that for all $(s,a)\in U_{\rho}$, we have
   \begin{equation*}
      |s|+|a|\leq C\left|\mathcal{G}(s,a)\right|.
   \end{equation*}
   On the other hand, by the definition of $\mathcal{G}$ and H\"{o}lder's inequality,
   \[\left|\mathcal{G}(s,a)\right|\lesssim \left\lVert W-W_{e^{s},a}\right\rVert_{\dot{H}^{1}}.\]
   Hence, for any $(s,a)\in U_{\rho}$, we obtain
   \begin{equation}\label{bounds controlled by the distance of bubble}
          |s|+|a|\lesssim\left\lVert W-W_{e^s,a}\right\rVert_{\dot{H}^{1}}
   \end{equation}
Finally, for $\rho$ as above,  take $\varepsilon_{0} = \varepsilon_{0}(\rho)$,  which ensures that
\[
\left\lVert W - W_{e^{s},a} \right\rVert_{\dot{H}^{1}} \leq \varepsilon_{0} \implies (s,a) \in U_{\rho},
\]
then, using \eqref{bounds controlled by the distance of bubble}, we complete the proof.
\end{proof}
\end{lemma}
     \section{Modulation around the multi-solitons}\label{modulation around the multi-solitons}
     In this section, we give a careful analysis of the modulation equations that govern the evolution of $K$-bubble configurations.
     \begin{lemma}\label{Mod}
     There exist $\delta_0, C_0 > 0$ such that the following is true. Let $J\subset\RR $ be a time interval,  and let $\vec{u}(t)$ be a solution to (\ref{NLW1}) on $J$. Assume that there exist  $\mu_{k}(t)>0$, $y_{k}(t)\in\RR^{5}$ $(1\leq k\leq K)$, and $0<\delta\leq\delta_{0}$ such that
     \begin{equation}\label{inital assumption}
         \bigg\lVert u(t)-\sum_{k=1}^{K}W_{\mu_{k},y_{k}}\bigg\rVert_{\dot{H}^{1}}+\lVert \partial_{t}u\rVert_{L^{2}}+\sum_{k=1}^{K}\mu_{k}+\sum_{k=1}^{K}|y_{k}-z_{k}|<\delta
     \end{equation}
     and 
     \begin{equation}
         C_{1}\leq \frac{\mu_{k}}{\mu_{j}}\leq C_{2}
     \end{equation}
     for all $t\in J$ and $1\leq j,k\leq K$, where $C_{1},C_{2}>0$ are some fixed constants.
Then there exist unique $C^{1}(J)$ functions $\lambda_{k}>0, x_{k}\in\RR^{5},b_{k}\in\RR$ $(1\leq k\leq K)$ such that, defining $\vec{g}(t)=(g(t),\dot{g}(t))$ by
     \begin{align*}
        & g(t):=u(t)-\sum_{k=1}^{K}W_{\lambda_{k},x_{k}},\\
         & \dot{g}(t):=\partial_{t}u(t)-\sum_{k=1}^{K}b_{k}\left(\Lambda W\right)_{\underline{\lambda_{k},x_{k}}}.
     \end{align*}
     we have, for each $t\in J$, 
     \begin{equation}\label{ort1}
          \langle (\Delta \Lambda W)_{\lambda_{k},x_{k}},g\rangle=0,
     \end{equation}
     \begin{equation}\label{ort2}
         \langle (\nabla W)_{\lambda_{k},x_{k}},g\rangle=0,
     \end{equation}
     \begin{equation}\label{ort3}
         \langle (\Lambda W)_{\underline{\lambda_{k},x_{k}}},\dot{g}\rangle=0,
     \end{equation}
     \begin{equation}\label{lamklamj}
         \frac{C_{1}}{2}\leq \frac{\lambda_{k}}{\lambda_{j}}\leq 2C_{2},\quad \forall\ 1\leq j,k\leq K,
     \end{equation}
     and 
     \begin{equation}\label{rough estimate}
         \lVert g\rVert_{\dot{H}^{1}}+\lVert \dot{g}\rVert_{L^{2}}+\sum_{k=1}^{K}\lambda_{k}+\sum_{k=1}^{K}|x_{k}-z_{k}|+\sum_{k=1}^{K}|b_{k}|\leq C_{0}\delta.
     \end{equation}
     \end{lemma}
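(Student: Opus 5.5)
The plan is the standard implicit function theorem argument, applied to each bubble in its own rescaled frame and then made global in time by uniqueness. First I would fix $t$ and treat the problem at the level of a single profile. Given $(\mu_k,y_k)$ satisfying \eqref{inital assumption}, I look for $(\lambda_k,x_k,b_k)$ near $(\mu_k,y_k,0)$. I would parametrize $\lambda_k=\mu_k e^{s_k}$ and $x_k=y_k+\mu_k a_k$, and work in the rescaled unknowns $(s_k,a_k,b_k)$, so that every bubble is seen at scale one. Define
\[
\Phi(\vec u;\,(s_k,a_k,b_k)_k)=\Big(\langle\lambda_k^{-2}(\Delta\Lambda W)_{\lambda_k,x_k},g\rangle,\ \langle\lambda_k^{-2}(\nabla W)_{\lambda_k,x_k},g\rangle,\ \langle(\Lambda W)_{\underline{\lambda_k,x_k}},\dot g\rangle\Big)_{k}\in\RR^{7K},
\]
normalized so that it is scale invariant. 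At the reference point $\vec u=(\sum_k W_{\mu_k,y_k},0)$ with $(s,a,b)=0$, we have $\Phi=0$.

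Next I compute the Jacobian in $(s,a,b)$. The diagonal blocks are exactly the matrix from Lemma~\ref{distance between different bubbles}(3), namely ${\rm diag}(-\|\nabla\Lambda W\|^2_{L^2},\tfrac15\|\nabla W\|_{L^2}^2I_5)$, together with the entry $-\|\Lambda W\|_{L^2}^2$ in the $b$ direction (differentiating $\dot g$ in $b_k$). The entries for $b$ decouple from those for $(s,a)$ at leading order, since $\dot g$ does not depend on $(s,a)$ when $b=0$, and $g$ does not depend on $b$. The off-diagonal blocks $j\neq k$ are pairings of a profile at $(\mu_j,y_j)$ with one at $(\mu_k,y_k)$. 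Because $|y_j-y_k|\ge 6d$ and $\mu\lesssim\delta$, Lemma~\ref{WW} makes these $O(\mu^{3})$ or smaller after rescaling. Hence the Jacobian is an $O(\delta)$ perturbation of an invertible block-diagonal matrix. Its derivatives are bounded uniformly, again by scale invariance.

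I then apply the quantitative inverse function theorem. Because the actual $\vec u(t)$ differs from the reference point by $O(\delta)$ in $\dot H^1\times L^2$, $\Phi$ at $(s,a,b)=0$ is $O(\delta)$. Therefore there is a unique zero with $|s_k|+|a_k|+|b_k|\lesssim\delta$ in a fixed small ball. This gives \eqref{ort1}--\eqref{ort3}. Moreover $|\log(\lambda_k/\mu_k)|\lesssim\delta$, which yields \eqref{lamklamj} for $\delta_0$ small. We also get $|x_k-y_k|\lesssim\mu_k\delta$, so $|x_k-z_k|<C\delta$, and $\lambda_k\lesssim\mu_k<\delta$. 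Finally, Lemma~\ref{distance between different bubbles}(1) gives $\|W_{\lambda_k,x_k}-W_{\mu_k,y_k}\|_{\dot H^1}\lesssim\delta$, so $\|g\|_{\dot H^1}\lesssim\delta$ and $\|\dot g\|_{L^2}\le\|\partial_tu\|_{L^2}+\sum|b_k|\|\Lambda W\|_{L^2}\lesssim\delta$. Together these give \eqref{rough estimate}.

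It remains to pass from a single time to the whole interval and obtain regularity. For uniqueness independent of the choice of $(\mu_k,y_k)$, I would show that any other admissible decomposition with parameters satisfying \eqref{rough estimate} is $O(\delta)$-close to $(\mu_k,y_k)$ in the $\delta$-metric. This follows from Lemma~\ref{distance between different bubbles}(3) after localizing near each $z_k$, and puts it inside the uniqueness ball. The parameters are then continuous in $t$, since $\vec u\in C(J;\dot H^1\times L^2)$ and $\Phi$ is $C^1$ in $\vec u$.

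For the $C^1$ statement I would use the standard device: $\Phi$ depends on $\vec u$ only through pairings with smooth, decaying functions. One checks that $t\mapsto\Phi(\vec u(t);p)$ is $C^1$ by pairing with the equation weakly; the needed integrations by parts are legitimate because the test functions are Schwartz-like. Alternatively, approximate by smooth solutions and pass to the limit. The implicit function theorem then transfers $C^1$ regularity to the parameters.

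The main obstacle is verifying that the Jacobian is uniformly invertible. This requires keeping the cross-bubble pairings small in the rescaled variables and keeping track of the pairing $\langle(\Lambda W)_{\underline{\lambda_k}},\partial_{\lambda_k}\cdot\rangle$ coming from the $\dot g$ component, which couples $b$ with $\lambda$. That coupling is proportional to $b_k$ and so is $O(\delta)$ near the reference point, which keeps the matrix a perturbation of the block-diagonal one.
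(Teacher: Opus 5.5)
Your proposal is correct and follows essentially the same route as the paper: a quantitative implicit function theorem in the rescaled variables $(\log\lambda_k,\,x_k/\mu_k,\,b_k)$ with Jacobian $\mathrm{diag}(-\lVert\nabla\Lambda W\rVert_{L^2}^2 I_K,\tfrac15\lVert\nabla W\rVert_{L^2}^2 I_{5K},-\lVert\Lambda W\rVert_{L^2}^2 I_K)+O(\delta)$, global uniqueness by localizing near each $z_k$ and applying Lemma~\ref{distance between different bubbles}(3), and the standard $C^1$ argument. One harmless overstatement: not every cross-bubble entry is $O(\mu^3)$. The $b$--$b$ entries $\langle(\Lambda W)_{\underline{\lambda_k,x_k}},(\Lambda W)_{\underline{\lambda_j,x_j}}\rangle$ are only $O(\lambda_k^{1/2}\lambda_j^{1/2})$, which is the $\sum_{j\neq k}\lambda_k^{1/2}\lambda_j^{1/2}$ term in the paper. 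The $\nabla W$--$\Lambda W$ entries are $O(\mu^2)$. Both are still $O(\delta)$, so the invertibility argument is unaffected.
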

     \begin{remark}\label{implicit function theorem}
         We will use the following, less-standard version of the implicit function theorem in the proof of Lemma \ref{Mod}.\\
         Let $X, Y, Z$ be Banach spaces and let $(x_0, y_0) \in X \times Y$, and $\delta_1, \delta_2 > 0$. Consider a mapping $G : B(x_0, \delta_1) \times B(y_0, \delta_2) \to Z$, continuous in $x$ and $C^1$ in $y$. Suppose that $G(x_0, y_0) = 0$ and $(D_y G)(x_0, y_0) =: L_0$ has bounded inverse $L_0^{-1}$. Suppose in addition that
\begin{align}\label{implicit function}
\|L_0 - D_y G(x, y)\|_{\mathcal{L}(Y,Z)} &\leq \frac{1}{3\|L_0^{-1}\|_{\mathcal{L}(Z,Y)}}, \\
\|G(x, y_0)\|_Z &\leq \frac{\delta_2}{3\|L_0^{-1}\|_{\mathcal{L}(Z,Y)}} \notag
\end{align}
for all $\|x - x_0\|_X \leq \delta_1$ and $\|y - y_0\|_Y \leq \delta_2$. Then there exists a continuous function $\varsigma : B(x_0, \delta_1) \to B(y_0, \delta_2)$ such that for all $x \in B(x_0, \delta_1)$, $y = \varsigma(x)$ is the unique solution of $G(x, \varsigma(x)) = 0$ in $B(y_0, \delta_2)$.\\
This statement is proved in the same fashion as the usual implicit function theorem; see, e.g., \cite{CH}. The key point is that the bounds \eqref{implicit function} give uniform control on the size of the open set on which the Banach contraction mapping theorem can be applied.
     \end{remark}
     \begin{proof}
         We sketch the proof.\\
         \textbf{Step 1: Existence and local uniqueness of the parameters $\boldsymbol{\lambda_{k}}$, $\boldsymbol{x_{k}}$, $\boldsymbol{b_{k}}$.} We suppress the dependence of all functions on $t\in J$ below as no constants appearing in the proof will depend on $t\in J$. We denote
         \begin{align}\label{g0,dotg0}
              &g_{0}:=u-\sum_{k=1}^{K}W_{\mu_{k},y_{k}},\notag\\
         &\dot{g}_{0}:=\partial_{t}u.
         \end{align}
         Using the assumption \eqref{inital assumption}, we have
         \begin{equation}\label{initial estimate}
             \left\lVert \left(g_{0},\dot{g}_{0}\right)\right\rVert_{\dot{H}^{1}\times L^2}+\sum_{k=1}^{K}\mu_{k}+\sum_{k=1}^{K}|y_{k}-z_{k}|<\delta.
         \end{equation}
         Define the function $\boldsymbol{F}=(F,\dot{F})$ by
         \begin{align*} &F(h,\dot{h},\boldsymbol{\lambda},\boldsymbol{x},\boldsymbol{b}):=h+\sum_{k=1}^{K}W_{\mu_{k},y_{k}}-\sum_{k=1}^{K}W_{\lambda_{k},x_{k}},\\
        &\dot{F}(h,\dot{h},\boldsymbol{\lambda},\boldsymbol{x},\boldsymbol{b}):=\dot{h}-\sum_{k=1}^{K}b_{k}\left(\Lambda W\right)_{\underline{\lambda_{k},x_{k}}}.
         \end{align*}
         Note that $\boldsymbol{F}(0,0,\boldsymbol{\mu},\boldsymbol{y},\boldsymbol{0})=(0,0)$. Next, define the function $G$ by
         \begin{align*}
&G(h,\dot{h},\boldsymbol{\lambda},\boldsymbol{x},\boldsymbol{b})=\left(
\left(\frac{1}{\lambda_k}\left\langle\left(\Delta \Lambda
W\right)_{\underline{\lambda_k,x_k}},F(h,\dot{h},\boldsymbol{\lambda},\boldsymbol{x},\boldsymbol{b})\right\rangle\right)_{k=1}^K,
\right.\\
&\left.
\left(\frac{1}{\mu_k}\left\langle\left(\nabla
W\right)_{\underline{\lambda_k,x_k}},
F(h,\dot{h},\boldsymbol{\lambda},\boldsymbol{x},\boldsymbol{b})
\right\rangle\right)_{k=1}^K,
\left(\left\langle\left(\Lambda
W\right)_{\underline{\lambda_k,x_k}},\dot{F}
(h,\dot{h},\boldsymbol{\lambda},\boldsymbol{x},\boldsymbol{b})\right\rangle
\right)_{k=1}^K
\right),
\end{align*}
We note that $G(0,0,\boldsymbol{\mu},\boldsymbol{y},\boldsymbol{0})=(\boldsymbol{0},\boldsymbol{0},\boldsymbol{0})$. We now introduce new variables
\[
\ell_{k}:=\log \lambda_{k},\quad \ell_{k}^{in}:=\log \mu_{k},\quad \boldsymbol{\ell}:=(\ell_{1},\cdots,\ell_{K}),\quad \boldsymbol{\ell^{in}}:=(\ell_{1}^{in},\cdots,\ell_{K}^{in}),
\]
\[
\tilde{x}_{k}:=\frac{1}{\mu_{k}}x_{k},\quad \tilde{y}_{k}:=\frac{1}{\mu_{k}}y_{k},\quad \boldsymbol{\tilde{x}}:=(\tilde{x}_{1},\cdots,\tilde{x}_{K}),\quad \boldsymbol{\tilde{y}}:=(\tilde{y}_{1},\cdots,\tilde{y}_{K}).
\]
Set
\[
\tilde{G}(h,\dot{h},\boldsymbol{\ell},\boldsymbol{\tilde{x}},\boldsymbol{b})=G(h,\dot{h},\boldsymbol{\lambda},\boldsymbol{x},\boldsymbol{b}),\quad \boldsymbol{\tilde{F}}(h,\dot{h},\boldsymbol{\ell},\boldsymbol{\tilde{x}},\boldsymbol{b})=\boldsymbol{F}(h,\dot{h},\boldsymbol{\lambda},\boldsymbol{x},\boldsymbol{b}).
\]
We now check that the conditions \eqref{implicit function} are satisfied for $x_{0}=(0,0)\in\dot{H}^{1}\times L^{2}$, $y_{0}=(\boldsymbol{\ell^{in}},\boldsymbol{\tilde{y}},\boldsymbol{0})\in\RR^{7K}$ and $\tilde{G}:B_{\dot{H}^{1}\times L^{2}}(0,2\delta)\times B_{\RR^{7K}}((\boldsymbol{\ell^{in}},\boldsymbol{\tilde{y}},\boldsymbol{0}),C_{1}\delta)\rightarrow\RR^{7K}$, for $0<\delta_{1}=2\delta<2\delta_{0}$ small enough and $C_{1}$ a uniform constant. 
Note that in the new variables
\[
\lambda_{k}\partial_{\lambda_{k}}=\partial_{\ell_{k}},\qquad \partial_{i}^{\tilde{x}_{k}}=\mu_{k}\partial_{i}^{x_{k}}.
\]
We define $L_{0}:=D_{\boldsymbol{\ell,\tilde{x},b}}\tilde{G}\upharpoonright_{(0,0,\boldsymbol{\ell^{in}},\boldsymbol{\tilde{y}},\boldsymbol{0})}$. 
A straightforward computation using \eqref{initial estimate} and Lemma \eqref{WW} gives
\begin{equation*}
    L_{0}=\text{diag}\left(-\left\lVert\nabla\Lambda W\right\rVert^{2}_{L^{2}}I_{K},\frac{1}{5}\left\lVert\nabla W\right\rVert^{2}_{L^{2}}I_{5K},-\lVert\Lambda W\rVert^{2}_{L^{2}}I_{K}\right)+O(\delta).
\end{equation*}
The matrix is diagonally dominant for $0<\delta<\delta_{0}$ small enough and hence invertible.\\
The second condition in \eqref{implicit function} is clear since $\boldsymbol{\tilde{F}}(h,\dot{h},\boldsymbol{\ell^{in}},\boldsymbol{\tilde{y}},\boldsymbol{0})=(h,\dot{h})$ and hence
\begin{align*}
    \left|\tilde{G}(h, \dot{h}, \boldsymbol{\ell^{in}}, \boldsymbol{\tilde{y}}, \boldsymbol{0})\right|
    &= \left| \Biggl(
        \left( \frac{1}{\mu_k} \left\langle \left( \Delta \Lambda W \right)_{\underline{\mu_k, y_k}}, h \right\rangle \right)_{k=1}^K,
        \left( \frac{1}{\mu_k} \left\langle \left( \nabla W \right)_{\underline{\mu_k, y_k}}, h \right\rangle \right)_{k=1}^K,
    \right. \\
    &\phantom{= \left| \Biggl( \right.}
    \left. \left( \left\langle \left( \Lambda W \right)_{\underline{\mu_k, y_k}}, \dot{h} \right\rangle \right)_{k=1}^K
    \Biggr) \right|
    \lesssim \lVert h \rVert_{\dot{H}^1} + \lVert \dot{h} \rVert_{L^2}.
\end{align*}
For the first estimate in \eqref{implicit function}, from \eqref{initial estimate}, Lemma \ref{WW} and Lemma \ref{distance between different bubbles}, we have
\begin{align*}
    \left|(L_{0})_{ij}-(D_{\boldsymbol{\ell,\tilde{x},b}}\tilde{G})_{ij}\right|\lesssim \lVert h \rVert_{\dot{H}^{1}}&+\lVert \dot{h}\rVert_{L^{2}}+\sum_{j\neq k}\lambda_{k}^{\frac{1}{2}}\lambda_{j}^{\frac{1}{2}}\\
    &+\left|\boldsymbol{b}\right|+\left|\boldsymbol{\ell}-\boldsymbol{\ell^{in}}\right|+\left|\boldsymbol{\tilde{y}}-\boldsymbol{\tilde{x}}\right|\ll 1.
\end{align*}
provided that $\delta_{0}>0$ is chosen sufficiently small. Applying Remark \ref{implicit function theorem}, we obtain a continuous mapping $\varsigma:B_{\dot{H}^{1}\times L^{2}}(0,2\delta)\rightarrow B_{\RR^{7K}}((\boldsymbol{\ell^{in}},\boldsymbol{\tilde{y}},\boldsymbol{0}),C_{1}\delta)$ so that for all $(h,\dot{h},\boldsymbol{\ell},\boldsymbol{\tilde{x}},\boldsymbol{b})\in B_{\dot{H}^{1}\times L^{2}}(0,2\delta)\times B_{\RR^{7K}}((\boldsymbol{\ell^{in}},\boldsymbol{\tilde{y}},\boldsymbol{0}),C_{1}\delta) $ we have
\begin{equation*}
G(h,\dot{h},\boldsymbol{\lambda},\boldsymbol{x},\boldsymbol{b})=0\Longleftrightarrow(\boldsymbol{\ell},\boldsymbol{\tilde{x}},\boldsymbol{b})=\varsigma (h,\dot{h}).
\end{equation*}
We observe that if we let $(g_{0},\dot{g}_{0})$ be as in \eqref{g0,dotg0}, and define $\lambda_{k}=e^{\ell_{k}},\ x_{k}=\mu_{k}\tilde{x}_{k}$ and $(g,\dot{g})\in\dot{H}^{1}\times L^{2}$ by
\[(\boldsymbol{\ell},\boldsymbol{\tilde{x}},\boldsymbol{b}):=\varsigma(g_{0},\dot{g}_{0}),\]
\[g:=F(g_{0},\dot{g}_{0},\boldsymbol{\lambda},\boldsymbol{x},\boldsymbol{b})=u-\sum_{k=1}^{K}W_{\lambda_{k},x_{k}},\]
\[\dot{g}:=\dot{F}(g_{0},\dot{g}_{0},\boldsymbol{\lambda},\boldsymbol{x},\boldsymbol{b})=\partial_{t}u-\sum_{k=1}^{K}b_{k}\left(\Lambda W\right)_{\underline{\lambda_{k},x_{k}}}.\]
Then 
\begin{equation*}
      \langle (\Delta \Lambda W)_{\lambda_{k},x_{k}},g\rangle=0,\
         \langle (\nabla W)_{\lambda_{k},x_{k}},g\rangle=0,\
         \langle (\Lambda W)_{\underline{\lambda_{k},x_{k}}},\dot{g}\rangle=0.
\end{equation*}
  Moreover, since $|\boldsymbol{\ell}-\boldsymbol{\ell^{in}}|+|\boldsymbol{\tilde{x}}-\boldsymbol{\tilde{y}}|+|\boldsymbol{b}|\lesssim \delta$, we have
    \begin{equation*}
        \left|\frac{\lambda_{k}}{\mu_{k}}-1\right|+\frac{\left|x_{k}-y_{k}\right|}{\mu_{k}}\lesssim\delta\ll 1,
    \end{equation*}
    \begin{equation*}
        \lVert(g,\dot{g})\rVert_{\dot{H}^{1}\times L^{2}}\lesssim \lVert(g_{0},\dot{g}_{0})\rVert_{\dot{H}^{1}\times L^{2}}+|\boldsymbol{\ell}-\boldsymbol{\ell^{in}}|+|\boldsymbol{\tilde{x}}-\boldsymbol{\tilde{y}}|+|\boldsymbol{b}|\lesssim \delta.
    \end{equation*}
    In particular, we have
    \begin{equation*}
        \frac{C_{1}}{2}\leq \frac{\lambda_{k}}{\lambda_{j}}\leq 2C_{2},\quad \forall\ 1\leq j,k\leq K,
    \end{equation*}
    and
    \begin{equation*}
         \lVert g\rVert_{\dot{H}^{1}}+\lVert \dot{g}\rVert_{L^{2}}+\sum_{k=1}^{K}\lambda_{k}+\sum_{k=1}^{K}|x_{k}-z_{k}|+\sum_{k=1}^{K}|b_{k}|\lesssim\delta,
    \end{equation*}
    which is \eqref{lamklamj} and \eqref{rough estimate}.\\
    \textbf{Step 2: Uniqueness of the modulation parameters.} Let $(\boldsymbol{\lambda^1},\boldsymbol{x^1},\boldsymbol{b^1})$ be another parameters such that \eqref{ort1},\eqref{ort2},\eqref{ort3},\eqref{lamklamj} and \eqref{rough estimate} hold. We show that $(\boldsymbol{\lambda},\boldsymbol{x},\boldsymbol{b})=(\boldsymbol{\lambda^1},\boldsymbol{x^1},\boldsymbol{b^1})$ for $\delta_{0}>0$ small enough. By the local uniqueness of the parameters in Step 1, it suffices to show that $(\boldsymbol{\ell^{1}},\boldsymbol{\tilde{x}^1},\boldsymbol{b^1})\in B_{\RR^{7K}}((\boldsymbol{\ell^{in}},\boldsymbol{\tilde{y}},\boldsymbol{0}),C_{1}\delta)  $. From  \eqref{rough estimate}, we have
    \begin{equation}\label{anthor parameters}
       \lambda_{k}^1\lesssim \delta,\qquad |x_{k}^1-z_{k}|\lesssim\delta,\qquad |b_{k}^1|\lesssim\delta,
    \end{equation}
    and
 \begin{align}\label{distance between multi-bubble}
  \left\lVert   \sum_{k=1}^{K}W_{\mu_{k},y_{k}}-\sum_{k=1}^{K}W_{\lambda_{k}^1,x_{k}^1}\right\rVert_{\dot{H}^{1}}&\leq \left\lVert u-\sum_{k=1}^KW_{\mu_{k},y_{k}}\right\rVert_{\dot{H}^{1}}+\left\lVert u-\sum_{k=1}^{K}W_{\lambda_{k}^1,x_{k}^1}\right\rVert_{\dot{H}^{1}}\notag\\
  &=\left\lVert g_{0}\right\rVert_{\dot{H}^{1}}+\left\lVert g^1\right\rVert_{\dot{H}^{1}}\lesssim \delta.
 \end{align}
 We first claim that, for $\delta_{0}>0$ sufficiently small,
 \begin{equation}\label{distance between single bubble}
     \left\lVert W_{\mu_{k},y_{k}}-W_{\lambda_{k}^1,x_{k}^1}\right\rVert\lesssim \delta,\quad \text{for all}\ 1\leq k\leq K.
 \end{equation}
 Take $\displaystyle\chi_{k}(x):=\chi\left(\frac{x-z_{k}}{d}\right)$, 
 where $\chi\in C^{\infty}_{c}(\RR^{5})$ satisfies $\chi\equiv 1$ on $|x|\leq 1$, ${\rm{supp}}\chi\subset \{|x|< 2\}$, and $d$ is defined by \eqref{definition of d}. Then we have
\begin{align*}
    \left\lVert W_{\mu_{k},y_{k}} - W_{\lambda_{k}^1,x_{k}^1} \right\rVert_{\dot{H}^{1}}
    &\leq \left\lVert (1-\chi_{k}) \left(W_{\mu_{k},y_{k}} - W_{\lambda_{k}^1,x_{k}^1} \right) \right\rVert_{\dot{H}^{1}} \\
    &\quad + \left\lVert \chi_{k} \left(W_{\mu_{k},y_{k}} - W_{\lambda_{k}^1,x_{k}^1} \right) \right\rVert_{\dot{H}^1}
\end{align*}
For the first term on the right side, using the fact that $\chi_{k}\equiv 1$ on $|x-z_{k}|\leq d$ and 
\[|x-z_{k}|\leq |x-y_{k}|+|y_{k}-z_{k}|\leq \frac{1}{2}d+C\delta\leq d,\]
for all $\displaystyle |x-y_{k}|\leq \frac{d}{2}$ and $\delta>0$ small enough, we have
\begin{align*}
    \left\lVert(1-\chi_{k})W_{\mu_{k},y_{k}}\right\rVert_{\dot{H}^{1}}\lesssim\left\lVert W_{\mu_{k},y_{k}}\right\rVert_{L^{2}(|x-y_{k}|\geq\frac{d}{2})}+\left\lVert\left(\nabla W\right)_{\underline{\mu_{k},y_{k}}}\right\rVert_{L^{2}(|x-y_{k}|\geq \frac{d}{2})}\lesssim\mu_{k}^{\frac{3}{2}}\ll \delta.
\end{align*}
By similar computations for the term $\left\lVert(1-\chi_{k})W_{\lambda_{k}^1,x_{k}^1}\right\rVert_{\dot{H}^{1}}$, we have
\begin{equation*}
    \left\lVert (1-\chi_{k}) \left(W_{\mu_{k},y_{k}} - W_{\lambda_{k}^1,x_{k}^1} \right) \right\rVert_{\dot{H}^{1}}\ll \delta.
\end{equation*}
For the second term on the right side, we first note that
\begin{align*}
     \left\lVert \chi_{k} \left(W_{\mu_{k},y_{k}} - W_{\lambda_{k}^1,x_{k}^1} \right) \right\rVert_{\dot{H}^1}\leq&\left\lVert\chi_{k}\left( \sum_{k=1}^{K}W_{\mu_{k},y_{k}}-\sum_{k=1}^{K}W_{\lambda_{k}^1,x_{k}^1}\right)\right\rVert_{\dot{H}^{1}}\\
     &+\left\lVert\chi_{k}\left(\sum_{j\neq k}W_{\mu_{j},y_{j}}-\sum_{j\neq k} W_{\lambda_{j}^1,x_{j}^1}\right)\right\rVert_{\dot{H}^{1}}
\end{align*}
For the first line, by the definition of $\chi_{k}$ and \eqref{distance between multi-bubble},
\begin{equation*}
    \left\lVert\chi_{k}\left( \sum_{k=1}^{K}W_{\mu_{k},y_{k}}-\sum_{k=1}^{K}W_{\lambda_{k}^1,x_{k}^1}\right)\right\rVert_{\dot{H}^{1}}\lesssim   \left\lVert   \sum_{k=1}^{K}W_{\mu_{k},y_{k}}-\sum_{k=1}^{K}W_{\lambda_{k}^1,x_{k}^1}\right\rVert_{\dot{H}^{1}}\lesssim\delta.
\end{equation*}
For the second line, we observe that for any $j\neq k$, if $x\in\text{supp}\ \chi_{k} \subset B(z_{k},2d)$, then
\[
|x-y_{j}|\geq |z_{k}-z_{j}|-|x-z_{k}|-|y_{j}-z_{j}|\geq 4d,
\]
for $\delta>0$ small enough. Hence,
\begin{equation*}
\left\lVert\chi_{k}W_{\mu_{j},y_{j}}\right\rVert_{\dot{H}^{1}}\lesssim\left\lVert W_{\mu_{j},y_{j}}\right\rVert_{L^{2}(|x-y_{j}|\geq4d)}+\left\lVert\left(\nabla W\right)_{\underline{\mu_{j},y_{j}}}\right\rVert_{L^{2}(|x-y_{j}|\geq 4d)}\lesssim \mu_{j}^{\frac{3}{2}}\ll \delta,
\end{equation*}
The same computation for the other terms on the second line gives
\begin{equation*}
    \left\lVert\chi_{k}\left(\sum_{j\neq k}W_{\mu_{j},y_{j}}-\sum_{j\neq k} W_{\lambda_{j}^1,x_{j}^1}\right)\right\rVert_{\dot{H}^{1}}\ll \delta.
\end{equation*}
Combining the above estimates, we obtain \eqref{distance between single bubble}. Now, by Lemma \ref{distance between different bubbles}, for $\delta>0$ small enough, we have
\begin{equation*}
    \left|\log \left(\frac{\lambda_{k}^{1}}{\mu_{k}}\right)\right|+\frac{|x_k^1-y_k|}{\mu_{k}}\lesssim\left\lVert W_{\mu_{k},y_{k}}-W_{\lambda_{k}^1,x_{k}^1}\right\rVert\lesssim\delta\quad \text{for all }\ 1\leq k\leq K,
\end{equation*}
which together with \eqref{anthor parameters} gives that
\begin{equation*}
    \left|\boldsymbol{\ell^1}-\boldsymbol{\ell^{in}}\right|+\left|\boldsymbol{\tilde{x}^{1}}-\boldsymbol{\tilde{y}}\right|+\left|\boldsymbol{b}^1\right|\lesssim\delta.
\end{equation*}
Shrinking $\delta>0$ if necessary, we obtain $(\boldsymbol{\ell^{1}},\boldsymbol{\tilde{x}^1},\boldsymbol{b^1})\in B_{\RR^{7K}}((\boldsymbol{\ell^{in}},\boldsymbol{\tilde{y}},\boldsymbol{0)},C_{1}\delta)$ as desired.\\
\textbf{Step 3: $\boldsymbol{C^{1}}$ regularity of the parameters.}
The $C^{1}$ regularity of $\lambda_{k}(t)$, $x_{k}(t)$ and $b_{k}(t)$ can be obtained by the same method as in \cite[Remark 3.13]{JL2018}, and we therefore omit the details here.
     \end{proof}
     Inserting  
     \begin{align*}
        & g(t):=u(t)-\sum_{k=1}^{K}W_{\lambda_{k},x_{k}},\\
         & \dot{g}(t):=\partial_{t}u(t)-\sum_{k=1}^{K}b_{k}\left(\Lambda W\right)_{\underline{\lambda_{k},x_{k}}}.
     \end{align*}
   into  equation (\ref{NLW1}) and using the relation $\Delta W_{\lambda_{k},x_{k}}+f\left(W_{\lambda_{k},x_{k}}\right)=0$, we obtain the equation for $\vec{g}(t)=(g(t),\dot{g}(t))$,
   \begin{equation}\label{g}
   \partial_{t}g= \sum\limits_{k=1}^{K}(b_{k}+\lambda'_{k})\left(\Lambda W\right)_{\underline{\lambda_{k},x_{k}}}+\sum\limits_{k=1}^{K}x'_{k}\cdot\left(\nabla W\right)_{\underline{\lambda_{k},x_{k}}}+\dot{g}
   \end{equation}
   \begin{equation}\label{dotg}
   \begin{aligned}
       \partial_{t}\dot{g}=\Delta g+& f\left(\sum\limits_{k=1}^{K} W_{\lambda_{k},x_{k}}+g\right)-\sum\limits_{k=1}^{K}f\left(W_{\lambda_{k},x_{k}}\right)-\sum\limits_{k=1}^{K}b'_{k}\left(\Lambda W\right)_{\underline{\lambda_{k},x_{k}}}+\\
       &\sum\limits_{k=1}^{K}\frac{b_{k}\lambda'_{k}}{\lambda_{k}}\left({\underline{\Lambda}}\Lambda W\right)_{\underline{\lambda_{k},x_{k}}}+\frac{b_{k}x'_{k}}{\lambda_{k}}\cdot \left(\nabla \Lambda W\right)_{\underline{\lambda_{k},x_{k}}}.
   \end{aligned}  
   \end{equation}
 
     \begin{proposition}[Modulation Control]\label{mod con}
         Let $\delta>0$ be as in Lemma \ref{Mod}, let $J\subset\RR$ be a time interval, and let $\vec{u}(t)$ be a solution to (\ref{NLW1}) on $J$.  Assume that there exist  $\mu_{k}(t)>0$ and $y_{k}(t)\in\RR^{5}$ $(1\leq k\leq K)$ such that
     \begin{equation}
         \bigg\lVert u(t)-\sum_{k=1}^{K}W_{\mu_{k},y_{k}}\bigg\rVert_{\dot{H}^{1}}+\lVert\partial_{t}u\rVert_{L^{2}}+\sum_{k=1}^{K}\mu_{k}+\sum_{k=1}^{K}|y_{k}-z_{k}|<\delta
     \end{equation}
     and 
     \begin{equation}
         C_{1}\leq \frac{\mu_{k}}{\mu_{j}}\leq C_{2}
     \end{equation}
     for all $t\in J$, $1\leq j,k\leq K$. Let  $\lambda_{k}(t), x_{k}(t),b_{k}(t)\in C^{1}(J)$ be given by lemma \ref{Mod}. Then the following estimates hold for $t\in J$:
     \begin{equation}\label{lamkbk}
         |\lambda'_{k}+b_{k}|+|x'_{k}|\lesssim \lVert\dot{g}\rVert_{L^{2}}+\sum_{k=1}^{K}|b_{k}|\lVert g\rVert_{\dot{H}^{1}},
     \end{equation}
     \begin{align}\label{bk'}
         \left|b'_{k}+\kappa \sum_{j\neq k}|z_{j}-z_{k}|^{-3}\lambda_{k}^{\frac{1}{2}}\lambda_{j}^{\frac{3}{2}}\right|\lesssim &\frac{1}{\lambda_{k}}\left(\lVert \dot{g}\rVert^{2}_{L^{2}}+ \lVert g\rVert^{2}_{\dot{H}^{1}}\right)+\frac{1}{\lambda_{k}}\sum_{k=1}^{K}|b_{k}|\lVert\dot{g}\rVert_{L^{2}}\notag\\
        & +\lambda^{2}_{k}\left(\sum_{j=1}^{K}|x_{j}-z_{j}|\right)+\sum_{k=1}^{K}b^{2}_{k},
     \end{align}
     \begin{align}\label{refined bk'}
             \Biggl|b'_{k}&+ \kappa\sum_{j\neq k}|z_{j}-z_{k}|^{-3}\lambda^{\frac{1}{2}}_{k}\lambda_{j}^{\frac{3}{2}}+\frac{\lambda'_{k}}{\lambda_{k}}\frac{\langle(\underline{\Lambda}\Lambda W)_{\underline{\lambda_{k},x_{k}}},\dot{g}\rangle}{\lVert\Lambda W\rVert^{2}_{L^{2}}}+\frac{x'_{k}}{\lambda_{k}}\cdot\frac{\langle(\nabla \Lambda W)_{\underline{\lambda_{k},x_{k}}},\dot{g}\rangle}{\lVert \Lambda W\rVert^{2}_{L^{2}}}\notag\\
             & -\frac{1}{\lVert \Lambda W\rVert^{2}_{L^{2}}}\bigg\langle f\left(\sum_{k=1}^{K}W_{\lambda_{k},x_{k}}+g\right)-f\left(\sum_{k=1}^{K}W_{\lambda_{k},x_{k}}\right)-f'\left(\sum_{k=1}^{K}W_{\lambda_{k},x_{k}}\right)g,\notag\\
             &\left(\Lambda W\right)_{\underline{{\lambda_{k},x_{k}}}}\bigg\rangle
             \Biggl|
             \lesssim \lambda_{k}^{3}+\lambda_{k}^{2}\left(\sum_{j=1}^{K}|x_{j}-z_{j}|+\lVert g\rVert_{\dot{H}^{1}}\right)+\sum_{k=1}^{K}\left(b_{k}^{2}+|b_{k}|\lVert\dot{g}\rVert_{L^{2}}\right).
     \end{align}
     \end{proposition}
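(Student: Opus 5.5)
The plan is the standard one: differentiate the orthogonality conditions \eqref{ort1}--\eqref{ort3} in time and substitute the equations \eqref{g}--\eqref{dotg} for $\vec g$. The resulting linear system for the unknowns $(\lambda_k'+b_k,\,x_k',\,b_k')$ is almost diagonal, and we invert it. Each interaction term is then estimated with Lemma~\ref{WW} and the pointwise bounds \eqref{nonlinear 1}--\eqref{nonlinear 5}.

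\textbf{Equations for $\lambda_k'$ and $x_k'$.} We differentiate \eqref{ort1} and \eqref{ort2} in time:
\[
0=\tfrac{d}{dt}\langle(\Delta\Lambda W)_{\lambda_k,x_k},g\rangle=\langle(\Delta\Lambda W)_{\lambda_k,x_k},\partial_t g\rangle-\tfrac{\lambda_k'}{\lambda_k}\langle(\Lambda\Delta\Lambda W)_{\lambda_k,x_k},g\rangle-\tfrac{x_k'}{\lambda_k}\cdot\langle(\nabla\Delta\Lambda W)_{\lambda_k,x_k},g\rangle,
\]
with the analogous identity for $\nabla W$. We then insert \eqref{g}. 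The diagonal terms give:
\begin{itemize}
\item $(b_k+\lambda_k')\,\lambda_k^{-1}\langle\Delta\Lambda W,\Lambda W\rangle$, up to normalization; this is nonzero, since $\langle\Delta\Lambda W,\Lambda W\rangle=-\|\nabla\Lambda W\|^2$;
\item $x_k'\,\lambda_k^{-1}\langle\nabla W,\nabla W\rangle$.
\end{itemize}
The cross terms with $\Lambda W$ paired against $\nabla W$ vanish by parity. The off-diagonal contributions $j\neq k$ are $O(\lambda^{2})$-small by Lemma~\ref{WW}. The terms containing $g$ are bounded by $(|\lambda_k'|+|x_k'|)\lambda_k^{-1}\|g\|_{\dot H^1}$, and we absorb them on the left, writing $|\lambda_k'|\le|\lambda_k'+b_k|+|b_k|$. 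The $\dot g$ term gives $\lambda_k^{-1}\|\dot g\|_{L^2}$. After multiplying by $\lambda_k$, the almost diagonal $6K\times6K$ system can be inverted, and this yields \eqref{lamkbk}.

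\textbf{Equation for $b_k'$.} We differentiate \eqref{ort3}, which gives
\[
0=\langle(\Lambda W)_{\underline{\lambda_k,x_k}},\partial_t\dot g\rangle-\tfrac{\lambda_k'}{\lambda_k}\langle(\underline\Lambda\Lambda W)_{\underline{\lambda_k,x_k}},\dot g\rangle-\tfrac{x_k'}{\lambda_k}\cdot\langle(\nabla\Lambda W)_{\underline{\lambda_k,x_k}},\dot g\rangle,
\]
and insert \eqref{dotg}. The $b_k'$ term produces $-b_k'\|\Lambda W\|^2$; the other $b_j'$ terms are coupled only weakly, at size $\lambda^3$. The terms $b_j\lambda_j'/\lambda_j$ and $b_jx_j'/\lambda_j$ are $O(b^2+|b|\|\dot g\|)$ by \eqref{lamkbk}; for $j=k$ the pairing $\langle\underline\Lambda\Lambda W,\Lambda W\rangle$ vanishes, because $\underline\Lambda$ is antisymmetric in $L^2$. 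For the linear part we use $L\Lambda W=0$:
\[
\langle\Delta g+f'(W_{\lambda_k,x_k})g,(\Lambda W)_{\underline{\lambda_k,x_k}}\rangle=0.
\]
The leftover linear term is $\langle(f'(\sum_jW_j)-f'(W_k))g,\Lambda W_k\rangle$, and by \eqref{nonlinear 4} and Hölder this is $\lesssim\lambda^2\|g\|_{\dot H^1}$. The quadratic remainder $f(\sum W+g)-f(\sum W)-f'(\sum W)g$ is kept explicitly in \eqref{refined bk'}. For \eqref{bk'} it is bounded via \eqref{nonlinear 3} by $\lambda_k^{-1}\|g\|^2_{\dot H^1}$, using the scaling of $(\Lambda W)_{\underline{\lambda_k}}=\lambda_k^{-1}(\Lambda W)_{\lambda_k}$ together with Sobolev.

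\textbf{Interaction term (main obstacle).} The key computation is the leading part of
\[
\langle f(\textstyle\sum_j W_j)-\sum_j f(W_j),(\Lambda W)_{\underline{\lambda_k,x_k}}\rangle.
\]
Near $x_k$ we linearize, $f(\sum W)-\sum f(W)\approx f'(W_k)\sum_{j\ne k}W_j$. There $W_j(x)\approx 15^{3/2}\lambda_j^{3/2}|x_k-x_j|^{-3}$ up to errors $O(\lambda_j^{3/2}|x-x_k|)$, so the contribution is
\[
15^{3/2}\lambda_j^{3/2}|x_k-x_j|^{-3}\lambda_k^{1/2}\langle f'(W),\Lambda W\rangle.
\]
This produces the constant $\kappa$, after dividing by $\|\Lambda W\|^2$ and replacing $|x_k-x_j|$ by $|z_k-z_j|$. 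That replacement costs $\lambda^2\sum_j|x_j-z_j|$. The remaining errors are:
\begin{itemize}
\item the Taylor remainder of $W_j$ near $x_k$, which gives $\lambda^{3}$ (with a logarithm at worst) since $|x|\,|f'(W)\Lambda W|$ is integrable;
\item the regions near the other centers and far away, which are controlled by Lemma~\ref{WW} ($\|W_jW_k^{4/3}\|$-type bounds, of order $\lambda^{3}$ or $\lambda^{2}\lambda^{1/2}$ after the weight);
\item the quadratic part, bounded by \eqref{nonlinear 1} and \eqref{nonlinear 2}.
\end{itemize}
The delicate point is verifying that every error is $o(\lambda^2)$, namely $O(\lambda^3)$ up to logarithms, uniformly. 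Keeping the $\dot g$-pairings and the nonlinear remainder explicit instead of estimating them gives \eqref{refined bk'}.
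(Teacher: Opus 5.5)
Your proposal follows the paper's proof essentially step by step: differentiate the three orthogonality relations \eqref{ort1}--\eqref{ort3}, substitute \eqref{g}--\eqref{dotg}, invert the resulting near-diagonal systems, and use $L\Lambda W=0$ to remove the linear term. As in the paper, you then extract $\kappa$ by expanding $W_{\lambda_j,x_j}$ near $x_k$ via $W(y)\approx 15^{3/2}|y|^{-3}$.

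One small correction concerns the Gram matrix for $b'$. Its off-diagonal entries $\langle(\Lambda W)_{\underline{\lambda_j,x_j}},(\Lambda W)_{\underline{\lambda_k,x_k}}\rangle$ are of size $\lambda_j^{1/2}\lambda_k^{1/2}$, not $\lambda^3$, because $\Lambda W$ decays only like $|x|^{-3}$. Inverting this matrix, as the paper does, therefore produces cross-terms of size $O(\lambda)\,|b_j'|$. These are still lower order, since $|b_j'|\lesssim\lambda^2$ up to terms involving $\vec g$ and $b$.
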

     \begin{proof}
         Differentiating (\ref{ort1}) yields
         \begin{align}
             0=\frac{d}{dt}\big\langle\left(\Delta \Lambda W\right)_{\underline{\lambda_{k},x_{k}}},g\big\rangle&=-\frac{\lambda'_{k}}{\lambda_{k}}\big\langle\left({\underline{\Lambda}}\Delta \Lambda W\right)_{\underline{\lambda_{k},x_{k}}},g\big\rangle-\frac{x'_{k}}{\lambda_{k}}\cdot\big\langle\left(\nabla \Delta \Lambda W\right)_{\underline{\lambda_{k},x_{k}}},g\big\rangle\notag\\
             &+\big\langle\left(\Delta\Lambda W\right)_{\underline{\lambda_{k},x_{k}}},\partial_{t}g\big\rangle.
                   \end{align}
Plugging \eqref{g} into the above identity and rearranging, we obtain 
   \begin{align}
           &\left(-\lVert\nabla\Lambda W\rVert^{2}_{L^{2}}-\frac{1}{\lambda_{k}}\big\langle\left({\underline{\Lambda}}\Delta \Lambda W\right)_{\underline{\lambda_{k},x_{k}}},g\big\rangle\right)\left(\lambda'_{k}+b_{k}\right)\notag\\
           &\qquad+\sum_{j\neq k}\left(\lambda'_{j}+b_{j}\right)\big\langle\left(\Delta \Lambda W\right)_{\underline{\lambda_{k},x_{k}}},\left(\Lambda W\right)_{\underline{\lambda_{j},x_{j}}}\big\rangle-\frac{x'_{k}}{\lambda_{k}}\cdot\big\langle\left(\nabla \Delta\Lambda W\right)_{\underline{\lambda_{k},x_{k}}},g\big\rangle\notag\\
           &\qquad+\sum_{j\neq k}x'_{j}\cdot\big\langle \left(\Delta \Lambda W\right)_{\underline{\lambda_{k},x_{k}}},\left(\nabla W\right)_{\underline{\lambda_{j},x_{j}}}\big\rangle\notag\\
           &=-\big\langle\left(\Delta \Lambda W\right)_{\underline{\lambda_{k},x_{k}}},\dot{g}\big\rangle-\frac{b_{k}}{\lambda_{k}}\big\langle\left({\underline{\Lambda}}\Delta \Lambda W\right)_{\underline{\lambda_{k},x_{k}}},g\big\rangle.
       \end{align}  
   Differentiating (\ref{ort2}) yields
   \begin{equation}
   \begin{aligned}
       0=\frac{d}{dt}\big\langle\left(\partial_{i}W\right)_{\underline{\lambda_{k},x_{k}}},g\big\rangle=&-\frac{\lambda'_{k}}{\lambda_{k}}\big\langle\left({\underline{\Lambda}}\partial_{i}W\right)_{\underline{\lambda_{k},x_{k}}},g\big\rangle-\frac{x'_{k}}{\lambda_{k}}\cdot\big\langle\left[\nabla\left(\partial_{i}W\right)\right]_{\underline{\lambda_{k},x_{k}}},g\big\rangle\\
       +&\big\langle\left(\partial_{i}W\right)_{\underline{\lambda_{k}x_{k}}},\partial_{t}g\big\rangle.
       \end{aligned}
   \end{equation}
Plugging \eqref{g} into the above identity and rearranging, we obtain
   \begin{equation}
       \begin{aligned}
           &-\frac{1}{\lambda_{k}}\big\langle\left(\underline{\Lambda}\partial_{i} W\right)_{\underline{\lambda_{k},x_{k}}},g\big\rangle(\lambda'_{k}+b_{k})+\sum_{j\neq k}(\lambda'_{j}+b_{j})\big\langle\left(\partial_{i}W\right)_{\underline{\lambda_{k},x_{k}}},\left(\Lambda W\right)_{\underline{\lambda_{j},x_{j}}}\big\rangle\\
           &+\left(\big\langle\partial_{i}W,\nabla W\big\rangle-\frac{1}{\lambda_{k}}\big\langle\left[\nabla\left(\partial_{i}W\right)\right]_{\underline{\lambda_{k},x_{k}}},g\big\rangle\right)\cdot x'_{k}\notag\\
           &+\sum_{j\neq k}\big\langle\left(\partial_{i}W\right)_{\underline{\lambda_{k},x_{k}}},\left(\nabla W\right)_{\underline{\lambda_{j},x_{j}}}\big\rangle\cdot{x'_{j}}\\
           =&-\big\langle\left(\partial_{i}W\right)_{\underline{\lambda_{k},x_{k}}},\dot{g}\big\rangle-\frac{b_{k}}{\lambda_{k}}\big\langle\left(\underline{\Lambda}\partial_{i} W\right)_{\underline{\lambda_{k},x_{k}}},g\big\rangle
       \end{aligned}
   \end{equation}
We then arrive at the following linear system for $\left(\boldsymbol{\lambda},\boldsymbol{x}\right)$
\begin{equation}
    M\begin{pmatrix}
        \boldsymbol{b}+\boldsymbol{\lambda}'\\
        \boldsymbol{x}'
    \end{pmatrix}=\begin{pmatrix}
        \boldsymbol{G}_{1}\\
        \boldsymbol{G}_{2}
    \end{pmatrix},
\end{equation}
where 
\begin{equation*}
    M=\begin{pmatrix}
        A & B\\ 
        C & D
    \end{pmatrix},\ A\in\RR^{K\times K},D\in\RR^{5K\times 5K}, B\in\RR^{K\times 5K}, C\in\RR^{5K\times K},
\end{equation*}
\begin{align*}
    &A_{kk}:=-\lVert\nabla\Lambda W\rVert^{2}_{L^{2}}-\frac{1}{\lambda_{k}}\big\langle({\underline{\Lambda}}\Delta\Lambda W)_{\underline{\lambda_{k},x_{k}}},g\big\rangle\ (1\leq k\leq K),\\
    &A_{ij}:=\big\langle\left(\Delta \Lambda W\right)_{\underline{\lambda_{i},x_{i}}},\left(\Lambda W\right)_{\underline{\lambda_{j},x_{j}}}\big\rangle\ (i\neq j, 1\leq i,j\leq K),\\
    &D_{kk}:=\frac{1}{5}\lVert\nabla W\rVert^{2}_{L^{2}}I_{5}+\left(-\frac{1}{\lambda_{k}}\big\langle\left(\partial_{ij}W\right)_{\underline{\lambda_{k},x_{k}}},g\big\rangle\right)_{1\leq i,j\leq 5}\ (1\leq k\leq K),\\ &D_{kl}:=\left(\big\langle(\partial_{i}W)_{\underline{\lambda_{k},x_{k}}},(\partial_{j}W)_{\underline{\lambda_{l},x_{l}}}\big\rangle\right)_{1\leq i,j\leq 5}\ (1\leq k, l\leq K,\ k\neq l),\\
    &B_{kk}:=\left(-\frac{1}{\lambda_{k}}\big\langle\left[\partial_{i}(\Delta\Lambda W)\right]_{\underline{\lambda_{k},x_{k}}},g\big\rangle\right)_{1\times 5},\ B_{kl}:=\left(\big\langle (\Delta\Lambda W)_{\underline{\lambda_{k},x_{k}}},(\partial_{i} W)_{\underline{\lambda_{l},x_{l}}}\big\rangle\right)_{1\times 5},\\
    &C_{kk}:=\left(-\frac{1}{\lambda_{k}}\big\langle(\underline{\Lambda}\partial_{i} W)_{\underline{\lambda_{k},x_{k}}},g\big\rangle\right)_{5\times 1},\ C_{kl}:=\left(\big\langle(\partial_{i}W)_{\underline{\lambda_{k},x_{k}}},(\Lambda W)_{\underline{\lambda_{l},x_{l}}}\big\rangle\right)_{5\times 1},\\
    &\boldsymbol{G}_{1}=\left(-\big\langle\left(\Delta \Lambda W\right)_{\underline{\lambda_{k},x_{k}}},\dot{g}\big\rangle
           -\frac{b_{k}}{\lambda_{k}}\big\langle\left({\underline{\Lambda}}\Delta \Lambda W\right)_{\underline{\lambda_{k},x_{k}}},g\big\rangle\right)_{K\times 1},\\
           &\boldsymbol{G}_{2}=\left(-\big\langle\left(\nabla W\right)_{\underline{\lambda_{k},x_{k}}},\dot{g}\big\rangle-\frac{b_{k}}{\lambda_{k}}\big\langle\left(\underline{\Lambda}\nabla W\right)_{\underline{\lambda_{k},x_{k}}},g\big\rangle\right)_{5K\times 1}.
\end{align*}
Note that 
\begin{align*}
    &\left|\frac{1}{\lambda_{k}}\big\langle\left(\underline{\Lambda}\Delta\Lambda W\right)_{\underline{\lambda_{k},x_{k}}},g\big\rangle\right|+\left|\frac{1}{\lambda_{k}}\big\langle(\partial_{ij} W)_{\underline{\lambda_{k},x_{k}}},g\big\rangle\right|+\left|\frac{1}{\lambda_{k}}\big\langle\left[\partial_{i}(\Delta\Lambda W)\right]_{\underline{\lambda_{k},x_{k}}},g\big\rangle\right|\\
    &+\left|\frac{1}{\lambda_{k}}\big\langle(\underline{\Lambda}\partial_{i}W)_{\underline{\lambda_{k},x_{k}}},g\big\rangle\right|\lesssim \lVert g\rVert_{\dot{H}^{1}},
\end{align*}
and, by Lemma \ref{WW},
\begin{equation*}
    |A_{k l}|\lesssim  \lambda_{k}^\frac{5}{2}\lambda_{l}^{\frac{1}{2}},\ |D_{kl}|\lesssim \lambda^{\frac{3}{2}}_{k}\lambda^{\frac{3}{2}}_{l},\ |B_{kl}|\lesssim \lambda^{\frac{3}{2}}_{l}\lambda_{k}^{\frac{5}{2}} ,\ |C_{kl}|\lesssim \lambda^{\frac{3}{2}}_{k}\lambda_{l}^{\frac{1}{2}},\ \forall\ k\neq l.
\end{equation*}
Hence, if we choose $\delta_{0}>0$ sufficiently small in Lemma \ref{Mod}, then the matrix $M$ is diagonally dominant and
\begin{equation}
    M=\begin{pmatrix}
        -\lVert\nabla \Lambda W\rVert^{2}_{L^{2}}I_{K}&\\
        &\frac{1}{5}\lVert\nabla W\rVert^{2}_{L^{2}}I_{5K}
    \end{pmatrix}+O(\delta),
\end{equation}
thus
\begin{equation*}
    \begin{pmatrix}
        \boldsymbol{b}+\boldsymbol{\lambda}'\\
        \boldsymbol{x}'
    \end{pmatrix}= M^{-1}\begin{pmatrix}
        \boldsymbol{G}_{1}\\
        \boldsymbol{G}_{2}
    \end{pmatrix},
\end{equation*}
which, combined with the fact that
\begin{equation*}
    |\boldsymbol{G}_{1}|+|\boldsymbol{G}_{2}|\lesssim \lVert \dot{g}\rVert_{L^{2}}+\left(\sum_{k=1}^{K}|b_{k}|\right)\lVert g\rVert_{\dot{H}^{1}},
\end{equation*}
yields
\begin{equation}
    \left|\lambda'_{k}+b_{k}\right|+\left|x'_{k}\right|\lesssim \lVert \dot{g}\rVert_{L^{2}}+\left(\sum_{k=1}^{K}|b_{k}|\right)\lVert g\rVert_{\dot{H}^{1}},\  \forall 1\leq k\leq K.
\end{equation}
Finally, differentiating (\ref{ort3}) gives
\begin{align}
    0=\frac{d}{dt}\big\langle \dot{g},(\Lambda W)_{\underline{ \lambda_{k},x_{k}}}\big\rangle=&-\frac{\lambda'_{k}}{\lambda_{k}}\big\langle(\underline{\Lambda}\Lambda W)_{\underline{\lambda_{k},x_{k}}},\dot{g}\big\rangle-\frac{x'_{k}}{\lambda_{k}}\cdot\big\langle (\nabla \Lambda W)_{\underline{\lambda_{k},x_{k}}},\dot{g}\big\rangle\notag\\
    +&\big\langle\partial_{t}\dot{g},(\Lambda W)_{\underline{\lambda_{k},x_{k}}}\big\rangle.
\end{align}
Plugging in \eqref{dotg}, using the cancellation $L\Lambda W=0$, and rearranging, we obtain
\begin{align}
        &\sum_{j=1}^{K}\left(\big\langle(\Lambda W)_{\underline{\lambda_{j},x_{j}}},(\Lambda W)_{\underline{\lambda_{k},x_{k}}}\big\rangle\right)b_{j}'\notag\\
        =&\bigg\langle f'\left(W_{\lambda_{k},x_{k}}\right)\sum_{j\neq k}W_{\lambda_{j},x_{j}},\left(\Lambda W\right)_{\underline{\lambda_{k},x_{k}}}\bigg\rangle\ ({\rm{I}})\notag\\
        &-\frac{\lambda'_{k}}{\lambda_{k}}\big\langle(\underline{\Lambda}\Lambda W)_{\underline{\lambda_{k},x_{k}}},\dot{g}\big\rangle-\frac{x'_{k}}{\lambda_{k}}\cdot\big\langle (\nabla \Lambda W)_{\underline{\lambda_{k},x_{k}}},\dot{g}\big\rangle\ ({\rm{II}})\notag\\
        &+\Bigg\langle f\left(\sum_{k=1}^{K}W_{\lambda_{k},x_{k}}+g\right)-f\left(\sum_{k=1}^{K} W_{\lambda_{k},x_{k}}\right)-f'\left(\sum_{k=1}^{K}W_{\lambda_{k},x_{k}}\right)g,(\Lambda W)_{\underline{\lambda_{k},x_{k}}}\Bigg\rangle({\rm{III}})\notag\\
        &+\Bigg\langle\left( f'\left(\sum_{j=1}^{K}W_{\lambda_{j},x_{j}}\right)-f'\left(W_{\lambda_{k},x_{k}}\right)\right)g,(\Lambda W)_{\underline{\lambda_{k},x_{k}}}\Bigg\rangle\ ({\rm{IV}})\notag\\
        &+\Bigg\langle f\left(\sum_{k=1}^{K}W_{\lambda_{k},x_{k}}\right)-\sum_{k=1}^{K}f(W_{\lambda_{k},x_{k}})-f'(W_{\lambda_{k},x_{k}})\sum_{j\neq k}W_{\lambda_{j},x_{j}}, (\Lambda W)_{\underline{\lambda_{k},x_{k}}}\Bigg\rangle\ ({\rm{V}})\notag\\
        &+\sum_{j=1}^{K}\frac{b_{j}\lambda'_{j}}{\lambda_{j}}\big\langle (\underline{\Lambda}\Lambda W)_{\underline{\lambda_{j},x_{j}}},(\Lambda W)_{\underline{\lambda_{k},x_{k}}}\big\rangle+\sum_{j=1}^{K}\frac{b_{j}x'_{j}}{\lambda_{j}}\cdot\big\langle (\nabla \Lambda W)_{\underline{\lambda_{j},x_{j}}},(\Lambda W)_{\underline{\lambda_{k},x_{k}}}\big\rangle\ ({\rm{VI}}).
    \end{align}
We now estimate the terms on the right-hand side separately:\\
For (I), we first claim that for any $j\neq k$
\begin{equation}\label{interaction}
\begin{aligned}
    &\big\langle (\Lambda W)_{\underline{\lambda_{k},x_{k}}},f'\left(W_{\lambda_{k},x_{k}}\right)W_{\lambda_{j},x_{j}}\big\rangle\\
    =&15^{\frac{3}{2}}|z_{j}-z_{k}|^{-3}\langle \Lambda W,f'(W)\rangle\lambda_{k}^{\frac{1}{2}}\lambda_{j}^{\frac{3}{2}}
    +O\left(\lambda_{k}^{3}+\lambda_{k}^{2}\left(|x_{k}-z_{k}|+|x_{j}-z_{j}|\right)\right).
\end{aligned}
\end{equation}
Note that 
\begin{equation*}
    \begin{aligned}
        &\left|\big\langle(\Lambda W)_{\underline{\lambda_{k},x_{k}}},f'\left(W_{\lambda_{k},x_{k}}\right)W_{\lambda_{j},x_{j}}\big\rangle-15^{\frac{3}{2}}|z_{j}-z_{k}|^{-3}\big\langle\Lambda W,f'(W)\big\rangle\lambda^{\frac{1}{2}}_{k}\lambda_{j}^{\frac{3}{2}}\right|\\
        =&\frac{7}{3}\lambda_{j}^{-\frac{3}{2}}\lambda_{k}^{\frac{1}{2}}\left|\int \Lambda W(x) W^{\frac{4}{3}}(x)\left[W\left(\frac{\lambda_{j}}{\lambda_{k}}x-\frac{x_{j}-x_{k}}{\lambda_{j}}\right)-15^{\frac{3}{2}}\lambda_{j}^{3}|z_{k}-z_{j}|^{-3}\right]{\rm{d}}x\right|,
    \end{aligned}
\end{equation*}
Then, in the region $\displaystyle |x|\geq\frac{d}{\lambda_{k}}$, by the Cauchy--Schwarz inequality,
\begin{equation*}
    \begin{aligned}
        &\left|\int_{|x|\geq \frac{d}{\lambda_{k}}}\Lambda W(x) W^{\frac{4}{3}}(x) W\left(\frac{\lambda_{k}}{\lambda_{j}}x-\frac{x_{j}-x_{k}}{\lambda_{j}}\right) {\rm{d}} x\right|\\
        \lesssim&\lambda^{4}_{k}\int_{|x|\geq\frac{d}{\lambda_{k}}}\Lambda W(x)W\left(\frac{\lambda_{k}}{\lambda_{j}}x-\frac{x_{j}-x_{k}}{\lambda_{j}}\right){\rm{d}}x\lesssim\lambda^{4}_{k},
    \end{aligned}
\end{equation*}
and 
\begin{equation*}
    \int_{|x|\geq \frac{d}{\lambda_{k}}}\Lambda W(x) W^{\frac{4}{3}}(x){\rm{d}}x\lesssim\int_{\frac{d}{\lambda_{k}}}^{+\infty}r^{-7}r^{4}{\rm{d}}r\lesssim\lambda^{2}_{k}.
\end{equation*}
In the region $\displaystyle|x|\leq \frac{d}{\lambda_{k}}$, 
\begin{equation*}
\begin{aligned}
    &\left|W\left(\frac{\lambda_{k}}{\lambda_{j}}x-\frac{x_{j}-x_{k}}{\lambda_{j}}\right)-15^{\frac{3}{2}}\frac{\lambda_{j}^{3}}{|z_{j}-z_{k}|^{3}}\right|\\
    \leq &\left|W\left(\frac{\lambda_{k}}{\lambda_{j}}x-\frac{x_{j}-x_{k}}{\lambda_{j}}\right)-W\left(\frac{x_{j}-x_{k}}{\lambda_{j}}\right)\right|\\
    &+\left|W\left(\frac{x_{j}-x_{k}}{\lambda_{j}}\right)-W\left(\frac{z_{j}-z_{k}}{\lambda_{j}}\right)\right|\\
    &+\left|W\left(\frac{z_{j}-z_{k}}{\lambda_{j}}\right)-15^{\frac{3}{2}}\lambda_{j}^{3}\frac{1}{|z_{j}-z_{k}|^{3}}\right|.
\end{aligned}
\end{equation*}
By the mean value theorem,
\begin{equation*}
    \left|W\left(\frac{\lambda_{k}}{\lambda_{j}}x-\frac{x_{j}-x_{k}}{\lambda_{j}}\right)-W\left(\frac{x_{j}-x_{k}}{\lambda_{j}}\right)\right|\lesssim\lambda^{4}_{k}|x|,
\end{equation*}
\begin{equation*}
    \left|W\left(\frac{z_{j}-z_{k}}{\lambda_{j}}\right)-W\left(\frac{x_{j}-x_{k}}{\lambda_{j}}\right)\right|\lesssim\lambda^{3}_{k}\left(|x_{k}-z_{k}|+|x_{j}-z_{j}|\right).
\end{equation*}
Using the fact that
\begin{equation*}
    \left|W(y)-15^{\frac{3}{2}}|y|^{-3}\right|\leq |y|^{-5},\quad \forall \ |y|\geq 1,
\end{equation*}
and for $\delta>0$ small enough, we obtain
\begin{equation*}
    \left|W\left(\frac{z_{j}-z_{k}}{\lambda_{j}}\right)-15^{\frac{3}{2}}\lambda^{3}_{j}|z_{j}-z_{k}|^{-3}\right|\lesssim \lambda_{k}^{5}.
\end{equation*}
Hence
\begin{equation*}
   \left|W\left(\frac{\lambda_{k}}{\lambda_{j}}x-\frac{x_{j}-x_{k}}{\lambda_{j}}\right)-15^{\frac{3}{2}}\frac{\lambda_{j}^{3}}{|z_{j}-z_{k}|^{3}}\right|\lesssim\lambda_{k}^{4}|x|+\lambda^{3}_{k}\left(|x_{k}-z_{k}|+|x_{j}-z_{j}|\right)+\lambda^{5}_{k},
\end{equation*}
and then
\begin{align*}
    &\left|\int_{|x|\leq\frac{d}{\lambda_{k}}}\Lambda W(x)W^{\frac{4}{3}}(x)\left[W\left(\frac{\lambda_{k}}{\lambda_{j}}x-\frac{x_{j}-x_{k}}{\lambda_{j}}\right)-15^{\frac{3}{2}}\frac{\lambda^{3}_{j}}{|z_{j}-z_{k}|^{3}}\right]{\rm{d}}x\right|\\
    \lesssim&\int_{|x|\leq\frac{d}{\lambda_{k}}}\left|\Lambda W(x)\right|W^{\frac{4}{3}}(x)\left(\lambda_{k}^{4}|x|+\lambda_{k}^{3}\left(|x_{j}-z_{j}|+|x_{j}-z_{k}|\right)+\lambda^{5}_{k}\right){\rm{d}}x\\
    \lesssim&\lambda_{k}^{4}+\lambda^{3}_{k}\left(|x_{k}-z_{k}|+|x_{j}-z_{j}|\right).
\end{align*}
Combining the above estimates gives (\ref{interaction}).\\
For term (II), 
\begin{equation*}
    |{\rm{II}}|\lesssim \frac{|\lambda'_{k}+b_{k}|+|b_{k}|}{\lambda_{k}}\lVert \dot{g}\rVert_{L^{2}}+\frac{|x'_{k}|}{\lambda_{k}}\lVert\dot{g}\rVert_{L^{2}}\lesssim\frac{1}{\lambda_{k}}\lVert\dot{g}\rVert^{2}_{L^{2}}+\frac{1}{\lambda_{k}}\sum_{k=1}^{K}|b_{k}|\lVert\dot{g}\rVert_{L^{2}}.
\end{equation*}
For term (III), from (\ref{nonlinear 3}) and H\"{o}lder inequality
\begin{equation*}
|{\rm{III}}|\lesssim \frac{1}{\lambda_{k}}\int\left(|g|^{\frac{1}{3}}+\sum_{k=1}^{K}\left|W_{\lambda_{k},x_{k}}\right|^{\frac{1}{3}}\right)|g|^{2}\left|\left(\Lambda W\right)_{\lambda_{k},x_{k}}\right|{\rm{d}}x\lesssim\frac{1}{\lambda_{k}}\lVert g\rVert^{2}_{\dot{H}^{1}}.
\end{equation*}
For term (IV), from (\ref{nonlinear 4})
\begin{equation*}
    \begin{aligned}
        |{\rm{IV}}|&\lesssim\frac{1}{\lambda_{k}}\int\left|f'\left(\sum_{j=1}^{K}W_{\lambda_{j},x_{j}}\right)-f'\left(W_{\lambda_{k},x_{k}}\right)\right|\left|W_{\lambda_{k},x_{k}}\right||g|{\rm{d}}x\\
        &\lesssim\frac{1}{\lambda_{k}}\int\sum_{j\neq k}W_{\lambda_{j},x_{j}}W_{\lambda_{k},x_{k}}\left(\sum_{j=1}^{K}\left|W_{\lambda_{j},x_{j}}\right|^{\frac{1}{3}}\right)|g|{\rm{d}}x\\
        &\lesssim\frac{1}{\lambda_{k}}\sum_{j\neq k}\int W^{\frac{4}{3}}_{\lambda_{j},x_{j}}W_{\lambda_{k},x_{k}}|g|{\rm{d}}x\lesssim\frac{1}{\lambda_{k}}\lambda^{3}_{k}\lVert g\rVert_{\dot{H}^{1}}\lesssim \lambda_{k}^{2}\lVert g\rVert_{\dot{H}^{1}}.
    \end{aligned}
\end{equation*}
For term (V), we first note from (\ref{nonlinear 2}) that
\begin{equation*}
\begin{aligned}
    &\left|f\left(\sum_{k=1}^{K}W_{\lambda_{k},x_{k}}\right)-\sum_{k=1}^{K}f(W_{\lambda_{k},x_{k}})-f'(W_{\lambda_{k},x_{k}})\sum_{j\neq k}W_{\lambda_{j},x_{j}}\right|\\
    \lesssim& \big|W_{\lambda_{k},x_{k}}\big|^{\frac{2}{3}}\sum_{j\neq k}\big|W_{\lambda_{j},x_{j}}\big|^{\frac{5}{3}}+\sum_{j\neq l;\ j,l\neq k}\big|W_{\lambda_{j},x_{j}}W_{\lambda_{l},x_{l}}^{\frac{4}{3}}\big|.
\end{aligned}
\end{equation*}
Combining this with Lemma~\ref{WW} and H\"{o}lder's inequality gives
\begin{equation*}
    \begin{aligned}
        &|{\rm{V}}|\lesssim \frac{1}{\lambda_{k}}\int|W_{\lambda_{k},x_{k}}|^{\frac{5}{3}}\sum_{j\neq k}|W_{\lambda_{j},x_{j}}|^{\frac{5}{3}}+\sum_{j\neq l;\ j,l\neq k}|W_{\lambda_{j},x_{j}}W_{\lambda_{l},x_{l}}^{\frac{4}{3}}||W_{\lambda_{k},x_{k}}|\\
        \lesssim&\frac{1}{\lambda_{k}}\sum_{j\neq l}\int |W_{\lambda_{l},x_{l}}|^{\frac{5}{3}}|W_{\lambda_{j},x_{j}}|^{\frac{5}{3}}{\rm{d}}x\lesssim \lambda_{k}^{4}|\log \lambda_{k}|.
    \end{aligned}
\end{equation*}
For term (VI), using Lemma \ref{WW} and the fact that $\langle \underline{\Lambda}\Lambda W,\Lambda W\rangle=0$, $\langle \nabla \Lambda W,\Lambda W\rangle=0$, we have
\begin{equation*}
    \begin{aligned}
        |{\rm{VI}}|&\lesssim\sum_{j\ne k}\frac{|b_{j}\lambda'_{j}|}{\lambda_{j}}\left|\big\langle(\underline{\Lambda}\Lambda W)_{\underline{\lambda_{j},x_{j}}},\left(\Lambda W\right)_{\underline{\lambda_{k},x_{k}}}\big\rangle\right|+\frac{|b_{j}x'_{j}|}{\lambda_{j}}\left|\big\langle\left(\nabla\Lambda W\right)_{\underline{\lambda_{j},x_{j}}},\left(\Lambda W\right)_{\underline{\lambda_{k},x_{k}}}\big\rangle\right|\\
        &\lesssim \sum_{j\neq k}\frac{\left(\lVert\dot{g}\rVert_{L^{2}}+\left(\sum\limits_{k=1}^{K}|b_{k}|\right)\lVert g\rVert_{\dot{H}^{1}}+|b_{j}|\right)|b_{j}|}{\lambda_{j}}\lambda_{j}^\frac{1}{2}\lambda_{k}^{\frac{1}{2}}\lesssim\lVert\dot{g}\rVert_{L^{2}}\sum_{k=1}^{K}|b_{k}|+\sum_{k=1}^{K}|b_{k}|^{2}.
    \end{aligned}
\end{equation*}
Combining the above estimates, we obtain
\begin{align*}
    &\sum_{j=1}^{K}\left(\big\langle\left(\Lambda W\right)_{\underline{\lambda_{j},x_{j}}},\left(\Lambda W\right)_{\underline{\lambda_{k},x_{k}}}\big\rangle\right)b'_{j}\\
    =&15^{\frac{3}{2}}|z_{j}-z_{k}|^{-3}\big\langle \Lambda W, f'(W)\big\rangle\lambda_{k}^{\frac{1}{2}}\sum_{j\neq k}\lambda_{j}^{\frac{3}{2}}\\
    &-\frac{\lambda'_{k}}{\lambda_{k}}\big\langle\left(\underline{\Lambda}\Lambda W\right)_{\underline{\lambda_{k},x_{k}}},\dot{g}\big\rangle-\frac{x'_{k}}{\lambda_{k}}\cdot\big\langle\left(\nabla \Lambda W\right)_{\underline{\lambda_{k},x_{k}}}, \dot{g}\big\rangle\\
    &+\bigg\langle f\left(\sum_{k=1}^{K}W_{\lambda_{k},x_{k}}+g\right)-f\left(\sum_{k=1}^{K}W_{\lambda_{k},x_{k}}\right)-f'\left(\sum_{k=1}^{K}W_{\lambda_{k},x_{k}}\right)g,
             \left(\Lambda W\right)_{\underline{{\lambda_{k},x_{k}}}}\bigg\rangle\\
             &+O\left(\lambda_{k}^{3}+\lambda_{k}^{2}\sum_{j=1}^{K}|x_{j}-z_{j}|+\lambda_{k}^{2}\lVert\dot{g}\rVert_{\dot{H}^{1}}+\lVert\dot{g}\rVert_{L^{2}}\sum_{k=1}^{K}|b_{k}|+\sum_{k=1}^{K}|b_{k}|^{2}\right).
\end{align*}
Since the matrix
\begin{equation*}
    \begin{aligned}
        S&=\left(\big\langle(\Lambda W)_{\underline{\lambda_{j},x_{j}}},(\Lambda W)_{\underline{\lambda_{k},x_{k}}}\big\rangle\right)_{1\leq i,j\leq K}\\
        &=\lVert\Lambda W\rVert^{2}_{L^{2}}I_{K}+O(\lambda_{k})
    \end{aligned}
\end{equation*}
is diagonally dominant for $0<\lambda_{k}\lesssim \delta$ sufficiently small, inverting $S$ yields \eqref{bk'} and \eqref{refined bk'}.
     \end{proof}
     \section{Refined modulation}\label{refined modulation}

     In this section, we define a truncated virial functional and state some estimates related to it. 
     The same functional was used in an essential way in the construction of multi-bubble solutions to \eqref{NLW1} by the first author and Martel in \cite{JM}.
     \begin{lemma}\cite[Lemma 18]{JM}\label{q(x)}
         Let any $\epsilon>0$ and $R>0$. There exists a radially symmetric function $q=q_{\epsilon,R}\in C^{3,1}(\RR)$ with the following properties:\\
         ${\rm{(i)}}$ $\displaystyle q(x)=\frac{1}{2}|x|^{2}$ for $|x|\leq R$.\\
         ${\rm{(ii)}}$ There exists $\tilde{R}$ (depending on $\epsilon$ and $R$) such that $q$ is constant for $|x|\geq \tilde{R}$.\\
         ${\rm{(iii)}}$ $\left|\nabla q(x)\right|\lesssim |x| $ and $|\Delta q(x)|\lesssim 1$ for all $x\in\RR^{5}$, with constants independent of $\epsilon$ and $R$.\\
         ${\rm{(iv)}}$ $\displaystyle\sum_{1\leq j,l\leq 5}\left(\partial_{x_{j}x_{l}}q(x)\right)v_{j}v_{l}\geq -\epsilon\sum_{j=1}^{5}|v_{j}|^{2}$, for all $x\in\RR^{5}$, $v\in\RR^{5}$.\\
         ${\rm{(v)}}$ $\Delta^{2}q(x)\leq \epsilon|x|^{-2}$, for all $x\in\RR^{5}$.
     \end{lemma}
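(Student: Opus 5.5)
We construct $q$ as a radial function $q(x)=\phi(|x|)$, with $\phi$ built by integrating a carefully designed profile for $\phi'$. Set $\phi(r)=\frac12 r^2$ on $[0,R]$. The requirement (iv) concerns the Hessian of a radial function, whose eigenvalues are $\phi''(r)$ (radial direction) and $\phi'(r)/r$ (angular directions, multiplicity four). We therefore aim for $\phi'\ge 0$ everywhere and $\phi''\ge-\epsilon$. Requirement (v) concerns $\Delta^2 q$, computed from the radial Laplacian $\Delta_r=\partial_r^2+\frac4r\partial_r$.

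\textbf{Construction.} The natural choice is a logarithmically slow cutoff of $\phi'$. On $R\le r\le R e^{N}$ we let $\phi'(r)=r\,\psi(\log(r/R)/N)$, where $\psi$ is smooth, nonincreasing, equal to $1$ near $0$ and to $0$ near $1$; beyond $Re^N$ we set $\phi'=0$. Then $\phi''=\psi+\frac1N\psi'\ge-\frac{C}{N}$, which is at least $-\epsilon$ for $N$ large. Moreover $|\phi'|\le r$ and $|\Delta q|=|\phi''+4\phi'/r|\lesssim1$ uniformly in $N$. This gives (iii), and (ii) holds with $\tilde R=Re^N$.

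\textbf{Checking (v).} For $\phi'=r\psi(s)$ with $s=\log(r/R)/N$, we get $\Delta q=5\psi+\frac1N\psi'(s)$. Writing $h(s)=5\psi+\frac1N\psi'$, we have $\Delta^2q=\Delta_r h$. Since $\partial_r=\frac{1}{Nr}\partial_s$,
\[
\Delta_r h=\frac{1}{N^2r^2}h''(s)+\frac{3}{Nr^2}h'(s).
\]
The leading term is $\frac{15}{N r^2}\psi'(s)\le0$ because $\psi$ is nonincreasing. The remaining terms are $O(N^{-2}r^{-2})$, so $\Delta^2 q\le \frac{C}{N^2}|x|^{-2}\le\epsilon|x|^{-2}$ once $N$ is large. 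Inside $|x|\le R$ we have $\Delta^2q=0$, and beyond $\tilde R$ also $\Delta^2 q=0$.

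\textbf{Regularity.} The regularity $C^{3,1}$ across $r=R$ and $r=\tilde R$ follows from $\psi$ being constant near $0$ and $1$. The profile is then $\frac12r^2$ and constant on either side, so all derivatives match.

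\textbf{Main obstacle.} The delicate point is (v) together with (iv) at the same time. An ordinary cutoff at a single scale would make $\phi''$ of order $-1$ and $\Delta^2 q$ of order $\pm r^{-2}$ with the wrong sign. The logarithmic spreading over $N$ dyadic-like scales is exactly what makes both defects $O(1/N)$. The sign of the leading $\psi'$ term is what allows (v) to hold as a one-sided inequality.
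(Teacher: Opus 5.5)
Your proof is correct. Extend $\psi$ by $1$ on $(-\infty,0]$ and by $0$ on $[1,\infty)$. Then the single formula $\partial_r q=r\,\psi\bigl(\log(r/R)/N\bigr)$ holds for all $r>0$, so $q$ is in fact $C^\infty$, and it is constant outside $B(0,Re^N)$. The exact expansion
\[
\Delta^{2}q(x)=\frac{1}{|x|^{2}}\left(\frac{15}{N}\psi'(s)+\frac{8}{N^{2}}\psi''(s)+\frac{1}{N^{3}}\psi'''(s)\right),\qquad s=\frac{1}{N}\log\frac{|x|}{R},
\]
confirms your computation, and (iv) and (v) both follow once $N\ge C_\psi\max\{1,\epsilon^{-1}\}$. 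One can take $C_\psi=\max\{1,\|\psi'\|_\infty,(8\|\psi''\|_\infty+\|\psi'''\|_\infty)^{1/2}\}$. The bounds in (iii), namely $|\nabla q|\le|x|$ and $|\Delta q|\le 5+\|\psi'\|_\infty$, depend only on the fixed profile $\psi$, so they are uniform in $\epsilon$ and $R$ as required. (ii) holds with $\tilde R=Re^{N}$.

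The paper itself gives no proof of this lemma. It is quoted from \cite{JM}, and only the five listed properties are used later, in Lemma~\ref{AK} and in the estimate for $p_k'$. So there is no in-paper argument to compare with. Your logarithmic-scale cutoff gives a short, self-contained proof, and a smoother function than the stated $C^{3,1}$. The key point is the monotonicity of $\psi$: it makes the leading $O(1/N)$ term of $\Delta^2 q$ nonpositive, so only the $O(N^{-2})$ corrections need to be absorbed into $\epsilon|x|^{-2}$.
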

    We fix a function $q$ as in Lemma \ref{q(x)} and define the operators
     \begin{align*}
         &[A_{k}h](x)=\frac{3}{10}\frac{1}{\lambda_{k}}\Delta q\left(\frac{x-x_{k}}{\lambda_{k}}\right)h(x)+\nabla q\left(\frac{x-x_{k}}{\lambda_{k}}\right)\cdot \nabla h(x),\\
         &[{\underline{A}_{k}}h](x)=\frac{1}{2}\frac{1}{\lambda_{k}}\Delta q\left(\frac{x-x_{k}}{\lambda_{k}}\right)h(x)+\nabla q\left(\frac{x-x_{k}}{\lambda_{k}}\right)\cdot \nabla h(x).
     \end{align*}
     \begin{lemma}\cite[Lemma 18]{JM}\label{AK}
     For any $\eta>0$, there exist $\epsilon_{1}, R_{1}>0$ such that, for all $\epsilon<\epsilon_{1}$ and $R>R_{1}$ in Lemma \ref{q(x)}, the operators $A_{k}$ and ${\underline{A}}_{k}$ $(1\leq k\leq K)$ defined above have the following properties.\\
         ${\rm{(i)}}$ The families
         \begin{align*}
             &\{A_{k}; \lambda_{k}>0,x_{k}\in\RR^{5}\},\ \{{\underline{A}}_{k}; \lambda_{k}>0,x_{k}\in \RR^{5}\},\\
             \{\lambda_{k}\partial_{\lambda_{k}}&A_{k}; \lambda_{k}>0, x_{k}\in\RR^{5}\},\ \{\lambda_{k}\partial_{\lambda_{k}}{\underline{A}}_{k}; \lambda_{k}>0, x_{k}\in\RR^{5}\},\\
             \{\lambda_{k}\partial_{x_{k}}A_{k}&;\lambda_{k}>0, x_{k}\in\RR^{5}\}\ {\rm{and}}\ \{\lambda_{k}\partial_{x_{k}}{\underline{A}}_{k}; \lambda_{k}>0, x_{k}\in\RR^{5}\}
         \end{align*}
         are bounded in $\mathcal{L}(\dot{H}^{1},L^{2})$, with norms depending on $q$.\\
         ${\rm{(ii)}}$ For any $g,h\in\dot{H}^{1}\cap \dot{H}^{2}$,
         \begin{equation}\label{sym}
             \langle A_{k}h,f(h+g)-f(h)-f'(h)g\rangle=-\langle A_{k}g,f(h+g)-f(h)\rangle.
         \end{equation}
         ${\rm{(iii)}}$ For the modulated parameters $\lambda_{k}>0$ and $x_{k}\in\RR^{5}$ as in Lemma \ref{Mod}, we have
         \begin{equation}\label{AK 2}
             \left\lVert\underline{A}_{k} \left(\Lambda W\right)_{\lambda_{k},x_{k}}-\left(\underline{\Lambda}\Lambda W\right)_{\underline{\lambda_{k},x_{k}}}\right\rVert_{L^{2}}+\left\lVert\underline{A}_{k}\left(\nabla W\right)_{\lambda_{k},x_{k}}-\left(\nabla \Lambda W\right)_{\underline{\lambda_{k},x_{k}}}\right\rVert_{L^{2}}\leq \eta,
         \end{equation}
    \begin{equation}\label{AK 3}
        \left\lVert A_{k}W_{\lambda_{k},x_{k}}-\left(\Lambda W\right)_{\underline{\lambda_{k},x_{k}}}\right\rVert_{L^{\frac{10}{3}}}\leq\frac{\eta}{\lambda_{k}},
    \end{equation}
     \begin{equation}\label{AK 4}
             \left\lVert\underline{A}_{k}\left(\Lambda W\right)_{\lambda_{j},x_{j}}\right\rVert_{L^{2}}+\left\lVert\underline{A}_{k}\left(\nabla W\right)_{\lambda_{j},x_{j}}\right\rVert\lesssim_{q}\lambda_{j}^{\frac{1}{2}},\quad \forall\ j\neq k,
         \end{equation}
         \begin{equation}\label{AK 5}
            \left\lVert A_{k} W_{\lambda_{j},x_{j}}\right\rVert_{L^{\frac{10}{3}}}\lesssim_{q} \lambda_{j}^{\frac{1}{2}},\quad \forall\ j\neq k.
         \end{equation}
         ${\rm{(iv)}}$ Finally, for all $g\in \dot{H}^{1}\cap\dot{H}^{2}$, we have
         \begin{equation}\label{cor loc}
             \langle {\underline{A}}_{k}g,\Delta g\rangle\leq \frac{\eta}{\lambda_{k}}\lVert g\rVert^{2}_{\dot{H}^{1}}-\frac{1}{\lambda_{k}}\int\limits_{|x-x_{k}|<R\lambda_{k}}|\nabla g(x)|^{2}{\rm{d}}x.
         \end{equation}
     \end{lemma}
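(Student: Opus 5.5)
The plan is to reduce every claim to the normalized case $\lambda_k=1$, $x_k=0$ by the scaling $h\mapsto h_{\lambda_k,x_k}$. Then $A_k$ and $\underline{A}_k$ become $A=\frac{3}{10}\Delta q+\nabla q\cdot\nabla$ and $\underline{A}=\frac12\Delta q+\nabla q\cdot\nabla$. The factor $\lambda_k^{-1}$ in \eqref{AK 3} and \eqref{cor loc} is simply the scaling weight of these operators. The main tools are the properties (i)--(v) of $q$ from Lemma \ref{q(x)} and Hardy's inequality $\||x|^{-1}h\|_{L^2}\lesssim\|h\|_{\dot H^1}$ on $\RR^5$. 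The parameters are fixed in this order: for a given $\eta$, first choose $R$ large, then $\epsilon$ small. The constants that depend on $q$ (through $\tilde R$) enter only the bounds \eqref{AK 4}, \eqref{AK 5} and (i), where they are allowed.

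\textbf{Step (i).} $\nabla q$ is compactly supported in $|x|\le\tilde R$ and satisfies $|\nabla q|\lesssim |x|\le \tilde R$, so $\|\nabla q\cdot\nabla h\|_{L^2}\lesssim_q\|h\|_{\dot H^1}$. Also $\Delta q$ is bounded and supported in $|x|\le\tilde R$, hence $|\Delta q(x)|\lesssim \tilde R|x|^{-1}$, and Hardy's inequality controls $\|\Delta q\, h\|_{L^2}$. The operators $\lambda\partial_\lambda A$ and $\lambda\partial_x A$ have coefficients $x\cdot\nabla(\nabla q)$, $\nabla^2 q$, $\nabla\Delta q$, $x\cdot\nabla\Delta q$. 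These are again compactly supported and bounded, using $q\in C^{3,1}$, so the same argument applies.

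\textbf{Step (ii).} This is an integration by parts. Pointwise one has
\[
\nabla q\cdot\nabla\big[F(h+g)-F(h)-f(h)g\big]=\big(f(h+g)-f(h)-f'(h)g\big)\nabla q\cdot\nabla h+\big(f(h+g)-f(h)\big)\nabla q\cdot\nabla g .
\]
Integrating gives $-\int\Delta q\,[F(h+g)-F(h)-f(h)g]$. We then use the homogeneity relations $uf(u)=\frac{10}{3}F(u)$ and $hf'(h)=\frac73 f(h)$. With them, the $\frac{3}{10}\Delta q$ parts of the two sides combine into $\frac{3}{10}\Delta q\,[(h+g)f(h+g)-hf(h)-gf(h)-hf'(h)g]=\Delta q\,[F(h+g)-F(h)-f(h)g]$. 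This exactly cancels the boundary-type term, which is \eqref{sym}.

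\textbf{Step (iii).} On $|x|\le R$ we have $q=\frac12|x|^2$, so $\Delta q=5$, $\frac{3}{10}\Delta q=\frac32$, $\frac12\Delta q=\frac52$, and therefore $A=\Lambda$, $\underline{A}=\underline{\Lambda}$ there. Hence the differences in \eqref{AK 2}--\eqref{AK 3} are supported in $|x|\ge R$. There they are bounded, using $|\nabla q|\lesssim|x|$ and $|\Delta q|\lesssim1$ with constants independent of $\epsilon$ and $R$, by tails of $|x||\nabla \Lambda W|+|\Lambda W|$, of $|x||\nabla^2W|+|\nabla W|$ in $L^2$, and of $|x||\nabla W|+W$ in $L^{10/3}$. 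All of these tails tend to $0$ as $R\to\infty$ by the decay of $W$, so they are $\le\eta$ for $R$ large. For $j\neq k$, the support $|x-x_k|\le\tilde R\lambda_k$ lies at distance $\ge d$ from $x_j$, since $\tilde R\lambda_k\ll d$ for $\delta$ small. There $(\Lambda W)_{\lambda_j,x_j}$ and its gradient are $O(\lambda_j^{3/2})$ pointwise. Integrating over a ball of volume $\sim\lambda_k^5$ and using $\lambda_j\sim\lambda_k$ gives \eqref{AK 4}--\eqref{AK 5}, and even better bounds.

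\textbf{Step (iv).} This is the step I expect to need the most care. A Pohozaev/virial computation gives
\[
\langle \underline{A}g,\Delta g\rangle=-\int \partial_{jl}q\,\partial_jg\,\partial_lg+\frac14\int\Delta^2q\,g^2 .
\]
The main risk is the bookkeeping of the $\frac12\Delta q$ term, which is precisely what removes the $\Delta q|\nabla g|^2$ contribution. On $|x|<R$ the Hessian is the identity, which produces $-\int_{|x|<R}|\nabla g|^2$. Elsewhere property (iv) of $q$ bounds the Hessian term by $\epsilon\|g\|_{\dot H^1}^2$. Property (v) together with Hardy's inequality bounds $\frac14\int\Delta^2q\,g^2\lesssim\epsilon\|g\|_{\dot H^1}^2$. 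Choosing $\epsilon\lesssim\eta$ and rescaling back yields \eqref{cor loc}.
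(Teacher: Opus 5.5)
Your proof is correct. The paper gives no argument of its own for this lemma; it imports it verbatim from \cite[Lemma 18]{JM}. Your reconstruction is the standard proof behind that citation:
\begin{itemize}
\item the reduction to $\lambda_k=1$, $x_k=0$ via $A_k h_{\lambda_k,x_k}=(Ah)_{\underline{\lambda_k,x_k}}$;
\item the integration by parts for \eqref{sym}, closed by $uf(u)=\frac{10}{3}F(u)$ and $uf'(u)=\frac73 f(u)$;
\item localising the differences in \eqref{AK 2}--\eqref{AK 3} to $|x|\geq R$, using the $\epsilon,R$-independent bounds $|\nabla q|\lesssim |x|$ and $|\Delta q|\lesssim 1$;
\item the identity $\langle \underline{A}g,\Delta g\rangle=-\int \partial_{jl}q\,\partial_j g\,\partial_l g+\frac14\int \Delta^2 q\,g^2$, combined with properties (iv)--(v) of $q$ and Hardy's inequality.
\end{itemize}
In particular, $R_1$ comes only from (iii) and $\epsilon_1$ only from (iv), so they can be chosen independently, as the statement requires.

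One small quantifier point concerns \eqref{AK 4}--\eqref{AK 5}. Your requirement $\tilde R\lambda_k\ll d$ makes the needed smallness of $\delta$ depend on $\tilde R$, hence on $\eta$. But $\delta_0$ in Lemma \ref{Mod} is fixed before $\eta$ is chosen. This is harmless, for either of two reasons. First, in the application $\lambda_k(t)\to 0$. Second, the complementary regime $\tilde R\lambda_k\gtrsim d$ can be handled directly: there $\lambda_j^{1/2}\gtrsim_q 1$ by comparability of scales, while the left-hand sides are $\lesssim_q 1$. For the $L^2$ norms this follows from (i). For the $L^{10/3}$ norm it follows from $\|\nabla W_{\lambda_j,x_j}\|_{L^{10/3}}\lesssim \lambda_j^{-1}$ and $\|W_{\lambda_j,x_j}\|_{L^{10/3}}\lesssim 1$, together with $\lambda_j,\lambda_k\gtrsim_q 1$. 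It is worth stating this explicitly so that (iii) holds uniformly for all parameters given by Lemma \ref{Mod}.
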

    We introduce the following modifications of $\lambda_k(t)$ and $b_k(t)$:
     \begin{equation}\label{refine lambda}
         \zeta_{k}(t):=\lambda_{k}(t)-\frac{1}{\lVert \Lambda W\rVert^{2}_{L^{2}}}\bigg\langle\chi\left(\frac{\cdot-x_{k}(t)}{\lambda_{k}(t)M}\right)\left(\Lambda W\right)_{\underline{\lambda_{k},x_{k}}},g(t)\bigg\rangle,
     \end{equation}
     \begin{equation}\label{pk}
     \begin{aligned}
         p_{k}(t):=b_{k}(t)
         &-\frac{1}{\lVert\Lambda W\rVert^{2}_{L^{2}}}\frac{b_{k}(t)}{\lambda_{k}(t)}\bigg\langle \chi\left(\frac{\cdot-x_{k}(t)}{\lambda_{k}(t)M}\right)({\underline{\Lambda}}\Lambda W)_{\underline{\lambda_{k},x_{k}}},g\bigg\rangle \\
         &+\frac{1}{\lVert \Lambda W\rVert^{2}_{L^{2}}}\langle \dot{g},{\underline{A}}_{k}g\rangle,
         \end{aligned}
     \end{equation}
     where $\chi\in C^{\infty}_{c}(\RR^{5})$ satisfies $\chi\equiv 1$ on $|x|\leq 1$, ${\rm{supp}}\chi\subset \{|x|\leq 2\}$, and $M>0$ is a sufficiently large constant to be determined below.\\
     
We are now ready to state the main modulation estimates.
 \begin{lemma}\label{some refined estimates}
 Let $\lambda_{k}(t)$, $x_{k}(t)$, $b_{k}(t)$ be given by lemma \ref{Mod}, let $\zeta_{k}(t)$ be given by (\ref{refine lambda}), and  let $p_{k}(t)$ be given by (\ref{pk}). Then 
         \begin{equation}\label{zetalam}
             \left|\frac{\zeta_{k}(t)}{\lambda_{k}(t)}-1\right|\lesssim \sqrt{M}\lVert g\rVert_{\dot{H}^{1}},
         \end{equation}
         \begin{equation}\label{zetabk}
             \left|\zeta'_{k}(t)+b_{k}(t)\right|\lesssim \frac{1}{\sqrt{M}}\lVert\dot{g}\rVert_{L^{2}}+\sqrt{M}\left(\sum_{k=1}^{K}|b_{k}|\right)\lVert g\rVert_{\dot{H}^{1}},
         \end{equation}
         \begin{equation}\label{pkbk}
             \left|p_{k}(t)-b_{k}(t)\right|\lesssim \sqrt{M}|b_{k}|\lVert g\rVert_{\dot{H}^{1}}+\lVert \dot{g}\rVert_{L^{2}}\lVert g\rVert_{\dot{H}^{1}},
         \end{equation}
         and
         \begin{equation}\label{dervative of pk 1}
             \begin{aligned}
                 p'_{k}(t)\leq &-\kappa \sum_{j\neq k}|z_{j}-z_{k}|^{-3}\lambda^{\frac{3}{2}}_{j}\lambda^{\frac{1}{2}}_{k}\\
                 &-\frac{1}{\lambda_{k}\lVert\Lambda W\rVert^{2}_{L^{2}}}\int\limits_{|x-x_{k}|<R\lambda_{k}}\left(\left|\nabla g(x)\right|^{2}-f'\left(W_{\lambda_{k},x_{k}}\right)g^{2}\right){\rm{d}}x\\
                 & +O\Bigg(\frac{\eta}{\lambda_{k}}\left(\lVert \dot{g}\rVert^{2}_{L^{2}}+\lVert g\rVert^{2}_{\dot{H}^{1}}\right)+\sqrt{M}\frac{\lVert g\rVert_{\dot{H}^{1}}}{\lambda_{k}}\Bigg(\lambda^{3}_{k}+\sum_{j=1}^{K}b^{2}_{j}\Bigg)
                 \\
                 &\quad+\frac{1}{\sqrt{M}\lambda_{k}}\left(\sum_{j=1}^{K}|b_{j}|\right)\lVert \dot{g}\rVert_{L^{2}}+\lambda^{3}_{k}+\lambda^{2}_{k}\left(\sum_{j=1}^{K}|x_{j}-z_{j}|\right)+\sum_{j=1}^{K}b^{2}_{j}\Bigg).
             \end{aligned}
         \end{equation}
         where $\eta>0$ is sufficiently small and $R>0$ is sufficiently large, as specified in Lemma \ref{q(x)} and Lemma \ref{AK}.
     \end{lemma}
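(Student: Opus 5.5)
The plan is to handle the four estimates in order of difficulty. The first three are direct computations. The bound on $p_k'$ is the heart of the lemma: we differentiate $p_k$ and use the equations \eqref{g}, \eqref{dotg} together with the properties of $\underline{A}_k$ from Lemma \ref{AK}.

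\textbf{Static bounds \eqref{zetalam}, \eqref{pkbk}.} By Cauchy--Schwarz and a change of variables,
\[
\left\|\chi\!\left(\tfrac{\cdot-x_k}{\lambda_kM}\right)(\Lambda W)_{\underline{\lambda_k,x_k}}\right\|_{L^{10/7}} \lesssim \lambda_k \left\|\chi(\cdot/M)\Lambda W\right\|_{L^{10/7}}.
\]
Since $\Lambda W\sim|x|^{-3}$, we have $\int_{|x|\le 2M}|x|^{-30/7}\,dx\sim M^{5/7}$, so this norm is $\lesssim \lambda_k M^{1/2}$. Pairing with $g\in L^{10/3}$ and using \eqref{sobolev} gives \eqref{zetalam}. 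The same argument with $\underline{\Lambda}\Lambda W$ (also $O(|x|^{-3})$), together with the boundedness of $\underline{A}_k:\dot H^1\to L^2$ from Lemma \ref{AK}(i), gives \eqref{pkbk}.

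\textbf{Bound \eqref{zetabk}.} Write $\chi_M:=\chi\!\left(\frac{\cdot-x_k}{\lambda_kM}\right)$ and differentiate $\zeta_k$. The main term is $\langle \chi_M(\Lambda W)_{\underline{\lambda_k,x_k}},\partial_t g\rangle$, into which we insert \eqref{g}.
\begin{itemize}
\item The $(b_k+\lambda_k')$ term gives $(b_k+\lambda_k')\bigl(\|\Lambda W\|^2_{L^2}+O(M^{-1})\bigr)$, because $\Lambda W\notin L^2(|x|>M)$ only marginally: $\int_{|x|>M}|x|^{-6}\,dx\sim M^{-1}$. The other-bubble and $x_k'$ contributions are small by Lemma \ref{WW} and parity.
\item The $\dot g$ term is handled by \eqref{ort3}: $\langle\chi_M(\Lambda W)_{\underline{\lambda_k,x_k}},\dot g\rangle=-\langle(1-\chi_M)(\Lambda W)_{\underline{\lambda_k,x_k}},\dot g\rangle=O(M^{-1/2}\|\dot g\|_{L^2})$.
\item The derivatives falling on $\chi_M$ and on the rescaling produce $\frac{|\lambda_k'|+|x_k'|}{\lambda_k}\cdot\lambda_k\sqrt{M}\|g\|_{\dot H^1}$, estimated with \eqref{lamkbk}.
\end{itemize}
After dividing by $\|\Lambda W\|^2_{L^2}$, these combine into \eqref{zetabk}.

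\textbf{Bound \eqref{dervative of pk 1} (main step).} We compute
\[
\frac{d}{dt}\langle \dot g,\underline{A}_k g\rangle=\langle\partial_t\dot g,\underline{A}_k g\rangle+\langle\dot g,\underline{A}_k\partial_t g\rangle+\langle \dot g,(\partial_t\underline{A}_k)g\rangle .
\]
\begin{itemize}
\item In $\langle\dot g,\underline{A}_k\partial_t g\rangle$ we insert \eqref{g}. The term $\langle\dot g,\underline{A}_k\dot g\rangle$ vanishes by antisymmetry of $\underline{A}_k$. The term $(b_k+\lambda_k')\langle\dot g,\underline{A}_k(\Lambda W)_{\lambda_k,x_k}\rangle$ is approximated by $\langle\dot g,(\underline{\Lambda}\Lambda W)_{\underline{\lambda_k,x_k}}\rangle$ via \eqref{AK 2}. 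It then cancels, up to $\eta$-errors, the $\frac{\lambda_k'}{\lambda_k}\langle(\underline{\Lambda}\Lambda W)_{\underline{\lambda_k,x_k}},\dot g\rangle$ term in \eqref{refined bk'}, once $\lambda_k'$ is split as $-b_k+(\lambda_k'+b_k)$. The $-b_k$ piece is exactly what the $\frac{b_k}{\lambda_k}\langle\chi_M\underline{\Lambda}\Lambda W,g\rangle$ correction in $p_k$ is designed to absorb: its time derivative produces $\frac{b_k}{\lambda_k}\langle\chi_M(\underline{\Lambda}\Lambda W)_{\underline{\lambda_k,x_k}},\dot g\rangle$ plus terms of size $\sqrt M\frac{\|g\|}{\lambda_k}(|b_k'|+\dots)$, which are controlled by \eqref{bk'} and yield the $\sqrt M\|g\|(\lambda_k^3+\sum b_j^2)/\lambda_k$ error. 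The $x_k'$ terms cancel analogously.
\item In $\langle\partial_t\dot g,\underline{A}_k g\rangle$ we use \eqref{dotg}. The term $\langle\Delta g,\underline{A}_k g\rangle$ is bounded by \eqref{cor loc}, giving the $-\frac1{\lambda_k}\int_{|x-x_k|<R\lambda_k}|\nabla g|^2$ term. For the nonlinear part we apply the symmetry identity \eqref{sym} with $h=\sum_j W_{\lambda_j,x_j}$. This turns $\langle \underline{A}_k g, f(h+g)-f(h)\rangle$ into the $f'(W_{\lambda_k,x_k})g^2$ localized term, using $\underline A_k-A_k=\frac{1}{5\lambda_k}\Delta q(\cdot)$ and $\Delta q=5$ on $|x|\le R$, plus a term $-\langle A_k h, f(h+g)-f(h)-f'(h)g\rangle$. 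By \eqref{AK 3} the latter matches, to leading order, the quadratic term (III) in \eqref{refined bk'}, so it cancels.
\item The cross-bubble pieces are small by \eqref{AK 4}, \eqref{AK 5}. The $b_k'$ and $\frac{b_j\lambda_j'}{\lambda_j}$ terms of \eqref{dotg}, paired with $\underline{A}_kg$, are bounded via \eqref{bk'}.
\end{itemize}

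The main obstacle is the bookkeeping of these cancellations. Each leading-order term must cancel exactly: the $\dot g$-linear terms of size $\frac{|b|}{\lambda}\|\dot g\|$ and the quadratic nonlinear term. We also need the $\chi_M$ cutoffs (a large $M$) and the $\eta$-closeness of $\underline{A}_k$ to $\underline{\Lambda}$ to leave only the stated residuals $\frac{1}{\sqrt M\lambda_k}\sum|b_j|\|\dot g\|$ and $\frac{\eta}{\lambda_k}\|\vec g\|^2$. Finally, we collect all terms with \eqref{refined bk'} for $b_k'$ to obtain the interaction term $-\kappa\sum_{j\ne k}|z_j-z_k|^{-3}\lambda_j^{3/2}\lambda_k^{1/2}$.
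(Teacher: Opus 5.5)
Your proposal is correct and follows the paper's proof essentially step for step. It uses the same $L^{10/7}$ bound $\lesssim\sqrt{M}\lambda_k$ on the truncated profile for \eqref{zetalam} and \eqref{pkbk}, and the same use of \eqref{ort3} and the $O(M^{-1})$ tail $\langle(1-\chi(\cdot/M))\Lambda W,\Lambda W\rangle$ for \eqref{zetabk}. For $p_k'$ you rely on the same three cancellations:
\begin{enumerate}
\item the $(b_k+\lambda_k')$ term against $\underline{A}_k(\Lambda W)_{\underline{\lambda_k,x_k}}$ via \eqref{AK 2};
\item the $b_k$ term against the $\chi_M$-correction, up to an $M^{-1/2}$ tail;
\item the quadratic nonlinear term via \eqref{sym} and \eqref{AK 3}.
\end{enumerate}
As in the paper, \eqref{cor loc} then produces the localized coercive term.

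The one piece you pass over is the non-$\dot g$ part of $\partial_t g$ inside $\frac{b_k}{\lambda_k}\langle\chi_M(\underline{\Lambda}\Lambda W)_{\underline{\lambda_k,x_k}},\partial_t g\rangle$. This term is of critical size $\frac{|b_k|}{\lambda_k}\|\dot g\|_{L^2}$. As in the paper, it is made harmless by $\langle\Lambda W,\underline{\Lambda}\Lambda W\rangle=0$ (antisymmetry of $\underline{\Lambda}$) together with parity, which leaves a factor $O(M^{-1})$.
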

     \begin{proof} 
         \textit{Proof of (\ref{zetalam}).} By the definition of $\zeta_{k}$ in (\ref{refine lambda}),
         \begin{equation*}
             \begin{aligned}
                 \left|\zeta_{k}(t)-\lambda_{k}(t)\right|&=\frac{1}{\lVert \Lambda W\rVert^{2}_{L^{2}}}\left|\bigg\langle\chi\left(\frac{\cdot-x_{k}(t)}{\lambda_{k}(t) M}\right)(\Lambda W)_{\underline{\lambda_{k},x_{k}}},g\bigg\rangle\right|\\
                 &\lesssim \Bigg\lVert\chi\left(\frac{\cdot-x_{k}}{\lambda_{k}M}\right)(\Lambda W)_{\underline{\lambda_{k},x_{k}}}\Bigg\rVert_{L^{\frac{10}{7}}}\lVert g\rVert_{\dot{H}^{1}}
             \end{aligned}
         \end{equation*}
         and 
         \begin{equation*}
         \begin{aligned}
             \Bigg\lVert\chi\left(\frac{\cdot-x_{k}}{\lambda_{k}M}\right)(\Lambda W)_{\underline{\lambda_{k},x_{k}}}\Bigg\rVert_{L^{\frac{10}{7}}}&\lesssim\left(\int\limits_{|x-x_{k}|\leq 2\lambda_{k} M}\left|\frac{1}{\lambda_{k}^{\frac{5}{2}}}W\left(\frac{x-x_{k}}{\lambda_{k}}\right)\right|^{\frac{10}{7}}{\rm{d}} x\right)^{\frac{7}{10}}\\
             &=\left(\int\limits_{|y|\leq 2M} W^\frac{10}{7}(y)\lambda_{k}^{\frac{10}{7}}{\rm{d}}y\right)^{\frac{7}{10}}\lesssim \sqrt{M}\lambda_{k}.
             \end{aligned}
         \end{equation*}
         Thus, (\ref{zetalam}) follows.\\
         \textit{Proof of (\ref{zetabk}).} From (\ref{g}), we have
         \begin{align*}
             \zeta'_{k}(t)=&\lambda'_{k}(t)-\frac{1}{\lVert\Lambda W\rVert^{2}_{L^{2}}}\bigg(-\frac{\lambda'_{k}}{\lambda_{k}}\bigg\langle\chi\left(\frac{\cdot-x_{k}}{\lambda_{k}M}\right)(\underline{\Lambda}\Lambda W)_{\underline{\lambda_{k},x_{k}}},g\bigg\rangle\\
             &-\frac{x'_{k}}{\lambda_{k}}\cdot\bigg\langle\chi\left(\frac{\cdot-x_{k}}{\lambda_{k}M}\right)(\nabla \Lambda W)_{\underline{\lambda_{k},x_{k}}},g\bigg\rangle\\
             &-\frac{x'_{k}}{\lambda_{k}M}\cdot\bigg\langle\left(\nabla \chi\right)\left(\frac{\cdot-x_{k}}{\lambda_{k}M}\right)(\Lambda W)_{\underline{\lambda_{k},x_{k}}},g\bigg\rangle\\
             &-\frac{\lambda'_{k}}{\lambda_{k}}\bigg\langle(x\cdot\nabla \chi)\left(\frac{\cdot-x_{k}}{\lambda_{k}M}\right)(\Lambda W)_{\underline{\lambda_{k},x_{k}}},g\bigg\rangle+\bigg\langle\chi\left(\frac{\cdot-x_{k}}{\lambda_{k}M}\right)(\Lambda W)_{\underline{\lambda_{k},x_{k}}},\dot{g}\bigg\rangle\\
    &+\sum_{j=1}^{K}\left(b_{j}+\lambda'_{j}\right)\bigg\langle(\Lambda W)_{\underline{\lambda_{j},x_{j}}},\chi\left(\frac{\cdot-x_{k}}{\lambda_{k}M}\right)(\Lambda W)_{\underline{\lambda_{k},x_{k}}}\bigg\rangle\\
             &+\sum_{j=1}^{K} x'_{j}\cdot\bigg\langle(\nabla W)_{\underline{\lambda_{j},x_{j}}},\chi\left(\frac{\cdot-x_{k}}{\lambda_{k}M}\right)(\Lambda W)_{\underline{\lambda_{k},x_{k}}}\bigg\rangle\bigg).
         \end{align*}
         We now estimate the terms on the right-hand side separately. We keep the first term. For the second term, from (\ref{lamkbk}),
         \begin{equation*}
             \begin{aligned}
                 \left|\frac{\lambda'_{k}}{\lambda_{k}}\bigg\langle\chi\left(\frac{\cdot-x_{k}}{\lambda_{k}M}\right)(\underline{\Lambda} \Lambda W)_{\underline{\lambda_{k},x_{k}}},g\bigg\rangle\right|&\lesssim\left|\lambda'_{k}\right|\sqrt{M}\lVert g\rVert_{\dot{H}^{1}}\\
                 &\lesssim\sqrt{M}\left(|b_{k}|\lVert g\rVert_{\dot{H}^{1}}+\lVert \dot{g}\rVert_{L^{2}}\lVert g\rVert_{\dot{H}^{1}}\right).
             \end{aligned}
         \end{equation*}
         For the third through fifth terms, arguing in the same manner, we obtain
         \begin{equation*}
             \left|\frac{x'_{k}}{\lambda_{k}}\cdot\bigg\langle\chi\left(\frac{\cdot-x_{k}}{\lambda_{k}M}\right)(\nabla \Lambda W)_{\underline{\lambda_{k},x_{k}}}, g\bigg\rangle\right|\lesssim \lVert\dot{g}\rVert_{L^{2}}\lVert g\rVert_{\dot{H}^{1}}+\sum_{k=1}^{K}|b_{k}|\lVert g\rVert^{2}_{\dot{H}^{1}},
         \end{equation*}
         \begin{equation*}
             \left|\frac{x'_{k}}{\lambda_{k}M}\bigg\langle\left(\nabla \chi\right)\left(\frac{\cdot-x_{k}}{\lambda_{k}M}\right)(\Lambda W)_{\underline{\lambda_{k},x_{k}}},g\bigg\rangle\right|\lesssim\frac{1}{\sqrt{M}}\left(\lVert \dot{g}\rVert_{L^{2}}\lVert g\rVert_{\dot{H}^{1}}+\sum_{k=1}^{K}|b_{k}|\lVert g\rVert_{\dot{H}^{1}}\right),
         \end{equation*}
         \begin{equation*}
             \left|\frac{\lambda'_{k}}{\lambda_{k}}\bigg\langle(x\cdot\nabla \chi)\left(\frac{\cdot-x_{k}}{\lambda_{k}M}\right)(\Lambda W)_{\underline{\lambda_{k},x_{k}}},g\bigg\rangle\right|\lesssim\sqrt{M}\left(|b_{k}|\lVert g\rVert_{\dot{H}^{1}}+\lVert\dot{g}\rVert_{L^{2}}\lVert g\rVert_{\dot{H}^{1}}\right).
         \end{equation*}
         For the sixth term, from the orthogonality condition $\big\langle(\Lambda W)_{\underline{\lambda_{k},x_{k}}},\dot{g}\big\rangle=0$, 
         \begin{equation*}
             \begin{aligned}
                 &\left|\bigg\langle\chi\left(\frac{\cdot-x_{k}}{\lambda_{k}M}\right)(\Lambda W)_{\underline{\lambda_{k},x_{k}}},\dot{g}\bigg\rangle\right|\\
                 =&\left|\bigg\langle(1-\chi)\left(\frac{\cdot-x_{k}}{\lambda_{k}M}\right)(\Lambda W)_{\underline{\lambda_{k},x_{k}}},\dot{g}\bigg\rangle\right|\\
                 \lesssim&\left\lVert(1-\chi)\left(\frac{\cdot-x_{k}}{\lambda_{k}M}\right)(\Lambda W)_{\underline{\lambda_{k},x_{k}}}\right\rVert_{L^{2}}\lVert \dot{g}\rVert_{L^{2}}\\
                 \lesssim&\left(\int_{|y|\geq M}W^{2}(y){\rm{d}}y\right)^{\frac{1}{2}}\lVert\dot{g}\rVert_{L^{2}}\lesssim\frac{1}{\sqrt{M}}\lVert \dot{g}\rVert_{L^{2}}.
             \end{aligned}
         \end{equation*}
         For the last term, using \eqref{lamkbk} and the fact that $\big\langle\nabla W, \chi \Lambda W\big\rangle=0$,
         \begin{equation*}
             \begin{aligned}
                &\left| \sum_{j=1}^{K} x'_{j}\cdot\bigg\langle(\nabla W)_{\underline{\lambda_{j},x_{j}}},\chi\left(\frac{\cdot-x_{k}}{\lambda_{k}M}\right)(\Lambda W)_{\underline{\lambda_{k},x_{k}}}\bigg\rangle\right|\\
                \lesssim&\sum_{j\neq k}\left|x'_{j}\right|\big\langle\left|(\Lambda W)_{\underline{\lambda_{k},x_{k}}}\right|,\left|\nabla W\right|_{\underline{\lambda_{j},x_{j}}}\big\rangle\lesssim\lambda_{k}^{2}\left(\lVert \dot{g}\rVert_{L^{2}}+\sum_{k=1}^{K}|b_{k}|\lVert g\rVert_{\dot{H}^{1}}\right).
             \end{aligned}
         \end{equation*}
         Finally, we estimate
         \begin{align*}  &\left|\sum_{j=1}^{K}\left(b_{j}+\lambda'_{j}\right)\bigg\langle(\Lambda W)_{\underline{\lambda_{j},x_{j}}},\chi\left(\frac{\cdot-x_{k}}{\lambda_{k}M}\right)(\Lambda W)_{\underline{\lambda_{k},x_{k}}}\bigg\rangle-\lVert\Lambda W\rVert^{2}_{L^{2}}\left(b_{k}+\lambda'_{k}\right)\right|\\
        \leq&\left|b_{k}+\lambda'_{k}\right|\left|\bigg\langle\left(1-\chi\right)\left(\frac{\cdot-x_{k}}{\lambda_{k}M}\right)(\Lambda W)_{\underline{\lambda_{k},x_{k}}},(\Lambda W)_{\underline{\lambda_{k},x_{k}}}\bigg\rangle\right|\\
             &\quad+\sum_{j\neq k}\left|b_{j}+\lambda'_{j}\right|\left|\bigg\langle(\Lambda W)_{\underline{\lambda_{j},x_{j}}},\chi\left(\frac{\cdot-x_{k}}{\lambda_{k}M}\right)(\Lambda W)_{\underline{\lambda_{k},x_{k}}}\bigg\rangle\right|\\
             \lesssim&\sum_{j\neq k}\left|b_{j}+\lambda'_{j}\right|\lambda_{j}^{\frac{1}{2}}\lambda_{k}^{\frac{1}{2}}+\left|b_{k}+\lambda'_{k}\right|\frac{1}{M}\lesssim\frac{1}{M}\left(\lVert \dot{g}\rVert_{L^{2}}+\sum_{k=1}^{K}|b_{k}|\lVert g\rVert_{\dot{H}^{1}}\right).
         \end{align*}
         Combining the above estimates gives (\ref{zetabk}).\\
         \textit{Proof of (\ref{pkbk}).} By the definition of $p_{k}$ in (\ref{pk}) and Lemma \ref{AK},
         \begin{equation*}
             \left|p_{k}(t)-b_{k}(t)\right|\lesssim \sqrt{M}|b_{k}|\lVert g\rVert_{\dot{H}^{1}}+\lVert \dot{g}\rVert_{L^{2}}\lVert g\rVert_{\dot{H}^{1}}
         \end{equation*}
         which is (\ref{pkbk}).\\
         \textit{Proof of (\ref{dervative of pk 1}).} Differentiating \eqref{pk} yields
\begin{align*}
    	p'_{k}(t)=&b'_{k}(t)-\frac{1}{\lVert\Lambda W\rVert^{2}_{L^{2}}}\bigg(\frac{b_{k}}{\lambda_{k}}\left\langle\partial_{t}g,\chi\left(\frac{\cdot-x_{k}}{\lambda_{k}M}\right)(\underline{\Lambda}\Lambda W)_{\underline{\lambda_{k},x_{k}}}\right\rangle\notag\\
		&+\frac{b'_{k}}{\lambda_{k}}\left\langle g,\chi\left(\frac{\cdot-x_{k}}{\lambda_{k}M}\right)(\underline{\Lambda} \Lambda W)_{\underline{\lambda_{k},x_{k}}}\right\rangle\\
        &-\frac{b_{k}\lambda'_{k}}{\lambda_{k}^{2}}\left\langle g,\chi\left(\frac{\cdot-x_{k}}{\lambda_{k}M}\right)(\underline{\Lambda}\Lambda W)_{\underline{\lambda_{k},x_{k}}}\right\rangle\notag\\
		&-\frac{b_{k}\lambda'_{k}}{\lambda^{2}_{k}}\left\langle g, \chi\left(\frac{\cdot-x_{k}}{\lambda_{k}M}\right)(\underline{\Lambda}\underline{\Lambda}\Lambda W)_{\underline{\lambda_{k},x_{k}}}\right\rangle\\
		&-\frac{b_{k}x'_{k}}{\lambda_{k}^{2}}\cdot\left\langle g,\chi\left(\frac{\cdot-x_{k}}{\lambda_{k}M}\right)(\nabla \underline{\Lambda}\Lambda W)_{\underline{\lambda_{k},x_{k}}}\right\rangle\notag\\
		&-\frac{b_{k}x'_{k}}{\lambda^{2}_{k}M}\cdot\left\langle g,(\nabla \chi)\left(\frac{\cdot-x_{k}}{\lambda_{k}M}\right)(\underline{\Lambda}\Lambda W)_{\underline{\lambda_{k},x_{k}}}\right\rangle\notag\\
		&-\frac{b_{k}\lambda'_{k}}{\lambda_{k}^{2}}\left\langle(x\cdot \nabla \chi)\left(\frac{\cdot-x_{k}}{\lambda_{k}M}\right)(\underline{\Lambda}\Lambda W)_{\underline{\lambda_{k},x_{k}}}\right\rangle\bigg)+\frac{1}{\lVert\Lambda W\rVert^{2}_{L^{2}}}\bigg(\left\langle\partial_{t}\dot{g},\underline{A}_{k}g\right\rangle\notag\\
		&+\left\langle\dot{g},\underline{A}_{k}(\partial_{t}g)\right\rangle+\frac{\lambda'_{k}}{\lambda_{k}}\left\langle\dot{g},\lambda_{k}\partial_{\lambda_{k}}\underline{A}_{k} g\right\rangle+\frac{x'_{k}}{\lambda_{k}}\cdot\left\langle\dot{g},\lambda_{k}\partial_{x_{k}}\underline{A}_{k} g\right\rangle\bigg)\notag.
\end{align*}
             Substituting \eqref{g}, \eqref{dotg}, and (\ref{refined bk'}), we obtain
             \begin{align}\label{expand for pk'}
            	p_{k}'(t)=&-\kappa\sum_{j\neq k}|z_{j}-z_{k}|^{-3}\lambda_{k}^{\frac{1}{2}}\lambda_{j}^{\frac{3}{2}}-\frac{\lambda'_{k}}{\lambda_{k}}\frac{\langle(\underline{\Lambda}\Lambda W)_{\underline{\lambda_{k},x_{k}}},\dot{g}\rangle}{\lVert \Lambda W\rVert^{2}_{L^{2}}}-\frac{x'_{k}}{\lambda_{k}}\cdot\frac{\langle (\nabla \Lambda W)_{\underline{\lambda_{k},x_{k}}},\dot{g}\rangle}{\lVert\Lambda W\rVert^{2}_{L^{2}}}\notag\\
	&+\frac{1}{\lVert \Lambda W\rVert^{2}_{L^{2}}}\bigg\langle f\left(\sum_{k=1}^{K}W_{\lambda_{k},x_{k}}+g\right)-f\left(\sum_{k=1}^{K}W_{\lambda_{k},x_{k}}\right)-f'\left(\sum_{k=1}^{K}W_{\lambda_{k},x_{k}}\right)g,\notag\\
	&\left(\Lambda W\right)_{\underline{{\lambda_{k},x_{k}}}}\bigg\rangle+O\left(\lambda_{k}^{3}+\lambda_{k}^{2}\left(\sum_{j=1}^{K}|x_{j}-z_{j}|+\lVert g\rVert_{\dot{H}^{1}}\right)+\sum_{k=1}^{K}\left(b_{k}^{2}+|b_{k}|\lVert\dot{g}\rVert_{L^{2}}\right)\right)\notag\\
	&-\frac{1}{\lVert \Lambda W\rVert^{2}_{L^{2}}}
	\bigg(\frac{b_{k}}{\lambda_{k}}
	\bigg\langle\dot{g}
	+\sum_{j=1}^{K}(b_{j}+\lambda'_{j})(\Lambda W)_{\underline{\lambda_{j},x_{j}}}
	+\sum_{j=1}^{K}x'_{j}\cdot(\nabla W)_{\underline{\lambda_{j},x_{j}}},\notag\\
	&\chi\left(\frac{\cdot-x_{k}}{\lambda_{k}M}\right)(\underline{\Lambda}\Lambda W)_{\underline{\lambda_{k},x_{k}}}\bigg\rangle+\frac{b'_{k}}{\lambda_{k}}\left\langle g,\chi\left(\frac{\cdot-x_{k}}{\lambda_{k}M}\right)(\underline{\Lambda} \Lambda W)_{\underline{\lambda_{k},x_{k}}}\right\rangle\notag\\
	&-\frac{b_{k}\lambda'_{k}}{\lambda_{k}^{2}}\left\langle g,\chi\left(\frac{\cdot-x_{k}}{\lambda_{k}M}\right)(\underline{\Lambda}\Lambda W)_{\underline{\lambda_{k},x_{k}}}\right\rangle-\frac{b_{k}\lambda'_{k}}{\lambda^{2}_{k}}\left\langle g, \chi\left(\frac{\cdot-x_{k}}{\lambda_{k}M}\right)(\underline{\Lambda}\underline{\Lambda}\Lambda W)_{\underline{\lambda_{k},x_{k}}}\right\rangle\notag\\
	&-\frac{b_{k}x'_{k}}{\lambda_{k}^{2}}\cdot\left\langle g,\chi\left(\frac{\cdot-x_{k}}{\lambda_{k}M}\right)(\nabla \underline{\Lambda}\Lambda W)_{\underline{\lambda_{k},x_{k}}}\right\rangle\notag\\
	&-\frac{b_{k}x'_{k}}{\lambda^{2}_{k}M}\cdot\left\langle g,(\nabla \chi)\left(\frac{\cdot-x_{k}}{\lambda_{k}M}\right)(\underline{\Lambda}\Lambda W)_{\underline{\lambda_{k},x_{k}}}\right\rangle\notag\\
	&-\frac{b_{k}\lambda'_{k}}{\lambda_{k}^{2}}\left\langle g, ( x\cdot \nabla \chi)\left(\frac{\cdot-x_{k}}{\lambda_{k}M}\right)(\underline{\Lambda}\Lambda W)_{\underline{\lambda_{k},x_{k}}}\right\rangle\bigg)\notag\\
	&+\frac{1}{\lVert\Lambda W\rVert^{2}_{L^{2}}}\bigg(\bigg\langle\Delta g
	+f\left(\sum_{k=1}^{K} W_{\lambda_{k},x_{k}}+g\right)-\sum\limits_{k=1}^{K}f\left(W_{\lambda_{k},x_{k}}\right)\notag\\
	&-\sum\limits_{k=1}^{K}b'_{k}\left(\Lambda W\right)_{\underline{\lambda_{k},x_{k}}}+\sum\limits_{k=1}^{K}\frac{b_{k}\lambda'_{k}}{\lambda_{k}}\left({\underline{\Lambda}}\Lambda W\right)_{\underline{\lambda_{k},x_{k}}}+\sum_{k=1}^{K}\frac{b_{k}x'_{k}}{\lambda_{k}}\cdot \left(\nabla \Lambda W\right)_{\underline{\lambda_{k},x_{k}}},\underline{A}_{k}g\bigg\rangle\notag\\
	&+\left\langle\dot{g}, \underline{A}_{k}\left(\sum\limits_{k=1}^{K}(b_{k}+\lambda'_{k})\left(\Lambda W\right)_{\underline{\lambda_{k},x_{k}}}+\sum\limits_{k=1}^{K}x'_{k}\cdot\left(\nabla W\right)_{\underline{\lambda_{k},x_{k}}}+\dot{g}\right)\right\rangle\notag\\
	&+\frac{\lambda'_{k}}{\lambda_{k}}\left\langle\dot{g},\lambda_{k}\partial_{\lambda_{k}}\underline{A}_{k} g\right\rangle+\frac{x'_{k}}{\lambda_{k}}\cdot\left\langle\dot{g},\lambda_{k}\partial_{x_{k}}\underline{A}_{k} g\right\rangle\bigg).
             \end{align}
      We now estimate the terms on the right-hand side separately. First, using the orthogonality conditions $\left\langle\Lambda W,\underline{\Lambda}\Lambda W\right\rangle=0$ and $\left\langle\chi\underline{\Lambda}\Lambda W,\nabla W\right\rangle=0$,  we have
        \begin{align*}
		&\left|\frac{b_{k}}{\lambda_{k}}\left\langle\sum_{j=1}^{K}(b_{j}+\lambda'_{j})(\Lambda W)_{\underline{\lambda_{j},x_{j}}}
		+\sum_{j=1}^{K}x'_{j}\cdot(\nabla W)_{\underline{\lambda_{j},x_{j}}},
		\chi\left(\frac{\cdot-x_{k}}{\lambda_{k}M}\right)\left(\underline{\Lambda}\Lambda W\right)_{\underline{\lambda_{k},x_{k}}}
		\right\rangle\right|\\
		\lesssim&\frac{|b_{k}|}{\lambda_{k}}\bigg(\sum_{j\neq k}\left|b_{j}+\lambda'_{j}\right|\left|\left\langle(\Lambda W)_{\underline{\lambda_{j},x_{j}}},\chi\left(\frac{\cdot-x_{k}}{\lambda_{k}M}\right)\left(\underline{\Lambda}\Lambda W\right)_{\underline{\lambda_{k},x_{k}}}\right\rangle\right|\\
		&+\left|b_{k}+\lambda'_{k}\right|\left|\left\langle(\Lambda W)_{\underline{\lambda_{k},x_{k}}},(1-\chi)\left(\frac{\cdot-x_{k}}{\lambda_{k}M}\right)\left(\underline{\Lambda}\Lambda W\right)_{\underline{\lambda_{k},x_{k}}}\right\rangle\right|\\
		&+\sum_{j\neq k}\left|x'_{j}\right|\left|\left\langle(\nabla W)_{\underline{\lambda_{j},x_{j}}},\chi\left(\frac{\cdot-x_{k}}{\lambda_{k}M}\right)\left(\underline{\Lambda}\Lambda W\right)_{\underline{\lambda_{k},x_{k}}}\right\rangle\right|\bigg)\\
		\lesssim&\frac{|b_{k}|}{\lambda_{k}}\left(\left(\lVert \dot{g}\rVert_{L^{2}}+\sum_{k=1}^{K}|b_{k}|\lVert g\rVert_{\dot{H}^{1}}\right)\left(\lambda_{j}^{\frac{1}{2}}\lambda_{k}^{\frac{1}{2}}+\frac{1}{M}\right)\right)\\
		\lesssim&\frac{|b_{k}|}{M\lambda_{k}}\left(\lVert \dot{g}\rVert_{L^{2}}+\sum_{k=1}^{K}|b_{k}|\lVert g\rVert_{\dot{H}^{1}}\right).
	\end{align*}
    Then, by computations similar to those in the proof of \eqref{zetabk}, combined with \eqref{lamkbk} and \eqref{bk'}, we obtain
  	\begin{align*}
		&\left|\frac{b'_{k}}{\lambda_{k}}\left\langle g,\chi\left(\frac{\cdot-x_{k}}{\lambda_{k}M}\right)(\underline{\Lambda} \Lambda W)_{\underline{\lambda_{k},x_{k}}}\right\rangle\right|
		+\left|\frac{b_{k}\lambda'_{k}}{\lambda_{k}^{2}}\left\langle g,\chi\left(\frac{\cdot-x_{k}}{\lambda_{k}M}\right)(\underline{\Lambda}\Lambda W)_{\underline{\lambda_{k},x_{k}}}\right\rangle\right|+\\
		&\left|\frac{b_{k}\lambda'_{k}}{\lambda^{2}_{k}}\left\langle g, \chi\left(\frac{\cdot-x_{k}}{\lambda_{k}M}\right)(\underline{\Lambda}\underline{\Lambda}\Lambda W)_{\underline{\lambda_{k},x_{k}}}\right\rangle\right|
		+\left|\frac{b_{k}x'_{k}}{\lambda_{k}^{2}}\cdot\left\langle g,\chi\left(\frac{\cdot-x_{k}}{\lambda_{k}M}\right)(\nabla \underline{\Lambda}\Lambda W)_{\underline{\lambda_{k},x_{k}}}\right\rangle\right|\\
		&+\left|\frac{b_{k}x'_{k}}{\lambda^{2}_{k}M}\cdot\left\langle g,(\nabla \chi)\left(\frac{\cdot-x_{k}}{\lambda_{k}M}\right)(\underline{\Lambda}\Lambda W)_{\underline{\lambda_{k},x_{k}}}\right\rangle\right|\\
		&+\left|\frac{b_{k}\lambda'_{k}}{\lambda_{k}^{2}}\left\langle g,(x\cdot \nabla \chi)\left(\frac{\cdot-x_{k}}{\lambda_{k}M}\right)(\underline{\Lambda}\Lambda W)_{\underline{\lambda_{k},x_{k}}}\right\rangle\right|\\
		\lesssim&\sqrt{M}\Bigg[\left(\lambda_{k}^{2}+\frac{1}{\lambda_{k}}\left(\lVert \dot{g}\rVert^{2}_{L^{2}}+ \lVert g\rVert^{2}_{\dot{H}^{1}}\right)+\frac{1}{\lambda_{k}}\sum_{k=1}^{K}|b_{k}|\lVert\dot{g}\rVert_{L^{2}}
		+\sum_{k=1}^{K}b^{2}_{k}\right)\lVert g\rVert_{\dot{H}^{1}}\\
		&+\frac{b^{2}_{k}+|b_{k}|\left(\lVert\dot{g}\rVert_{L^{2}}+\sum\limits_{k=1}^{K}|b_{k}|\lVert g\rVert_{\dot{H}^{1}}\right)}{\lambda_{k}}\lVert g\rVert_{\dot{H}^{1}}\Bigg]\\
		\lesssim&\sqrt{M}\lVert g\rVert_{\dot{H}^{1}}\left(\lambda_{k}^{2}+\frac{b^{2}_{k}}{\lambda_{k}}+\sum_{k=1}^{K}b_{k}^{2}\right.\\
        &\left.+\frac{1}{\lambda_{k}}\left(\lVert \dot{g}\rVert^{2}_{L^{2}}+\lVert g\rVert^{2}_{\dot{H}^{1}}+\sum_{k=1}^{K}|b_{k}|\lVert\dot{g}\rVert_{L^{2}}+\sum_{k=1}^{K}b^{2}_{k}\lVert g\rVert_{\dot{H}^{1}}\right)\right).
	\end{align*}
From Lemma \ref{AK}, (\ref{lamkbk}), and  (\ref{bk'}), 
	\begin{align*}
		&\left|\frac{\lambda'_{k}}{\lambda_{k}}\left\langle\dot{g},\lambda_{k}\partial_{\lambda_{k}}\underline{A}_{k} g\right\rangle\right|+\left|\frac{x'_{k}}{\lambda_{k}}\cdot\left\langle\dot{g},\lambda_{k}\partial_{x_{k}}\underline{A}_{k} g\right\rangle\right|+\sum_{j=1}^{K}\left|b'_{j}\left\langle(\Lambda W)_{\underline{\lambda_{j},x_{j}}},\underline{A}_{k} g\right\rangle\right|\\
		&\sum_{j=1}^{K}\left|\frac{b_{j}\lambda'_{j}}{\lambda_{j}}\left\langle(\underline{\Lambda}\Lambda W)_{\underline{\lambda_{j},x_{j}}},\underline{A}_{k} g\right\rangle\right|+\sum_{j=1}^{K}\left|\frac{b_{j}x'_{j}}{\lambda_{j}}\cdot\left\langle(\nabla \Lambda W)_{\underline{\lambda_{j},x_{j}}},\underline{A}_{k}g\right\rangle\right|\\
\lesssim&\frac{|b_{k}|+\lVert\dot{g}\rVert_{L^{2}}+\sum\limits_{j=1}^{K}|b_{j}|\lVert g\rVert_{\dot{H}^{1}}}{\lambda_{k}}\lVert\dot{g}\rVert_{L^{2}}\lVert g\rVert_{\dot{H}^{1}}+\lambda_{k}^{2}\lVert g\rVert_{\dot{H}^{1}}+\frac{1}{\lambda_{k}}\left(\lVert\dot{g}\rVert^{2}_{L^{2}}+\lVert g\rVert^{2}_{\dot{H}^{1}}\right)\lVert g\rVert_{\dot{H}^{1}}\\
		&+\frac{1}{\lambda_{k}}\sum_{j=1}^{K}b^{2}_{j}\lVert g\rVert_{\dot{H}^{1}}+\frac{1}{\lambda_{k}}\sum_{j=1}^{K}|b_{j}|\lVert\dot{g}\rVert_{L^{2}}\lVert g\rVert_{\dot{H}^{1}}\\
		\lesssim &\lambda_{k}^{2}\lVert g\rVert_{\dot{H}^{1}}+\frac{1}{\lambda_{k}}\left(\lVert\dot{g}\rVert^{2}_{L^{2}}+\lVert g\rVert^{2}_{\dot{H}^{1}}\right)\lVert g\rVert_{\dot{H}^{1}}
		+\frac{1}{\lambda_{k}}\sum_{j=1}^{K}b^{2}_{j}\lVert g\rVert_{\dot{H}^{1}}.
		%+\frac{1}{\lambda_{k}}\sum_{j=1}^{K}|b_{j}|\lVert\dot{g}\rVert_{L^{2}}\lVert g\rVert_{\dot{H}^{1}}
	\end{align*}
    Substituting the above estimates into \eqref{expand for pk'} and using $\langle\dot{g},\underline{A}_{k}\dot{g}\rangle=0$ (via integration by parts), we have
	\begin{align}\label{der for pk}
		p'_{k}(t)=&-\kappa\sum_{j\neq k}|z_{j}-z_{k}|^{-3}\lambda_{j}^{\frac{3}{2}}\lambda_{k}^{\frac{1}{2}}+\frac{1}{\lVert \Lambda W\rVert^{2}_{L^{2}}}\left((I)+(II)+(III)+(IV)\right)\notag\\
		&+O\Biggl(\sqrt{M}\frac{\lVert g\rVert_{\dot{H}^{1}}}{\lambda_{k}}\Biggl(\lambda^{3}_{k}+\sum_{j=1}^{K}b^{2}_{j}+\lVert \dot{g}\rVert^{2}_{L^{2}}+\lVert g\rVert^{2}_{\dot{H}^{1}}\Biggl)\notag\\
		&+\frac{1}{M\lambda_{k}}\sum_{j=1}^{K}|b_{j}|\lVert\dot{g}\rVert_{L^{2}}
		+\lambda_{k}^{3}+\lambda_{k}^{2}\Biggl(\sum_{j=1}^{K}|x_{j}-z_{j}|\Biggl)+\sum_{j=1}^{K}b^{2}_{j}\Biggl),
	\end{align}
    where
\begin{align*}
	(I)=&-\frac{\lambda'_{k}}{\lambda_{k}}\langle(\underline{\Lambda}\Lambda W)_{\underline{\lambda_{k},x_{k}}},\dot{g}\rangle-\frac{b_{k}}{\lambda_{k}}\left\langle\chi\left(\frac{\cdot-x_{k}}{\lambda_{k}M}\right)(\underline{\Lambda}\Lambda W)_{\underline{\lambda_{k},x_{k}}},\dot{g}\right\rangle\\
	&+\sum_{j=1}^{K}\left(b_{j}+\lambda'_{j}\right)\left\langle\dot{g},\underline{A}_{k}(\Lambda W)_{\underline{\lambda_{j},x_{j}}}\right\rangle,\\
	(II)=&-\frac{x'_{k}}{\lambda_{k}}\cdot\langle(\nabla \Lambda W)_{\underline{\lambda_{k},x_{k}}},\dot{g}\rangle+\sum_{j=1}^{K}x'_{j}\cdot\left\langle\dot{g},\underline{A}_{k}(\nabla W)_{\underline{\lambda_{j},x_{j}}}\right\rangle,\\
	(III)=&\bigg\langle f\left(\sum_{k=1}^{K}W_{\lambda_{k},x_{k}}+g\right)-f\left(\sum_{k=1}^{K}W_{\lambda_{k},x_{k}}\right)-f'\left(\sum_{k=1}^{K}W_{\lambda_{k},x_{k}}\right)g,
	\left(\Lambda W\right)_{\underline{{\lambda_{k},x_{k}}}}\bigg\rangle,\\
	(IV)=&\left\langle\Delta g+f\left(\sum_{j=1}^{K}W_{\lambda_{j},x_{j}}+g\right)-\sum_{j=1}^{K}f\left( W_{\lambda_{j},x_{j}}\right),\underline{A}_{k}g\right\rangle.
	\end{align*}
 For (I), we have
	\begin{align*}
		|(I)|\lesssim&\frac{|b_{k}+\lambda'_{k}|}{\lambda_{k}}\left|\left\langle\dot{g},\underline{A}_{k}(\Lambda W)_{\lambda_{k},x_{k}}-(\underline{\Lambda}\Lambda W)_{\underline{\lambda_{k},x_{k}}}\right\rangle\right|\\
		& +\sum_{j\neq k}\left|b_{j}+\lambda'_{j}\right|\left|\langle\dot{g},\underline{A}_{k}(\Lambda W)_{\underline{\lambda_{j},x_{j}}}\rangle\right|+\frac{|b_{k}|}{\lambda_{k}}\left|\left\langle\left(1-\chi\right)\left(\frac{\cdot-x_{k}}{\lambda_{k}M}\right)\left(\underline{\Lambda}\Lambda W\right)_{\underline{\lambda_{k},x_{k}}},\dot{g}\right\rangle\right|,
		\end{align*}
        Combining this with \eqref{lamkbk}, \eqref{AK 2}, and \eqref{AK 4} gives
	\begin{align*}
		|(I)|	\lesssim&\frac{\eta}{\lambda_{k}}\lVert\dot{g}\rVert_{L^{2}}\left(\lVert\dot{g}\rVert_{L^{2}}+\sum_{k=1}^{K}|b_{k}|\lVert g\rVert_{\dot{H}^{1}}\right)\\
		&+\left(\lVert\dot{g}\rVert_{L^{2}}+\sum_{k=1}^{K}|b_{k}|\lVert g\rVert_{\dot{H}^{1}}\right)\lVert\dot{g}\rVert_{L^{2}}\lambda_{k}^{-\frac{1}{2}}
		+\frac{|b_{k}|}{\lambda_{k}\sqrt{M}}\lVert\dot{g}\rVert_{L^{2}}\\
		\lesssim& \frac{\eta}{\lambda_{k}}\lVert \dot{g}\rVert^{2}_{L^{2}}+\frac{1}{\sqrt{M}}\sum_{k=1}^{K}\frac{|b_{k}|}{\lambda_{k}}\lVert\dot{g}\rVert_{L^2}.
	\end{align*}
    For  (II), using \eqref{lamkbk}, \eqref{AK 2}, and \eqref{AK 4} again, we have
	\begin{align*}
		|(II)|&\lesssim\frac{|x'_{k}|}{\lambda_{k}}\left|\left\langle\dot{g},(\nabla \Lambda W)_{\underline{\lambda_{k},x_{k}}}-\underline{A}_{k}\left(\nabla W\right)_{\lambda_{k},x_{k}}\right\rangle\right|+\sum_{j\neq k}|x'_{j}|\left|\left\langle A_{k}(\nabla W)_{\underline{\lambda_{j},x_{j}}},\dot{g}\right\rangle\right|\\
		&\lesssim \frac{\eta}{\lambda_{k}}\left(\lVert\dot{g}\rVert^{2}_{L^{2}}+\sum_{j=1}^{K}|b_{j}|\lVert\dot{g}\rVert_{L^2}\lVert g\rVert_{\dot{H}^{1}}\right).
	\end{align*}
    We now estimate term (IV), which can be rewritten as
\begin{align*}
	(IV)=&\left\langle\Delta g,\underline{A}_{k} g\right\rangle+\left\langle f\left(\sum_{j=1}^{K}W_{\lambda_{j},x_{j}}\right)-\sum_{j=1}^{K}f\left(W_{\lambda_{j},x_{j}}\right),\underline{A}_{k} g\right\rangle\\
	&+\left\langle f\left(\sum_{j=1}^{K}W_{\lambda_{j},x_{j}}+g\right)-f\left(\sum_{j=1}^{K}W_{\lambda_{j},x_{j}}\right),\underline{A}_{k}g\right\rangle,
\end{align*}
Note from \eqref{nonlinear 1}, Lemma \ref{WW} and the boundedness of $\underline{A}_{k}:\dot{H}^{1}\rightarrow L^{2}$, the term on the second line can be estimated as follows,
\begin{equation*}
	\left|\left\langle f\left(\sum_{j=1}^{K}W_{\lambda_{j},x_{j}}\right)-\sum_{j=1}^{K}f\left(W_{\lambda_{j},x_{j}}\right),\underline{A}_{k} g\right\rangle\right|\lesssim\sum_{l\neq k}\left\lVert W_{\lambda_{l},x_{l}}W^{\frac{4}{3}}_{\lambda_{k},x_{k}}\right\rVert_{L^{2}}\lVert g\rVert_{\dot{H}^{1}}\lesssim\lambda_{k}^{2}\lVert g\rVert_{\dot{H}^{1}}.
\end{equation*}
Combining this with \eqref{cor loc} yields
\begin{align*}
	(IV)\leq& \frac{\eta}{\lambda_{k}}\lVert g\rVert^{2}_{\dot{H}^{1}}-\frac{1}{\lambda_{k}}\int\limits_{|x-x_{k}|<R\lambda_{k}}\left|\nabla g(x)\right|^{2}{\rm{d}}x +O\left(\lambda_{k}^{2}\lVert g\rVert_{\dot{H}^{1}}\right)\\
&	+\left\langle f\left(\sum_{j=1}^{K}W_{\lambda_{j},x_{j}}+g\right)-f\left(\sum_{j=1}^{K}W_{\lambda_{j},x_{j}}\right),\underline{A}_{k}g\right\rangle.
\end{align*}
Hence, from (\ref{sym}),
	\begin{align*}
		&(IV)\leq \frac{\eta}{\lambda_{k}}\lVert g\rVert^{2}_{\dot{H}^{1}}-\frac{1}{\lambda_{k}}\int\limits_{|x-x_{k}|<R\lambda_{k}}\left|\nabla g(x)\right|^{2}{\rm{d}}x +O\left(\lambda_{k}^{2}\lVert g\rVert_{\dot{H}^{1}}\right)\\
		&-\left\langle f\left(\sum_{k=1}^{K}W_{\lambda_{k},x_{k}}+g\right)-f\left(\sum_{k=1}^{K}W_{\lambda_{k},x_{k}}\right)-f'\left(\sum_{k=1}^{K} W_{\lambda_{k},x_{k}}\right)g,A_{k}\left(\sum_{j=1}^{K}W_{\lambda_{j},x_{j}}\right)\right\rangle\\
		&+\frac{1}{5}\left\langle f\left(\sum_{k=1}^{K}W_{\lambda_{k},x_{k}}+g\right)-f\left(\sum_{k=1}^{K}W_{\lambda_{k},x_{k}}\right),
		\frac{1}{\lambda_{k}}\Delta q\left(\frac{\cdot-x_{k}}{\lambda_{k}}\right)g\right\rangle\\
		&= \frac{\eta}{\lambda_{k}}\lVert g\rVert^{2}_{\dot{H}^{1}}-\frac{1}{\lambda_{k}}\int\limits_{|x-x_{k}|<R\lambda_{k}}\left(\left|\nabla g(x)\right|^{2}-f'\left(W_{\lambda_{k},x_{k}}\right)g^{2}\right){\rm{d}}x +O\left(\lambda_{k}^{2}\lVert g\rVert_{\dot{H}^{1}}\right)\\
		& -\left\langle f\left(\sum_{k=1}^{K}W_{\lambda_{k},x_{k}}+g\right)-f\left(\sum_{k=1}^{K}W_{\lambda_{k},x_{k}}\right)
		-f'\left(\sum_{k=1}^{K}W_{\lambda_{k},x_{k}}\right)g,
		\left(\Lambda W\right)_{\underline{{\lambda_{k},x_{k}}}}\right\rangle\\
		&+(1)+(2)+(3)+(4)+(5).
	\end{align*}
where
\begin{align*}
	(1)=&-\left\langle f\left(\sum_{k=1}^{K}W_{\lambda_{k},x_{k}}+g\right)-f\left(\sum_{k=1}^{K}W_{\lambda_{k},x_{k}}\right)
	-f'\left(\sum_{k=1}^{K}W_{\lambda_{k},x_{k}}\right)g,\right.\\
	&A_{k}W_{\lambda_{k},x_{k}}-(\Lambda W)_{\underline{\lambda_{k},x_{k}}}\Bigg\rangle,\\
	(2)=&-\left\langle f\left(\sum_{k=1}^{K}W_{\lambda_{k},x_{k}}+g\right)-f\left(\sum_{k=1}^{K}W_{\lambda_{k},x_{k}}\right)
	-f'\left(\sum_{k=1}^{K}W_{\lambda_{k},x_{k}}\right)g,\right.\\
	&\left.A_{k}\left(\sum_{j\neq k}W_{\lambda_{j},x_{j}}\right)\right\rangle,\\
	(3)=&\frac{1}{5}\left\langle f\left(\sum_{k=1}^{K}W_{\lambda_{k},x_{k}}+g\right)-f\left(\sum_{k=1}^{K}W_{\lambda_{k},x_{k}}\right)
	-f'\left(\sum_{k=1}^{K}W_{\lambda_{k},x_{k}}\right)g,\right.\\
	&\frac{1}{\lambda_{k}}\Delta q\left(\frac{\cdot-x_{k}}{\lambda_{k}}\right)g\Bigg\rangle,\\
	(4)=&\frac{1}{5}\left\langle\left(f'\left(\sum_{k=1}^{K}W_{\lambda_{k},x_{k}}\right)-f'\left(W_{\lambda_{k},x_{k}}\right)\right)g,\frac{1}{\lambda_{k}}\Delta q\left(\frac{\cdot-x_{k}}{\lambda_{k}}\right)g\right\rangle,\\
	(5)=&\frac{1}{5}\left\langle f'\left(W_{\lambda_{k},x_{k}}\right)g,\frac{1}{\lambda_{k}}\Delta q\left(\frac{\cdot-x_{k}}{\lambda_{k}}\right)g\right\rangle-\frac{1}{\lambda_{k}}\int\limits_{|x-x_{k}|<R\lambda_{k}}f'\left(W_{\lambda_{k},x_{k}}\right)g^{2}{\rm{d}}x.
\end{align*}
For (1), using (\ref{nonlinear 3}) and (\ref{AK 3}),
	\begin{align*}
		\left|(1)\right|&\lesssim\int\left(\sum_{k=1}^{K}\left|W_{\lambda_{k},x_{k}}\right|^{\frac{1}{3}}+|g|^{\frac{1}{3}}\right)|g|^{2}\left|A_{k}W_{\lambda_{k},x_{k}}-(\Lambda W)_{\underline{\lambda_{k},x_{k}}}\right|{\rm{d}}x\\
		&\lesssim\left(\sum_{k=1}^{K}\left\lVert W_{\lambda_{k},x_{k}}\right\rVert^{\frac{1}{3}}_{\dot{H}^{1}}+\lVert g\rVert_{\dot{H}^{1}}^{\frac{1}{3}}\right)\lVert g\rVert^{2}_{\dot{H}^{1}}\left\lVert A_{k}W_{\lambda_{k},x_{k}}-(\Lambda W)_{\underline{\lambda_{k},x_{k}}}\right\rVert_{L^{\frac{10}{3}}}\lesssim \frac{\eta}{\lambda_{k}}\lVert g\rVert^{2}_{\dot{H}^{1}}.
	\end{align*}
For (2),  using (\ref{nonlinear 3}) and (\ref{AK 5}),
	\begin{equation*}
		\left|(2)\right|\lesssim\left(\sum_{k=1}^{K}\left\lVert W_{\lambda_{k},x_{k}}\right\rVert^{\frac{1}{3}}_{\dot{H}^{1}}+\lVert g\rVert_{\dot{H}^{1}}^{\frac{1}{3}}\right)\lVert g\rVert^{2}_{\dot{H}^{1}}\sum_{j\neq k}\left\lVert A_{k} W_{\lambda_{j},x_{j}}\right\rVert_{L^{\frac{10}{3}}}\lesssim \lambda_{k}^{-\frac{1}{2}}\lVert g\rVert ^{2}_{\dot{H}^{1}}.
	\end{equation*}
    For (3), using the fact that $\Delta q(x)$ is bounded and (\ref{nonlinear 3}), we have
	\begin{equation*}
		\left|(3)\right|\lesssim\frac{1}{\lambda_{k}}\lVert g\rVert^{3}_{\dot{H}^{1}}.
	\end{equation*}
    For (4), using \eqref{nonlinear 4} and the fact that $\Delta q(x)$ is supported on a ball of radius $\tilde{R}$,
	\begin{align*}
		\left|(4)\right|&\lesssim \frac{1}{\lambda_{k}}\int_{|x-x_{k}|\leq \tilde{R}\lambda_{k}}\left|f'\left(\sum_{j=1}^{K} W_{\lambda_{j},x_{j}}\right)- f'\left(W_{\lambda_{k},x_{k}}\right)\right|g^{2}{\rm{d}}x\\
		&\lesssim\frac{1}{\lambda_{k}}\lVert g\rVert^{2}_{\dot{H}^{1}}\left\lVert\left(\sum_{j\neq k}W_{\lambda_{j},x_{j}}\right)\sum_{j=1}^{K}\left|W_{\lambda_{j},x_{j}}\right|^{\frac{1}{3}}\right\rVert_{L^{\frac{5}{2}}(|x-x_{k}|\leq \tilde{R}\lambda_{k})}\\
		&\lesssim\frac{1}{\lambda_{k}}\lVert g\rVert^{2}_{\dot{H}^{1}}\sum_{j\neq k}\left\lVert W_{\lambda_{j},x_{j}}\right\rVert_{L^{\frac{10}{3}}(|x-x_{k}|\leq \tilde{R}\lambda_{k})}\lesssim \lambda_{k}^{\frac{1}{2}}\lVert g\rVert^{2}_{\dot{H}^{1}}.
	\end{align*}
    For the last term, by the definition of $q$ in Lemma \ref{q(x)}, we have $\Delta q(x)=5$ for $|x|\leq R$, and $\Delta q(x)$ is bounded for all $x\in\RR^{5}$. Thus,
	\begin{align*}
		|(5)|&\lesssim \int_{|x-x_{k}|\geq R\lambda_{k}}\frac{1}{\lambda_{k}}\left|f'\left(W_{\lambda_{k},x_{k}}\right)\right|g^{2}{\rm{d}}x\\
		&\lesssim\frac{1}{\lambda_{k}}\left\lVert f'\left(W_{\lambda_{k},x_{k}}\right)\right\rVert_{L^{\frac{5}{2}}(|x-x_{k}|\geq R \lambda_{k})}\lVert g\rVert^{2}_{\dot{H}^{1}}\\
		&\lesssim\frac{R^{-2}}{\lambda_{k}}\lVert g\rVert^{2}_{\dot{H}^{1}}\lesssim\frac{\eta}{\lambda_{k}}\lVert g\rVert^{2}_{\dot{H}^{1}},
	\end{align*}
    provided that $R>0$ is chosen sufficiently large in the definition of $q$ in Lemma~\ref{q(x)}. Hence,
	\begin{align*}
		(IV)\leq& -\frac{1}{\lambda_{k}}\int\limits_{|x-x_{k}|<R\lambda_{k}}\left(\left|\nabla g(x)\right|^{2}-f'\left(W_{\lambda_{k},x_{k}}\right)g^{2}\right){\rm{d}}x +O\left(\lambda_{k}^{2}\lVert g\rVert_{\dot{H}^{1}}+\frac{\eta}{\lambda_{k}}\lVert g\rVert^{2}_{\dot{H}^{1}}\right)\\
		&-\left\langle f\left(\sum_{k=1}^{K}W_{\lambda_{k},x_{k}}+g\right)-f\left(\sum_{k=1}^{K}W_{\lambda_{k},x_{k}}\right)
		-f'\left(\sum_{k=1}^{K}W_{\lambda_{k},x_{k}}\right)g,
		\left(\Lambda W\right)_{\underline{{\lambda_{k},x_{k}}}}\right\rangle.
	\end{align*}
    Substituting this into \eqref{der for pk}, the second line cancels with term (III) in \eqref{der for pk}, and we obtain
	\begin{align*}
		p'_{k}(t)\leq&-\kappa\sum_{j\neq k}|z_{j}-z_{k}|^{-3}\lambda_{j}^{\frac{3}{2}}\lambda_{k}^{\frac{1}{2}}\\
		&-\frac{1}{\lambda_{k}\lVert\Lambda W\rVert^{2}_{L^{2}}}\int_{|x-x_{k}|<R\lambda_{k}}\left(\left|\nabla g(x)\right|^{2}-f'\left(W_{\lambda_{k},x_{k}}\right)g^{2}\right){\rm{d}}x \\
		&+O\left(\frac{\eta}{\lambda_{k}}\left(\lVert g\rVert^{2}_{\dot{H}^{1}}+\lVert\dot{g}\rVert^{2}_{L^{2}}\right)+\sqrt{M}\frac{\lVert g\rVert_{\dot{H}^{1}}}{\lambda_{k}}\left(\lambda_{k}^{3}+\sum_{j=1}^{K}b_{j}^{2}\right)+\frac{1}{\sqrt{M}\lambda_{k}}\sum_{j=1}^{K}|b_{j}|\lVert 
		\dot{g}\rVert_{L^{2}}\right.\\
		&\left.+\lambda_{k}^{3}+\sum_{j=1}^{K}b_{j}^{2}+\lambda_{k}^{2}\left(\sum_{j=1}^{K}|x_{j}-z_{j}|\right)\right).
	\end{align*}
    This completes the proof of (\ref{dervative of pk 1}).
    \end{proof}

     \section{Energy expansion}\label{energy expansion}
     In this section, using energy conservation, we derive additional estimates for the modulation parameters and the remainder terms $\vec{g}(t)$.

First, we compute the energy of the approximate solution: 
 \begin{lemma}
     Let $\lambda_{k}$, $x_{k}$, $b_{k}$ be defined as in Lemma \ref{Mod}, then 
     \begin{align}\label{energy expand}
             &E\left(\sum_{k=1}^{K} W_{\lambda_{k},x_{k}},\sum_{k=1}^{K}b_{k}\left(\Lambda W\right)_{\underline{\lambda_{k},x_{k}}}\right)\notag\\
             =&\frac{1}{2}\lVert\Lambda W\rVert^{2}_{L^{2}}\sum_{k=1}^{K}|b_{k}|^{2}+KE(W,0)-
             \left(15^{\frac{3}{2}}\int W^{\frac{7}{3}}\right)\sum_{1\leq j<k\leq K}|z_{j}-z_{k}|^{-3}\lambda^{\frac{3}{2}}_{k}\lambda_{j}^{\frac{3}{2}}\notag\\
             &+O\Bigg(\lambda_{k}^{4}+\lambda_{k}^{3}\left(\sum_{k=1}^{K}|x_{k}-z_{k}|\right)+
             \left(\sum_{k=1}^{K}|b_{k}|^{2}\right)\lambda_{k}\Bigg).
     \end{align}
 \end{lemma}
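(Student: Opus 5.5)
We split the energy into its kinetic and potential parts and expand each; the cross-bubble interactions are read off from Lemma \ref{WW} and the interaction computation \eqref{interaction}. Write $W_k:=W_{\lambda_k,x_k}$ and $V:=\sum_k W_k$. The energy of the approximate solution is
\[
\frac12\Big\lVert \sum_k b_k(\Lambda W)_{\underline{\lambda_k,x_k}}\Big\rVert_{L^2}^2+\frac12\lVert\nabla V\rVert_{L^2}^2-\int F(V).
\]

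\textbf{Kinetic part.} Expanding the square gives the diagonal terms $\frac12\lVert\Lambda W\rVert_{L^2}^2 b_k^2$. For $j\neq k$, Lemma \ref{WW} rescaled to the $L^2$ scaling gives
\[
|\langle(\Lambda W)_{\underline{\lambda_j,x_j}},(\Lambda W)_{\underline{\lambda_k,x_k}}\rangle|\lesssim \lambda_j^{1/2}\lambda_k^{1/2}\lesssim\lambda_k .
\]
Hence the off-diagonal terms are $O(\lambda_k\sum b_k^2)$.

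\textbf{Potential part.} We have
\[
\frac12\lVert\nabla V\rVert^2=\sum_k\frac12\lVert\nabla W\rVert^2+\sum_{j<k}\langle\nabla W_j,\nabla W_k\rangle .
\]
Integrating by parts with $-\Delta W_j=f(W_j)$ turns each cross term into $\langle f(W_j),W_k\rangle$, and we symmetrize it as $\frac12(\langle f(W_j),W_k\rangle+\langle f(W_k),W_j\rangle)$. For the nonlinear term we use \eqref{nonlinear 5}:
\[
\int F(V)=\sum_k\int F(W_k)+\sum_k\sum_{j\neq k}\langle f(W_k),W_j\rangle+O\Big(\sum_{j\neq k}\lVert W_j^2W_k^{4/3}\rVert_{L^1}\Big).
\]
By Lemma \ref{WW} the error is $O(\lambda^4)$. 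The diagonal terms add up to $KE(W,0)$.

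The remaining cross terms combine to
\[
\sum_{j<k}\big(\langle f(W_j),W_k\rangle-\langle f(W_j),W_k\rangle-\langle f(W_k),W_j\rangle\big)=-\sum_{j<k}\langle f(W_k),W_j\rangle ,
\]
up to the symmetrization. So we must compute $\langle f(W_k),W_j\rangle$ to leading order. This follows the argument for \eqref{interaction}. After rescaling,
\[
\langle f(W_k),W_j\rangle=\lambda_k^{3/2}\lambda_j^{-3/2}\int W^{7/3}(x)\,W\Big(\frac{\lambda_k x+x_k-x_j}{\lambda_j}\Big)\,dx .
\]
On $|x|\le d/\lambda_k$ we replace $W(\cdot)$ by $15^{3/2}\lambda_j^3|z_j-z_k|^{-3}$. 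The replacement error is $\lambda^4|x|+\lambda^3(|x_j-z_j|+|x_k-z_k|)+\lambda^5$, and it is integrable against $W^{7/3}$ since $W^{7/3}|x|\sim|x|^{-6}$ in $\mathbb{R}^5$. On $|x|\ge d/\lambda_k$ we bound $W^{7/3}$ pointwise by $\lambda^7$ and estimate the integral of the other factor directly; the tail of $\int W^{7/3}$ is $O(\lambda^2)$. Altogether
\[
\langle f(W_k),W_j\rangle=15^{3/2}\Big(\int W^{7/3}\Big)|z_j-z_k|^{-3}\lambda_k^{3/2}\lambda_j^{3/2}+O\big(\lambda^4+\lambda^3(|x_j-z_j|+|x_k-z_k|)\big),
\]
which gives \eqref{energy expand}.

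\textbf{Main obstacle.} The delicate point is the sign bookkeeping: the $\dot H^1$ cross terms cancel against one half of the $F$-cross terms, leaving exactly one copy of the interaction with a negative sign. The second delicate point is the far-field region in the interaction integral: there the naive bound only gives $\lambda^{3/2}\cdot\lambda^{3/2}$ up to logarithms. I would control it via the pointwise bound $W^{7/3}\lesssim\lambda^7$ on $|x|\ge d/\lambda_k$ to reach $O(\lambda^4)$.
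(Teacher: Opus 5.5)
Your proposal is the paper's proof. You expand the energy, integrate the $\dot H^1$ cross terms by parts using $-\Delta W_j=f(W_j)$, and reduce via \eqref{nonlinear 5} and Lemma~\ref{WW} to $-\sum_{j<k}\langle f(W_k),W_j\rangle$. You bound the kinetic cross terms by Lemma~\ref{WW} and evaluate $\langle f(W_k),W_j\rangle$ by the argument of \eqref{interaction}. Your sign bookkeeping, the rescaled formula for $\langle f(W_k),W_j\rangle$ and the near-field replacement estimate are all correct.

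One step fails as written, though it is easy to repair. On $|x|\ge d/\lambda_k$ you cannot bound the whole factor $W^{7/3}$ by $\lambda^7$ and then integrate the remaining factor $W\big(\frac{\lambda_k x+x_k-x_j}{\lambda_j}\big)$ over that region. Since $W(y)\sim|y|^{-3}$ is not in $L^1(\mathbb{R}^5)$, that integral is infinite, so you must keep some decay from the first factor.

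The paper's device, from the proof of \eqref{interaction}, is to bound only $W^{4/3}(x)\lesssim\lambda_k^4$ there and then apply Cauchy--Schwarz, using $W\in L^2(\mathbb{R}^5)$:
\[
\int_{|x|\ge d/\lambda_k}W^{7/3}(x)\,W\Big(\frac{\lambda_k x+x_k-x_j}{\lambda_j}\Big)\,dx\lesssim \lambda_k^4\,\|W\|_{L^2}\,\Big(\frac{\lambda_j}{\lambda_k}\Big)^{5/2}\|W\|_{L^2}\lesssim\lambda_k^4 .
\]
Alternatively, you can keep your pointwise idea by splitting the far region into two pieces:
\begin{itemize}
\item Within distance $d/\lambda_k$ of $(x_j-x_k)/\lambda_k$, the bound $W^{7/3}\lesssim\lambda^7$ works, because there $\int W(\cdot)\sim(\lambda_j/\lambda_k)^5\int_{|y|\le d/\lambda_j}W\sim\lambda^{-2}$.
\item On the rest, $W(\cdot)\lesssim\lambda^3$ and $\int_{|x|\ge d/\lambda_k}W^{7/3}\lesssim\lambda^2$.
\end{itemize}
Both pieces are then $O(\lambda^5)$, which is more than enough for the claimed $O(\lambda^4)$ error.
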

 \begin{proof}
     By the definition of the energy $E$, we have
     \begin{align*}
         &E\left(\sum_{k=1}^{K} W_{\lambda_{k},x_{k}},\sum_{k=1}^{K}b_{k}\left(\Lambda W\right)_{\underline{\lambda_{k},x_{k}}}\right)\\
         =&\int_{\RR^{5}}\frac{1}{2}\left|\sum_{k=1}^{K}b_{k}\left(\Lambda W\right)_{\underline{\lambda_{k},x_{k}}}\right|^{2}+\frac{1}{2}\left|\nabla\left(\sum_{k=1}^{K}W_{\lambda_{k},x_{k}}\right)\right|^{2}-F\left(\sum_{k=1}^{K}W_{\lambda_{k},x_{k}}\right){\rm{d}}x\\
         =&\frac{1}{2}\lVert \Lambda W\rVert^{2}_{L^{2}}\sum_{k=1}^{K}|b_{k}|^{2}+KE(W,0)+\sum_{1\leq k<j\leq K}b_{k}b_{j}\int_{\RR^{5}}\left(\Lambda W\right)_{\underline{\lambda_{k},x_{k}}}\left(\Lambda W\right)_{\underline{\lambda_{j},x_{j}}}{\rm{d}}x\\
         &+\sum_{1\leq k<j\leq K}\int_{\RR^{5}}\nabla W_{\lambda_{j},x_{j}}\cdot \nabla W_{\lambda_{k},x_{k}}{\rm{d}}x+\int_{\RR^{5}}\sum_{k=1}^{K}F\left(W_{\lambda_{k},x_{k}}\right)-F\left(\sum_{k=1}^{K} W_{\lambda_{k},x_{k}}\right){\rm{d}}x.
     \end{align*}
     Using the fact that $\displaystyle \Delta W_{\lambda_{k},x_{k}}+f\left(W_{\lambda_{k},x_{k}}\right)=0$ and integrating by parts, we obtain
     \begin{align*}
         &E\left(\sum_{k=1}^{K} W_{\lambda_{k},x_{k}},\sum_{k=1}^{K}b_{k}\left(\Lambda W\right)_{\underline{\lambda_{k},x_{k}}}\right)\\
         =&\frac{1}{2}\lVert \Lambda W\rVert^{2}_{L^{2}}\sum_{k=1}^{K}|b_{k}|^{2}+KE(W,0)\\
         &-\int_{\RR^{5}}\left(F\left(\sum_{k=1}^{K}W_{\lambda_{k},x_{k}}\right)-\sum_{k=1}^{K}F\left(W_{\lambda_{k},x_{k}}\right)-\sum_{1\leq k<j\leq K}W_{\lambda_{j},x_{j}}f\left(W_{\lambda_{k},x_{k}}\right)\right){\rm{d}}x\\
         &+\sum_{1\leq k<j\leq K}b_{k}b_{j}\int_{\RR^{5}}\left(\Lambda W\right)_{\underline{\lambda_{k},x_{k}}}\left(\Lambda W\right)_{\underline{\lambda_{j},x_{j}}}{\rm{d}}x.
     \end{align*}
     For the second line, we first note from (\ref{nonlinear 5}) that 
     \begin{equation*}
         \left|F\left(\sum_{k=1}^{K}W_{\lambda_{k},x_{k}}\right)-\sum_{k=1}^{K}F\left(W_{\lambda_{k},x_{k}}\right)-\sum_{k=1}^{K}f\left(W_{\lambda_{k},x_{k}}\right)\sum_{j\neq k}W_{\lambda_{j},x_{j}}\right|\lesssim\sum_{j\neq k}W^{2}_{\lambda_{j},x_{j}}W^{\frac{4}{3}}_{\lambda_{k},x_{k}}.
     \end{equation*}
      Hence, from Lemma \ref{WW},
 \begin{align*}   &\Bigg|\int_{\RR^{5}}\left(F\left(\sum_{k=1}^{K}W_{\lambda_{k},x_{k}}\right)-\sum_{k=1}^{K}F\left(W_{\lambda_{k},x_{k}}\right)-\sum_{1\leq k<j\leq K}W_{\lambda_{j},x_{j}}f\left(W_{\lambda_{k},x_{k}}\right)\right){\rm{d}}x\\
     &-\sum_{1\leq j<k\leq K}\int_{\RR^{5}}W_{\lambda_{j},x_{j}}f\left(W_{\lambda_{k},x_{k}}\right){\rm{d}}x\Bigg|\lesssim\sum_{j\neq k}\int_{\RR^{5}} W^{2}_{\lambda_{j},x_{j}}W_{\lambda_{k},x_{k}}^{\frac{4}{3}}{\rm{d}}x\lesssim \lambda_{k}^{4}.
 \end{align*}
 For the term $\displaystyle\left\langle W_{\lambda_{j},x_{j}}, f\left(W_{\lambda_{k},x_{k}}\right)\right\rangle\ (j\neq k)$, computations similar to those in the proof of \eqref{interaction} give
 \begin{align*}
     \left\langle W_{\lambda_{j},x_{j}},f\left(W_{\lambda_{k},x_{k}}\right)\right\rangle=&15^{\frac{3}{2}}\int_{\RR^{5}}W^{\frac{7}{3}}|z_{j}-z_{k}|^{-3}\lambda_{k}^{\frac{3}{2}}\lambda_{j}^\frac{3}{2}\\
     &+O\left(\lambda_{k}^{4}+\lambda_{j}^{3}\left(|x_{j}-z_{j}|+|x_{k}-z_{k}|\right)\right).
 \end{align*}
 For the third line, by Lemma \ref{WW}, we have
 \begin{equation*}
    \left| \sum_{1\leq k<j\leq K}b_{k}b_{j}\int_{\RR^{5}}\left(\Lambda W\right)_{\underline{\lambda_{k},x_{k}}}\left(\Lambda W\right)_{\underline{\lambda_{j},x_{j}}}{\rm{d}}x\right|\lesssim\lambda_{k}\left(\sum_{k=1}^{K}b_{k}^{2}\right).
 \end{equation*}
 Finally, combining the above estimates gives (\ref{energy expand}).
   \end{proof}
\begin{remark}
It follows from \eqref{Bub1} and \eqref{asp} that there exists $T_{0}>0$ such that, for all $t\geq T_{0}$, $\vec{u}(t)$ and the parameters $\mu_{k}(t)$, $y_{k}(t)$ satisfy the assumptions of Lemma \ref{Mod}, and hence there exist unique $C^{1}([T_{0},+\infty))$ modulated parameters 
$\lambda_{k}(t)>0$, $x_{k}(t)\in\RR^{5}$, $b_{k}(t)\in\RR$ $(1\leq k\leq K)$ satisfying (\ref{ort1}), (\ref{ort2}), (\ref{ort3}), (\ref{lamklamj}). Moreover,
 \begin{equation}\label{asy}
    \lVert g(t)\rVert_{\dot{H}^{1}} +\lVert \dot{g}(t)\rVert_{L^{2}}+\sum_{k=1}^{K}\lambda_{k}(t)+\sum_{k=1}^{K}|x_{k}(t)-z_{k}|+\sum_{k=1}^{K}|b_{k}(t)|\rightarrow 0\ {\rm{as}}\ t\rightarrow+\infty.
 \end{equation}
and there exists a fixed $A>0$ such that
\begin{equation}\label{lamksimlamj}
    A \lambda_{k}(t)\leq \lambda_{1}(t)\leq\frac{1}{A}\lambda_{k}(t),\quad \forall\ 1\leq k\leq K\ {\rm{and}}\ \forall\ t\geq T_{0}.
\end{equation}
\end{remark}
\vspace{0.5cm}

Now, for $1\leq k\leq K$, we denote 
 \begin{equation}\label{negative directions}
   a_{k}^{+}=\langle \alpha^{+}_{\lambda_{k},x_{k}},\vec{g} \rangle\quad {\rm{and}}\quad a_{k}^{-}=\langle \alpha^{-}_{\lambda_{k},x_{k}},\vec{g}\rangle.
 \end{equation}
 Then, by the coercivity estimates (\ref{cor local}), for $\vec{g}$ as in Lemma \ref{Mod}, we have
 \begin{equation}\label{cor loc1}
 \begin{aligned}
     \int\limits_{|x-x_{k}|\leq R\lambda_{k}}\left(\left|\nabla g\right|^{2}-f'(W_{\lambda_{k},x_{k}})g^{2}\right) {\rm{d}}x&\geq -\eta \lVert\nabla g\rVert^{2}_{L^{2}}-\nu^{2}\big\langle\lambda^{-2}_{k}Y_{\lambda_{k},x_{k}},g\big\rangle^{2}\\
    & \geq-\eta\lVert\nabla g\rVert^{2}_{L^{2}}-C\left((a^{+}_{k})^{2}+(a^{-}_{k})^{2}\right),
 \end{aligned}
 \end{equation}
 where $\eta>0$ can be taken arbitrarily small and $R=R(\eta)>0$.
 For $\delta>0$ sufficiently small in Lemma \ref{Mod}, from the orthogonality conditions \eqref{ort1}, \eqref{ort2}, and the coercivity estimate \eqref{cor mul},
 \begin{equation}\label{cor for g}
     \int \left|\nabla g\right|^{2}-f'\left(\sum_{k=1}^{K}W_{\lambda_{k},x_{k}}\right)g^{2}{\rm{d}}x\geq C_{0}\lVert\nabla g\rVert^{2}_{L^{2}}-C\left(\sum_{k=1}^{K}(a^{+}_{k})^{2}+(a^{-}_{k})^{2}\right),
 \end{equation}
 where $C_{0}>0$ is a fixed constant.\\
 Then by  conservation of energy, we have the following estimates:
 \begin{proposition}[Energy expansion]
 For $\lambda_{k}(t)$, $x_{k}(t)$, $b_{k}(t)\in C^{1}([T_{0},+\infty))$ and $\vec{g}(t)$ obtained in Lemma \ref{Mod}, the following estimates hold for all $t\geq T_{0}$:
     \begin{equation}\label{energy estimate}
         \lVert \dot{g}\rVert^{2}_{L^{2}}+\lVert\nabla g\rVert^{2}_{L^{2}}+\sum_{k=1}^{K}|b_{k}|^{2}\lesssim \lambda_{1}^{3}+\sum_{k=1}^{K}(a^{+}_{k})^{2}+(a^{-}_{k})^{2}.
     \end{equation}
     In particular,
     \begin{equation}\label{estimate for bk}
         \sum_{k=1}^{K}|b_{k}|^{2}\lesssim \lambda^{3}_{1}+\lVert\dot{g}\rVert_{L^{2}}^{2}+\lVert g\rVert_{\dot{H}^{1}}^{2}.
     \end{equation}
     Moreover,
     \begin{equation}\label{refined energy estimate}
         \begin{aligned}
             &(1+O(\lambda_{1}))\left[\lVert \dot{g}\rVert^{2}_{L^{2}}+C_{0}\lVert\nabla g\rVert^{2}_{L^{2}}\right]+\lVert\Lambda W\rVert^{2}_{L^{2}}\sum_{k=1}^{K}|b_{k}|^{2}\\
             \leq &\left(2\times15^{\frac{3}{2}}\int W^{\frac{7}{3}}\right)\sum_{1\leq j<k\leq K}|z_{j}-z_{k}|^{-3}\lambda_{j}^{\frac{3}{2}}\lambda_{k}^{\frac{3}{2}}\\
             &\quad+C\Bigg(\sum_{k=1}^{K}\left((a^{+}_{k})^{2}+(a_{k}^{-})^{2}\right)+\lambda_{1}^{4}+\lambda_{1}^{3}\Bigg(\sum_{j=1}^{K}|x_{j}-z_{j}|+\lVert g\rVert_{\dot{H}^{1}}\Bigg)+\lVert g\rVert^{3}_{\dot{H}^{1}}\Bigg).
         \end{aligned}
     \end{equation}
 \end{proposition}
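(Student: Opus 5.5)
The argument combines conservation of energy with a Taylor expansion of $E$ around $\vec U=\bigl(\sum_k W_{\lambda_k,x_k},\sum_k b_k(\Lambda W)_{\underline{\lambda_k,x_k}}\bigr)$, followed by the coercivity bound \eqref{cor for g}.

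\textbf{Step 1: the energy level.} We first show $E(\vec u)=KE(W,0)$. By \eqref{asy}, $\vec g(t)\to0$, $b_k\to0$ and $\lambda_k\to0$. Expansion \eqref{energy expand} then gives $E(\vec U)\to KE(W,0)$. Continuity of $E$ in $\dot H^1\times L^2$ gives $E(\vec u)-E(\vec U)\to0$. Since $E(\vec u)$ is constant in time, it equals $KE(W,0)$.

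\textbf{Step 2: the expansion.} We write
\[
E(\vec u)=E(\vec U)+\langle \mathrm{D}E(\vec U),\vec g\rangle+\tfrac12\langle \mathrm{D}^2E(\vec U)\vec g,\vec g\rangle+O(\lVert g\rVert_{\dot H^1}^3),
\]
where the cubic remainder comes from \eqref{nonlinear 3} and Sobolev. Each term is handled as follows.

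The kinetic part of the linear term is $\sum_k b_k\langle(\Lambda W)_{\underline{\lambda_k,x_k}},\dot g\rangle$, which vanishes by \eqref{ort3}.

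For the potential part of the linear term, $-\Delta W_{\lambda_k,x_k}=f(W_{\lambda_k,x_k})$ shows it equals $-\langle f(\sum_k W_{\lambda_k,x_k})-\sum_k f(W_{\lambda_k,x_k}),g\rangle$. By \eqref{nonlinear 1} and the $L^{10/7}$ bound of Lemma \ref{WW}, this is $O(\lambda_1^3\lVert g\rVert_{\dot H^1})$.

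The quadratic term is
\[
\tfrac12\lVert\dot g\rVert_{L^2}^2+\tfrac12\int\Bigl(|\nabla g|^2-f'\bigl(\textstyle\sum_k W_{\lambda_k,x_k}\bigr)g^2\Bigr)\,\mathrm dx,
\]
and \eqref{cor for g} bounds it from below by $\tfrac12\lVert\dot g\rVert^2+\tfrac{C_0}2\lVert\nabla g\rVert^2-C\sum_k\bigl((a_k^+)^2+(a_k^-)^2\bigr)$.

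\textbf{Step 3: conclusion.} We insert \eqref{energy expand} for $E(\vec U)$, cancel $KE(W,0)$ on both sides, and multiply by $2$. This yields \eqref{refined energy estimate}. The interaction constant is $2\times 15^{3/2}\int W^{7/3}$, and the $O(\lambda_1\sum_k b_k^2)$ error from \eqref{energy expand} is absorbed on the left into the $b_k^2$ term. The $(1+O(\lambda_1))$ factor is cosmetic and only accounts for these absorptions. We use \eqref{lamksimlamj} to replace every $\lambda_k$ by $\lambda_1$.

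For \eqref{energy estimate}, the interaction term is $O(\lambda_1^3)$. The errors $\lambda_1^3\lVert g\rVert_{\dot H^1}$ and $\lVert g\rVert^3_{\dot H^1}$ are absorbed: by Young's inequality $\lambda_1^3\lVert g\rVert\le \epsilon\lVert g\rVert^2+C\lambda_1^6$, and $\lVert g\rVert^3\le\epsilon\lVert g\rVert^2$ since $\lVert g\rVert\to0$. The term $\lambda_1^3\sum_j|x_j-z_j|$ is $\lesssim\lambda_1^3$.

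For \eqref{estimate for bk}, note $|a_k^\pm|\lesssim\lambda_k^{-1}\lVert\mathcal Y_{\underline{\lambda_k,x_k}}\rVert_{L^{10/7}}\lVert g\rVert_{\dot H^1}+\lVert\dot g\rVert_{L^2}\lesssim\lVert\vec g\rVert$. Substituting this into \eqref{energy estimate} gives the bound on $\sum_k|b_k|^2$.

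\textbf{Main obstacle.} The delicate step is making sure no term of size $\lambda_1^3$ enters linearly with an unfavorable constant. This rests on the exact vanishing of the kinetic cross term via \eqref{ort3}, and on the fact that the potential linear term pairs with $g$ only through the interaction $f(\sum_k W)-\sum_k f(W)$. Both are checked above.
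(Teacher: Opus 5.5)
Your proposal is correct and follows the paper's proof essentially step by step. The shared steps are: the Taylor expansion of $E$ around $\vec U$; the vanishing of the kinetic linear term by \eqref{ort3}; the bound $O(\lambda_1^3\lVert g\rVert_{\dot H^1})$ for the potential linear term via \eqref{nonlinear 1} and Lemma~\ref{WW}; the identification $E(\vec u)=KE(W,0)$ by energy conservation; and the coercivity estimate \eqref{cor for g}.

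One small adjustment is needed to land exactly on the stated form of \eqref{refined energy estimate}, where the $(1+O(\lambda_1))$ factor multiplies the $\vec g$-terms and not $\sum_k|b_k|^2$. The $O(\lambda_1\sum_k b_k^2)$ error should be converted using \eqref{estimate for bk} into $O(\lambda_1^4)+O(\lambda_1)\lVert\vec g\rVert^2_{\dot H^1\times L^2}$. So prove \eqref{energy estimate} and \eqref{estimate for bk} first, as the paper does, rather than absorbing this error into the $b_k^2$ term.
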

 
 \begin{proof}
    Recall that
    \begin{align*}
        u(t)&=\sum_{k=1}^{K}W_{\lambda_{k},x_{k}}+g(t),\\
        \partial_{t}u(t)&=\sum_{k=1}^{K}b_{k}\left(\Lambda W\right)_{\underline{\lambda_{k},x_{k}}}+\dot{g}(t).
    \end{align*}
    For simplicity, we write
    \begin{equation*}
        \vec{U}=\left(\sum_{k=1}^{K} W_{\lambda_{k},x_{k}},\sum_{k=1}^{K}b_{k}\left(\Lambda W\right)_{\underline{\lambda_{k},x_{k}}}\right),\quad \vec{g}=(g,\dot{g}).
    \end{equation*}
    We then expand the energy as follows:
    \begin{equation*}
        E\left(u,\partial_{t}u\right)=E\left(\vec{U}\right)+\left\langle DE\left(\vec{U}\right),\vec{g}\right\rangle+\frac{1}{2}\left\langle D^{2}E(\vec{U})\vec{g},\vec{g}\right\rangle+S,
    \end{equation*}
    where
    \begin{align*}
        \left|S\right|=&\Bigg|\int F\left(\sum_{k=1}^{K} W_{\lambda_{k},x_{k}}+g\right)-F\left(\sum_{k=1}^{K}W_{\lambda_{k},x_{k}}\right)\\
        &\qquad-f\left(\sum_{k=1}^{K}W_{\lambda_{k},x_{k}}\right)g-\frac{1}{2}f'\left(\sum_{k=1}^{K}W_{\lambda_{k},x_{k}}\right)g^{2}{\rm{d}}x\Bigg|\\        \lesssim&\int\left(\sum_{k=1}^{K}\left|W_{\lambda_{k},x_{k}}\right|^{\frac{1}{3}}+|g|^{\frac{1}{3}}\right)|g|^{3}{\rm{d}}x\lesssim\lVert g\rVert^{3}_{\dot{H}^{1}}
    \end{align*}
    For the second term on the right-hand side, using \eqref{nonlinear 1}, Lemma \ref{WW}, and the orthogonality condition \eqref{ort3}, we have
  \begin{align*}
      \left|\left\langle DE\left(\vec{U}\right),\vec{g}\right\rangle\right|\leq&\int\left|f\left(\sum_{k=1}^{K}W_{\lambda_{k},x_{k}}\right)-\sum_{k=1}^{K}f\left(W_{\lambda_{k},x_{k}}\right)\right||g|{\rm{d}}x\\
      \lesssim&\sum_{j\neq l}\int\ W_{\lambda_{j},x_{j}}\left(W_{\lambda_{l},x_{l}}\right)^{\frac{4}{3}}|g|{\rm{d}}x\\
      \lesssim&\sum_{j\neq l}\left\lVert W_{\lambda_{j},x_{j}}\left(W_{\lambda_{l},x_{l}}\right)^{\frac{4}{3}}\right\rVert_{L^{\frac{10}{7}}}\lVert g\rVert_{\dot{H}^{1}}\lesssim\lambda^{3}_{1}\lVert g\rVert_{\dot{H}^{1}}.
  \end{align*}
  Substituting the above estimates and \eqref{energy expand} into the energy expansion, we obtain
  \begin{align}\label{energy conservation}
      E\left(u,\partial_t u\right)=&\frac{1}{2}\left\langle D^{2}E(\vec{U})\vec{g},\vec{g}\right\rangle+KE(W,0)+\frac{1}{2}\lVert\Lambda W\rVert^{2}_{L^{2}}\sum_{k=1}^{K}|b_{k}|^{2}\notag\\
      &-\left(15^{\frac{3}{2}}\int W^{\frac{7}{3}}\right)\sum_{1\leq j<k\leq K}|z_{j}-z_{k}|^{-3}\lambda^{\frac{3}{2}}_{k}\lambda_{j}^{\frac{3}{2}}\\    &+O\Bigg(\lambda_{1}^{4}+\lambda_{1}^{3}\left(\sum_{k=1}^{K}|x_{k}-z_{k}|\right)+\left(\sum_{k=1}^{K}|b_{k}|^{2}\right)\lambda_{1}+\lVert g\rVert^{3}_{\dot{H}^{1}}+\lambda_{k}^{3}\lVert g\rVert_{\dot{H}^{1}}\Bigg).\notag
  \end{align}
  From (\ref{asy}) and using
  \begin{equation*}
      \left|\left\langle D^{2}E(\vec{U})\vec{g},\vec{g}\right\rangle\right|\lesssim \lVert \dot{g}\rVert_{L^{2}}^{2}+\lVert \nabla g\rVert^{2}_{L^{2}},
  \end{equation*}
  we have
  \begin{equation*}
      \lim_{t\rightarrow+\infty}E\left(u(t),\partial_{t}u(t)\right)=KE(W,0).
  \end{equation*}
  Hence, by conservation of energy,
  \begin{equation*}
      E\left(u(t),\partial_{t}u(t)\right)=KE(W,0),\quad \forall\ t\geq T_{0}.
  \end{equation*}
  Substituting this into \eqref{energy conservation} and combining it with the coercivity estimate \eqref{cor for g}, we obtain
  \begin{align*}
      KE(W,0)\geq& KE(W,0)+\frac{1}{2}\lVert\dot{g}\rVert^{2}_{L^{2}}+\frac{C_{0}}{2}\lVert g\rVert^{2}_{\dot{H}^{1}}-C\left(\sum_{k=1}^{K}\left(a_{k}^{+}\right)^{2}+\left(a_{k}^{-}\right)^{2}\right)\\
      &+\frac{1}{2}\lVert\Lambda W\rVert^{2}_{L^{2}}\sum_{k=1}^{K}|b_{k}|^{2}-
             \left(15^{\frac{3}{2}}\int W^{\frac{7}{3}}\right)\sum_{1\leq j<k\leq K}|z_{j}-z_{k}|^{-3}\lambda^{\frac{3}{2}}_{k}\lambda_{j}^{\frac{3}{2}}\\
          &+ O\Bigg(\lambda_{1}^{4}+\lambda_{1}^{3}\left(\sum_{k=1}^{K}|x_{k}-z_{k}|\right)+
             \left(\sum_{k=1}^{K}|b_{k}|^{2}\right)\lambda_{1}+\lVert g\rVert^{3}_{\dot{H}^{1}}+\lambda_{1}^{3}\lVert g\rVert_{\dot{H}^{1}}\Bigg) ,
  \end{align*}
  This yields \eqref{energy estimate} and \eqref{estimate for bk}, and then \eqref{refined energy estimate} follows.
 \end{proof}

 \begin{remark}
 Substituting \eqref{cor loc1}, \eqref{energy estimate}, and \eqref{estimate for bk} into \eqref{dervative of pk 1}, the estimate \eqref{dervative of pk 1} can be simplified as follows:
     \begin{equation}\label{sim estimate for pk'}
        p'_{k}(t)\leq -\kappa\sum_{j\neq k}|z_{j}-z_{k}|^{-3}\lambda^{\frac{3}{2}}_{j}\lambda^{\frac{1}{2}}_{k}+O\left(\left(\eta+\frac{1}{\sqrt{M}}\right)\lambda^{2}_{1}+\frac{1}{\lambda_{1}}\sum_{k=1}^{K}\left((a^{+}_{k})^{2}+(a_{k}^{-})^{2}\right)\right).
     \end{equation}
 \end{remark}

 \section{Control of the stable and unstable directions}\label{stable and unstable directions}
 In our case, we also need to control the stable and unstable directions. We first study the equation for $a^{+}_{k}$ and $a^{-}_{k}$.
 \begin{lemma}
     Let $a^{+}_{k}$ and $a^{-}_{k}$ be defined in (\ref{negative directions}) and  let $\lambda_{k}(t)$, $\vec{g}(t)$ be as in Lemma \ref{Mod}. Then we have
     \begin{equation}\label{a+}
         \left|\frac{d}{dt}a^{+}_{k}(t)-\frac{\nu}{\lambda_{k}(t)}a^{+}_{k}(t)\right|\lesssim \lambda^{2}_{k}(t)+\frac{1}{\lambda_{k}(t)}\left(\lVert \dot{g}(t)\rVert^{2}_{L^{2}}+\lVert g(t)\rVert^{2}_{\dot{H}^{1}}\right),
     \end{equation}
     \begin{equation}\label{a-}
                \left|\frac{d}{dt}a^{-}_{k}(t)+\frac{\nu}{\lambda_{k}(t)}a^{-}_{k}(t)\right|\lesssim \lambda^{2}_{k}(t)+\frac{1}{\lambda_{k}(t)}\left(\lVert \dot{g}(t)\rVert^{2}_{L^{2}}+\lVert g(t)\rVert^{2}_{\dot{H}^{1}}\right).
     \end{equation}
 \end{lemma}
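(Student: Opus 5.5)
We differentiate $a_k^{\pm}=\langle\alpha^{\pm}_{\lambda_k,x_k},\vec g\rangle$ in time, write $\partial_t\vec g$ via the system \eqref{g}--\eqref{dotg}, and extract the linearized part using \eqref{negative inner}; every remaining piece will be shown to be of the size claimed. We treat $a_k^+$; the case of $a_k^-$ is identical with $\nu$ replaced by $-\nu$.

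\textbf{Step 1: differentiating the weight.} The time derivative of $\alpha^+_{\lambda_k,x_k}$ contributes
\[
-\frac{\lambda_k'}{\lambda_k}\big\langle \lambda_k\partial_{\lambda_k}\alpha^+_{\lambda_k,x_k},\vec g\big\rangle-\frac{x_k'}{\lambda_k}\cdot\big\langle\lambda_k\partial_{x_k}\alpha^+_{\lambda_k,x_k},\vec g\big\rangle .
\]
The weights $\lambda_k\partial_{\lambda_k}\alpha^+$ and $\lambda_k\partial_{x_k}\alpha^+$ are rescaled exponentially decaying functions. Their pairing with $\vec g$ is therefore $O(\|g\|_{\dot H^1}+\|\dot g\|_{L^2})$, once we account for the $\lambda_k^{-1}$ in the first component. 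By \eqref{lamkbk} we have $|\lambda_k'|+|x_k'|\lesssim |b_k|+\|\dot g\|_{L^2}+\sum_j|b_j|\|g\|_{\dot H^1}$. Hence this contribution is bounded by
\[
\lambda_k^{-1}\Big(\|\vec g\|^2+\sum_j |b_j|\,\|\vec g\|\Big)\lesssim \lambda_k^{-1}\Big(\|\vec g\|^2+\sum_j b_j^2\Big).
\]
The term $\lambda_k^{-1}b_k^2$ is the dangerous one. By \eqref{estimate for bk}, $b_j^2\lesssim\lambda_1^3+\|\vec g\|^2$, so $\lambda_k^{-1}b_j^2\lesssim\lambda_k^2+\lambda_k^{-1}\|\vec g\|^2$, which is acceptable.

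\textbf{Step 2: the flow term.} We decompose
\[
\partial_t\vec g=J\circ D^2E(\vec W_{\lambda_k,x_k})\vec g+\mathcal{M}+\mathcal{N},
\]
and pair each piece with $\alpha^+_{\lambda_k,x_k}$.

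The first piece gives exactly $\frac{\nu}{\lambda_k}a_k^+$ by \eqref{negative inner}.

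The modulation part $\mathcal M$ consists of the terms $(b_j+\lambda_j')(\Lambda W)_{\underline{\lambda_j,x_j}}$, $x_j'\cdot(\nabla W)_{\underline{\lambda_j,x_j}}$, $-b_j'(\Lambda W)_{\underline{\lambda_j,x_j}}$, $\frac{b_j\lambda_j'}{\lambda_j}(\underline\Lambda\Lambda W)_{\underline{\lambda_j,x_j}}$ and $\frac{b_jx_j'}{\lambda_j}\cdot(\nabla\Lambda W)_{\underline{\lambda_j,x_j}}$.
\begin{itemize}
\item For $j=k$, the pairings of $\mathcal Y$ with $\Lambda W$ and $\nabla W$ vanish, so the $\mathcal Y$-components kill the first three terms.
\item The terms with $\underline\Lambda\Lambda W$ and $\nabla\Lambda W$ give at most $\lambda_k^{-1}|b_k|(|b_k|+\|\vec g\|)$, which is handled as in Step 1.
\item For $j\ne k$, the interactions are $O(\lambda^{3/2}\lambda^{3/2}/\lambda)$-small by the exponential decay of $\mathcal Y$, in the spirit of Lemma \ref{WW}.
\item The $b_j'$ contributions use \eqref{bk'}, which gives $|b_j'|\lesssim\lambda^2+\lambda^{-1}\|\vec g\|^2+\dots$, again multiplied by small overlaps.
\end{itemize}

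\textbf{Step 3: the nonlinear remainder.} The remainder is
\[
\mathcal N=\Big(0,\; f\big(\textstyle\sum_j W_{\lambda_j,x_j}+g\big)-\sum_j f(W_{\lambda_j,x_j})-f'(W_{\lambda_k,x_k})g\Big).
\]
We split it into three parts:
\begin{itemize}
\item the quadratic part, controlled by \eqref{nonlinear 3}, which gives $\lambda_k^{-1}\|g\|^2_{\dot H^1}$ after pairing with $\mathcal Y_{\underline{\lambda_k,x_k}}$;
\item the part $\big(f'(\sum W)-f'(W_{\lambda_k})\big)g$, controlled by \eqref{nonlinear 4}, which gives $\lambda_k^{2}\|g\|_{\dot H^1}$ as in term (IV) of Proposition \ref{mod con};
\item the pure interaction part $f(\sum W)-\sum f(W)$, controlled by \eqref{nonlinear 1}.
\end{itemize}
In the pure interaction part, near $x_k$ the function $W_{\lambda_j,x_j}$ is of size $\lambda_j^{3/2}$. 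Its pairing with $\lambda_k^{-1}\mathcal Y_{\underline{\lambda_k,x_k}}$, whose $L^1$ norm is $\lambda_k^{-1}\lambda_k^{5/2}$, against $W_{\lambda_k}^{4/3}\lesssim\lambda_k^{-2}$ therefore gives $\lambda_k^{-1}\cdot\lambda_k^{3/2}\cdot\lambda_k^{-2}\cdot\lambda_k^{5}\sim\lambda_k^{2}$. This is the source of the $\lambda_k^2$ term in the statement.

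\textbf{Main obstacle.} The main obstacle is the bookkeeping of the $1/\lambda_k$ factor coming from the first component of $\alpha^\pm$, together with showing that no term of size $\lambda_k^{-1}|b_k|\|\vec g\|$ survives uncontrolled. We intend to absorb every such term through Young's inequality and \eqref{estimate for bk}.
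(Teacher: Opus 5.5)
Your proposal is correct and follows essentially the same route as the paper: you differentiate $a_k^\pm$ and extract $\pm\frac{\nu}{\lambda_k}a_k^\pm$ via \eqref{negative inner}. You then kill the $j=k$ modulation directions using $\langle\mathcal Y,\Lambda W\rangle=\langle\mathcal Y,\nabla W\rangle=0$, and close with \eqref{lamkbk}, \eqref{bk'}, \eqref{estimate for bk} and the nonlinear bounds of Proposition \ref{mod con}.

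One bookkeeping correction is needed in Step 3. The nonlinear remainder lies in the second component of $\partial_t\vec g$, so it pairs with $\frac12\mathcal Y_{\underline{\lambda_k,x_k}}$, with no factor $\lambda_k^{-1}$. The correct interaction count is $\lambda_k^{-5/2}\cdot\lambda_k^{-2}\cdot\lambda_j^{3/2}\cdot\lambda_k^{5}\sim\lambda_k^{2}$. With the weight $\lambda_k^{-1}\mathcal Y_{\underline{\lambda_k,x_k}}$ that you wrote, the same count would give only $O(\lambda_k)$, which is not acceptable. Your displayed product $\lambda_k^{-1}\cdot\lambda_k^{3/2}\cdot\lambda_k^{-2}\cdot\lambda_k^{5}$ actually equals $\lambda_k^{7/2}$, so the arithmetic there should also be fixed.
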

 \begin{proof}
     By the definition of $a^{\pm}_{k}$ in (\ref{negative directions}), we compute
     \begin{equation*}
         \frac{d}{dt}a_{k}^{\pm}=\left\langle\partial_{t}\alpha^{\pm}_{\lambda_{k},x_{k}},\vec{g}\right\rangle+\left\langle\alpha^{\pm}_{\lambda_{k},x_{k}},\partial_{t}\vec{g}\right\rangle.
     \end{equation*}
     For the first term,
     \begin{equation*}
\partial_{t}\alpha^{\pm}_{\lambda_{k},x_{k}}=\lambda'_{k}\partial_{\lambda_{k}}\alpha^{\pm}_{\lambda_{k},x_{k}}+x'_{k}\cdot\partial_{x_{k}}\alpha^{\pm}_{\lambda_{k},x_{k}}.
     \end{equation*}
     Since $\mathcal{Y}$ is smooth and exponentially decaying, we obtain from (\ref{lamkbk}), (\ref{estimate for bk}), the estimate
     \begin{align*}
&\left|\left\langle\partial_{t}\alpha^{\pm}_{\lambda_{k},x_{k}},\vec{g}\right\rangle\right|\lesssim \left(\frac{|\lambda'_{k}|}{\lambda_{k}}+\frac{|x'_{k}|}{\lambda_{k}}\right)\lVert\vec{g}\rVert_{\dot{H}^{1}\times L^{2}}\\
&\lesssim \frac{1}{\lambda_{k}}\left(\left|b_{k}\right|+\lVert\dot{g}\rVert_{L^{2}}+\sum_{k=1}^{K}|b_{k}|\lVert g\rVert_{\dot{H}^{1}}\right)\lVert\vec{g}\rVert_{\dot{H}^{1}\times L^{2}}\lesssim \lambda_{k}^{2}+\frac{1}{\lambda_{k}}\lVert\vec{g}\rVert^{2}_{\dot{H}^{1}\times L^{2}}.
     \end{align*}
     For the second term, using (\ref{g}) and (\ref{dotg}),
     \begin{equation*} \left\langle\alpha^{\pm}_{\lambda_{k},x_{k}},\partial_{t}\vec{g}\right\rangle=\left\langle\alpha^{\pm}_{\lambda_{k},x_{k}},J\circ{{\rm{D}}^{2}}E\left(\vec{W}_{\lambda_{k},x_{k}}\right)\vec{g}\right\rangle+\left\langle\alpha^{\pm}_{\lambda_{k},x_{k}},\vec{h}\right\rangle,
     \end{equation*}
     where
     \begin{align*}      \vec{h}=&\left(\sum_{j=1}^{K}\left(b_{j}+\lambda'_{j}\right)\left(\Lambda  W\right)_{\underline{\lambda_{j},x_{j}}}+\sum_{j=1}^{K}x'_{j}\cdot\left(\nabla W\right)_{\underline{\lambda_{j},x_{j}}}, f\left(\sum_{j=1}^{K}W_{\lambda_{j},x_{j}}+g\right)\right.\\
         &\qquad-\sum_{j=1}^{K} f\left(W_{\lambda_{j},x_{j}}\right)-f'\left(W_{\lambda_{k},x_{k}}\right)g-\sum\limits_{j=1}^{K}b'_{j}\left(\Lambda W\right)_{\underline{\lambda_{j},x_{j}}}\\
         &\qquad\qquad\left.+
       \sum\limits_{j=1}^{K}\frac{b_{j}\lambda'_{j}}{\lambda_{j}}\left({\underline{\Lambda}}\Lambda W\right)_{\underline{\lambda_{j},x_{j}}}+\sum_{j=1}^{K}\frac{b_{j}x'_{j}}{\lambda_{j}}\cdot \left(\nabla \Lambda W\right)_{\underline{\lambda_{j},x_{j}}}\right).
     \end{align*}
     By (\ref{negative inner}), the first term on the right-hand side gives the leading-order contribution
     \begin{equation*}
\left\langle\alpha^{\pm}_{\lambda_{k},x_{k}},J\circ{{\rm{D}}^{2}}E\left(\vec{W}_{\lambda_{k},x_{k}}\right)\vec{g}\right\rangle=\pm\frac{\nu}{\lambda_{k}}\left\langle\alpha^{\pm}_{\lambda_{k},x_{k}},\vec{g}\right\rangle=\pm\frac{\nu}{\lambda_{k}}a_{k}^{\pm}.
     \end{equation*}
     Next, for the second term, we expand
     \begin{align*}
\left\langle\alpha^{\pm}_{\lambda_{k},x_{k}},\vec{h}\right\rangle=&\frac{1}{2}\nu\left\langle\lambda^{-2}_{k}\mathcal{Y}_{\lambda_{k},x_{k}},\sum_{j=1}^{K}\left(b_{j}+\lambda_{j}'\right)\left(\Lambda  W\right)_{\underline{\lambda_{j},x_{j}}}+\sum_{j=1}^{K}x'_{j}\cdot\left(\nabla W\right)_{\underline{\lambda_{j},x_{j}}}\right\rangle\\
&\pm\frac{1}{2}\left\langle\mathcal{Y}_{\underline{\lambda_{k},x_{k}}},f\left(\sum_{j=1}^{K}W_{\lambda_{j},x_{j}}+g\right)-\sum_{j=1}^{K} f\left(W_{\lambda_{j},x_{j}}\right)-f'\left(W_{\lambda_{k},x_{k}}\right)g\right.\\
&-\left.\sum\limits_{j=1}^{K}b'_{j}\left(\Lambda W\right)_{\underline{\lambda_{j},x_{j}}}
         +
       \sum\limits_{j=1}^{K}\frac{b_{j}\lambda'_{j}}{\lambda_{j}}\left({\underline{\Lambda}}\Lambda W\right)_{\underline{\lambda_{j},x_{j}}}+\sum_{j=1}^{K}\frac{b_{j}x'_{j}}{\lambda_{j}}\cdot \left(\nabla \Lambda W\right)_{\underline{\lambda_{j},x_{j}}}\right\rangle.
     \end{align*}
    For the first line, using $\left\langle\mathcal{Y},\Lambda W\right\rangle=\left\langle\mathcal{Y},\nabla W\right\rangle=0$ and the estimate \eqref{lamkbk}, we obtain
     \begin{align*}  &\left|\left\langle\lambda^{-2}_{k}\mathcal{Y}_{\lambda_{k},x_{k}},\sum_{j=1}^{K}\left(b_{j}+\lambda_{j}'\right)\left(\Lambda  W\right)_{\underline{\lambda_{j},x_{j}}}+\sum_{j=1}^{K}x'_{j}\cdot\left(\nabla W\right)_{\underline{\lambda_{j},x_{j}}}\right\rangle\right|\\
     \lesssim&\lambda_{k}^{2}\left(\sum_{j\neq k}|b_{j}+\lambda_{j}'|+|x_{j}'|\right)
       \lesssim\lambda_{k}^{2}\lVert \vec{g}\rVert_{\dot{H}^{1}\times L^{2}}.
     \end{align*}
     For the second line, arguing as in the proof of Lemma \ref{mod con}, we have
     \begin{equation*}
\left|\left\langle\mathcal{Y}_{\underline{\lambda_{k},x_{k}}},f\left(\sum_{j=1}^{K}W_{\lambda_{j},x_{j}}+g\right)-\sum_{j=1}^{K} f\left(W_{\lambda_{j},x_{j}}\right)-f'\left(W_{\lambda_{k},x_{k}}\right)g\right\rangle\right|\lesssim\lambda_{k}^{2}+\frac{1}{\lambda_{k}}\lVert g\rVert^{2}_{\dot{H}^{1}}.
     \end{equation*}
     For the third line, from (\ref{lamkbk}), (\ref{bk'}) and (\ref{estimate for bk}),
     \begin{align*}
         &\left|\left\langle\mathcal{ Y}_{\underline{\lambda_{k},x_{k}}},\sum\limits_{j=1}^{K}b'_{j}\left(\Lambda W\right)_{\underline{\lambda_{j},x_{j}}}
         - \sum\limits_{j=1}^{K}\frac{b_{j}\lambda'_{j}}{\lambda_{j}}\left({\underline{\Lambda}}\Lambda W\right)_{\underline{\lambda_{j},x_{j}}}-\sum_{j=1}^{K}\frac{b_{j}x'_{j}}{\lambda_{j}}\cdot \left(\nabla \Lambda W\right)_{\underline{\lambda_{j},x_{j}}}\right\rangle\right|\\
         \lesssim&\sum_{j=1}^{K}|b_{j}'|+\frac{|b_{j}\lambda'_{j}|}{\lambda_{j}}+\frac{|b_{j}x'_{j}|}{\lambda_{j}}\lesssim\lambda_{k}^{2}+\frac{1}{\lambda_{k}}\lVert\vec{g}\rVert^{2}_{\dot{H}^{1}\times L^{2}}.
     \end{align*}
     Finally, combining the above estimates completes the proof.
 \end{proof}
Set 
 \begin{equation*}
     N_{1}(t):=\sum_{k=1}^{K} \left(a^{-}_{k}(t)\right)^{2} \quad {\rm{and}}\quad N_{2}(t):=\sum_{k=1}^{K}\left(a^{+}_{k}(t)\right)^{2}.
 \end{equation*}
Then we have the following lemmas.
 \begin{lemma}\label{bound for N1t1}
     For any $c>0$ and $T_{1}\geq T_{0}$, there exists $t_{1}\geq T_{1}$ such that
     \begin{equation}\label{N1 1}
         N_{1}(t_{1})\leq c\left(\lambda_{1}^{3}(t_{1})+\lVert \vec{g}(t_{1})\rVert^{2}_{\dot{H}^{1}\times L^{2}}\right).
     \end{equation}
 \end{lemma}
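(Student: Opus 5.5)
We argue by contradiction. Suppose that for some $c>0$ and $T_{1}\geq T_{0}$ we have
\[
N_{1}(t)> c\left(\lambda_{1}^{3}(t)+\lVert\vec g(t)\rVert^{2}_{\dot H^{1}\times L^{2}}\right)\qquad\text{for all } t\geq T_{1}.
\]
The stable component $a_k^-$ is damped by \eqref{a-}, and a lower bound of this kind is incompatible with that damping. Set $N(t):=\lambda_{1}^{3}(t)+\lVert\vec g(t)\rVert^{2}_{\dot H^{1}\times L^{2}}$. By \eqref{lamksimlamj}, every $\lambda_k$ is comparable to $\lambda_1$.

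\textbf{Step 1: differential inequality for $N_1$.} From \eqref{a-},
\[
\frac{d}{dt}(a_k^-)^2=-\frac{2\nu}{\lambda_k}(a_k^-)^2+O\!\left(|a_k^-|\Big(\lambda_k^2+\frac{1}{\lambda_k}\lVert\vec g\rVert^2\Big)\right).
\]
Note that $|a_k^-|\lesssim\lVert\vec g\rVert\lesssim N^{1/2}$, and that $\lambda_k^2\lesssim N^{1/2}\lambda_1^{1/2}$. Hence the error term is $O\big(\lambda_1^{-1}N^{1/2}(\lambda_1^{3/2}\cdot\lambda_1^{\dots})\big)$. More precisely, it is bounded by $\lambda_1^{-1}\big(|a^-|\lambda_1^{3}+|a^-|\lVert\vec g\rVert^2\big)\lesssim \lambda_1^{-1}N\big(\lambda_1^{1/2}+\lVert\vec g\rVert\big)=o(\lambda_1^{-1}N)$, using \eqref{asy}. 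Under the contradiction hypothesis, $N<c^{-1}N_1$. Summing over $k$ therefore gives, for $t\geq T_1$ large,
\[
N_1'(t)\leq -\frac{2\nu A'}{\lambda_1(t)}N_1(t)+o\!\left(\frac{N_1}{\lambda_1}\right)\leq -\frac{\nu A'}{\lambda_1(t)}N_1(t),
\]
for some constant $A'>0$ depending on $A$. In particular $N_1$ decays at least like $\exp\!\big(-\nu A'\int_{T_1}^t\lambda_1^{-1}\big)$.

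\textbf{Step 2: compare with the decay rate of $N$.} We need a lower bound showing that $N$, and in particular $\lambda_1^3$, cannot decay that fast. By \eqref{lamkbk} and \eqref{energy estimate},
\[
|\lambda_1'|\leq |b_1|+\lVert\dot g\rVert+o(\cdot)\lesssim N^{1/2}\lesssim c^{-1/2}N_1^{1/2}.
\]
This gives $|(\lambda_1^3)'|\lesssim\lambda_1^2 N_1^{1/2}$. Now $\lambda_1^3\leq c^{-1}N_1$ while $N_1$ decreases exponentially on the time scale $\lambda_1$. Integrating $\lambda_1'$ backwards from $+\infty$ (recall $\lambda_1\to0$) yields
\[
\lambda_1(t)\lesssim \int_t^\infty N_1^{1/2}(s)\,ds .
\]
Since $N_1^{1/2}$ decays on the time scale $\lambda_1$, and $\lambda_1$ itself varies slowly (its derivative is small), this integral is $\lesssim \lambda_1(t)N_1^{1/2}(t)$. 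We conclude $\lambda_1(t)\lesssim\lambda_1(t)N_1^{1/2}(t)$, i.e. $1\lesssim N_1^{1/2}(t)\to0$, a contradiction.

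\textbf{Main obstacle.} The crucial point is the integral comparison in Step 2, where $\lambda_1$ sits in the exponent rate. To handle it we would introduce the rescaled time $s=\int^t\lambda_1^{-1}$. Because $|\lambda_1'|\lesssim N^{1/2}\to0$, the function $\log\lambda_1$ is Lipschitz in $s$ with small constant. Then $N_1\lesssim e^{-\nu A's}$ while $\lambda_1\gtrsim e^{-\epsilon s}$, so that
\[
\lambda_1^3\gtrsim e^{-3\epsilon s}\gg e^{-\nu A' s}\gtrsim c\,N_1\quad\text{once }3\epsilon<\nu A',
\]
which contradicts $\lambda_1^3\leq c^{-1}N_1$. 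This $s$-variable formulation is the cleanest way to conclude.
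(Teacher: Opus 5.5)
Your proof is correct and follows essentially the paper's route: the contradiction hypothesis absorbs the error terms in \eqref{a-} to give $N_1'\le-\frac{\nu A}{\lambda_1}N_1$, and one then uses that $\log\lambda_1$ varies slowly in the rescaled time $s=\int\lambda_1^{-1}$. The paper's functional $\log\tilde\lambda_1+D_1e^{-\int\tilde\lambda_1}$ is exactly this computation in disguise. The only difference is the endgame: the paper uses the hypothesis to get $|\lambda_1'|\lesssim e^{-\nu A s/2}$, which is integrable in $s$ and so keeps $\log\lambda_1$ bounded, contradicting $\lambda_1\to0$. Your rate comparison $e^{-3\epsilon s}\lesssim\lambda_1^3\lesssim e^{-\nu A' s}$ instead needs only $|\lambda_1'|\to0$. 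The integral heuristic in your Step 2 is superfluous once you pass to the $s$-variable.
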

 \begin{proof}
     From (\ref{a-}), we have
     \begin{align}\label{der of N1}
         \frac{d}{dt}N_{1}(t)&=2\sum_{k=1}^{K}\left(\frac{d}{dt}a^{-}_{k}\right)a_{k}^{-}\notag\\
        & =2\sum_{k=1}^{K}\left(-\frac{\nu}{\lambda_{k}}a_{k}^{-}+O\left(\lambda_{1}^{2}+\lambda_{1}^{-1}\lVert \vec{g}\rVert^{2}_{\dot{H}^{1}\times L^{2}}\right)\right)a_{k}^{-}\notag\\
        &=-2\nu\sum_{k=1}^{K}\frac{1}{\lambda_{k}}\left(a_{k}^{-}\right)^{2}+2\sum_{k=1}^{K}O\left(|a_{k}^{-}|\left(\lambda_{1}^{2}+\lambda_{1}^{-1}\lVert \vec{g}\rVert^{2}_{\dot{H}^{1}\times L^{2}}\right)\right)\notag\\
        &\leq -2\nu\frac{A}{\lambda_{1}}\sum_{k=1}^{K}\left(a_{k}^{-}\right)^{2}+O\left(\left(\sum_{k=1}^{K}|a_{k}^{-}|\right)\left(\lambda_{1}^{2}+\lambda_{1}^{-1}\lVert \vec{g}\rVert^{2}_{\dot{H}^{1}\times L^{2}}\right)\right).
     \end{align}
     To prove (\ref{N1 1}), we argue by contradiction and suppose that there exist  $C>0$ and $T_{1}\geq T_{0}$ such that for all $t\geq T_{1}$, we have
     \begin{equation}\label{lower bound for N1}
         N_{1}(t)> C\left(\lambda_{1}^{3}(t)+\lVert\vec{g}(t)\rVert^{2}_{\dot{H}^{1}\times L^{2}}\right).
     \end{equation}
     Inserting this into (\ref{der of N1}), we obtain for any $t\geq T_{0}$,
     \begin{equation*}
         \frac{d}{dt}N_{1}(t)\leq -\frac{2\nu A}{\lambda_{1}(t)}N_{1}(t)+D\left(\left(\sum_{k=1}^{K}|a_{k}^{-}(t)|\right)\frac{1}{\lambda_{1}(t)}N_{1}(t)\right),
     \end{equation*}
     where $D>0$ is a fixed constant. Since 
     \begin{equation*}
         \sum_{k=1}^{K}|a_{k}^{-}(t)|\lesssim \lVert\vec{g}(t)\rVert_{\dot{H}^{1}\times L^{2}}\rightarrow 0,\quad {\rm{as}}\quad t\rightarrow+\infty,
     \end{equation*}
     there exists  $t_{0}>T_{1}$ such that for any $t\geq t_{0}$, we have
    \begin{equation*}
        \frac{d}{dt}N_{1}(t)\leq-\frac{\nu A}{\lambda_{1}(t)}N_{1}(t).
    \end{equation*}
     This is equivalent to 
 \begin{equation*}
    \frac{d}{dt}\left(e^{\int_{t_{0}}^{t}\frac{\nu A}{\lambda_{1}(s)}{\rm{d}}s}N_{1}(t)\right)\leq 0.
 \end{equation*}
 Hence,
 \begin{equation*}
     N_{1}(t)\leq N_{1}(t_{0})e^{-\int_{t_{0}}^{t}\frac{\nu A}{\lambda_{1}(s)}{\rm{d}}s}.
 \end{equation*}
 Then, from (\ref{lower bound for N1}), we have
 \begin{equation*}
     \lambda_{1}^{\frac{3}{2}}(t)+\lVert\vec{ g}(t)\rVert_{\dot{H}^{1}\times L^{2}}\lesssim e^{-\int_{t_{0}}^{t}\frac{\nu A}{2\lambda_{1}(s)}{\rm{d}}s}\quad \forall\ t\geq t_{0}.
 \end{equation*}
 On the other hand, by (\ref{lamkbk}) and (\ref{estimate for bk}),
 \begin{align}\label{eq for lam 1}
     \left|\lambda_{1}'(t)\right|\lesssim\left|b_{1}(t)\right|+\lVert\dot{g}(t)\rVert_{L^{2}}+\sum_{k=1}^{K}|b_{k}(t)|\lVert g(t)\rVert_{\dot{H}^{1}}\notag\\
     \lesssim\lambda_{1}^{\frac{3}{2}}(t)+\lVert \vec{g}(t)\rVert_{\dot{H}^{1}\times L^{2}}\lesssim e^{-\int_{t_{0}}^{t}\frac{\nu A}{2\lambda_{1}(s)}{\rm{d}}s}
 \end{align}
 Set 
 \begin{equation*}
    \tilde{\lambda}_{1}=\frac{\nu A}{2\lambda_{1}},
 \end{equation*}
 By (\ref{asy}), 
 \begin{equation*}
     \tilde{\lambda}_{1}(t)\rightarrow+\infty,\quad {\rm{as}}\quad t\rightarrow+\infty.
 \end{equation*}
 Moreover, from \eqref{eq for lam 1}, we have
 \begin{equation*}
     \left|\frac{\tilde{\lambda}_{1}'(t)}{\tilde{\lambda}_{1}(t)}\right|\leq D_{1}\tilde{\lambda}_{1}(t) e^{-\int_{t_{0}}^{t}\tilde{\lambda}(\tau){\rm{d}}\tau}
 \end{equation*}
 for some fixed constant $D_{1}>0$. Hence,
 \begin{equation*}
     \frac{d}{dt}\left(\log \tilde{\lambda}_{1}(t)+D_{1}e^{-\int_{t_{0}}^{t}\tilde{\lambda}_{1}(\tau){\rm{d}}\tau}\right)\leq 0,\quad \forall\ t\geq t_{0},
 \end{equation*}
 from which we obtain
 \begin{equation*}
     \log \tilde{\lambda}_{1}(t)+D_{1}e^{-\int_{t_{0}}^{t}\tilde{\lambda}_{1}(\tau){\rm{d}}\tau}\leq \log \tilde{\lambda}_{1}(t_{0})+D_{1},\quad \forall\ t\geq t_{0}.
 \end{equation*}
 Letting $t\rightarrow+\infty$ contradicts the fact that $\tilde{\lambda}_{1}\rightarrow+\infty$ as $t\rightarrow+\infty$.
 \end{proof}

 \begin{lemma}
     There exists $C_{0}>0$ such that for any $c>0$, there exists $T_{1}\geq T_{0}$ such that for any $t\geq T_{1}$,
     \begin{equation}\label{rough estimate for N2}
         N_{2}(t)\leq c\ \sup_{\tau\geq t}\left(\lambda^{3}_{1}(\tau)+N_{1}(\tau)\right),
     \end{equation}
     \begin{equation}\label{rough estimate for g}
         \lVert \vec{g}(t)\rVert^{2}_{\dot{H}^{1}\times L^{2}}\leq C_{0}\ \sup_{\tau\geq t}\left(\lambda^{3}_{1}(\tau)+N_{1}(\tau)\right).
     \end{equation}
 \end{lemma}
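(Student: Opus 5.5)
The argument uses the unstable ODE \eqref{a+} integrated backward from $+\infty$, together with the energy estimate \eqref{energy estimate}. Write $S(t):=\sup_{\tau\geq t}\left(\lambda_1^3(\tau)+N_1(\tau)\right)$. This quantity is nonincreasing and tends to $0$ by \eqref{asy}. The first step proves \eqref{rough estimate for N2} conditionally, assuming a bound of the form $\lVert\vec g(\tau)\rVert^2_{\dot H^1\times L^2}\leq C(S(\tau)+N_2(\tau))$; this bound is exactly \eqref{energy estimate}, since $\sum_k((a_k^+)^2+(a_k^-)^2)=N_1+N_2$ and $\lambda_1^3\leq S$.

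For the $N_2$ bound, differentiate $N_2$ using \eqref{a+} and \eqref{lamksimlamj}:
\[
\frac{d}{dt}N_2\geq \frac{2\nu}{\lambda_{\max}}N_2-C\sqrt{N_2}\left(\lambda_1^2+\lambda_1^{-1}\lVert\vec g\rVert^2\right).
\]
Inserting \eqref{energy estimate} gives $\lambda_1^{-1}\lVert\vec g\rVert^2\lesssim \lambda_1^{-1}(S+N_2)$. Since $\sqrt{N_2}\to0$, the cubic term $\lambda_1^{-1}\sqrt{N_2}N_2$ is absorbed into the main term. For the rest, Young's inequality gives
\[
\sqrt{N_2}\,\lambda_1^{-1}S\leq \epsilon\lambda_1^{-1}N_2+C_\epsilon\lambda_1^{-1}S^2/N_2 ,
\]
and similarly $\sqrt{N_2}\lambda_1^{2}=\lambda_1^{-1/2}\sqrt{N_2}\,\lambda_1^{5/2}$ with $\lambda_1^5\leq\lambda_1^2 S$. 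It is cleaner to argue by contradiction at a fixed time. Suppose $N_2(t)>cS(t)$ at some large $t$. Then at that time
\[
\frac{d}{dt}N_2\geq \frac{\nu A}{\lambda_1}N_2-C\sqrt{N_2}\left(\lambda_1^{1/2}\sqrt{S}+\lambda_1^{-1}S\right)\geq \frac{\nu A}{2\lambda_1}N_2 ,
\]
because $\sqrt S\leq c^{-1/2}\sqrt{N_2}$, $\lambda_1^{1/2}\ll \lambda_1^{-1}$, and $S\leq c^{-1}N_2$ with $S\to 0$. The last point needs care: $\lambda_1^{-1}S\sqrt{N_2}\leq c^{-1/2}\lambda_1^{-1}\sqrt S\,N_2$, which is small compared with $\lambda_1^{-1}N_2$ once $S$ is small. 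Next, the set where $N_2>cS$ is forward invariant, because $S$ is nonincreasing while $N_2$ strictly increases there. Hence $N_2$ increases for all later times. Since $N_2(\tau)\to0$ as $\tau\to\infty$ (by \eqref{asy}), an eventually increasing positive function cannot tend to $0$, which is the desired contradiction. The threshold $T_1$ is chosen so that $\sqrt{S}$, $\sqrt{N_2}$ and $\lambda_1$ are small enough depending on $c$.

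With \eqref{rough estimate for N2} established (take $c=1$, say), \eqref{rough estimate for g} follows immediately from \eqref{energy estimate}:
\[
\lVert\vec g(t)\rVert^2\lesssim\lambda_1^3(t)+N_1(t)+N_2(t)\leq (1+c)\,S(t).
\]
The constant $C_0$ is then the implied constant of \eqref{energy estimate} times $2$, independent of $c$.

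The main obstacle is the monotonicity step, specifically making sure the forcing term $\lambda_1^{-1}\lVert\vec g\rVert^2$ in \eqref{a+} is not of the same order as the main term $\nu\lambda_1^{-1}N_2$. This works only because the energy estimate converts $\lVert\vec g\rVert^2$ into $S+N_2$, and $N_2$ appears multiplied by the small factor $\sqrt{N_2}$. I would first verify that absorption carefully, then the forward invariance of $\{N_2>cS\}$ using that $S$ is nonincreasing.
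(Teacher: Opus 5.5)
Your proof is correct, but it is organized differently from the paper's.

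The paper first fixes $t_1\ge t$ maximizing $\lambda_1^3+\lVert\vec g\rVert^2_{\dot H^1\times L^2}$ on $[t,+\infty)$. By a last-time argument based on \eqref{a+} alone, with the error written as $O\big(\lVert\vec g\rVert\,\lambda_1^{-1}(\lambda_1^3+\lVert\vec g\rVert^2)\big)$, it shows $N_2(t_1)\le c\,(\lambda_1^3(t_1)+\lVert\vec g(t_1)\rVert^2)$. Only then does it invoke \eqref{energy estimate} to absorb $N_2$ and obtain \eqref{rough estimate for g}. Finally it proves \eqref{rough estimate for N2} with a second last-time argument at an exit time $t_3$.

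You reverse the order. You insert \eqref{energy estimate} directly into the differential inequality for $N_2$, so the forcing is controlled by $S+N_2$ with $S(t)=\sup_{\tau\ge t}(\lambda_1^3+N_1)$. A single trapping argument then shows that $\{N_2>cS\}$ is forward invariant for large times, which contradicts $N_2\to0$. This step needs only continuity of $N_2$ and the fact that $S$ is nonincreasing. Estimate \eqref{rough estimate for g} then follows in one line, with $C_0$ independent of $c$.

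The ingredients are the same in both proofs: \eqref{a+}, \eqref{lamksimlamj}, \eqref{energy estimate}, and $\vec g\to0$. Your route avoids the auxiliary maximizing time and the two separate contradiction arguments. The paper's first step has the minor advantage of not using the energy estimate inside the ODE.

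One small point: drop the aside bounding $\sqrt{N_2}\,\lambda_1^{-1}S$ by a multiple of $\lambda_1^{-1}S^2/N_2$. It is not a correct instance of Young's inequality, and the argument you actually run does not use it.
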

 \begin{proof}
     Let $t\geq T_{0}$ and $t_{1}\geq t$ be such that
     \begin{equation*}
         \lambda_{1}^{3}(t_{1})+\lVert\vec{g}(t_{1})\rVert^{2}_{\dot{H}^{1}\times L^{2}}=\sup_{\tau\geq t}\left(\lambda_{1}^{3}(\tau)+\lVert\vec{g}(\tau)\rVert^{2}_{\dot{H}^{1}\times L^{2}}\right).
     \end{equation*}
     We first prove that, for any $c>0$, if $T_{1}>T_{0}$ is chosen large enough and $t\geq T_{1}$, $t_{1}\geq t$, then
     \begin{equation}\label{bound for N2t1}
         N_{2}(t_{1})\leq c\left(\lambda_{1}^{3}(t_{1})+\lVert\vec{g}(t_{1})\rVert^{2}_{\dot{H}^{1}\times L^{2}}\right).
     \end{equation}
     We argue by contradiction. Suppose (\ref{bound for N2t1}) does not hold; then for some fixed $c>0$, for any $T_{1}>T_{0}$, there exist $t\geq T_{1}$ and $t_{1}\geq t$ such that
     \begin{equation*}
          N_{2}(t_{1})> c\left(\lambda_{1}^{3}(t_{1})+\lVert\vec{g}(t_{1})\rVert^{2}_{\dot{H}^{1}\times L^{2}}\right).
     \end{equation*}
     Let
     \begin{equation*}
         t_{2}:=\max\{\tau\geq t_{1}:N_{2}(\tau)\geq N_{2}(t_{1})\}.
     \end{equation*}
     Then $t_{2}\in[t_{1},+\infty)$, and if $t> t_{2}$,
     \begin{equation*}
         \frac{N_{2}(t_{2})-N_{2}(t)}{t_{2}-t}\leq 0,
     \end{equation*}
     which implies $N_{2}'(t_{2})\leq 0$. On the other hand, by the definition of $t_{1}$ and $t_{2}$, we have
     \begin{equation}\label{estN2}
         N_{2}(t_{2})\geq N_{2}(t_{1})>c\left(\lambda^{3}_{1}(t_{1})+\lVert\vec{g}(t_{1})\rVert^{2}_{\dot{H}^{1}\times L^{2}}\right)\geq c\left(\lambda_{1}^{3}(t_{2})+\lVert \vec{g}(t_{2})\rVert^{2}_{\dot{H}^{1}\times L^{2}}\right).
     \end{equation}
 Then, by (\ref{lamksimlamj}) and (\ref{a+}),
     \begin{align*}
         &\frac{d}{dt}N_{2}(t)\Bigg|_{t=t_{2}}=2\sum_{k=1}^{K}a^{+}_{k}(t_{2})\frac{d}{dt}a_{k}^{+}(t)\Bigg|_{t=t_{2}}\\
         =&2\nu\sum_{k=1}^{K}\frac{1}{\lambda_{k}}(a_{k}^{+})^{2}(t_{2})+O\left(\lVert\vec{g}(t_{2})\rVert_{\dot{H}^{1}\times L^{2}}\left(\lambda_{1}^{2}(t_{2})+\lambda^{-1}_{1}(t_{2})\lVert\vec{g}(t_{2})\rVert_{\dot{H}^{1}\times L^{2}}^{2}\right)\right)\\
         &\geq \frac{2\nu A}{\lambda_{1}(t_{2})}N_{2}(t_{2})+O\left(\lVert\vec{g}(t_{2})\rVert_{\dot{H}^{1}\times L^{2}}\left(\lambda_{1}^{2}(t_{2})+\lambda^{-1}_{1}(t_{2})\lVert\vec{g}(t_{2})\rVert_{\dot{H}^{1}\times L^{2}}^{2}\right).\right.
         \end{align*}
         From \eqref{estN2}, we have
         \begin{align*}
             \frac{d}{dt}N_{2}(t)\Bigg|_{t=t_{2}}\geq& \frac{2\nu Ac}{\lambda_{1}(t_{2})}\left(\lambda_{1}^{3}(t_{2})+\lVert\vec{g}(t_{2})\rVert^{2}_{\dot{H}^{1}\times L^2}\right)\\
             &\qquad+O\left(\frac{\lVert \vec{g}(t_{2})\rVert_{\dot{H}^{1}\times L^{2}}}{\lambda_{1}(t_{2})}\left(\lambda_{1}^{3}(t_{2})+\lVert\vec{g}(t_{2})\rVert^{2}_{\dot{H}^{1}\times L^2}\right)\right)>0,       
     \end{align*}
     provided $T_{1}>T_{0}$ is large enough (since $\lVert \vec{g}(t)\rVert_{\dot{H}^{1}\times L^{2}}\rightarrow 0$ as $t\rightarrow+\infty$ and $t_{2}\geq t_{1}\geq T_{1}$). This contradicts  $N_{2}'(t_{2})\leq 0$. Hence, (\ref{bound for N2t1}) holds.\\
     
     Next, by the energy estimate (\ref{energy estimate}), we have
     \begin{equation*}
         \lVert\vec{g}(t)\rVert^{2}_{\dot{H}^{1}\times L^2}\leq D\left(\lambda^{3}_{1}(t)+N_{1}(t)+N_{2}(t)\right),
     \end{equation*}
     where $D>0$ is a fixed constant. By the previous argument, if we take $c=\displaystyle\frac{1}{2(1+D)}$, then 
     there exists $T_{1}>T_{0}$ such that, for all $t\geq T_{1}$, $t_{1}\geq t$, (\ref{bound for N2t1}) holds.  Then,
     \begin{align*}
         \lambda_{1}^{3}(t_{1})+\lVert\vec{ g}(t_{1})\rVert^{2}_{\dot{H}^{1}\times L^{2}}&\leq (1+D)(\lambda_{1}^{3}(t_{1})+N_{1}(t_{1})+N_{2}(t_{1}))\\
         &\leq \frac{1}{2}\left(\lambda_{1}^{3}(t_{1})+\lVert\vec{g}(t_{1})\rVert^{2}_{\dot{H}^{1}\times L^{2}}\right)+(1+D)\left(\lambda_{1}^{3}(t_{1})+N_{1}(t_{1})\right),
     \end{align*}
     from which we deduce
     \begin{align*}
         \sup_{\tau\geq t}\left(\lambda_{1}^{3}(\tau)+\lVert\vec{ g}(\tau)\rVert^{2}_{\dot{H}^{1}\times L^{2}}\right)=\lambda_{1}^{3}(t_{1})+\lVert\vec{ g}(t_{1})\rVert^{2}_{\dot{H}^{1}\times L^{2}}\leq 2(1+D)\left(\lambda_{1}^{3}(t_{1})+N_{1}(t_{1})\right).
     \end{align*}
     Moreover, for any $t\geq T_{1}$, we have
     \begin{equation*}
         \lambda_{1}^{3}(t)+\lVert\vec{g}(t)\rVert^{2}_{\dot{H}^{1}\times L^{2}}\leq2(1+D)\sup_{\tau\geq t}\left(\lambda_{1}^{3}(\tau)+N_{1}(\tau)\right).
     \end{equation*}
     Hence, taking $C_{0}=2(1+D)$, we obtain
     \begin{equation*}
         \lVert \vec{g}(t)\rVert^{2}_{\dot{H}^{1}\times L^{2}}\leq C_{0}\ \sup_{\tau\geq t}\left(\lambda^{3}_{1}(\tau)+N_{1}(\tau)\right).
     \end{equation*}
     This is (\ref{rough estimate for g}). \\
     
     Finally, we prove (\ref{rough estimate for N2}) by contradiction. 
     Suppose that, for some $c>0$ and any $T_{1}>T_{0}$, there exists $t\geq T_{1}$ such that
     \begin{equation*}
         N_{2}(t)> c\sup_{\tau\geq t}\left(\lambda_{1}^{3}(\tau)+N_{1}(\tau)\right).
     \end{equation*}
     Set
     \begin{equation*}
         t_{3}:=\sup\left\{\tau>t:N_{2}(t)>c\left(\lambda_{1}^{3}(t_{1})+N_{1}(t_{1})\right)\right\}.
     \end{equation*}
     Since $N_{2}(t)\rightarrow 0$ as $t\rightarrow+\infty$, we know that $t_{3}$ is well-defined and $t_{3}\in(t,+\infty)$. 
     Moreover, similar arguments as before, we have $N_{2}'(t_{3})\leq 0$. On the other hand, by the definition of $t_{1}$ and the previous argument,
     \begin{align*}
         \lambda_{1}^{3}(t_{3})+\lVert\vec{g}(t_{3})\rVert^{2}_{\dot{H}^{1}\times L^{2}}\leq \lambda_{1}^{3}(t_{1})+\lVert\vec{g}(t_{1})\rVert^{2}_{\dot{H}^{1}\times L^{2}}\leq C_{0}\left(\lambda_{1}^{3}(t_{1})+N_{1}(t_{1})\right).
     \end{align*}
     Then, by the definition of $t_{3}$,
     \begin{equation*}
         N_{2}(t_{3})=c\left(\lambda_{1}^{3}(t_{1})+N_{1}(t_{1})\right)\geq\frac{c}{C_{0}}\left(\lambda_{1}^{3}(t_{3})+\lVert\vec{g}(t_{3})\rVert^{2}_{\dot{H}^{1}\times L^{2}}\right).
     \end{equation*}
     Hence, computations similar to those above give
     \begin{align*}
         &\frac{d}{dt}N_{2}(t)\Bigg|_{t=t_{3}}\geq \frac{2\nu A}{\lambda_{1}(t_{3})}N_{2}(t_{3})+O\left(\frac{\lVert \vec{g}(t_{3})\rVert_{\dot{H}^{1}\times L^{2}}}{\lambda_{1}(t_{3})}\left(\lambda_{1}^{3}(t_{3})+\lVert\vec{g}(t_{3})\rVert^{2}_{\dot{H}^{1}\times L^2}\right)\right)\\
         \geq& \frac{2\nu Ac}{C_{0}\lambda_{1}(t_{3})}\left(\lambda_{1}^{3}(t_{3})+\lVert\vec{g}(t_{3})\rVert^{2}_{\dot{H}^{1}\times L^{2}}\right)+O\left(\frac{\lVert \vec{g}(t_{3})\rVert_{\dot{H}^{1}\times L^{2}}}{\lambda_{1}(t_{3})}\left(\lambda_{1}^{3}(t_{3})+\lVert\vec{g}(t_{3})\rVert^{2}_{\dot{H}^{1}\times L^2}\right)\right)\\
         >&0
     \end{align*}
     for $t_{3}\geq T_{1}$ large enough. This is a contradiction, and \eqref{rough estimate for N2} follows.
 \end{proof}
 \begin{lemma}
     There exists $C_{0}'>0$ such that for any $c>0$, there exists $T_{1}\geq T_{0}$ such that for any $t\geq T_{1}$,
     \begin{equation}\label{refine estimate for N1}
         N_{1}(t)\leq c\ \sup_{\tau\geq t}\lambda^{3}_{1}(\tau),
     \end{equation}
     \begin{equation}\label{refine estimate for N2}
         N_{2}(t)\leq c\ \sup_{\tau\geq t}\lambda^{3}_{1}(\tau),
     \end{equation}
     \begin{equation}\label{refine estimate for g}
         \lVert\vec{g}(t)\rVert^{2}_{\dot{H}^{1}\times L^{2}}\leq C_{0}' \sup_{\tau\geq t}\lambda^{3}_{1}(\tau).
     \end{equation}
 \end{lemma}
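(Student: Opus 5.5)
The plan is to prove \eqref{refine estimate for N1} first, and then obtain \eqref{refine estimate for N2} and \eqref{refine estimate for g} by inserting it into the previous lemma. Indeed, if $N_{1}(\tau)\le c\sup_{\sigma\ge\tau}\lambda_1^3(\sigma)$ for all $\tau\ge t$, then $\sup_{\tau\ge t}(\lambda_1^3+N_1)\le(1+c)\sup_{\tau\ge t}\lambda_1^3$. Then \eqref{rough estimate for g} gives \eqref{refine estimate for g} with $C_0'=2C_0$, and \eqref{rough estimate for N2}, applied with $c$ replaced by $c/2$, gives \eqref{refine estimate for N2}. So everything reduces to showing that the stable components $a_k^-$ are negligible compared with $\lambda_1^{3/2}$.

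For $N_1$ I will use the differential inequality coming from \eqref{a-}, as in \eqref{der of N1}, but now control the error more carefully. Write $\Lambda(t):=\sup_{\tau\ge t}\lambda_1^3(\tau)$ and $S(t):=\sup_{\tau\ge t}(\lambda_1^3+N_1)(\tau)$. By \eqref{rough estimate for g}, $\|\vec g\|^2\le C_0S$. By Young's inequality,
\[
|a_k^-|\lambda_1^2\le \frac{\nu A}{2K\lambda_1}(a_k^-)^2+C\lambda_1^5,
\]
and $\lambda_1^5=\lambda_1^{-1}\lambda_1^{6}\le \lambda_1^{-1}\,o(1)\,\Lambda$. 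Moreover $|a_k^-|\lambda_1^{-1}\|\vec g\|^2\lesssim \lambda_1^{-1}\|\vec g\|\,S=\lambda_1^{-1}o(1)S$, since $\|\vec g\|\to0$. Hence, for $t\ge T_1$ large,
\begin{equation*}
\frac{d}{dt}N_1\le -\frac{\nu A}{\lambda_1}N_1+\frac{\theta(t)}{\lambda_1}\big(\Lambda(t)+\sup_{\tau\ge t}N_1(\tau)\big),\qquad \theta(t)\to0 .
\end{equation*}

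I then argue by a maximum argument on $[t,\infty)$. Since $N_1\to0$, $\sup_{\tau\ge t}N_1$ is attained at some $t_1\ge t$. If $t_1>t$, then $N_1'(t_1)\ge0$ at this interior maximum. The inequality then gives $\nu A N_1(t_1)\le\theta(t_1)\big(\Lambda(t_1)+N_1(t_1)\big)$, hence $N_1(t)\le N_1(t_1)\le c\Lambda(t_1)\le c\Lambda(t)$ once $\theta$ is small.

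The remaining case $t_1=t$, where $N_1$ is largest at the left endpoint, is the main obstacle: the inequality only shows that $N_1$ decays forward, which is consistent with a large value at $t$. To handle it I will look backward and use a Duhamel formula for each $a_k^-$ on a window $[t-L,t]$:
\[
a_k^-(t)=a_k^-(t-L)\,e^{-\int_{t-L}^t\nu/\lambda_k}+\int_{t-L}^t e^{-\int_s^t\nu/\lambda_k}\,O\big(\lambda_k^2+\lambda_k^{-1}\|\vec g\|^2\big)\,ds .
\]
The key input is that $\lambda_k$ varies slowly. From \eqref{lamkbk}, \eqref{estimate for bk} and \eqref{rough estimate for g} one has $|\lambda_1'|\lesssim S^{1/2}\ll1$, so $\lambda_k(s)\sim\lambda_k(t)$ and $\Lambda(s)\le 2\Lambda(t)$ for $|s-t|\le L$ with $L:=L_0\lambda_1(t)|\log\lambda_1(t)|$. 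On this window the first term is bounded by $\|\vec g\|\,\lambda_1(t)^{\nu A' L_0}$, which is $\ll \lambda_1^{3/2}(t)$ for $L_0$ large. The integral term is bounded by $\lambda_1\sup_{[t-L,t]}\big(\lambda_1^2+\lambda_1^{-1}\|\vec g\|^2\big)$.

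The delicate point is that $\|\vec g\|^2$ on $[t-L,t]$ is again controlled by $S(t-L)$, which contains $N_1$ at earlier times. I will close this with a bootstrap. Either use the quantity $\widetilde N(t):=\sup_{\tau\ge t}N_1/\Lambda$, with the Duhamel bound yielding $\widetilde N\le o(1)(1+\widetilde N)$ uniformly on $[T_1,\infty)$. Or iterate the Duhamel bound combined with Lemma~\ref{bound for N1t1}, which provides arbitrarily late good times at which to start the bootstrap. If the window argument turns out to be too lossy, I would instead try to sharpen the error in \eqref{a-} to $o(\lambda_1^2)+\lambda_1^{-1}\|\vec g\|^2\cdot o(1)$ using the orthogonality $\langle\mathcal Y,\Lambda W\rangle=0$.
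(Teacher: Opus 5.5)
Your reduction of \eqref{refine estimate for N2} and \eqref{refine estimate for g} to \eqref{refine estimate for N1} is correct. So is your differential inequality for $N_1$, and so is your treatment of the case where $\sup_{\tau\ge t}N_1$ is attained at some $t_1>t$. The gap is the remaining case $N_1(t)=\sup_{\tau\ge t}N_1(\tau)$. This case is the whole difficulty, and the window--Duhamel argument does not close it.

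First, $|\lambda_1'|\lesssim S^{1/2}=o(1)$ does not give $\lambda_k(s)\sim\lambda_k(t)$, nor $\Lambda(s)\le 2\Lambda(t)$, on a window of length $L_0\lambda_1(t)|\log\lambda_1(t)|$. That would need $S^{1/2}|\log\lambda_1|\ll1$, i.e.\ a rate for $\|\vec g\|$ in terms of $\lambda_1$, which is what you are trying to prove. Your decay factor $\lambda_1(t)^{\nu A'L_0}$ relies on the same missing input.

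Second, and more seriously, the bootstrap has no anchor. Because \eqref{rough estimate for g} bounds $\|\vec g(s)\|$ only through suprema over $[s,\infty)$ (this is how the unstable direction enters), the Duhamel bound at time $t$ puts $\widetilde N(t-L)\ge\widetilde N(t)$ on the right-hand side. What you actually obtain is $\widetilde N(t+L)\le o(1)\bigl(1+\widetilde N(t)\bigr)$. This is useless unless $\widetilde N$ is known to be finite somewhere, and finiteness of $\widetilde N$ is itself a bound $N_1(t)\lesssim\sup_{\tau\ge t}\lambda_1^3(\tau)$, i.e.\ essentially the statement being proved.

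Your fallback of sharpening the error in \eqref{a-} does not help either. The difficulty is not the size of the forcing but the absence of a time at which $N_1$ is known to be small, since the danger is the free decay of $a_k^-$ inherited from the past.

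The missing idea is a backward continuity argument anchored at a good time from Lemma~\ref{bound for N1t1}, and it uses only ingredients you already have. Put $\tilde N_1(t)=\sup_{\tau\ge t}N_1(\tau)$ and $\tilde N_3(t)=\sup_{\tau\ge t}\lambda_1^3(\tau)$.

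\emph{Strict decay at bad points.} At any large $t$ with $N_1(t)=\tilde N_1(t)\ge\eta\tilde N_3(t)$, \eqref{rough estimate for g} gives $\|\vec g(t)\|^2\le C_0(1+\eta^{-1})N_1(t)$. Hence all error terms in your inequality are $o(N_1/\lambda_1)$, and $N_1'(t)\le-\frac{\nu A}{\lambda_1}N_1(t)$.

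\emph{Slow variation of $\tilde N_3$.} The function $\tilde N_3$ is Lipschitz, with $|\tilde N_3'|\le3\lambda_1^2|\lambda_1'|\ll\lambda_1^{-1}\tilde N_3$ almost everywhere.

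\emph{Backward propagation.} On any interval where $N_1'\le-\frac{\nu A}{2\lambda_1}N_1$, the function $\phi=N_1-\eta\tilde N_3$ satisfies $\phi'\le-\frac{\nu A}{2\lambda_1}\phi$. So $\phi$ stays positive going backward in time, while $N_1$, being decreasing there, stays equal to its future supremum. Thus if $\tilde N_1(t_2)>\eta\tilde N_3(t_2)$ for some $t_2$ (one may assume $N_1(t_2)=\tilde N_1(t_2)$), this ``bad'' property propagates back to a good time $t_1<t_2$ furnished by Lemma~\ref{bound for N1t1}.

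\emph{Contradiction at the anchor.} At $t_1$, \eqref{N1 1} and \eqref{rough estimate for g} give $N_1(t_1)\le c(1+C_0)\tilde N_3(t_1)+cC_0\tilde N_1(t_1)$. This contradicts $N_1(t_1)=\tilde N_1(t_1)>\eta\tilde N_3(t_1)$ once $c\ll\eta$.

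This is the paper's proof, and it covers your interior-maximum case as well.
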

 \begin{proof}
     By Lemma \ref{bound for N1t1}, for any $c>0$ and  any $T_{1}>T_{0}$, there exists $t_{1}\geq T_{1}$ such that 
     \begin{align}\label{bounds for N1T1}
         N_{1}(t_{1})&\leq c\left(\lambda^{3}_{1}(t_{1})+\lVert\vec{g}(t_{1})\rVert^{2}_{\dot{H}^{1}\times L^{2}}\right)\notag\\
         &\leq c\left(\lambda^{3}_{1}(t_{1})+C_{0}\sup_{\tau\geq t_{1}}\left(\lambda^{3}_{1}(\tau)+N_{1}(\tau)\right)\right)\notag\\
         &\leq  c\left(\left(1+C_{0}\right)\sup_{\tau\geq t_{1}}\lambda_{1}^{3}(\tau)+C_{0}\sup_{\tau\geq t_{1}} N_{1}(\tau)\right),
     \end{align}
     where in the second line, we have used (\ref{rough estimate for g}). Set 
     \begin{equation*}
         N_{3}(t):=\lambda_{1}^{3}(t),\quad \tilde{N}_{3}(t):=\sup_{\tau\geq t}N_{3}(\tau),\quad \tilde{N}_{1}(t):=\sup_{\tau\geq t}N_{1}(\tau).
     \end{equation*}
     Then, by the rising sun lemma, $\tilde{N}_{3}(t)$ is differentiable for almost every $t\in[T_{0},+\infty)$. Moreover, by definition,  $\left|\tilde{N}_{3}'(t)\right|\leq  \left|N_{3}'(t)\right|$ at every differentiable points of $\tilde{N}_{3}(t)$ and
     \begin{equation}\label{Lip}
         \left|\tilde{N}_{3}(t)-\tilde{N}_{3}(s)\right|\leq \sup_{\tau\geq T_{0}}\left|N'_{3}(\tau)\right||s-t|\lesssim|s-t|\quad \forall\ s,t\in[T_{0},+\infty).
     \end{equation}
     Fix an arbitrarily small $\eta>0$ and take $\displaystyle c=\frac{\eta}{2(1+C_{0})}$ in (\ref{bounds for N1T1}), we show that, if $t_{1}\geq T_{1}$ is sufficiently large, then for any $t\geq t_{1}$, we have
     \begin{equation}\label{tiN1tiN3}
         \tilde{N}_{1}(t)\leq \eta \tilde{N}_{3}(t).
     \end{equation}
     We argue by contradiction and suppose (\ref{tiN1tiN3}) does not hold. Let $t_{2}>t_{1}$ be such that
     \begin{equation*}
         \tilde{N}_{1}(t_{2})>\eta \tilde{N}_{3}(t_{2}).
     \end{equation*}
     Without loss of generality, we may assume that $\tilde{N}_{1}(t_{2})=N_{1}(t_{2})$ (otherwise, it suffices to replace $t_{2}$ by $\sup\{t\geq t_{2}|N_{1}(t)=\tilde{N}_{1}(t_{2})\}$). We first claim that  if $t$ is sufficiently large and satisfies $N_{1}(t)=\tilde{N}(t)$, then
     \begin{equation}\label{claim for N1}
         N_{1}(t)\geq \eta\tilde{N}_{3}(t)\quad {\rm{implies}}\quad N'_{1}(t)\leq -\frac{\nu A}{\lambda_{1}(t)}N_{1}(t).
     \end{equation}
     In fact, by (\ref{rough estimate for g}),
     \begin{equation*}
         \lVert\vec{g}(t)\rVert^{2}_{\dot{H}^{1}\times L^{2}}\leq C_{0} \left(\tilde{N}_{1}(t)+\tilde{N}_{3}(t)\right)\leq C_{0}\left(1+\eta^{-1}\right)N_{1}(t)
     \end{equation*}
     where in the last inequality, we have used $N_{1}(t)=\tilde{N}(t)$. Then, from (\ref{der of N1}),
     \begin{align*}
         N_{1}'(t)&\leq -\frac{2\nu A}{\lambda_{1}(t)}N_{1}(t)+O\left(\lVert\vec{g}(t)\rVert_{\dot{H}^{1}\times L^2}\left(\lambda_{1}^{2}(t)+\lambda^{-1}_{1}(t)\lVert\vec{g}(t)\rVert^{2}_{\dot{H}^{1}\times L^{2}}\right)\right)\\
         &\leq -\frac{2\nu A}{\lambda_{1}(t)}N_{1}(t)+O\left(\frac{\lVert\vec{g}(t)\rVert_{\dot{H}^{1}\times L^2}}{\lambda_{1}(t)}\left(\eta^{-1}N_{1}(t)+C_{0}\left(1+\eta^{-1}\right)N_{1}(t)\right)\right).
     \end{align*}
     Since $\lVert\vec{g}(t)\rVert^2_{\dot{H}^{1}\times L^{2}}\rightarrow 0$ as $t\rightarrow +\infty$,  if $t$ is sufficiently large,
     \begin{equation*}
         N_{1}'(t)\leq -\frac{\nu A}{\lambda_{1}(t)}N_{1}(t),
     \end{equation*}
     which is (\ref{claim for N1}). Let
     \begin{equation*}
         t_{3}:=\min \left\{t\in[t_{1},t_{2}]:N'_{1}(\tau)\leq-\frac{\nu A}{2\lambda_{1}(\tau)}N_{1}(\tau)\ {\rm{for}}\ {\rm{all}}\ \tau\in[t,t_{2}]\right\}.
     \end{equation*}
     Since $N_{1}(t_{2})>\eta \tilde{N}_{3}(t_{2})$ and $N_{1}(t_{2})=\tilde{N}_{1}(t_{2})$,  for $t_{2}>t_{1}\geq T_{1}$ large enough, it follows from  (\ref{claim for N1}) that
     \begin{equation*}
         N_{1}'(t_{2})\leq -\frac{\nu A}{\lambda_{1}(t_{2})}N_{1}(t_{2}).
     \end{equation*}
     By continuity, we have $t_{3}<t_{2}$. We prove that $t_{3}=t_{1}$.
     Now we argue by contradiction and assume that $t_{3}>t_{1}$. Note  that for almost every $t\geq T_{0}$,
     \begin{align*}
         \left|\tilde{N}_{3}'(t)\right|&\leq \left|N_{3}'(t)\right|=\left|3\lambda'_{1}(t)\lambda_{1}^{2}(t)\right|\lesssim\left(\left|b_{1}\right|+\lVert\vec{g}(t)\rVert_{\dot{H}^{1}\times L^{2}}\right)\lambda_{1}^{2}(t)\\
        & \lesssim\frac{1}{\lambda_{1}(t)}\left(\lVert\vec{g}(t)\rVert_{\dot{H}^{1}\times L^{2}}+\lambda_{1}^{\frac{3}{2}}(t)\right)\lambda_{1}^{3}(t)\ll \frac{1}{\lambda_{1}(t)}\tilde{N}_{3}(t)\quad {\rm{as}}\quad t\rightarrow+\infty.
     \end{align*}
     In particular, we may assume that  
     \begin{equation*}
         \tilde{N}_{3}'(t)\geq-\frac{\nu A}{2\lambda_{1}(t)}\tilde{N}_{3}(t)
     \end{equation*}
     for almost every $t\in[t_{3},t_{2}]$ (provided that $t_{1}\geq T_{1}$ was chosen sufficiently large). 
     We now introduce the auxiliary function
     \begin{equation*}
         \phi(t):= N_{1}(t)-\eta \tilde{N}_{3}(t).
     \end{equation*}
     Then $\phi(t_{2})>0$ and from (\ref{Lip}), $\phi(t)$ is Lipschitz continuous. Moreover, for almost every $t\in[t_{3},t_{2}]$, we have
     \begin{align*}
         \phi'(t)=N_{1}'(t)-\eta \tilde{N}_{3}'(t)\leq -\frac{\nu A}{2\lambda_{1}(t)}N_{1}(t)+\frac{\nu A}{2\lambda_{1}(t)}\eta \tilde{N}_{3}(t)=-\frac{\nu A}{2\lambda_{1}(t)}\phi(t),
     \end{align*}
     which implies  that
     \begin{equation*}
         \frac{d}{dt}\left(e^{\int_{t_{2}}^{t}\frac{\nu A}{2\lambda_{1}(\tau)}{\rm{d}}\tau}\phi(t)\right)\leq 0
     \end{equation*}
     for almost every $t\in[t_{3},t_{2}]$. Since  $ \phi(t)$ is Lipschitz continuous, integrating the above inequality yields
     \begin{equation*}
         e^{\int_{t_{2}}^{t}\frac{\nu A}{2\lambda_{1}(\tau)}{\rm{d}}\tau}\phi(t)\geq \phi (t_{2})\quad \forall\ t\in[t_{3},t_{2}].
     \end{equation*}
     Hence, 
     \begin{equation*}
         \phi (t)>0,\quad \forall\ t\in[t_{3},t_{2}],
     \end{equation*}
     which immediately gives
     \begin{equation*}
         N_{1}(t_{3})>\eta \tilde{N}_{3}(t_{3}).
     \end{equation*}
     By the definition of $t_{3}$, $N_{1}(t)$ is decreasing on $[t_{3},t_{2}]$. Combining this with the fact that $N_{1}(t_{2})=\tilde{N}_{1}(t_{2})$ gives
     \begin{equation*}
         N_{1}(t_{3})=\tilde{N}_{1}(t_{3}).
     \end{equation*}
     Therefore, invoking the claim \eqref{claim for N1} once more, we obtain
     \begin{equation*}
         N'_{1}(t_{3})\leq -\frac{\nu A}{\lambda_{1}(t)}N_{1}(t_{3}),
     \end{equation*}
     which contradicts the definition of $t_{3}$. We thus conclude that $t_{3}=t_{1}$. In particular, this yields
     \begin{equation*}
         N_{1}(t_{1})=\tilde{N}_{1}(t_{1}),\quad N_{1}(t_{1})>\eta \tilde{ N}_{3}(t_{1}).
     \end{equation*}
     However, by the definition of $t_{1}$ and (\ref{bounds for N1T1}), we have
     \begin{equation*}
         N_{1}(t_{1})\leq \frac{\eta}{2(C_{0}+1)}\left[(1+C_{0})\tilde{N}_{3}(t_{1})+C_{0}N_{1}(t_{1})\right].
     \end{equation*}
     It follows that
     \begin{equation*}
         N_{1}(t_{1})\leq\frac{\eta}{2-\frac{C_{0} \eta}{C_{0}+1}}\tilde{N}_{3}(t_{1}),
     \end{equation*}
     which is a contradiction if $\eta>0$ is chosen sufficiently small. Therefore, (\ref{tiN1tiN3}) holds, which implies (\ref{refine estimate for N1}). Finally, (\ref{refine estimate for N2}) and (\ref{refine estimate for g}) follow from (\ref{rough estimate for N2}) and (\ref{rough estimate for g}).
 \end{proof}
 \section{Orders of the modulation parameters and the remainder terms}\label{order of the modulation parameters and the remainder terms}
In this section, we prove that, under the assumptions \eqref{Bub1} and \eqref{asp}, the modulation parameters $\lambda_k(t)$ behave like $t^{-2}$ as $t \to +\infty$, whereas the parameters $b_k(t)$ behave like $t^{-3}$. Moreover, the remainder terms are shown to be of much smaller order than $t^{-3}$ as $t \to +\infty$.
\subsection{Monotonicity of $\zeta_{1}(t)$}
From \eqref{zetalam}, 
 \begin{equation*}
     \left|\frac{\zeta_{k}(t)}{\lambda_{k}(t)}-1\right|\lesssim \sqrt{M}\lVert g\rVert_{\dot{H}^{1}},
 \end{equation*}
 Hence, by \eqref{asy}, for $t$ sufficiently large, $\zeta_{k}(t)\sim \lambda_{k}(t)$, we may replace $\lambda_{k}(t)$ with $\zeta_{k}(t)$ and $\displaystyle\sup_{\tau\geq t}\lambda_{1}^{3}(\tau)$ with $\displaystyle\sup_{\tau\geq t}\zeta_{1}^{3}(\tau)$ in the following arguments. We now prove that $\zeta_{1}(t)$ is in fact a decreasing function for $t$ sufficiently large, so that $\displaystyle\sup_{\tau\geq t}\zeta_{1}^{3}(\tau)=\zeta_{1}^{3}(t)$ for $t$ large enough.
 \begin{proposition}\label{zeta1 decreasing}
     There exists $T_{1}\geq T_{0}$ such that $\zeta_{1}(t)$ is decreasing on $[T_{1},+\infty).$
 \end{proposition}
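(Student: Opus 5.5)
The plan is to show that $\zeta_1'(t)<0$ at every sufficiently large time $t$ at which $\zeta_1$ attains its running supremum, i.e.\ $\zeta_1(t)=\sup_{\tau\geq t}\zeta_1(\tau)$. Monotonicity then follows. If $t<s$ with $\zeta_1(s)>\zeta_1(t)$, then, since $\zeta_1\to 0$ by \eqref{asy} and \eqref{zetalam}, the maximum of $\zeta_1$ over $[t,+\infty)$ is attained at some interior point $\tau^*>t$. That point is a running-supremum point with $\zeta_1'(\tau^*)=0$, which is a contradiction.

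\textbf{Step 1: the modified scale parameter near a running-supremum point.} Fix such a $\tau^*$ and set $\lambda_*:=\lambda_1(\tau^*)$. Since $\zeta_k\sim\lambda_k\sim\lambda_1$, the estimates \eqref{refine estimate for N1}--\eqref{refine estimate for g} give, for $\tau\geq\tau^*$,
\[
\lVert\vec g(\tau)\rVert_{\dot H^1\times L^2}^2\lesssim\lambda_*^3,\qquad N_1(\tau)+N_2(\tau)\leq c\,\lambda_*^3,
\]
with $c$ arbitrarily small. Combined with \eqref{estimate for bk}, this yields $|b_k(\tau)|\lesssim\lambda_*^{3/2}$ for $\tau\geq\tau^*$. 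By \eqref{zetabk} we then get $|\zeta_1'|\lesssim\lambda_*^{3/2}$ on $[\tau^*,+\infty)$. Hence $\zeta_1(\tau)\geq\tfrac12\zeta_1(\tau^*)$ on $[\tau^*,\tau^*+\epsilon_0\lambda_*^{-1/2}]$ for a small fixed $\epsilon_0$. On that interval $\lambda_k\gtrsim\lambda_*$ for every $k$, by \eqref{lamksimlamj}.

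\textbf{Step 2: a lower bound on $p_1(\tau^*)$.} By \eqref{sim estimate for pk'}, with $\eta$ small, $M$ large, and $N_1+N_2\leq c\lambda_*^3$, we obtain
\[
p_1'(\tau)\leq -\kappa\sum_{j\neq 1}|z_j-z_1|^{-3}\lambda_j^{3/2}\lambda_1^{1/2}+o(\lambda_*^2)\leq -c_1\lambda_*^2
\]
on that interval. Moreover $p_1'\leq o(\lambda_*^2)$ for all $\tau\geq\tau^*$, using $\lambda_1\lesssim\lambda_*$ there, which comes from the running-supremum property. The conclusion $p_1(\tau)\to0$ follows from \eqref{pkbk} and \eqref{asy}. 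Integrating from $\tau^*$ to infinity will give
\[
p_1(\tau^*)\geq c_1\epsilon_0\lambda_*^{3/2}-o\Bigl(\int_{\tau^*}^\infty\lambda_1^2\Bigr).
\]
This tail is the main obstacle. The crude bound $\lambda_1\lesssim\lambda_*$ does not make $\int_{\tau^*}^{\infty}\lambda_1^2$ finite, let alone comparable to $\lambda_*^{3/2}$. I would handle it by noting that $p_1'\leq0$ up to errors which are themselves $o(\lambda_1(\tau)^2)$, since $N_1+N_2$ is controlled by the future supremum. So one should only need $p_1'\leq0$ on $[\tau^*+\epsilon_0\lambda_*^{-1/2},\infty)$, and I would check that the error terms in \eqref{sim estimate for pk'} are dominated by the interaction term pointwise at large times, using the comparable-scale assumption and $\sup_{\sigma\geq\tau}\lambda_1^3(\sigma)\lesssim\lambda_*^3$. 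If this pointwise domination fails, I would instead bootstrap with a dyadic decomposition in time.

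\textbf{Step 3: conclusion.} By \eqref{pkbk}, $b_1(\tau^*)\geq p_1(\tau^*)-O\bigl(\sqrt M\,\lambda_*^{3/2}\lVert g\rVert_{\dot H^1}\bigr)\geq \tfrac{c_1\epsilon_0}{2}\lambda_*^{3/2}$. Then \eqref{zetabk} gives
\[
\zeta_1'(\tau^*)\leq -\tfrac{c_1\epsilon_0}{2}\lambda_*^{3/2}+C M^{-1/2}\lambda_*^{3/2}+C\sqrt M\,\lambda_*^{3/2}\lVert g\rVert_{\dot H^1}<0
\]
once $M$ is large and $\tau^*\geq T_1$ is large. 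Here $c_1$ and $\epsilon_0$ are independent of $M$, since they come only from the interaction constant and the Lipschitz bound on $\zeta_1$. This gives the required strict decrease at running-supremum points and hence the proposition.
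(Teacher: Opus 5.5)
There is a genuine gap in Step 2, and it is exactly the tail you flag. The error term in \eqref{sim estimate for pk'} at time $\tau$ contains $\lambda_1(\tau)^{-1}\sum_k\bigl((a_k^+)^2+(a_k^-)^2\bigr)$. The only control available is \eqref{refine estimate for N1}--\eqref{refine estimate for N2}, namely $N_1(\tau)+N_2(\tau)\le c\sup_{\sigma\ge\tau}\lambda_1^3(\sigma)\lesssim c\lambda_*^3$. So for $\tau\ge\tau^*$ this error is only bounded by $c\lambda_*^3/\lambda_1(\tau)$.

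That bound has two consequences. First, your claim that $p_1'\le o(\lambda_*^2)$ for all $\tau\ge\tau^*$ fails as soon as $\lambda_1(\tau)\ll\lambda_*$. Second, the error is dominated by the interaction term $\sim\lambda_1(\tau)^2$ only where $\lambda_1(\tau)^3\gtrsim\sup_{\sigma\ge\tau}\lambda_1^3(\sigma)$. That is, you would already need approximate monotonicity of $\lambda_1$ after $\tau$, which is what you are proving. The proposed pointwise domination is therefore circular, and a dyadic bootstrap runs into the same obstruction. As a result, the lower bound $p_1(\tau^*)\gtrsim\lambda_*^{3/2}$ is unproved, and Step 3 falls with it.

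The missing idea is to stay inside the window from Step 1 and integrate forward, using $\zeta_1'(\tau^*)=0$ at the interior maximum. By \eqref{zetabk} and \eqref{pkbk}, for large times $|\zeta_1'+p_1|\le CM^{-1/2}\lambda_*^{3/2}$ on $I=[\tau^*,\tau^*+\epsilon_0\lambda_*^{-1/2}]$. Hence $p_1(\tau^*)\le CM^{-1/2}\lambda_*^{3/2}$; this upper bound is what is actually available at a maximum point.

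On $I$ you have $\tfrac12\zeta_1(\tau^*)\le\zeta_1\le\zeta_1(\tau^*)$, so every error term is controlled by $\lambda_*$ and $p_1'\le-c_1\lambda_*^2$ there. Consequently $\zeta_1'(\tau)\ge c_1\lambda_*^2(\tau-\tau^*)-2CM^{-1/2}\lambda_*^{3/2}$ on $I$, and
\[
\zeta_1\bigl(\tau^*+\epsilon_0\lambda_*^{-1/2}\bigr)-\zeta_1(\tau^*)\ge\Bigl(\tfrac{c_1}{2}\epsilon_0^2-2C\epsilon_0M^{-1/2}\Bigr)\lambda_*>0
\]
for $M$ large. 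Your $c_1$ and $\epsilon_0$ are indeed independent of $M$, so this is legitimate. It contradicts $\tau^*$ being a running-supremum point.

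This is the paper's proof. The paper uses the first time $t_3$ at which $\zeta_1$ drops to half its maximum in place of your fixed-length window. The mechanism is that $p_1$ decreasing forces $\zeta_1$ back up, and this only needs information on the window where $\zeta_1$ is comparable to its running maximum. Your Step 1 already contains all the ingredients; what must change is the direction of integration.
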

 \begin{proof}
 It suffices to prove that for $T_{1}$ sufficiently large, for any $t_{1}\geq T_{1}$ and all $t_{1}<t$, 
    \begin{equation*}
        \zeta_{1}(t)<\zeta_{1}(t_{1}).
    \end{equation*}
    We argue by contradiction. Suppose that, for any $T_{1}\geq T_{0}$, there exist   $t_{1}\geq T_{1}$ and $t>t_{1}$ such that $\zeta_{1}(t)\geq \zeta_{1}(t_{1})$. Let
    \begin{equation*}
        t_{2}:=\sup\left\{t:\zeta_{1}(t)=\sup_{\tau\geq t_{1}}\zeta_{1}(\tau)\right\}.
    \end{equation*}
    Since $\zeta_{1}(t)\rightarrow 0$ as $t\rightarrow+\infty$, we know that $t_{2} $ is finite and $t_{2}>t_{1}$ with $\zeta_{1}'(t_{2})=0$.
    Set $\zeta_{0}=\zeta_{1}(t_{2})$ and 
    \begin{equation*}
        t_{3}:=\inf\left\{t\geq t_{2}:\zeta_{1}(t)=\frac{\zeta_{0}}{2}\right\}.
    \end{equation*}
    Using again the fact that $\zeta_{1}(t)\rightarrow 0$ as $t\rightarrow+\infty$ once more, we know $t_{3}$ is finite and $t_{3}>t_{2}$. By the estimate \eqref{refine estimate for g}, for any $t\in[t_{2},t_{3}]$,
    \begin{equation}\label{est for g ex}
        \lVert\vec{g}(t)\rVert^{2}_{\dot{H}^{1}\times L^{2}}\leq C_{0}'\sup_{\tau\geq t_{1}}\zeta_{1}^{3}(\tau)=C_{0}'\zeta_{0}^{3}.
    \end{equation}
    Recall that, by \eqref{sim estimate for pk'}, \eqref{refine estimate for N1} and \eqref{refine estimate for N2}, we have
    \begin{equation*}
        p'_{1}(t)\leq -\kappa\sum_{j\neq 1}|z_{j}-z_{1}|^{-3}\lambda^{\frac{3}{2}}_{j}\lambda^{\frac{1}{2}}_{1}+O\left(\left(\eta+\frac{1}{\sqrt{M}}\right)\lambda^{2}_{1}+\frac{c}{\lambda_{1}}\sup_{\tau\geq t}\lambda_{1}^{3}(\tau)\right).
    \end{equation*}
    On the interval $[t_{2},t_{3}]$, by the definition of $t_{2}$ and $t_{3}$,
    \begin{equation*}
        \frac{\zeta_{0}}{2}\leq \zeta_{1}(t)\leq \zeta_{0},\quad \forall\ t\in[t_{2},t_{3}],
    \end{equation*}
    and since $\zeta_{1}(t)\sim \lambda_{1}(t)$, $\lambda_{1}(t)\sim\lambda_{j}(t)$ $(\forall\ 1\leq j\leq K)$, we have
    \begin{equation*}
        \lambda_{j}(t)\sim \zeta_{0},\quad \forall\ t\in[t_{2},t_{3}]\quad {\rm{and}}\quad 1\leq j\leq K.
    \end{equation*}
    Moreover, since $\displaystyle\eta,\frac{1}{\sqrt{M}},c>0$ can be taken arbitrarily small, there exists a constant $A_{0}>0$, such that
    \begin{equation}\label{ex estimate for pk'}
        p'_{1}(t)\leq -A_{0}\zeta^{2}_{0},\quad \forall\ t\in[t_{2},t_{3}].
    \end{equation}
    By \eqref{zetabk}, \eqref{pkbk}, \eqref{estimate for bk} and \eqref{est for g ex}, we have
    \begin{equation}\label{ex estiamte for zeta'p1}
        \left|\zeta'_{1}(t)+p_{1}(t)\right|\lesssim\frac{1}{\sqrt{M}}\lVert\vec{g}(t)\rVert_{\dot{H}^{1}\times L^{2}}\lesssim\frac{1}{\sqrt{M}}\zeta_{0}^{\frac{3}{2}}\quad \forall\ t\in[t_{2},t_{3}]
    \end{equation}
    In particular, at $t_{2}$, since $\zeta'_{1}(t_{2})=0$,
    \begin{equation*}
        p_{1}(t_{2})\leq \frac{C}{\sqrt{M}}\zeta_{0}^{\frac{3}{2}}
    \end{equation*}
    for some fixed constant $C>0$. Integrating \eqref{ex estimate for pk'} over $[t_{2},t_{3}]$, we obtain for any $t\in[t_{2},t_{3}]$
    \begin{equation}\label{ex estimate for p1}
        p_{1}(t)\leq p_{1}(t_{2})-A_{0}\zeta_{0}^{2}(t-t_{2})\leq \frac{C}{\sqrt{M}}\zeta_{0}^{\frac{3}{2}}-A_{0}\zeta_{0}^{2}(t-t_{2}),
    \end{equation}
    which, combined with \eqref{ex estiamte for zeta'p1}, yields 
    \begin{equation}\label{ex estimate for zeta1'}
        \zeta'_{1}(t)\geq-p_{1}(t)-\frac{C}{\sqrt{M}}\zeta_{0}^{\frac{3}{2}}\geq A_{0}\zeta_{0}^{2}(t-t_{2})-\frac{C}{\sqrt{M}}\zeta_{0}^{\frac{3}{2}}.
    \end{equation}
    Integrating \eqref{ex estimate for zeta1'} on $[t_{2},t_{3}]$ gives
    \begin{align*}
        \zeta_{1}(t_{3})-\zeta_{1}(t_{2})&\geq A\zeta_{0}^{2}\int_{t_{2}}^{t_{3}}(t-t_{2}){\rm{d}}t-\frac{C}{\sqrt{M}}\zeta_{0}^{\frac{3}{2}}(t_{3}-t_{2})\\
        &=\frac{A_{0}\zeta_{0}^{2}}{2}\left(t_{3}-t_{2}\right)^{2}-\frac{C}{\sqrt{M}}\zeta^{\frac{3}{2}}_{0}(t_{3}-t_{2})\geq -\zeta_{0}\frac{C^{2}}{2A_{0}M}.
    \end{align*}
    Thus, for $M$ sufficiently large
    \begin{equation*}
        \zeta_{1}(t_{3})\geq \zeta_{1}(t_{2})-\zeta_{0}\frac{C^{2}}{2A_{0}M}\geq \zeta_{0}-\frac{\zeta_{0}}{4}=\frac{3\zeta_{0}}{4},
    \end{equation*}
    which contradicts the definition of $t_{3}$.
 \end{proof}
  \begin{remark}
  Since $\zeta_{k}(t)\sim\zeta_{1}(t)$, similar arguments show that
      $\zeta_{k}(t)$ is also decreasing for $t\in[T_{1},+\infty)$ if $T_{1}$ is large enough.
 \end{remark}
 \subsection{Proof of the first main conclusion}
In this subsection, we first determine the leading order of the modulation parameters, and then derive estimates for the remainder terms.
 \begin{theorem}\label{order for the parameters}
    Let $\vec{u}(t):[0,+\infty)\rightarrow\dot{H}^{1} (\RR^{5})\times L^{2}(\RR^{5})$ be a solution to (\ref{NLW1}) satisfying (\ref{Bub1}) and (\ref{asp}). Then for $t$ sufficiently large and some fixed constants $C_{1}>0$ and $C_{2}>0$, there exist  modulated parameters $\lambda_{k}(t),b_{k}(t),x_{k}(t)\in C^{1}$  satisfying
\begin{equation}\label{estimates for parameters}
    C_{1}t^{-2}\leq \lambda_{k}(t)\leq C_{2}t^{-2},\ C_{1}t^{-3}\leq b_{k}(t)\leq C_{2}t^{-3},\ \lim_{t\rightarrow+\infty}t^{2}|x_{k}(t)-z_{k}|=0,
\end{equation}
and 
\begin{equation}\label{estimate for remainder terms}
   \lim_{t\rightarrow+\infty}t^{3}\left(\bigg\lVert u(t)-\sum_{k=1}^{K}W_{\lambda_{k},x_{k}}\bigg\rVert_{\dot{H}^{1}(\RR^{5})}+
       \bigg\lVert\partial_{t}u(t)-\sum_{k=1}^{K}b_{k}(\Lambda W)_{\underline{\lambda_{k},x_{k}}}\bigg\rVert_{L^{2}(\RR^{5})}\right)=0.
\end{equation}
\end{theorem}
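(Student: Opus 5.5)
The plan is to run a bootstrap/ODE argument on the refined parameters $\zeta_1,p_1$ of Section~\ref{refined modulation}, using the a priori bounds of Section~\ref{stable and unstable directions} as the only input on $\vec g$. By Proposition~\ref{zeta1 decreasing}, $\sup_{\tau\geq t}\zeta_1^3(\tau)=\zeta_1^3(t)$ for $t\geq T_1$. Since $\zeta_1\sim\lambda_1\sim\lambda_k$, \eqref{refine estimate for N1}, \eqref{refine estimate for N2}, \eqref{refine estimate for g} and \eqref{energy estimate} then give, for $t$ large,
\[
\lVert\vec g(t)\rVert^2_{\dot H^1\times L^2}+\sum_k b_k^2\lesssim \zeta_1^3(t),\qquad N_1+N_2\leq c\,\zeta_1^3 .
\]
Plugging these into \eqref{zetabk}, \eqref{pkbk} and \eqref{sim estimate for pk'} yields three consequences. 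First, $|\zeta_1'+p_1|\lesssim M^{-1/2}\zeta_1^{3/2}$. Second, $|p_1|\lesssim\zeta_1^{3/2}$. Third, with $\eta,c,M^{-1}$ small, $p_1'\leq -A_0\zeta_1^2$.

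\textbf{Step 1: $\lambda_k\sim t^{-2}$.} For the lower bound, the first two consequences give $|\zeta_1'|\lesssim\zeta_1^{3/2}$, i.e.\ $(\zeta_1^{-1/2})'\lesssim 1$, hence $\zeta_1\gtrsim t^{-2}$. For the upper bound, since $p_1\to 0$ (by \eqref{asy} and \eqref{pkbk}), integrating $p_1'\leq -A_0\zeta_1^2$ on $[t,\infty)$ gives $p_1(t)\geq A_0\int_t^\infty\zeta_1^2$. The bound $|\zeta_1'|\lesssim\zeta_1^{3/2}$ shows that $\zeta_1(s)\geq\zeta_1(t)/2$ on an interval $[t,t+c_0\zeta_1(t)^{-1/2}]$. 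Therefore $p_1(t)\gtrsim\zeta_1^{3/2}(t)$, and then $\zeta_1'\leq -p_1+CM^{-1/2}\zeta_1^{3/2}\leq -c_1\zeta_1^{3/2}$ for $M$ large, which gives $\zeta_1\lesssim t^{-2}$. The same bounds transfer to every $\lambda_k$ via \eqref{zetalam} and \eqref{lamksimlamj}. The same argument applied to each $p_k$ gives $p_k\sim t^{-3}$, and \eqref{pkbk} then gives $b_k=p_k(1+o(1))$. Hence $b_k\sim t^{-3}$, and in particular $b_k>0$ for $t$ large.

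\textbf{Step 2: $\lVert\vec g\rVert=o(t^{-3})$; this is the main obstacle.} The a priori bound only gives $O(t^{-3})$. I will exploit the approximate Hamiltonian structure of the limiting ODE. Set
\[
H(t):=\tfrac12\lVert\Lambda W\rVert^2_{L^2}\sum_k p_k^2-\Big(15^{\frac32}\!\int W^{\frac73}\Big)\sum_{j<k}|z_j-z_k|^{-3}\lambda_j^{\frac32}\lambda_k^{\frac32}.
\]
By the definition of $\kappa$, the derivative of the potential term equals $+\lVert\Lambda W\rVert^2\kappa\sum_k b_k\lambda_k^{1/2}\sum_{j\neq k}|z_j-z_k|^{-3}\lambda_j^{3/2}$ up to $\lambda_k'+b_k$ errors. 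Using $p_k>0$, \eqref{sim estimate for pk'}, \eqref{lamkbk} and $p_k-b_k=o(t^{-3})$, the main terms cancel and
\[
H'(t)\leq C\big(\eta+M^{-1/2}+c\big)t^{-7}+o(t^{-7}).
\]
Since $H\to0$, integration gives $H(t)\geq -C(\eta+M^{-1/2}+c)t^{-6}-o(t^{-6})$. On the other hand, the refined energy estimate \eqref{refined energy estimate} can be rewritten as
\[
\lVert\dot g\rVert^2+C_0\lVert\nabla g\rVert^2\leq -2H+C\Big(\sum_k (a_k^\pm)^2+\lambda_1^4+\lambda_1^3\big(\textstyle\sum_j|x_j-z_j|+\lVert g\rVert\big)+\lVert g\rVert^3\Big)+o(t^{-6}),
\]
after replacing $b_k$ by $p_k$ via \eqref{pkbk}. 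Every term on the right is thus $\lesssim(\eta+M^{-1/2}+c)t^{-6}+o(t^{-6})$. The constants $\eta,M^{-1},c$ were arbitrary, but changing them changes $p_k$ and $T_1$ and not $\vec g$. So $\limsup t^6\lVert\vec g\rVert^2$ is arbitrarily small, i.e.\ zero. The delicate point is checking that every error term in $H'$ is genuinely of size $(\text{small})\cdot t^{-7}$. For the $\lambda_k'+b_k$ errors this uses \eqref{lamkbk} together with $\lVert\dot g\rVert\lesssim t^{-3}$ multiplied by $\lambda^2\sim t^{-4}$. For those terms I would first try to absorb them with a bootstrap on $\limsup t^6\lVert\vec g\rVert^2$.

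\textbf{Step 3: the centers.} By \eqref{lamkbk} and Step~2, $|x_k'|\lesssim\lVert\dot g\rVert+\sum_j|b_j|\lVert g\rVert=o(t^{-3})$. Integrating from $t$ to $\infty$, and using $x_k\to z_k$, gives $|x_k(t)-z_k|=o(t^{-2})$. This completes \eqref{estimates for parameters} and \eqref{estimate for remainder terms}.
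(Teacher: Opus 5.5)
Your Steps 1 and 3 are sound. In Step 1, obtaining $p_1(t)\gtrsim\zeta_1^{3/2}(t)$ by integrating $p_1'\le -A_0\zeta_1^2$ over a window of length $\sim\zeta_1(t)^{-1/2}$ on which $\zeta_1\ge\zeta_1(t)/2$ is a valid substitute for the paper's auxiliary function $\varphi_1$. Step 2, however, has a genuine gap, located exactly where you flagged it.

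Since your $H$ contains $\lambda_j^{3/2}\lambda_k^{3/2}$, $H'$ contains the terms $(\lambda_k'+b_k)\lambda_k^{1/2}\lambda_j^{3/2}$. The only control you have is \eqref{lamkbk}, which gives $|\lambda_k'+b_k|\lesssim\lVert\dot g\rVert_{L^2}+\sum_j|b_j|\lVert g\rVert_{\dot H^1}$. The first piece comes from $\langle(\Delta\Lambda W)_{\underline{\lambda_k,x_k}},\dot g\rangle$ and carries an $O(1)$ constant. With only $\lVert\dot g\rVert_{L^2}\lesssim t^{-3}$ available, these terms are $O(t^{-7})$ with a non-small constant. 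So the bound $H'\le C(\eta+M^{-1/2}+c)t^{-7}+o(t^{-7})$ is not justified; assuming it would be circular.

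The bootstrap you propose cannot close. Set $\gamma:=\limsup_{t\to\infty}t^{6}\lVert\vec g(t)\rVert^2_{\dot H^1\times L^2}$. Integrating $H'$ and inserting into \eqref{refined energy estimate} yields only $\gamma\le C(\eta+M^{-1/2}+c)+C'\gamma^{1/2}$. The error is linear in $\lVert\dot g\rVert_{L^2}$, while the energy controls $\lVert\vec g\rVert^2$. This inequality is consistent with $\gamma\sim C'^2$ and gives nothing beyond the a priori $O(t^{-6})$.

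The missing idea is the reason $\zeta_k$ was introduced: use $\zeta_j^{3/2}\zeta_k^{3/2}$ in the potential part. This is the paper's $E_1$, i.e. your $H$ divided by $\frac12\lVert\Lambda W\rVert^2_{L^2}$ with $\lambda$ replaced by $\zeta$. By \eqref{zetabk}, $|\zeta_k'+b_k|\lesssim M^{-1/2}\lVert\dot g\rVert_{L^2}+\sqrt M(\sum_j|b_j|)\lVert g\rVert_{\dot H^1}=O(M^{-1/2}t^{-3})+o(t^{-3})$. The factor $M^{-1/2}$ comes from \eqref{ort3}, which reduces $\langle\chi(\frac{\cdot-x_k}{\lambda_kM})(\Lambda W)_{\underline{\lambda_k,x_k}},\dot g\rangle$ to a tail. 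The resulting error in the derivative is $O(M^{-1/2}t^{-7})$, which is of the admissible form.

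The switch costs nothing at the end: \eqref{zetalam} gives $\zeta_j^{3/2}\zeta_k^{3/2}-\lambda_j^{3/2}\lambda_k^{3/2}=O(\sqrt M\,t^{-6}\lVert g\rVert_{\dot H^1})=o(t^{-6})$. Your direct comparison with \eqref{refined energy estimate}, together with your correct observation that $\vec g$ does not depend on $\eta,M,c$, then finishes the argument as you intended.

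Alternatively, you could keep $\lambda_k$ and integrate the bad term by parts via $\lambda_k'+b_k=(\zeta_k'+b_k)+(\lambda_k-\zeta_k)'$, using $\lambda_k-\zeta_k=O(\sqrt M\lambda_k\lVert g\rVert_{\dot H^1})$. Either way, the correction built into $\zeta_k$ is what supplies the needed smallness.
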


\begin{remark}
  Substituting the estimates \eqref{estimate for remainder terms} into \eqref{lamkbk} and \eqref{bk'}, we  also obtain the asymptotic ODE system for $\lambda_{k}$ and $b_{k}$,
    \begin{equation*}
		\left\{\begin{aligned}
      & \lambda'_{k}(t)+b_{k}(t)=o(t^{-3}),\\
      & b'_{k}(t)+\kappa \sum_{j\neq k, 1\leq j\leq K}|z_{j}-z_{k}|^{-3}\lambda_{k}^{\frac{1}{2}}(t)\lambda^{\frac{3}{2}}_{j}(t)=o(t^{-4}).
		\end{aligned}\right.\quad {\rm{as}}\quad t\rightarrow+\infty
	\end{equation*}
\end{remark}

\begin{proof}
    \textbf{Step 1: Bounds for $\boldsymbol{\lambda_{k}(t)}$.} For the lower bound, first, from \eqref{zetabk}, \eqref{estimate for bk}, \eqref{refine estimate for g} and Proposition \ref{zeta1 decreasing}, we have
    \begin{equation*}
        \left|\zeta_{1}'(t)\right|\leq \left|b_{1}(t)\right|+\frac{1}{\sqrt{M}}\lVert \dot{g}\rVert_{L^{2}}+\sqrt{M}\left(\sum_{k=1}^{K}\left|b_{k}\right|\right)\lVert g\rVert_{\dot{H}^{1}}\lesssim \left|\zeta_{1}(t)\right|^{\frac{3}{2}}.
    \end{equation*}
    Combining this with the fact that $\zeta_{1}(t)\rightarrow 0$ as $t\rightarrow+\infty$ and $\zeta_{1}(t)>0$, we obtain
    \begin{equation*}
        \zeta_{1}(t)\gtrsim t^{-2} \quad{\rm{as}}\quad t\rightarrow+\infty,
    \end{equation*}
    and since $\zeta_{1}(t)\sim\lambda_{1}(t)$, $\lambda_{1}(t)\sim\lambda_{k}(t)$, we have
    \begin{equation*}
        \lambda_{k}(t)\gtrsim t^{-2} \quad {\rm{as}}\quad t\rightarrow+\infty \quad \forall\ 1\leq k\leq K.
    \end{equation*}
    For the upper bound, from \eqref{sim estimate for pk'}, \eqref{refine estimate for N1} and \eqref{refine estimate for N2},
    \begin{equation*}
        p'_{1}(t)\leq -\kappa\sum_{j\neq 1}|z_{j}-z_{1}|^{-3}\lambda^{\frac{3}{2}}_{j}\lambda^{\frac{1}{2}}_{1}+O\left(\left(\eta+\frac{1}{\sqrt{M}}\right)\lambda^{2}_{1}+\frac{c}{\lambda_{1}}\sup_{\tau\geq t}\lambda_{1}^{3}(\tau)\right).
    \end{equation*}
    Since $\lambda_{1}(t)\sim\lambda_{k}(t)$, $\lambda_{1}(t)\sim\zeta_{1}(t)$ and $\zeta_{1}$ is decreasing, by choosing $\eta$, $\displaystyle\frac{1}{\sqrt{M}}$, $c$ sufficiently small, we obtain there exists some fixed constant $A_{1}>0$ such that 
    \begin{equation}\label{upper bound for p1'}
        p_{1}'(t)\leq -A_{1}\zeta_{1}^{2}(t)\quad \forall\ t\geq T_{1},
    \end{equation}
    where $T_{1}\geq T_{0}$ is sufficiently large. Moreover, from  \eqref{zetabk}, \eqref{pkbk}, \eqref{refine estimate for g} and Proposition \ref{zeta1 decreasing},
    \begin{equation}\label{refine estimate for zeta1'}
        \left|\zeta'_{1}(t)+p_{1}(t)\right|\lesssim\frac{1}{\sqrt{M}}\lVert\vec{g}(t)\rVert_{\dot{H}^{1}\times L^{2}}\lesssim\frac{1}{\sqrt{M}}\zeta_{1}^{\frac{3}{2}}(t),
    \end{equation}
    Taking $M>0$ sufficiently large, we have
     \begin{equation}\label{lower bound for zeta1'}
         \zeta_{1}'(t)\geq-p_{1}(t)-\sqrt{\frac{A_{1}}{3}}\zeta_{1}^{\frac{3}{2}}(t)\quad \forall\ t\geq T_{1}.
     \end{equation}
     We now consider the auxiliary function
     \begin{equation*}
         \varphi_{1}(t):=-p_{1}(t)+\sqrt{\frac{A_{1}}{3}}\zeta_{1}^{\frac{3}{2}}(t).
     \end{equation*}
     It follows from \eqref{upper bound for p1'} and \eqref{lower bound for zeta1'} that
     \begin{align*}
         \varphi_{1}'(t)&=-p_{1}'(t)+\sqrt{\frac{A_{1}}{3}}\cdot\frac{3}{2}\zeta_{1}^{\frac{1}{2}}(t)\zeta'_{1}(t)\\
         &\geq A_{1}\zeta^{2}_{1}(t)+\sqrt{\frac{A_{1}}{3}}\cdot\frac{3}{2}\left(-p_{1}(t)-\sqrt{\frac{A_{1}}{3}}\zeta_{1}^{\frac{3}{2}}(t)\right)\zeta_{1}^{\frac{1}{2}}(t)\\
         &=\frac{1}{2}\sqrt{3A_{1}}\zeta_{1}^{\frac{1}{2}}(t)\left(-p_{1}(t)+\sqrt{\frac{A_{1}}{3}}\zeta_{1}^{\frac{3}{2}}(t)\right)=\frac{1}{2}\sqrt{3A_{1}}\zeta_{1}^{\frac{1}{2}}(t)\varphi_{1}(t),\quad \forall\ t\geq T_{1}.
     \end{align*}
     We claim that $\varphi_{1}(t) \leq 0$ holds for all $t \geq T_{1}$.
Suppose otherwise, then there exists $t_{1}\geq T_{1}$  such that
$\varphi_{1}(t_{1}) > 0$ and $\zeta_{1}(t_{1}) > 0$. It follows that
$\varphi_{1}'(t_{1}) > 0$, and by a continuity argument, we obtain
\begin{equation*}
    \varphi_{1}'(t) > 0, \quad \forall t \in [t_{1}, +\infty).
\end{equation*}
Consequently, for all $t > t_{1}$, we have
\begin{equation*}
    \varphi_{1}(t) > \varphi_{1}(t_{1}) > 0.
\end{equation*}
However, by the definition of $\varphi_{1}(t)$, we have $\varphi_{1}(t) \to 0$ as $t \to +\infty$, which contradicts the fact that $\varphi_{1}(t)>\varphi_{1}(t_{1})>0$ for all $t>t_{1}$. This contradiction establishes our claim that
\begin{equation*}
    \varphi_{1}(t) \leq 0 \quad \text{for all } t \geq T_{1},
\end{equation*}
which immediately yields the bound
\begin{equation*}
    -p_{1}(t) \leq -\sqrt{\frac{A_{1}}{3}}\zeta_{1}^{\frac{3}{2}}(t), \quad \forall\ t \geq T_{1}.
\end{equation*}
Substituting this estimate into inequality \eqref{refine estimate for zeta1'} and choosing $M > 0$ sufficiently large, we arrive at the  inequality
\begin{equation*}
    \zeta_{1}'(t) \leq -\sqrt{\frac{A_{1}}{3}}\zeta_{1}^{\frac{3}{2}}(t) + \frac{1}{2}\sqrt{\frac{A_{1}}{3}}\zeta_{1}^{\frac{3}{2}}(t) = -\frac{1}{2}\sqrt{\frac{A_{1}}{3}}\zeta_{1}^{\frac{3}{2}}(t).
\end{equation*}
Integrating both sides of this inequality over $[t, +\infty)$ and using the positivity condition $\zeta_{1}(t) > 0$ together with the asymptotic behavior $\zeta_{1}(t) \to 0$ as $t \to +\infty$, we deduce that
\begin{equation*}
    \zeta_{1}(t) \lesssim t^{-2} \quad \text{as } t \to +\infty.
\end{equation*}
It  follows that 
\begin{equation*}
    \lambda_{k}(t)\lesssim t^{-2} \quad {\rm{as}}\quad t\rightarrow+\infty \quad \forall\ 1\leq k\leq K.
\end{equation*}
\textbf{Step 2: Bounds for $\boldsymbol{b_{k}(t)}$.}
Combining  \eqref{refine estimate for g} with the previously derived estimate $\lambda_{k}(t) \lesssim t^{-2}$, we immediately obtain 
\begin{equation*}
    \lVert \vec{g}(t) \rVert^{2}_{\dot{H}^{1} \times L^{2}} \lesssim t^{-6}.
\end{equation*}
Substituting this into equation \eqref{estimate for bk} yields the preliminary upper bound
\begin{equation*}
    \left| b_{k}(t) \right| \lesssim t^{-3}.
\end{equation*}
We now turn to the lower bound. Starting from the  inequality \eqref{sim estimate for pk'} and using the asymptotic behavior $\lambda_{j}(t) \sim t^{-2}$, we choose $\eta$, $1/\sqrt{M}$, and $c$ sufficiently small to derive that
\begin{equation*}
    p_{k}'(t) \lesssim -t^{-4}.
\end{equation*}
Integrating both sides over $[t, +\infty)$ and using the vanishing asymptotic condition $p_{k}(t) \to 0$ as $t \to +\infty$, we deduce the lower bound
\begin{equation*}
    p_{k}(t) \gtrsim t^{-3} \quad \text{as } t \to +\infty.
\end{equation*}
Combining this with relation \eqref{pkbk} then gives the corresponding lower bound for $b_{k}(t)$:
\begin{equation*}
    b_{k}(t) \gtrsim t^{-3} \quad \text{as } t \to +\infty.
\end{equation*}
\textbf{Step 3: Estimates for the remainder terms.} We consider the functional
\begin{equation*}
    E_{1}(t):=\sum_{k=1}^{K}\left(p_{k}(t)\right)^{2}-\frac{4}{3}\kappa\sum_{1\leq j<k\leq K}\left|z_{j}-z_{k}\right|^{-3}\zeta_{j}^{\frac{3}{2}}(t)\zeta_{k}^{\frac{3}{2}}(t).
\end{equation*}
We begin by differentiating the functional $E_1(t)$ with respect to $t$ :
\begin{align*}
E'_{1}(t)&=2\sum_{k=1}^{K}p_{k} p'_{k}-\frac{4}{3}\kappa\sum_{1\leq j<k\leq K}\left|z_{j}-z_{k}\right|^{-3}\left(\frac{3}{2}\zeta_{j}'\zeta_{j}^{\frac{1}{2}}\zeta_{k}^{\frac{3}{2}}+\frac{3}{2}\zeta_{k}'\zeta_{k}^{\frac{1}{2}}\zeta_{j}^{\frac{3}{2}}\right).
\end{align*}
From the estimates \eqref{zetalam}, \eqref{zetabk}, \eqref{pkbk} and \eqref{sim estimate for pk'}, we obtain
\begin{align*}
    E'_{1}(t)\leq &2\sum_{k=1}^{K}p_{k}(t)\left(-\kappa\sum_{j\neq k}|z_{j}-z_{k}|^{-3}\lambda^{\frac{3}{2}}_{j}(t)\lambda^{\frac{1}{2}}_{k}(t)+O\left(\left(\eta+\frac{1}{\sqrt{M}}+c\right)t^{-4}\right)\right)\\
    &-\frac{4}{3}\kappa\sum_{1\leq j<k\leq K}\frac{3}{2}\left|z_{j}-z_{k}\right|^{-3}\left[\left(-p_{j}(t)+O\left(\frac{1}{\sqrt{M}}t^{-3}\right)\right)\zeta_{j}^{\frac{1}{2}}(t)\zeta_{k}^{\frac{3}{2}}(t)\right.\\
   &\left.+\left(-p_{k}(t)+O\left(\frac{1}{\sqrt{M}}t^{-3}\right)\right)\zeta_{j}^{\frac{3}{2}}(t)\zeta_{k}^{\frac{1}{2}}(t) \right].   
\end{align*}
Thus, the estimates established in Step 1 and Step 2 give
\begin{align*}
    E'_{1}(t)\leq &-2\kappa\sum_{k=1}^{K}p_{k}(t)\sum_{j\neq k}\left|z_{j}-z_{k}\right|^{-3}\zeta_{j}^{\frac{3}{2}}(t)\zeta_{k}^{\frac{1}{2}}(t)\\
   &+2\kappa\sum_{1\leq j<k\leq K}\left|z_{j}-z_{k}\right|^{-3}\left(p_{j}(t)\zeta_{j}^{\frac{1}{2}}(t)\zeta_{k}^{\frac{3}{2}}(t)+p_{k}(t)\zeta_{j}^{\frac{3}{2}}(t)\zeta_{k}^{\frac{1}{2}}(t) \right)\\
   &+O\left(\left(\eta+\frac{1}{\sqrt{M}}+c\right)t^{-7}\right).
   %=O\left(\left(\eta+\frac{1}{\sqrt{M}}+c\right)t^{-7}\right)
\end{align*}
A direct computation shows that the two leading-order sums cancel. We thus arrive at the remainder-only bound
\begin{equation*}
    E_1'(t) \leq O\left( \left( \eta + \frac{1}{\sqrt{M}} + c \right) t^{-7} \right).
\end{equation*}
We now integrate this  bound from $t$ to $+\infty$. Using the asymptotic vanishing condition $E_1(t) \to 0$ as $t \to +\infty$, we obtain the lower bound
\begin{equation*}
    E_1(t) \gtrsim - \left( \eta + \frac{1}{\sqrt{M}} + c \right) t^{-6}.
\end{equation*}
Recalling the definition of $E_1(t)$ and combining this with \eqref{zetalam} and \eqref{pkbk}, we deduce the lower bound
\begin{equation*}
    \sum_{k=1}^{K} b_k^2(t) \geq \frac{4}{3}\kappa \sum_{1\leq j<k\leq K} |z_j - z_k|^{-3} \lambda_j^{3/2}(t) \lambda_k^{3/2}(t) - C \left( \eta + \frac{1}{\sqrt{M}} + c \right) t^{-6},
\end{equation*}
where $C>0$ is a universal constant independent of $t$, $\eta$, $M$, and $c$. On the other hand, from \eqref{refined energy estimate}, we have
\begin{align*}
   \sum_{k=1}^{K}|b_{k}(t)|^{2}\leq& \frac{\left(2\times15^{\frac{3}{2}}\int W^{\frac{7}{3}}\right)}{ \lVert\Lambda W\rVert^{2}_{L^{2}}}\cdot
             \sum_{1\leq j<k\leq K}|z_{j}-z_{k}|^{-3}\lambda_{j}^{\frac{3}{2}}(t)\lambda_{k}^{\frac{3}{2}}(t)\\
             &+C\Bigg(\sum_{k=1}^{K}\left((a^{+}_{k})^{2}+(a_{k}^{-})^{2}\right)+\lambda_{1}^{4}+\lambda_{1}^{3}\Bigg(\sum_{j=1}^{K}|x_{j}-z_{j}|+
             \lVert g\rVert_{\dot{H}^{1}}\Bigg)+\lVert g\rVert^{3}_{\dot{H}^{1}}\Bigg)\\
             \leq&\frac{4}{3}\kappa\sum_{1\leq j<k\leq K}\left|z_{j}-z_{k}\right|^{-3}\lambda_{j}^{\frac{3}{2}}(t)\lambda_{k}^{\frac{3}{2}}(t)+O\left(c t^{-6}\right).
\end{align*}
Since $\eta>0$, $c>0$, and $1/\sqrt{M}$ can be taken arbitrarily small, we therefore obtain the sharp asymptotic identity
\begin{equation*}
    \sum_{k=1}^{K} b_k^2(t) = \frac{4}{3}\kappa \sum_{1\leq j<k\leq K} |z_j - z_k|^{-3} \lambda_j^{3/2}(t) \lambda_k^{3/2}(t) + o\left( t^{-6} \right), \quad \text{as } t \to +\infty.
\end{equation*}
Substituting this identity back into the refined energy estimate \eqref{refined energy estimate}, we  conclude that
\begin{equation*}
    \lim_{t \to +\infty} t^6 \lVert \vec{g}(t) \rVert_{\dot{H}^1 \times L^2}^2 = 0,
\end{equation*}
which is precisely the estimate \eqref{estimate for remainder terms} we set out to prove. Finally, the bound for $x_{k}$ follows directly from \eqref{lamkbk}, \eqref{asy} and  \eqref{estimate for remainder terms}.
\end{proof}

\section{ODE systems for the parameters $\lambda_{k}$ and $b_{k}$}\label{ode systems for the parameters}
     In the following, we consider the ODE system for $\lambda_{k}(s)$ and $b_{k}(s)$:
      \begin{equation}\label{mud eq}
		\left\{\begin{aligned}
      & \lambda'_{k}(s)+b_{k}(s)=o_{s\rightarrow+\infty}(s^{-3}),\\
      & b'_{k}(s)+\kappa \sum_{j\neq k, 1\leq j\leq K}|z_{j}-z_{k}|^{-3}\lambda_{k}^{\frac{1}{2}}(s)\lambda^{\frac{3}{2}}_{j}(s)=o_{s\rightarrow+\infty}(s^{-4}).
		\end{aligned}\right.
	\end{equation}
    By the previous argument, we have already proved that there exist  constants $C_{1}>0$ and $C_{2}>0$ such that
    \begin{equation}
        C_{1}s^{-2}\leq \lambda_{k}(s)\leq C_{2}s^{-2},\quad C_{1}s^{-3}\leq b_{k}(s)\leq C_{2}s^{-3},
    \end{equation}
    for all $1\leq k\leq K$ and all sufficiently large $s$. Next, we show that the vector $\left(s^{2}\vec{\lambda}(s), s^{3}\vec{b}(s)\right)$ converges to a connected component of a fixed set.\\
    First, we change variables and set
    \begin{equation}
        \alpha_{k}=s^{2}\lambda_{k}(s),\quad  \beta_{k}=s^{3}b_{k}(s),\quad s=e^{t}, 
    \end{equation} 
    Then  (\ref{mud eq}) is equivalent to 
    \begin{equation}
 \left\{\begin{aligned}
      & \alpha'_{k}(t)=2\alpha_{k}(t)-\beta_{k}(t)+o_{t\rightarrow+\infty}(1),\\
      & \beta'_{k}(t)=3\beta_{k}(t)-\kappa\sum_{j\neq k, 1\leq j\leq K}|z_{j}-z_{k}|^{-3}\alpha_{k}^{\frac{1}{2}}(t)\alpha^{\frac{3}{2}}_{j}(t)+o_{t\rightarrow+\infty}(1).
		\end{aligned}\right.
    \end{equation}
    for any $1\leq k\leq K$. And the parameters $\alpha_{k}(t)$ and $\beta_{k}(t)$ satisfy
        \begin{equation}
        C_{1}\leq \alpha_{k}(t)\leq C_{2},\quad C_{1}\leq \beta_{k}(t)\leq C_{2},
    \end{equation}
    We now state the main conclusion of this section.
   \begin{proposition}\label{ODE}
       Suppose  that $(\vec{\alpha}(t),\vec{\beta}(t))\in C^{1}([0,+\infty))$ is a solution of the equation
       \begin{equation}\label{ODE1}
 \left\{\begin{aligned}
      & \alpha'_{k}(t)=2\alpha_{k}(t)-\beta_{k}(t)+\epsilon^{1}_{k}(t),\\
      & \beta'_{k}(t)=3\beta_{k}(t)-\kappa\sum_{j\neq k, 1\leq j\leq K}|z_{j}-z_{k}|^{-3}\alpha_{k}^{\frac{1}{2}}(t)\alpha^{\frac{3}{2}}_{j}(t)+\epsilon_{k}^{2}(t),
		\end{aligned}\right.
    \end{equation}
    where $\epsilon_{k}^{1}(t),\epsilon_{k}^{2}(t)\in C([0,+\infty))$ and 
    \begin{equation}
        \lim_{t\rightarrow+\infty}\epsilon_{k}^{1}(t)=\lim_{t\rightarrow+\infty}\epsilon_{k}^{2}(t)=0.
    \end{equation}
    If there exist  positive constants $C_{1}>0$ and $C_{2}>0$, such that
           \begin{equation}
               C_{1}\leq \alpha_{k}(t)\leq C_{2},\quad C_{1}\leq \beta_{k}(t)\leq C_{2}.
           \end{equation}
           Then, as $t\rightarrow+\infty$, the vector $(\vec{\alpha}(t),\vec{\beta}(t))$ converges to a connected component of the set
           \begin{equation}\label{equ 1}
      Eq(F)=\left\{(\vec{a},\vec{c})=((a_{k})_{1\leq k \leq K},(c_{k})_{1\leq k\leq K})\left|\begin{aligned}
      & 2a_{k}=c_{k},\\
      & 3c_{k}=\kappa\sum_{j\neq k, 1\leq j\leq K}|z_{j}-z_{k}|^{-3}a_{k}^{\frac{1}{2}}a^{\frac{3}{2}}_{j},\\
      & a_{k}>0,\ c_{k}>0,\ \forall\ 1\leq k\leq K.
		\end{aligned}\right.\right\}
 \end{equation}
 \end{proposition}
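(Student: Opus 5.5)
The proof will follow the asymptotically autonomous strategy sketched in Step 4 of the introduction. Write $X(t)=(\vec\alpha(t),\vec\beta(t))$ and $X'=\Phi(X)+\epsilon(t)$, where $\Phi$ is the autonomous vector field
\[
\Phi_k^\alpha=2a_k-c_k,\qquad \Phi_k^\beta=3c_k-\kappa\sum_{j\neq k}|z_j-z_k|^{-3}a_k^{1/2}a_j^{3/2}.
\]
$\Phi$ is smooth, hence locally Lipschitz, on the open set $\{a_k>0\}$. Let $\omega(X)$ be the $\omega$-limit set. Because $X$ stays in the compact box $Q=[C_1,C_2]^{2K}$ (which lies inside the region where $\Phi$ is smooth) and $X'$ is bounded, $\omega(X)$ is nonempty, compact and connected. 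Moreover $\mathrm{dist}(X(t),\omega(X))\to0$.

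It therefore suffices to show $\omega(X)\subset Eq(F)$. Then $\omega(X)$ is a connected subset of $Eq(F)$, hence lies in a single connected component $\mathcal C$, and $\mathrm{dist}(X(t),\mathcal C)\to0$. This is the meaning of convergence to a connected component. Non-emptiness of $Eq(F)$ then follows as a byproduct, since $\omega(X)\neq\emptyset$.

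\textbf{Invariance.} The first step is to show that $\omega(X)$ is invariant under the flow $\varphi$ of $\Phi$ and that the flow is defined for all $t\in\mathbb R$ on $\omega(X)$. Take $p=\lim X(t_n)$. By the Gronwall estimate on the time interval $[-T,T]$, the translates $X(t_n+\cdot)$ converge uniformly on $[-T,T]$ to the solution through $p$, because the perturbation $\epsilon(t_n+\cdot)\to0$ uniformly. These translates stay in $Q$, so the limiting trajectory exists on $[-T,T]$, stays in $Q$, and its points belong to $\omega(X)$. Since $T$ is arbitrary, this gives global existence on $\omega(X)$ and two-sided invariance.

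\textbf{Chain transitivity.} The next step is to show that $\omega(X)$ is internally chain transitive in the sense of Hirsch--Smith--Zhao: for $p,q\in\omega(X)$, $\varepsilon>0$ and $T>0$ there is an $(\varepsilon,T)$-chain of $\varphi$ inside $\omega(X)$ from $p$ to $q$. I would follow the standard proof. Pick times along the pseudo-trajectory $X$ with errors $\to0$. Approximate the points $X(s_i)$ by nearby points of $\omega(X)$, using $\mathrm{dist}(X(t),\omega(X))\to0$. Control the flow over time $T$ by Gronwall on the compact neighbourhood of $Q$. The only care needed, as flagged in the introduction, is that $\varphi$ is not globally defined. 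I would handle this by working in a fixed compact neighbourhood $Q_\delta\subset\{a_k>0\}$ of $Q$, on which $\Phi$ is Lipschitz and flows starting near $\omega(X)$ exist for time $T$ by continuous dependence.

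\textbf{Lyapunov function (main obstacle).} The key and hardest step is a strict Lyapunov function. Set
\[
P(a)=\kappa\sum_{j<k}|z_j-z_k|^{-3}a_j^{3/2}a_k^{3/2}, \qquad\text{so that}\qquad \partial_{a_k}P=\tfrac32\kappa\sum_{j\ne k}|z_j-z_k|^{-3}a_k^{1/2}a_j^{3/2},
\]
and the equation reads $c_k'=3c_k-\tfrac23\partial_kP$, $a_k'=2a_k-c_k$. This is a damped-gradient/Hamiltonian structure: $H=\tfrac12\sum c_k^2-\tfrac23P$ is the analogue of the energy functional $E_1$ from Step 3 of the proof of Theorem~\ref{order for the parameters}. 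I would try a combination $\mathcal L=H+\sum_k(\mu c_k^2+\nu a_kc_k+\rho a_k^2)$. The constants are chosen using the homogeneity $P(sa)=s^3P(a)$: along the scaling $a\sim s^{-2}$ the quantity $\sum c_k^2\sim P$, and $a\cdot\nabla P=3P$. The goal is $\frac{d}{dt}\mathcal L\ge0$, with equality only when $c=2a$ and $3c=\tfrac23\nabla P$. Concretely, I expect a quadratic form in $(c_k-2a_k)$ and $(3c_k-\tfrac23\partial_kP)$ that is positive definite.

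If no such polynomial combination works, my fallback is the time-weighted energy. In the original variables, $\tilde H(s)=\tfrac12\sum b_k^2-\tfrac23\kappa\sum|z_j-z_k|^{-3}\lambda_j^{3/2}\lambda_k^{3/2}$ is conserved for the exact system. Adding a virial term $s\sum b_k\lambda_k$ and transforming yields a monotone quantity, since virial identities produce definite signs via $a\cdot\nabla P=3P$.

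\textbf{Conclusion.} Once the strict Lyapunov function $\mathcal L$ is available, the conclusion is standard. On the invariant compact set $\omega(X)$, $\mathcal L$ is nondecreasing along orbits. Chain transitivity forces $\mathcal L$ to be constant on $\omega(X)$: a strictly increasing orbit segment could not be closed up by $\varepsilon$-chains, because the small jumps change $\mathcal L$ only by a uniformly small amount, while the increase over time $T$ along a non-equilibrium orbit is bounded below by compactness. Hence every orbit in $\omega(X)$ consists of equilibria. Equilibria of $\Phi$ in $Q$ are exactly the points of $Eq(F)$, since $C_1>0$ gives positivity. This proves $\omega(X)\subset Eq(F)$ and finishes the proof.
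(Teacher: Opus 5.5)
Your architecture ($\omega$-limit set, two-sided invariance, internal chain transitivity, Lyapunov function, contradiction) is the same as the paper's, and the first three steps are fine. However, the step you call the main obstacle is left as an ansatz. Your family does contain the right function: taking $\mu=0$, $\nu=-2$, $\rho=5$ gives
\[
\mathcal L=\tfrac12\sum_k(2a_k-c_k)^2+3\sum_k a_k^2-\tfrac23P(a),\qquad \frac{d}{dt}\mathcal L=5\sum_k(2a_k-c_k)^2,
\]
which is exactly the paper's $L$. But your expectation of a form that is positive definite in $(c_k-2a_k)$ and $(3c_k-\tfrac23\partial_kP)$ cannot be realized. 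Write $A_k=2a_k-c_k$ and $B_k=3c_k-\tfrac23\partial_kP$. In your family, removing the $a$-dependent cross terms forces $\nu=-2-4\mu$ and $\rho=5+4\mu$. The derivative is then $(5+4\mu)\sum_kA_k^2-2\mu\sum_kA_kB_k$, which has no $B_k^2$ term. So $\frac{d}{dt}\mathcal L$ vanishes on all of $\{c=2a\}$, and you need a LaSalle-type step. If $\mathcal L(\varphi(T,p))=\mathcal L(p)$, then $c\equiv 2a$ on $[0,T]$, so $a$ and $c$ are constant there, so $c'=0$ and $p\in Eq(F)$.

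The real gap is in your conclusion. It is not true that the gain of $\mathcal L$ over time $T$ is bounded below uniformly on the non-equilibrium part of $\omega(X)$. That set is not compact, and orbits through points close to equilibria gain arbitrarily little. Since an $(\varepsilon,T)$-chain may have arbitrarily many steps, the jumps (each lowering $\mathcal L$ by at most $\mathrm{Lip}(\mathcal L)\,\varepsilon$) can add up to any finite decrease, and no contradiction follows. The abstract principle you invoke is in fact false. Take a circle consisting of an arc of equilibria on which $\mathcal L$ runs from its maximum down to its minimum, closed up by a single orbit along which $\mathcal L$ increases back. This set is internally chain transitive and carries a Lyapunov function that is strict off equilibria, yet it is not made of equilibria.

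What is missing is that $\mathcal L(Eq(F)\cap[C_1,C_2]^{2K})$ has empty interior. The paper gets this from Sard's theorem. On equilibria $\mathcal L=\Psi(a):=3\sum_ka_k^2-\tfrac23P(a)$, and
\[
\partial_k\Psi=6a_k-\kappa\sum_{j\neq k}|z_j-z_k|^{-3}a_k^{1/2}a_j^{3/2},
\]
so equilibria are exactly the critical points of $\Psi$, with $c=2a$.

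One then picks a level $c^\ast\in(\mathcal L(p),\mathcal L(\varphi(1,p)))$ that is not an equilibrium value. The compact set $\{x\in\omega(X):\mathcal L(x)=c^\ast\}$ then contains no equilibria. By compactness this gives a uniform gain $2\eta$ after time $\ge 1$ for every point of $\omega(X)$ whose $\mathcal L$-value is close to $c^\ast$. Hence the flow maps $\{\mathcal L>c^\ast-\delta\}$ into $\{\mathcal L\ge c^\ast+\delta\}$ for all times $\ge 1$. Now take $\varepsilon$ smaller than the distance, inside $\omega(X)$, between $\{\mathcal L\ge c^\ast+\delta\}$ and $\{\mathcal L\le c^\ast-\delta\}$. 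An $(\varepsilon,1)$-chain can then never return from $\varphi(1,p)$ to $p$, which contradicts internal chain transitivity.
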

 \begin{remark}
     The set $Eq(F)$ is non-empty. A proof of this fact was given by Jendrej and Martel \cite{JM} in their construction of multi-bubble solutions for \eqref{NLW1} and we refer the reader to Lemma 3 of their paper for a detailed proof.
 \end{remark}
 \begin{remark}
     Combining Proposition \ref{ODE} with Theorem \ref{order for the parameters}, we  complete the proof of the main theorem \ref{main theorem}.
 \end{remark}
  Before giving the proof of proposition \ref{ODE}, we first introduce some notation that will be used frequently below.
  For  simplicity, we denote
  \begin{align*}
      &X(t):=\left(\vec{\alpha}(t),\vec{\beta}(t)\right)=\left((\alpha_{k}(t))_{1\leq k\leq K},(\beta_{k}(t))_{1\leq k\leq K}\right),\\
      &e(t):=\left((\epsilon^{1}_{k}(t))_{1\leq k\leq K},(\epsilon^{2}_{k}(t))_{1\leq k\leq K}\right),\\
      &F(X):=\left(\left(2\alpha_{k}-\beta_{k}\right)_{1\leq k\leq K}, \left(3\beta_{k}-\kappa\sum_{j\neq k, 1\leq j\leq K}|z_{j}-z_{k}|^{-3}\alpha_{k}^{\frac{1}{2}}\alpha^{\frac{3}{2}}_{j}\right)_{1\leq k\leq K}\right).
  \end{align*}
  Then  (\ref{ODE1}) can be rewritten as 
  \begin{equation}\label{ODES}
      X'(t)=F(X(t))+e(t),\quad {\rm{with}}\quad X(t)\in [C_{1},C_{2}]^{2K},\ e(t) \xrightarrow{t \to +\infty} 0.
  \end{equation}
  Set 
  \begin{equation}
      Eq(F):=\{ F(X)=0,\  {\rm{with}}\ \alpha_{k}>0,\beta_{k}>0,\ \forall 1\leq k\leq K\}
  \end{equation}
  which is equivalent to the set defined in \eqref{equ 1} and is also the set of equilibrium points of the autonomous system:
  \begin{equation}\label{aut ODE}
      X'(t)=F(X(t)).
  \end{equation}
  Let
  \begin{equation*}
      \omega(X):=\{p\in\RR^{2K}: \exists\ t_{n}\rightarrow+\infty, X(t_{n})\rightarrow p\}
  \end{equation*}
  denote the $\omega$-limit set of the solution $X(t)$ for the equation (\ref{ODES}).
  \begin{lemma}\label{omega-limit set}
      Let $\omega(X)$ be defined as above, then $\omega(X)$ is nonempty, compact, connected, and 
      \begin{equation}\label{conve of X(t)}
          \lim_{t\rightarrow+\infty}{\rm{dist}}(X(t), \omega(X))=0,
      \end{equation}
      where ${\rm{dist}}\displaystyle(x,A)=\inf_{y\in A}|x-y|$.
  \end{lemma}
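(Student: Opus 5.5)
The plan is the standard argument for $\omega$-limit sets of bounded trajectories, using only that $X$ is continuous with values in the compact box $Q:=[C_{1},C_{2}]^{2K}$ and that $X'$ is bounded.

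\textbf{Nonemptiness and compactness.} For any sequence $t_{n}\to+\infty$, the points $X(t_{n})$ lie in $Q$. By Bolzano--Weierstrass a subsequence converges, so $\omega(X)\neq\emptyset$ and $\omega(X)\subset Q$. Next we write
\[
\omega(X)=\bigcap_{T\geq 0}\overline{\{X(t):t\geq T\}},
\]
which exhibits $\omega(X)$ as an intersection of closed subsets of $Q$. Hence $\omega(X)$ is closed and bounded, so it is compact.

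\textbf{Convergence of the distance \eqref{conve of X(t)}.} We argue by contradiction. Suppose there are $\varepsilon>0$ and $t_{n}\to+\infty$ with ${\rm dist}(X(t_{n}),\omega(X))\geq\varepsilon$. Extract a convergent subsequence $X(t_{n})\to p$. Then $p\in\omega(X)$ by definition. On the other hand, continuity of ${\rm dist}(\cdot,\omega(X))$ gives ${\rm dist}(p,\omega(X))\geq\varepsilon$, which is absurd.

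\textbf{Connectedness.} This is the only step that needs a little care, and it uses continuity of $X$. Suppose $\omega(X)=A\cup B$ with $A,B$ nonempty, disjoint and closed. Since $\omega(X)$ is compact, $A$ and $B$ are compact, so $\rho:={\rm dist}(A,B)>0$. Set $U:=\{x:{\rm dist}(x,A)<\rho/3\}$ and $V:=\{x:{\rm dist}(x,B)<\rho/3\}$; these are disjoint open sets.

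By \eqref{conve of X(t)}, there is $T$ such that $X(t)\in U\cup V$ for all $t\geq T$. Because $A$ and $B$ are both nonempty, $X$ visits $U$ and $V$ at arbitrarily large times. The curve $X([T,+\infty))$ is connected, being the continuous image of an interval, and it is covered by the disjoint open sets $U$ and $V$, meeting both of them. This contradicts connectedness.

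Equivalently, by the intermediate value theorem there are arbitrarily large times $s_{n}$ with ${\rm dist}(X(s_{n}),A)=\rho/2$. A limit point of $X(s_{n})$ then lies in $\omega(X)$ but at distance at least $\rho/2$ from $A$ and at least $\rho/2$ from $B$, again a contradiction. Hence $\omega(X)$ is connected.

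The only real ingredients are the a priori bounds $C_{1}\leq\alpha_{k},\beta_{k}\leq C_{2}$ and the continuity of $X$. None of the structure of $F$ or of the errors $e(t)$ is needed here; that structure enters only in the later invariance and chain-transitivity arguments.
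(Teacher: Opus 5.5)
Your proof is correct and is exactly the standard argument for $\omega$-limit sets of bounded continuous trajectories that the paper invokes. The paper omits the details, citing Walter after noting that $\omega(X)\subset[C_1,C_2]^{2K}$. One small remark: you list boundedness of $X'$ among your ingredients, but it is never used; continuity of $X$ and the a priori bounds $C_1\le\alpha_k,\beta_k\le C_2$ suffice, as your own closing paragraph correctly says.
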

  From (\ref{ODES}), $\omega(X)\subset [C_{1},C_{2}]^{2K}$, 
  then the proof is the same as that for the usual autonomous system (see, e.g, \cite[p.~323 Theorem.]{Walter1998}), so we omit the details.
  
\begin{lemma}\label{Basic properties of Y}
    For any $p\in\omega(X)$, the solution $Y(t)$ of the autonomous system \eqref{aut ODE}  with initial data $Y(0)=p$, which we  denote by $\varphi(t,p)$ below, satisfies
    \begin{itemize}
        \item $Y(t)\in C^{\infty}(\RR)$ and $Y(t)\in [C_{1},C_{2}]^{2K}$ for any $t\in\RR$.
        \item For any $t,s\in\RR$, $p\in\omega(X)$, $\varphi(t,\varphi(s,p))=\varphi(t+s,p)$.
        \item $\omega(X)$ is an invariant set of the flow $\varphi(t,p)$, that is for any $t\in\RR$ and $p\in \omega(X)$, $\varphi(t,p)\in \omega(X)$.
    \end{itemize}
\end{lemma}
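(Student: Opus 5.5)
The argument rests on two ingredients: the standard correspondence between $\omega$-limit points of an asymptotically autonomous trajectory and solutions of the limit system, and a uniform compactness/Lipschitz property of $F$ on a neighbourhood of $[C_{1},C_{2}]^{2K}$. Since $F$ is smooth (indeed real-analytic) on the open positive cone $(0,+\infty)^{2K}$, it is locally Lipschitz there. Hence for each $p\in\omega(X)\subset[C_{1},C_{2}]^{2K}$ the autonomous system \eqref{aut ODE} has a unique local $C^{\infty}$ solution $\varphi(t,p)$. The first two bullet points reduce to showing that this solution exists for all $t\in\RR$ and stays in $[C_{1},C_{2}]^{2K}$. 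The group property $\varphi(t,\varphi(s,p))=\varphi(t+s,p)$ then follows from uniqueness, by the usual argument.

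The main step is to prove that $\varphi(t,p)$ is the limit of time-translates of $X$. Fix $p\in\omega(X)$ and $t_{n}\to+\infty$ with $X(t_{n})\to p$, and set $X_{n}(t):=X(t+t_{n})$, defined for $t\geq -t_{n}$. Each $X_{n}$ solves $X_{n}'=F(X_{n})+e(t+t_{n})$ and takes values in $[C_{1},C_{2}]^{2K}$, so the $X_{n}$ are uniformly bounded, and their derivatives are bounded by $\sup_{[C_{1},C_{2}]^{2K}}|F|+\sup|e|$. By Arzel\`a--Ascoli and a diagonal argument, a subsequence converges locally uniformly on $\RR$ to a continuous $Y$ with $Y(0)=p$ and $Y(t)\in[C_{1},C_{2}]^{2K}$ for all $t$. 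Passing to the limit in the integral identity
\[
X_{n}(t)=X_{n}(0)+\int_{0}^{t}\big(F(X_{n}(s))+e(s+t_{n})\big)\,\mathrm{d}s,
\]
using $e(s+t_{n})\to0$ uniformly on compact sets and the uniform continuity of $F$ on $[C_{1},C_{2}]^{2K}$, shows that $Y$ solves \eqref{aut ODE} on all of $\RR$. Uniqueness then gives $Y=\varphi(\cdot,p)$, which proves global existence, the bound $Y(t)\in[C_{1},C_{2}]^{2K}$, and smoothness by bootstrapping $Y'=F(Y)$. Alternatively, one can avoid the subsequence and use Gronwall: with $L$ the Lipschitz constant of $F$ on a compact neighbourhood, $|X_{n}(t)-\varphi(t,p)|\le e^{L|t|}\big(|X(t_{n})-p|+|t|\sup_{s\ge t_{n}-|t|}|e(s)|\big)$ on any interval where $\varphi$ stays in that neighbourhood. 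This gives the same conclusion together with full-sequence convergence.

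Invariance follows directly. For fixed $t\in\RR$, we have $X(t+t_{n})=X_{n}(t)\to\varphi(t,p)$ and $t+t_{n}\to+\infty$, so $\varphi(t,p)\in\omega(X)$ by definition. The only delicate point is backward time: $X_{n}$ is defined only on $[-t_{n},\infty)$, but since $t_{n}\to+\infty$ every compact interval is eventually covered, so negative $t$ causes no difficulty. I expect the main obstacle to be the careful justification that the limit stays within the region where $F$ is Lipschitz, namely away from $\alpha_{k}=0$ where $\alpha_{k}^{1/2}$ is singular. This is exactly what the a priori bound $C_{1}\le\alpha_{k},\beta_{k}\le C_{2}$ provides, as it passes to the closed set $[C_{1},C_{2}]^{2K}$ in the limit.
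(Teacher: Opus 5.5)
Your proposal is correct and follows essentially the same route as the paper. Both arguments take time-translates $X(t_n+\cdot)$, apply Arzel\`a--Ascoli with a diagonal extraction, pass to the limit in the integral identity, and use uniqueness to identify the limit with $\varphi(\cdot,p)$, which gives global existence, the bound in $[C_1,C_2]^{2K}$ and smoothness; the group law then follows from uniqueness, and invariance from $X(t_n+t)\to\varphi(t,p)$. Your optional Gronwall variant is also valid and is essentially the estimate the paper proves later in Lemma~\ref{auto vs unauto}.
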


\begin{proof}
\textbf{Step 1:}  Since $p\in\omega(X)$,  by the definition of $\omega(X)$, there exists a sequence $t_{n}\rightarrow+\infty$ such that $X(t_{n})\rightarrow p$. 
For any fixed $T>0$ and all sufficiently large $t_n$ such that such that $t_{n}-T\geq 0$,  $X_{n}(s)=X(t_{n}+s)$  solves the ODE
\begin{equation}\label{ODE X_{n}}
    X'_{n}(s)=F(X_{n}(s))+e(t_{n}+s)
\end{equation}
on $[-T,T]$. By  assumption, $X(t)\in [C_{1},C_{2}]^{2K}$, so that $X_{n}(s)\in [C_{1},C_{2}]^{2K}$ for any $s\in[-T,T]$ and $t_{n}$ large enough. Moreover, since $F(X)$ is smooth on $[C_{1},C_{2}]^{2K}$ and $e(s)\in C([0,+\infty))$ with $e(t) \xrightarrow{t \to +\infty} 0$, there exists a fixed constant $M>0$ such that
\begin{equation*}
    \left| F(X)\right|+\left|e(t)\right|\leq M,\quad \forall\ X\in [C_{1},C_{2}]^{2K},\ t\in[0,+\infty).
\end{equation*}
Hence, from (\ref{ODE X_{n}}),
\begin{equation*}
    \left|X'_{n}(s)\right|\leq \left|F(X_{n}(s))\right|+\left| e(t_{n}+s)\right|\leq M
\end{equation*}
is uniformly bounded on $[-T,T]$ for any $n$ large enough. As a result, $\{ X_{n}(s)\}$ is equicontinuous on $[-T,T]$. By Ascoli-Arzel\`a theorem, there exist a subsequence $\{X_{n_{k}}(s)\}$ and some continuous function $\tilde{Y}_{T}(s)\in [C_{1},C_{2}]^{2K}$ such that
\begin{equation}
    X_{n_{k}}(s)\rightarrow \tilde{Y}_{T}(s)\quad {\rm{uniformly}}\ {\rm{on}}\ [-T,T].
\end{equation}
Then, taking $T=1,2,\cdots,m,\cdots$ and using a standard diagonal argument, we can choose a subsequence (which we still denote by $X_{n}(s)$), so that there exists some $\tilde{Y}(s)\in C(\RR)$ and
\begin{equation}
    X_{n}(s)\rightarrow \tilde{Y}(s)\quad {\rm{uniformly}}\ {\rm{on}}\ [-T,T]\ {\rm{for}}\ {\rm{any}}\ T>0,
\end{equation}
and since $X_{n}(s)\in [C_{1},C_{2}]^{2K}$, we have $\tilde{Y}(s)\in[C_{1},C_{2}]^{2K}$ for any $s\in\RR$.\\
\textbf{Step 2:} We claim that $\tilde{Y}(s)\in C^{\infty}(\RR)$ and solves the autonomous system 
\begin{equation*}
    \tilde{Y}'(s)=F(\tilde{Y}(s)).
\end{equation*}
In fact, for any fixed $T>0$, since $\displaystyle\lim_{t\rightarrow+\infty}e(t)=0$
\begin{equation}\label{error}
    \sup_{t\in[-T,T]}|e(t_{n}+t)|\xrightarrow{t_{n}\rightarrow+\infty}0.
\end{equation}
For any fixed $s\in[-T,T]$, integrating (\ref{ODE X_{n}}) gives
\begin{equation}\label{inter}
    X_{n}(s)-X_{n}(0)=\int_{0}^{s}\left(F(X_{n}(\tau))+e(t_{n}+\tau)\right)d\tau.
\end{equation}
Since $X_{n}(s)\rightarrow \tilde{Y}(s)$ uniformly on $[-T,T]$, $F(X)$ is uniformly continuous on $[C_{1},C_{2}]^{2K}$ and (\ref{error}) holds, letting $n\rightarrow +\infty$ on both sides of (\ref{inter}), we obtain
\begin{equation}
    \tilde{Y}(s)-\tilde{Y}(0)=\int_{0}^{s}F(\tilde{Y}(\tau))d\tau\quad \forall\ s\in[-T,T].
\end{equation}
Since $F(X)$ is smooth on $[C_{1},C_{2}]^{2K}$ and $\tilde{Y}(s)\in C(\RR)$, we have
\begin{equation*}
    \tilde{Y}(s)\in C^{1}([-T,T])\quad {\rm{and}}\quad  \tilde{Y}'(s)=F(\tilde{Y}(s))\quad \forall\ s\in[-T,T].
\end{equation*}
Since $T>0$ is arbitrary, 
\begin{equation*}
    \tilde{Y}(s)\in C^{1}(\RR)\quad {\rm{and}}\quad  \tilde{Y}'(s)=F(\tilde{Y}(s))\quad \forall\ s\in\RR.
\end{equation*}
Moreover, $\tilde{Y}(s)\in [C_{1},C_{2}]^{2K}$ for any $s\in\RR$ and by a standard bootstrap argument $\tilde{Y}(s)\in C^{\infty}(\RR)$.\\
\textbf{Step 3:} By step 1 and the uniform convergence of $X_{n}(s)$ on any compact set, we have $\tilde{Y}(0)=p$ and by step 2, $\tilde{Y}(s)$ solves the equation
\begin{equation*}
    \tilde{Y}'(s)=F(\tilde{Y}(s)).
\end{equation*}
Then, by local uniqueness of the ODE (\ref{aut ODE}) (note that $F(X)$ is smooth on $[C_{1},C_{2}]^{2K}$), we obtain
\begin{equation*}
    Y(t)=\tilde{Y}(t),\quad \forall\ t\in\RR.
\end{equation*}
Furthermore, 
\begin{equation*}
     Y(t)\in C^{\infty}(\RR)\quad {\rm{and}}\quad  Y(t)\in [C_{1},C_{2}]^{2K}\quad \forall\ t\in\RR.
\end{equation*}
Then for any $t\in\RR$, $p\in\omega(X)$, $\varphi(t,p)\in [C_{1},C_{2}]^{2K}$ is well-defined. By  local uniqueness of the ODE (\ref{aut ODE}) again, we know 
\begin{equation*}
    \varphi(t,\varphi(s,p))=\varphi(t+s,p),\quad \forall\ t,s\in\RR\ {\rm{and}}\ p\in\omega(X).
\end{equation*}
Finally, we prove that $\omega(X)$ is an invariant set of the flow $\varphi(t,p)$. Indeed, for any $p\in\omega(X)$, by the previous argument, there exists a sequence $t_{n}\rightarrow+\infty$, such that 
\begin{equation*}
    X(t_{n}+t)\xrightarrow{t_{n}\rightarrow+\infty}\varphi(t,p)\quad \forall\ t\in\RR.
\end{equation*}

Hence $\varphi(t,p)\in \omega(X)$ for any $t\in\RR,\ p\in\omega(X)$.
\end{proof}
Next, we  introduce some notation from dynamical systems. 
For simplicity, the following definitions are adapted to our setting; a more general definition can be found, for instance, in \cite{HSZ,MST}. \\
\begin{definition}[$(\varepsilon,T)\ {\rm{chain}}$]
    For  an autonomous flow $\varphi(t,p):\RR\times \omega(X)\rightarrow\omega(X)$, given $\varepsilon>0$ and $T>0$, an $(\varepsilon,T)$ chain from $x$ to $y$ is a finite sequence of points
\[
x=x_0,\; x_1,\; \dots,\; x_n=y\ {\rm{and}}\ x_{i}\in\omega(X)
\]
together with times
\[
t_0,\; t_1,\; \dots,\; t_{n-1}\quad \text{with } t_i\ge T,
\]
such that for each $i=0,1,\dots,n-1$,
\[
|\varphi(t_{i},x_i)-x_{i+1}|<\varepsilon.
\]
\end{definition}
\begin{definition}[$\rm{Internally\ chain\ transitive}\ (ICT)$] Let $S\subset \omega(X)$ be an invariant set (that is $\varphi(t,S)\subset S$ for all $t\in\RR$). We say that $S$ is internally chain transitive if for every $x,y\in S$ and for every $\varepsilon>0$ and $T>0$, there exists an $(\varepsilon,T)$ chain from $x$ to $y$,
\[
x=x_0,\dots,x_n=y,
\]
such that all chain points lie in $S$, i.e.,
\[
x_i\in S\quad (i=0,1,\dots,n),
\]
and the corresponding times satisfy $t_i\ge T$ and
\[
|\varphi(t_{i},x_i)-x_{i+1}|<\varepsilon\quad (i=0,\dots,n-1).
\]
\end{definition}
We now claim that $\omega(X)$ is ICT for the flow $\varphi(t,p):\RR\times \omega(X)\rightarrow\omega(X)$.
\begin{lemma}\label{ICT}
    The $\omega$-limit set $\omega(X)$ is ICT for the flow $\varphi(t,p):\RR\times \omega(X)\rightarrow\omega(X)$.
\end{lemma}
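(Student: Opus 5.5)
The plan follows the classical argument that omega-limit sets of asymptotically autonomous systems are internally chain transitive (Hirsch--Smith--Zhao), adapted to the fact that the flow $\varphi$ is only known to exist on $\omega(X)$. Fix $x,y\in\omega(X)$, $\varepsilon>0$ and $T>0$. By compactness of $\omega(X)$ and continuous dependence for the smooth field $F$ on a slightly enlarged compact neighbourhood $\mathcal N$ of $[C_1,C_2]^{2K}$, pick $\delta\in(0,\varepsilon/2)$ such that: if $q\in\omega(X)$, $|q'-q|<\delta$ and $Z$ solves $Z'=F(Z)+r$ with $Z(0)=q'$, $|r|\le\delta$ on $[0,2T]$, then $Z$ stays in $\mathcal N$ and $|Z(s)-\varphi(s,q)|<\varepsilon/2$ for all $s\in[0,2T]$. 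This is a Gronwall estimate using the Lipschitz constant of $F$ on $\mathcal N$. The point needing care is that $q'$ itself need not lie in $\omega(X)$, so we always compare with the flow issued from the nearby point $q\in\omega(X)$, never from $q'$.

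Next, by Lemma~\ref{omega-limit set} and $e(t)\to0$, choose $\tau_0$ with $\operatorname{dist}(X(t),\omega(X))<\delta$ and $|e(t)|<\delta$ for all $t\ge\tau_0$. Since $x,y\in\omega(X)$, choose $s_0\ge\tau_0$ with $|X(s_0)-x|<\delta$, and $s_*\ge s_0+T$ with $|X(s_*)-y|<\delta$. Partition $[s_0,s_*]$ as $s_0<s_1<\dots<s_n=s_*$ with every step $s_{i+1}-s_i\in[T,2T]$; this is possible since $s_*-s_0\ge T$. Set $x_0=x$, $x_n=y$, and for $0<i<n$ let $x_i\in\omega(X)$ be a point with $|X(s_i)-x_i|<\delta$. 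Take times $t_i=s_{i+1}-s_i\ge T$.

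To verify the chain condition, apply the choice of $\delta$ with $q=x_i$, $q'=X(s_i)$ (note $|X(s_0)-x_0|<\delta$ as well), $Z(s)=X(s_i+s)$ and $r(s)=e(s_i+s)$. This gives $|X(s_{i+1})-\varphi(t_i,x_i)|<\varepsilon/2$. Hence
\[
|\varphi(t_i,x_i)-x_{i+1}|\le|\varphi(t_i,x_i)-X(s_{i+1})|+|X(s_{i+1})-x_{i+1}|<\varepsilon/2+\delta<\varepsilon,
\]
and the last link uses $|X(s_*)-y|<\delta$. All points lie in $\omega(X)$, and $\omega(X)$ is invariant by Lemma~\ref{Basic properties of Y}, so $\omega(X)$ is ICT.

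The main obstacle is the uniform continuous-dependence step: $\varphi$ is only known to be globally defined on $\omega(X)$, and points near $\omega(X)$ might escape the region where we control $F$. We handle this by working on the compact neighbourhood $\mathcal N$ where $F$ is smooth (away from $\alpha_k=0$), noting that $X(t)$ itself stays in $[C_1,C_2]^{2K}$. Only Gronwall's inequality on finite windows of length at most $2T$ is needed, with no flow required off $\omega(X)$.
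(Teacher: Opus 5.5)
Your proof is correct and follows the paper's argument. Both build the chain by sampling $X$ at times spaced in $[T,2T]$ and taking chain points in $\omega(X)$ that are $\delta$-close to those samples, with $x_0$ and $x_n$ being the given endpoints. The one difference is how you bound $|\varphi(t_i,x_i)-X(s_{i+1})|$: you compare $X(s_i+\cdot)$ directly with $\varphi(\cdot,x_i)$ in a single Gronwall estimate. The paper instead passes through $\varphi(t_i,X(\tau_i))$, so it needs Lemmas~\ref{continuous dependence} and~\ref{auto vs unauto} to ensure the autonomous flow exists from points off $\omega(X)$. Your route avoids those two lemmas, and no escape issue actually arises, since $X(t)$ and $\varphi(t,x_i)$ both already lie in the convex set $[C_1,C_2]^{2K}$.
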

Before giving the proof of Lemma \ref{ICT}, we first prove the following lemma.
\begin{lemma}\label{continuous dependence}
    For any $T>0$, there exists $\delta>0$  such that if ${\rm{dist}}(q,\omega(X))<\delta$, 
 then  the solution of the autonomous system
    \begin{equation*}
        Y'(s)=F(Y(s))\quad {\rm{with}}\quad Y(0)=q
    \end{equation*}
   (which we  denote by $\varphi(t,q)$)  exists on $[0,T]$ and satisfies
   \begin{equation*}
       \varphi(t,q)\in \left[\frac{1}{2}C_{1},2C_{2}\right]^{2K},\quad \forall\ t\in[0,T].
   \end{equation*}
  \end{lemma}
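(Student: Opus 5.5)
The plan is a standard continuous-dependence argument for the smooth vector field $F$ on a compact neighbourhood of $\omega(X)$, using that orbits starting on $\omega(X)$ are global and stay in $[C_1,C_2]^{2K}$ (Lemma \ref{Basic properties of Y}).

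First, I fix the compact set
\[
\mathcal{K}:=\left[\tfrac{1}{2}C_{1},2C_{2}\right]^{2K}.
\]
On $\mathcal{K}$ all coordinates are bounded below by $C_1/2>0$, so $F$ is smooth there (the only singularities come from $\alpha_k^{1/2},\alpha_j^{3/2}$ at $0$). Hence $F$ is bounded by some $M_0$ and globally Lipschitz on $\mathcal{K}$ with some constant $L$. Set
\[
r:=\min\bigl(\tfrac12 C_1,\,C_2\bigr),
\]
which is the distance from $[C_1,C_2]^{2K}$ to the boundary of $\mathcal{K}$ (in each coordinate). I then choose
\[
\delta:=\tfrac12\, r\, e^{-LT}.
\]

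Next, given $q$ with ${\rm dist}(q,\omega(X))<\delta$, I use compactness of $\omega(X)$ from Lemma \ref{omega-limit set} to pick $p\in\omega(X)$ with $|q-p|<\delta$. By Lemma \ref{Basic properties of Y}, $\varphi(t,p)$ exists for all $t$ and lies in $[C_1,C_2]^{2K}$. The local solution $\varphi(t,q)$ exists by Cauchy--Lipschitz, since $q$ lies in the interior of $\mathcal{K}$. Let $T^*$ be the supremum of times $\tau\le T$ such that $\varphi(t,q)$ exists and stays in $\mathcal{K}$ on $[0,\tau]$. On $[0,T^*)$ both trajectories lie in $\mathcal{K}$, so Gronwall applied to
\[
\varphi(t,q)-\varphi(t,p)=q-p+\int_0^t\bigl(F(\varphi(s,q))-F(\varphi(s,p))\bigr)\,ds
\]
gives
\[
|\varphi(t,q)-\varphi(t,p)|\le |q-p|\,e^{Lt}<\tfrac12 r .
\]
Hence $\varphi(t,q)$ stays at distance at least $r/2$ from $\partial\mathcal{K}$. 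Because $F$ is bounded on $\mathcal{K}$, the solution cannot blow up or leave $\mathcal{K}$ before $T^*$. If $T^*<T$, the solution would extend past $T^*$ while remaining in $\mathcal{K}$, which contradicts the definition of $T^*$. Therefore $T^*=T$, and $\varphi(t,q)\in\mathcal{K}$ for all $t\in[0,T]$.

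The only delicate point is the bootstrap/continuation step. The Lipschitz bound is valid only inside $\mathcal{K}$, so the Gronwall estimate must be run on the maximal subinterval where $\varphi(t,q)$ stays in $\mathcal{K}$, and closed by the strict inequality $r/2<r$. I expect this step to be routine.
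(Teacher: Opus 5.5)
Your proposal is correct and follows essentially the same route as the paper. You pick $p\in\omega(X)$ with $|q-p|<\delta$ and use that $\varphi(\cdot,p)$ stays in $[C_1,C_2]^{2K}$. You then run Gronwall with the Lipschitz constant of $F$ on $\left[\tfrac12 C_1,2C_2\right]^{2K}$ and close with a continuation argument. Since $r=\tfrac12 C_1$, your choice $\delta=\tfrac12 r e^{-LT}$ is exactly the paper's $\tfrac14 C_1 e^{-LT}$.
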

  \begin{proof}
      First, since ${\rm{dist}}(q,\omega(X))<\delta$, there exists  $p\in\omega(X)$ such that $|q-p|<\delta$, and
      \begin{equation*}
          \varphi(t,p)\in\left[C_{1},C_{2}\right]^{2K},\quad \forall\ t \in \RR.
      \end{equation*}
      Take $0<\delta<\delta_{1}$ such that 
      \begin{equation*}
           q\in\left[\frac{3}{4}C_{1},\frac{3}{2}C_{2}\right]^{2K}.
      \end{equation*}
      Set 
      \begin{equation*}
          L=\sup_{X\in\left[\frac{1}{2}C_{1},2C_{2}\right]^{2K}}\left|\nabla F(X)\right|.
      \end{equation*}
      Then, for any $t>0$ such that $\forall\ s\in[0,t]$,
      \begin{equation*}
          \varphi(s,q)\in\left[\frac{1}{2}C_{1},2C_{2}\right]^{2K},
      \end{equation*}
      (By continuity, this holds for some small $t>0$), we have
      \begin{equation*}
          \left|\varphi'(s,p)-\varphi'(s,q)\right|=\left|F(\varphi(s,p))-F(\varphi(s,q))\right|\leq L\left|\varphi(s,p)-\varphi(s,q)\right|,
      \end{equation*}
      which together with Gronwall's inequality gives
      \begin{equation*}
          \left|\varphi(s,p)-\varphi(s,q)\right|\leq e^{Ls}|p-q|\leq e^{Lt}\delta,\quad s\in[0,t].
      \end{equation*}
      Thus, for $\displaystyle 0<\delta<\min \{\delta_{1},\frac{1}{4}C_{1}e^{-LT}\}$, by standard bootstrap argument, we know the solution $\varphi(s,q)$ exists on $[0,T]$ and satisfies
           \begin{equation*}
       \varphi(t,q)\in \left[\frac{1}{2}C_{1},2C_{2}\right]^{2K},\quad \forall\ t\in[0,T].
   \end{equation*}  
  \end{proof}
  \begin{lemma}\label{auto vs unauto}
    For any $T>0$, there exists  $s_{0}>0$ such that for any $s\geq s_{0}$, the solution of the autonomous system
    \begin{equation*}
         Y'(s)=F(Y(s))\quad {\rm{with}}\quad Y(0)=X(s)
    \end{equation*}
    exists on $[0,T]$ and satisfies
    \begin{equation*}
        \varphi(t,X(s))\in \left[\frac{1}{2}C_{1},2C_{2}\right]^{2K},\quad \forall\ t\in[0,T].
    \end{equation*}
  \end{lemma}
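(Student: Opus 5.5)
The plan is to reduce Lemma \ref{auto vs unauto} to Lemma \ref{continuous dependence} using the convergence \eqref{conve of X(t)} from Lemma \ref{omega-limit set}. Fix $T>0$. First apply Lemma \ref{continuous dependence} with this $T$. It gives a $\delta=\delta(T)>0$ with the following property: every $q$ with ${\rm{dist}}(q,\omega(X))<\delta$ generates a solution $\varphi(t,q)$ of \eqref{aut ODE} that exists on $[0,T]$ and stays in $\left[\frac12 C_1,2C_2\right]^{2K}$ there.

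Next use \eqref{conve of X(t)}, namely $\lim_{s\to+\infty}{\rm{dist}}(X(s),\omega(X))=0$. This lets us choose $s_0>0$ such that ${\rm{dist}}(X(s),\omega(X))<\delta$ for all $s\ge s_0$. For any such $s$, take $q=X(s)$ in Lemma \ref{continuous dependence}. It follows directly that $\varphi(t,X(s))$ exists on $[0,T]$ and lies in $\left[\frac12 C_1,2C_2\right]^{2K}$ for all $t\in[0,T]$, which is the claim.

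The only points to check are these:
\begin{itemize}
\item $\delta$ in Lemma \ref{continuous dependence} depends only on $T$, $C_1$, $C_2$ and the Lipschitz constant $L$ of $F$ on $\left[\frac12C_1,2C_2\right]^{2K}$. It does not depend on the point $q$, since that proof used only $|q-p|<\delta$ for some $p\in\omega(X)$ together with $\varphi(\cdot,p)\in[C_1,C_2]^{2K}$ from Lemma \ref{Basic properties of Y}. So a single $\delta$ serves every $s\ge s_0$.
\item $F$ is smooth on the open positive orthant, which contains $\left[\frac12C_1,2C_2\right]^{2K}$. Hence the solution from $X(s)$ is locally unique and the bootstrap in Lemma \ref{continuous dependence} applies.
\end{itemize}

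No step is a genuine obstacle. The one subtlety is that \eqref{conve of X(t)} is stated via a reference for autonomous systems. Its proof carries over because $X(t)$ stays in the compact set $[C_1,C_2]^{2K}$. If ${\rm{dist}}(X(t_n),\omega(X))\ge\varepsilon$ along some $t_n\to\infty$, compactness would produce a limit point of $X(t_n)$ at distance $\ge\varepsilon$ from $\omega(X)$. Such a limit point lies in $\omega(X)$ by definition, which is a contradiction.
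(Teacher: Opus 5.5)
Your proposal is correct and matches the paper's proof: fix $T$, take $\delta=\delta(T)$ from Lemma~\ref{continuous dependence}, and use \eqref{conve of X(t)} to obtain $s_0$ with ${\rm{dist}}(X(s),\omega(X))<\delta$ for all $s\geq s_0$. Your two extra checks are sound and only make explicit what the paper leaves implicit: that $\delta$ is uniform in $q$, and the compactness argument behind \eqref{conve of X(t)}.
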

  Moreover, there exists a fixed constant $L$, such that for any $t\in[0,T]$,
  \begin{equation}\label{APT}
      \left|\varphi(t,X(s))-X(t+s)\right|\leq \frac{e^{Lt}-1}{L}\sup_{\tau\in[0,t]}\left|e(\tau+s)\right|.
  \end{equation}
\begin{proof}
    For any fixed $T>0$, from (\ref{conve of X(t)}), there exists some $s_{0}>0$ such that for any $s\geq s_{0}$,
    \begin{equation*}
        {\rm{dist}}\left(X(s),\omega(X)\right)<\delta,
    \end{equation*}
    where we take $\delta>0$ as in lemma \ref{continuous dependence} for fixed $T>0$. Hence, it remains to show the estimate (\ref{APT}). For any fixed $s\geq s_{0}$, set
    \begin{equation*}
        \psi_{s}(t):=\varphi(t,X(s))-X(t+s)
    \end{equation*}
    Then  $\psi_{s}(t)$ satisfies the equation
    \begin{equation*}
        \frac{d}{dt}\psi_{s}(t)=F(\varphi(t,X(s))-F(X(t+s))-e(t+s).
    \end{equation*}
    By the previous argument,
    \begin{equation*}
        \varphi(t,X(s)),\ X(t+s)\in \left[\frac{1}{2}C_{1},2C_{2}\right]^{2K},\quad \forall\ t\in[0,T],\ s\geq s_{0}.
    \end{equation*}
    Recalling the definition of $L>0$ in the proof of lemma \ref{continuous dependence}, we have
    \begin{equation*}
        \left|\frac{d}{dt}\psi_{s}(t)\right|\leq L\left|\psi_{s}(t)\right|+|e(t+s)|,
    \end{equation*}
    which together with the Gronwall's inequality and initial condition $\psi_{s}(0)=X(s)-X(s)=0$, gives (\ref{APT}).
\end{proof}
Now, we turn to the proof of Lemma \ref{ICT}:
\begin{proof}
    We only need to prove that for any $a,b\in\omega(X)$ and $\varepsilon>0$, $T>0$, there exists an $(\varepsilon,T)$-chain from $a$  to  $b$ such that all chain points lie in $\omega(X)$. We divide the proof into several steps.\\
    \textbf{Step 1: Basic preparation.} Take $\eta>0$ sufficiently small (which will be chosen below and depends only on $\varepsilon>0,\ T>0$), from (\ref{ODES}), there exists  $t_{0}>0$ such that for any $t\geq t_{0}$,
    \begin{equation*}
        \left|e(t)\right|\leq \eta.
    \end{equation*}
    Since $a,b\in \omega(X)$, for some $t_{a}\geq t_{0}$,
    \begin{equation}\label{start point}
        \left| X(t_{a})-a\right|<\eta,
    \end{equation}
    and some $t_{b}>t_{a}+T$,
    \begin{equation}\label{end point}
        \left| X(t_{b})-b\right|<\eta.
    \end{equation}
    \textbf{Step 2: Choice of the time sequence $\boldsymbol{t_i\geq T}$.} We divide the interval $[t_{a},t_{b}]$ into some smaller intervals with length between $T$ and $2T$. Set $T'=t_{b}-t_{a}\geq T$ and take 
    \begin{equation*}
        n:=\left[\frac{T'}{T}\right]\geq 1,\quad r:=T'-nT\in[0,T).
    \end{equation*}
    We  define
    \begin{equation}
        t_{i}:=T+\frac{r}{n}\in[T,2T)\quad (i=0,\cdots,n-1),
    \end{equation}
    and set
    \begin{equation}
        \tau_{0}:=t_{a},\quad \tau_{i+1}:=\tau_{i}+t_{i}.
    \end{equation}
    Then $\displaystyle \tau_{n}=t_{a}+\sum_{i=0}^{n-1}t_{i}=t_{a}+T'=t_{b}$ and the length of each interval $[\tau_{i},\tau_{i+1}]$ is $\displaystyle t_{i}=T+\frac{r}{n}\in[T,2T)$.\\
    \textbf{Step 3: Choice of the chain points $\boldsymbol{x_{i}\in\omega (X)}$.} Set
    \begin{equation*}
        y_{i}:=X(\tau_{i})\quad (i=0,1,\cdots,n).
    \end{equation*}
    From Lemma \ref{omega-limit set}, as $\tau_{i}\rightarrow+\infty$, ${\rm{dist}}\left(y_{i},\omega(X)\right)\rightarrow 0$. Hence, taking $t_{a}$  sufficiently large and for $\tau_{i}\geq \tau_{0}=t_{a}$,
    \begin{equation}
        {\rm{dist}}(y_{i},\omega(X))< \eta\quad (i=0,1,\cdots,n),
    \end{equation}
     we choose $x_{i}\in\omega(X)$ such that
    \begin{equation}\label{xi-yi}
        |x_{i}-y_{i}|<\eta,\quad (i=0,1,\cdots, n).
    \end{equation}
    Here, we can take $x_{0}=a$, $x_{n}=b$, since from (\ref{start point}) and (\ref{end point}),
    \begin{equation*}
        |y_{0}-a|<\eta,\quad |y_{n}-b|<\eta.
    \end{equation*}
    \textbf{Step 4: Conclusion.} In this step, we prove that 
    the choices of $t_i\geq T$ and $x_i\in\omega(X)$ form an $(\varepsilon,T)$-chain from $a$ to $b$ for $\eta>0$ small enough. From the previous argument, we only need to show the inequality
    \begin{equation}
        |\varphi(t_{i},x_{i})-x_{i+1}|<\varepsilon,\quad (i=0,1,\cdots,n-1)
    \end{equation}
    for $\eta>0$ small enough. First, 
    \begin{equation}\label{3 term}
        |\varphi(t_{i},x_{i})-x_{i+1}|\leq |\varphi(t_{i},x_{i})-\varphi(t_{i},y_{i})|+|\varphi(t_{i},y_{i})-y_{i+1}|+|y_{i+1}-x_{i+1}|.
    \end{equation}
    Then we estimate the three terms on the right-hand side of (\ref{3 term}). For the first term, from the definition of $x_{i}$ and $y_{i}$,
    \begin{equation*}
        x_{i}\in\omega(X)\quad {\rm{and}}\quad {\rm{dist}}(y_{i},\omega(X))\leq |x_{i}-y_{i}|<\eta.
    \end{equation*}
   By Lemma \ref{continuous dependence}, for fixed $T>0$, take $\eta<\delta$ (where $\delta>0$ is defined in Lemma \ref{continuous dependence}), we have
    \begin{equation*}
        \varphi(t,x_{i}),\ \varphi(t,y_{i})\in \left[\frac{1}{2}C_{1},2C_{2}\right]^{2K},\quad \forall\ t\in[0,2T].
    \end{equation*}
    Then
    \begin{equation*}
        |\varphi'(t,x_{i})-\varphi'(t,y_{i})|=\left|F(\varphi(t,x_{i}))-F(\varphi(t,y_{i}))\right|\leq L|\varphi(t,x_{i})-\varphi(t,y_{i})|,
    \end{equation*}
    which together with the initial condition $\varphi(0,x_{i})-\varphi(0,y_{i})=x_{i}-y_{i}$ and Gronwall's inequality gives
    \begin{equation*}
        |\varphi(t_{i},x_{i})-\varphi(t_{i},y_{i})|\leq e^{Lt_{i}}|x_{i}-y_{i}|<e^{2TL}\eta.
    \end{equation*}
    For the second term, note that
    \begin{equation*}
        |\varphi(t_{i},y_{i})-y_{i+1}|=|\varphi(t_{i},X(\tau_{i}))-X(t_{i}+\tau_{i})|
    \end{equation*}
    and for $\tau_{i}\geq t_{a}$ large enough, from Lemma \ref{auto vs unauto},
    \begin{equation*}
        |\varphi(t_{i},X(\tau_{i}))-X(t_{i}+\tau_{i})|\leq\frac{e^{Lt_{i}}-1}{L}\sup_{\tau\in[0,t_{i}]}|e(\tau+\tau_{i})|\leq\frac{e^{2LT}-1}{L}\eta.
    \end{equation*}
    Hence, 
    \begin{equation*}
        |\varphi(t_{i},y_{i})-y_{i+1}|\leq\frac{e^{2LT}-1}{L}\eta.
    \end{equation*}
    For the third term, from (\ref{xi-yi}),
    \begin{equation*}
        |y_{i+1}-x_{i+1}|< \eta.
    \end{equation*}
    Inserting the above three estimates into (\ref{3 term}), we have
    \begin{equation*}
         |\varphi(t_{i},x_{i})-x_{i+1}|<\left(1+e^{2TL}+\frac{e^{2LT}-1}{L}\right)\eta.
    \end{equation*}
    We now choose $\displaystyle\eta<\min \left\{\delta,\frac{\varepsilon}{e^{2LT}+\frac{e^{2LT}-1}{L}+1}\right\}$ and then 
    \begin{equation*}
         |\varphi(t_{i},x_{i})-x_{i+1}|<\varepsilon.
    \end{equation*} 
    As a result, for any $T>0$, $\varepsilon>0$ and $a,b\in\omega(X)$, we have constructed an $(\varepsilon,T)$-chain from $a$ to $b$ with all chain points lying in $\omega(X)$. This shows that $\omega(X)$ is ICT for the flow $\varphi(t,p)$.
\end{proof}
Finally, we return to the proof of Proposition \ref{ODE}. By Lemma \ref{omega-limit set}, we know that $\omega(X)$ is itself a connected set. Thus, to prove Proposition \ref{ODE}, it suffices to establish that $\omega(X) \subset Eq(F)$.
\begin{proposition}\label{omegainEq}
    The $\omega$-limit set $\omega(X)$ is contained in the equilibrium set Eq(F).
\end{proposition}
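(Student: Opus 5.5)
We construct a Lyapunov function $V$ on $(0,+\infty)^{2K}$ that is nondecreasing along every trajectory of the autonomous system \eqref{aut ODE} and strictly increasing along non-equilibrium ones. We then combine it with the internal chain transitivity of $\omega(X)$ (Lemma \ref{ICT}) and its flow invariance (Lemma \ref{Basic properties of Y}).

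\textbf{The Lyapunov function.} The autonomous system comes from the time-dependent Hamiltonian-type system $\lambda_k'=-b_k$, $b_k'=-\partial_{\lambda_k}\Phi$ after the self-similar change of variables. Here
\[
\Phi(\vec\lambda)=-\tfrac{2}{3}\kappa\sum_{j<k}|z_j-z_k|^{-3}\lambda_j^{3/2}\lambda_k^{3/2},
\]
and the functional $E_1=\sum b_k^2-\tfrac43\kappa\sum_{j<k}|z_j-z_k|^{-3}\lambda_j^{3/2}\lambda_k^{3/2}$ is conserved by that system. In the new variables, with $P(\vec\alpha):=\frac{2}{3}\kappa\sum_{j<k}|z_j-z_k|^{-3}\alpha_j^{3/2}\alpha_k^{3/2}$, we have $\partial_{\alpha_k}P=\kappa\sum_{j\ne k}|z_j-z_k|^{-3}\alpha_k^{1/2}\alpha_j^{3/2}$, and $P$ is homogeneous of degree $3$. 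We first try
\[
V(\vec\alpha,\vec\beta)=\tfrac12|\vec\beta|^2-P(\vec\alpha)+\sum_k\theta\,\alpha_k\beta_k+\ldots,
\]
and compute $\frac{d}{dt}$ along $\alpha'=2\alpha-\beta$, $\beta'=3\beta-\nabla P$. This gives
\[
\tfrac{d}{dt}\bigl(\tfrac12|\beta|^2-P\bigr)=3|\beta|^2-\beta\cdot\nabla P-2\alpha\cdot\nabla P+\beta\cdot\nabla P=3|\beta|^2-6P,
\]
by Euler's identity. This is not signed, so the plan is to add a cross term $c\,\alpha\cdot\beta$ and a term $d|\alpha|^2$, and to choose the constants so that $\dot V$ becomes a positive semidefinite form in $\beta-2\alpha$ and $3\beta-\nabla P$ that vanishes exactly on $Eq(F)$. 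If no clean quadratic choice works, the fallback is to use the homogeneity scaling: write $X=r\cdot\hat X$ and exploit that the system is a gradient-like flow with linear damping, looking for $V$ of the form $e^{-\gamma t}$-type combination restricted via $\alpha\cdot\beta$. Finding this $V$ is the main obstacle, and it is the one step requiring real computation. The aim is the identity
\[
\dot V=Q(\beta-2\alpha,\,3\beta-\nabla P)\ge 0,
\]
with equality if and only if $F(X)=0$.

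\textbf{Concluding the argument.} Granting $V$, suppose $p\in\omega(X)\setminus Eq(F)$. By invariance, $\varphi(t,p)\in\omega(X)\subset[C_1,C_2]^{2K}$ for all $t$, and $V$ strictly increases along it. Hence $V(\varphi(1,p))-V(p)=:4\gamma>0$. The function $V$ is bounded on the compact set $\omega(X)$, and by continuity of $V$ and of the flow (Lemma \ref{continuous dependence}), $V(\varphi(t,q))\ge V(q)-\gamma$ for $t\in[0,2T]$ and $q$ near $\omega(X)$. Moreover, $V$ is constant on $\omega(X)$. To see this, note that $V(X(t))$ is asymptotically nondecreasing up to an $o(1)$ error by \eqref{APT}, so $V(X(t))$ converges; since $\omega(X)$ consists of limits of $X(t_n)$, $V\equiv V_\infty$ on $\omega(X)$. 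This directly contradicts $V(\varphi(1,p))>V(p)$ with $\varphi(1,p)\in\omega(X)$.

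Thus the ICT property is needed only as a backup. The constancy of $V$ on $\omega(X)$ follows by writing $V(X(t+1))-V(X(t))\ge -o(1)$ from Lemma \ref{auto vs unauto}, which shows the lower and upper limits of $V(X(t))$ agree. Alternatively, one follows an $(\varepsilon,T)$-chain from $\varphi(1,p)$ back to $p$ in $\omega(X)$; along the chain $V$ drops by at most $O(\varepsilon)$ per jump and never decreases along flow pieces. This contradicts the net gain $4\gamma$ once $\varepsilon$ is small, since the number of jumps can be controlled by strict increase away from the equilibria. Therefore $\omega(X)\subset Eq(F)$. Connectedness from Lemma \ref{omega-limit set} then places $\omega(X)$ in a single connected component, and \eqref{conve of X(t)} gives the stated convergence.
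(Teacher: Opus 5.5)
Your proposal has two genuine gaps.

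\textbf{First gap: the Lyapunov function is never produced.} You call it the main obstacle yourself. Your ansatz does work, with $\theta=-2$ and $d=5$; this is exactly the function the paper uses:
\[
L(\vec\alpha,\vec\beta)=\tfrac12|\vec\beta-2\vec\alpha|^2+3|\vec\alpha|^2-P(\vec\alpha),\qquad \tfrac{d}{dt}L(Y(t))=5|\vec\beta-2\vec\alpha|^2\geq 0
\]
along the autonomous flow. Note that $\dot L$ vanishes on the whole set $\{\vec\beta=2\vec\alpha\}$, not only on $Eq(F)$. So your target of a form vanishing exactly on $Eq(F)$ is neither reached nor needed. Strict increase along non-equilibrium orbits follows because $\dot L\equiv 0$ on $[0,T]$ forces $\vec\alpha'\equiv0$. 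Hence $\vec\beta=2\vec\alpha$ is constant, and therefore $F(p)=0$.

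\textbf{Second gap: the concluding step fails.} This is the more serious problem. From $V(X(t+1))-V(X(t))\geq -o(1)$ you cannot conclude that $V(X(t))$ converges. Increments tending to zero are compatible with perpetual oscillation (think of $\sin\log t$). Here $\frac{d}{dt}V(X(t))\geq -C|e(t)|$, and $e$ is only known to tend to $0$, not to be integrable.

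This is not a technicality. Suppose the limit flow has a continuum of equilibria on which $V$ is non-constant. Then a perturbed solution can drift down along the continuum and ride back up along a connecting orbit, so its $\omega$-limit set contains non-equilibria. Nothing in your argument rules this out. Your chain fallback breaks for the same reason: chain points may sit near equilibria where the flow gains nothing, so the number of $O(\varepsilon)$ drops is uncontrolled.

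\textbf{The missing ingredient} is that $L(Eq(F))$ has empty interior. On $Eq(F)$ one has $\vec\beta=2\vec\alpha$ and $L=\Psi(\vec\alpha):=3|\vec\alpha|^2-P(\vec\alpha)$ with $\nabla\Psi(\vec\alpha)=0$, so Sard's theorem applies. The argument then runs as follows.
\begin{enumerate}
\item Pick $c\in(L(p),L(\varphi(1,p)))\setminus L(Eq(F))$.
\item The set $\{L=c\}\cap\omega(X)$ is compact, so the flow gains a uniform amount over times $\geq1$ in a band around this level.
\item Hence, for small $\delta$, the set $\{L>c-\delta\}\cap\omega(X)$ is mapped into $\{L\geq c+\delta\}$ for all times $\geq1$.
\item Take an $(\varepsilon,1)$-chain in $\omega(X)$ from $\varphi(1,p)$ to $p$, with $\varepsilon$ smaller than the distance between $\{L\geq c+\delta\}$ and $\{L\leq c-\delta\}$. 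It never leaves $\{L>c-\delta\}$, which contradicts $L(p)<c-\delta$.
\end{enumerate}
So internal chain transitivity (Lemma \ref{ICT}) is essential here, not a backup.
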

\begin{proof}
    We divide the proof into three steps.\\
    \textbf{Step 1: Construction of the Lyapunov function.}\\
    Let  $Y(t)=((\alpha_k(t))_{1\le k\le K},(\beta_k(t))_{1\le k\le K})$ be any solution of
\begin{equation}\label{flow}
    Y'(t)=F(Y(t)),\quad Y(0)=p\in\omega(X).
\end{equation}
By Lemma \ref{Basic properties of Y}, we know that  $Y(t)\in C^{\infty}(\RR)$ and $Y(t)\in [C_{1},C_{2}]^{2K}$ for any $t\in\RR$. For simplicity, we write
\begin{equation*}
   A_k:=2\alpha_k-\beta_k,
\qquad
B_k:=3\beta_k-\kappa\sum_{j\ne k}\left|z_{j}-z_{k}\right|^{-3}\alpha_k^{1/2}\alpha_j^{3/2}.
\end{equation*}
Then the autonomous system can be rewritten as
\begin{equation*}
    \alpha_k'=A_k,
\qquad
\beta_k'=B_k,
\end{equation*}
and we define the following Lyapunov function
\begin{equation}\label{lyapounv}
    L(\vec{\alpha},\vec{\beta})=\frac{1}{2}\sum_{k=1}^{K}(2\alpha_{k}-\beta_{k})^{2}+3\sum_{k=1}^{K}\alpha_{k}^{2}-\frac{2}{3}\kappa\sum_{1\leq i<j\leq K}|z_{i}-z_{j}|^{-3}\alpha^{\frac{3}{2}}_{i}\alpha^{\frac{3}{2}}_{j}.
\end{equation}
Since $[C_1,C_2]^{2K}\subset (0,+\infty)^{2K}$, the function $L$ is $C^\infty$ on an open neighborhood of $[C_1,C_2]^{2K}$.
We now differentiate \eqref{lyapounv} along the flow \eqref{flow}:
\begin{align*}
    \frac{d}{dt}L(Y(t))=&\sum_{k=1}^{K}(2\alpha_{k}-\beta_{k})(2\alpha_{k}'-\beta_{k}')+6\sum_{k=1}^{K}\alpha_{k}\alpha_{k}'\\
    &-\kappa\sum_{1\leq i<j\leq K}|z_{i}-z_{j}|^{-3}\left(\alpha_{i}^{\frac{1}{2}}\alpha_{j}^{\frac{3}{2}}\alpha_{i}'+\alpha_{j}^{\frac{1}{2}}\alpha_{i}^{\frac{3}{2}}\alpha_{j}'\right)\\
    =&\sum_{k=1}^{K}A_{k}(2A_{k}-B_{k})+6\sum_{k=1}^{K}\alpha_{k}A_{k}
    -\kappa\sum_{k=1}^{K}A_{k}\sum_{j\neq k}\left|z_{k}-z_{j}\right|^{-3}\alpha_{k}^{\frac{1}{2}}\alpha_{j}^{\frac{3}{2}}.
\end{align*}
Since 
\begin{equation*}
    B_k=3\beta_k-\kappa\sum_{j\ne k}\left|z_{j}-z_{k}\right|^{-3}\alpha_k^{1/2}\alpha_j^{3/2},
\end{equation*}
we get
\begin{align*}
    \frac{d}{dt}L(Y(t))&=\sum_{k=1}^{K}A_{k}(2A_{k}-B_{k})+6\sum_{k=1}^{K}\alpha_{k}A_{k}
    -\sum_{k=1}^{K}A_{k}(3\beta_{k}-B_{k})\\
    &=\sum_{k=1}^{K}2A_{k}^{2}+3\sum_{k=1}^{K}(2\alpha_{k}-\beta_{k})A_{k}\\
    &=\sum_{k=1}^{K}5A_{k}^{2}.
\end{align*}
Thus, for every solution of the autonomous system \eqref{flow},
\begin{equation}\label{nodecreasing of L}
    \frac{d}{dt}L(Y(t))=5\sum_{k=1}^{K}\left(2\alpha_{k}(t)-\beta_{k}(t)\right)^{2}=5\sum_{k=1}^{K}\left(\alpha_{k}'(t)\right)^{2}\geq 0.
\end{equation}
\textbf{Step 2: Properties of the Lyapunov function.}\\
We prove the following two properties of the Lyapunov function $L$ 
which will be used later.\\
(1). For any $p\in\omega(X)$ and any $T>0$,
    \begin{equation}\label{equal}
        L(\varphi(T,p))=L(p)
\quad\Longleftrightarrow\quad
p\in Eq(F),
\end{equation}
where $\varphi(t,p)$ denotes the autonomous flow. In particular, if $p\in\omega(X)$ and $p\notin Eq(F)$, then for every $T>0$,
\begin{equation}\label{inequal}
    L(\varphi(T,p))>L(p).
\end{equation}
(2). $L\big(Eq(F)\cap [C_1,C_2]^{2K}\big)$ has empty interior in $\RR$.\\
For part (1), if $p\in Eq(F)$,  by  uniqueness of the solution, $\varphi(t,p)\equiv p$, and hence $L(\varphi(T,p))=L(p)$. Conversely, assume that for some $T>0$,
\[
L(\varphi(T,p))=L(p).
\]
Then, from \eqref{nodecreasing of L}, we know that $L(\varphi(t,p))$ is increasing on $[0,T]$ which combines with the fact $L(\varphi(T,p))=L(p)$ gives
\begin{equation*}
    0=\frac{d}{dt}L(Y(t))=5\sum_{k=1}^{K}\left(2\alpha_{k}(t)-\beta_{k}(t)\right)^{2}=5\sum_{k=1}^{K}\left(\alpha_{k}'(t)\right)^{2}\quad \text{for all } t\in[0,T].
\end{equation*}
Hence
\[
\alpha_k'(t)=2\alpha_k(t)-\beta_k(t)\equiv 0
\qquad \text{on }[0,T],
\]
so each $\alpha_k$ is constant on $[0,T]$, and hence each $\beta_k=2\alpha_k$ is also constant on $[0,T]$. Therefore
\[
\beta_k'(t)\equiv 0
\qquad \text{on }[0,T].
\]
Substituting into the second equation of the autonomous system yields
\[
0
=
3\beta_k(0)-\kappa\sum_{j\ne k}\left|z_{j}-z_{k}\right|^{-3}\alpha_k(0)^{1/2}\alpha_j(0)^{3/2},
\qquad 1\le k\le K,
\]
which together with $\beta_k(0)=2\alpha_k(0)$ gives that $F(p)=0$, i.e. $p\in Eq(F)$. This proves \eqref{equal}, and \eqref{inequal}  follows.\\
For part (2), we define
\begin{equation}\label{Psi}
\Psi(\vec\alpha)
:=
3\sum_{k=1}^K\alpha_k^2
-\frac23\kappa\sum_{1\le i<j\le K}\left|z_{j}-z_{j}\right|^{-3}\alpha_i^{3/2}\alpha_j^{3/2},
\quad
\vec\alpha\in \Big(\frac{C_1}{2},2C_2\Big)^K.
\end{equation}
Then $\Psi\in C^\infty\!\big((\frac{C_1}{2},2C_2)^K\big)$ and
\[
L(\vec\alpha,\vec\beta)
=
\frac12\sum_{k=1}^K(2\alpha_k-\beta_k)^2+\Psi(\vec\alpha).
\]
Moreover,
\[
\partial_{\alpha_k}\Psi(\vec\alpha)
=
6\alpha_k-\kappa\sum_{j\ne k}|z_{j}-z_{k}|^{-3}\alpha_k^{1/2}\alpha_j^{3/2}.
\]
Hence a point $((a_k)_{1\le k\le K},(c_k)_{1\le k\le K})\in [C_1,C_2]^{2K}$ belongs to $Eq(F)$ if and only if
\[
c_k=2a_k,
\qquad
\partial_{\alpha_k}\Psi(\vec a)=0,
\qquad
1\le k\le K.
\]
Therefore
\[
L\big(Eq(F)\cap [C_1,C_2]^{2K}\big)
\subset
\Psi\big(\{\vec a\in [C_1,C_2]^K:\ \nabla\Psi(\vec a)=0\}\big).
\]
The set on the left-hand side is contained in the image of the critical set of a smooth function $\Psi$. Furthermore, by Sard's theorem, the image of the critical set of $\Psi$ has empty interior in $\RR$, hence the conclusion of the second part follows.\\
\textbf{Step 3: $\boldsymbol{\omega(X)\subset Eq(F)}$.}\\
We now prove that $\omega(X)\subset Eq(F)$. We argue by contradiction and assume that
\[
\omega(X)\not\subset Eq(F).
\]
Then there exists
\[
p\in \omega(X)\setminus Eq(F).
\]
Since $\omega(X)$ is invariant under the autonomous flow, $\varphi(1,p)\in \omega(X)$. Set
\[
q:=\varphi(1,p)\in \omega(X).
\]
Because $p\notin Eq(F)$, \eqref{inequal} gives
\[
L(q)>L(p).
\]
From Step 2 Part (2), we can choose 
\begin{equation}\label{cchoice}
c\in (L(p),L(q))
\setminus
L\big(Eq(F)\cap [C_1,C_2]^{2K}\big).
\end{equation}
Since $\omega(X)$ is connected and $L$ is continuous, the image $L(\omega(X))$ is an interval in $\RR$. Since $L(p)<c<L(q)$ and $p,q\in \omega(X)$, there exists at least one point in $\omega(X)$ with Lyapunov value $c$. Thus
\[
K_c:=\{x\in \omega(X):\ L(x)=c\}
\]
is a nonempty compact subset of $\omega(X)$. By \eqref{cchoice},
\begin{equation}\label{Kc-noeq}
K_c\cap Eq(F)=\varnothing.
\end{equation}
Fix any $x\in K_c$. Since, by \eqref{Kc-noeq}, $x\notin Eq(F)$, we have
\[
L(\varphi(1,x))>L(x)=c.
\]
Define \[
\Delta_x:=L(\varphi(1,x))-L(x)>0.
\]
Since the map
\[
y\mapsto L(\varphi(1,y))-L(y)
\]
is continuous on $\omega(X)$, there exists an open neighborhood $U_x\subset \omega(X)$ of $x$ such that for every $y\in U_x$,
\[
L(\varphi(1,y))-L(y)>\frac{\Delta_x}{2}.
\]
Set
\[
\eta_x:=\frac{\Delta_x}{4}>0.
\]
Since $L(\varphi(t,y))$ is non-decreasing in $t$ by \eqref{nodecreasing of L}, it follows that for all $t\geq 1$ and all $y\in U_x$,
\begin{equation}\label{Uxall}
L(\varphi(t,y))\geq L(y)+2\eta_x.
\end{equation}
Now $\{U_x\}_{x\in K_c}$ is an open cover of the compact set $K_c$, so there exists a finite subcover
\[
K_c\subset \bigcup_{m=1}^N U_{x_m}.
\]
Define
\[
U:=\bigcup_{m=1}^N U_{x_m},
\qquad
\eta:=\min_{1\leq m\leq N}\eta_{x_m}>0.
\]
Then \eqref{Uxall} implies
\begin{equation}\label{uniformU}
L(\varphi(t,y))\geq L(y)+2\eta
\qquad
\forall\, y\in U,\ \forall\, t\geq 1.
\end{equation}
Since $K_c\subset U$ and $U$ is open in $\omega(X)$, there exists $\delta_0>0$ such that
\begin{equation}\label{band}
\{x\in \omega(X):\ |L(x)-c|\leq 2\delta_0\}\subset U.
\end{equation}
In fact, if not, there would exist a sequence $x_n\in \omega(X)\setminus U$ such that
\[
|L(x_n)-c|\leq \frac1n.
\]
Since $\omega(X)$ is compact, after passing to a subsequence $x_n\to x_\ast\in \omega(X)\setminus U$. By continuity of $L$, $L(x_\ast)=c$, hence $x_\ast\in K_c\subset U$, a contradiction. Now choose
\begin{equation}\label{deltachoice}
0<\delta<\min\Big\{
\delta_0,\,
\eta,\,
\frac{c-L(p)}{2},\,
\frac{L(q)-c}{2}
\Big\}.
\end{equation}
Then by the definition of $\delta$,
\[
L(p)<c-2\delta,
\qquad
L(q)>c+2\delta.
\]
Define
\begin{equation}\label{eq:Asets}
A_0:=\{x\in \omega(X) :\ L(x)>c-\delta\},
\qquad
A_1:=\{x\in \omega(X):\ L(x)\geq c+\delta\}.
\end{equation}
Clearly,
\begin{equation}\label{eq:Ainc}
A_1\subset A_0,
\qquad
q\in A_1,
\qquad
p\notin A_0.
\end{equation}

We now claim that
\begin{equation}\label{A0A1}
\varphi(t,y)\in A_1
\qquad
\forall\, y\in A_0,\ \forall\, t\geq 1.
\end{equation}
Let $y\in A_0$ and $t\geq 1$.\\
\smallskip
\noindent\emph{Case 1.} If $L(y)\geq c+\delta$, then $y\in A_1$, and by \eqref{nodecreasing of L},
\[
L(\varphi(t,y))\geq L(y)\geq c+\delta,
\]
hence $\varphi(t,y)\in A_1$.\\
\smallskip
\noindent\emph{Case 2.} If $c-\delta<L(y)<c+\delta$, then
\[
|L(y)-c|<\delta<2\delta_0,
\]
so by \eqref{band}, $y\in U$. Therefore \eqref{uniformU} gives
\[
L(\varphi(t,y))\geq L(y)+2\eta.
\]
Since $\delta<\eta$, we get
\[
L(\varphi(t,y))
>
(c-\delta)+2\eta
>
c+\delta.
\]
Thus $\varphi(t,y)\in A_1$.
This proves \eqref{A0A1}.\\
Since $\omega(X)$ is compact, the sets
\[
A_{1}
=
\{x\in \omega(X):\ L(x)\geq c+\delta\},
\]
and
\[
\omega(X)\setminus A_0
=
\{x\in \omega(X):\ L(x)\leq c-\delta\},
\]
are disjoint compact subsets of $\omega(X)$. Hence
\begin{equation}\label{eq:distpos}
d_\ast
:=
\text{dist}\big(A_{1},\,\omega(X)\setminus A_0\big)
>0.
\end{equation}
Choose
\[
0<\varepsilon<d_\ast.
\]
Because $\omega(X)$ is internally chain transitive, for this $\varepsilon$ and for $T=1$, there exists an $(\varepsilon,1)$-chain from $q$ to $p$. Thus there exist chain points
\[
q=x_0,\ x_1,\dots,x_n=p,
\qquad x_i\in \omega(X),
\]
and times
\[
t_0,\dots,t_{n-1}\geq 1,
\]
such that
\begin{equation}\label{eq:chain}
|\varphi(t_i,x_i)-x_{i+1}|<\varepsilon,
\qquad i=0,\dots,n-1.
\end{equation}

We claim that
\begin{equation}\label{eq:allA0}
x_i\in A_0
\qquad \forall\, i=0,1,\dots,n.
\end{equation}
Indeed, by \eqref{eq:Ainc}, $x_0=q\in A_1\subset A_0$.

Assume $x_i\in A_0$ for some $0\leq i\leq n-1$. Since $t_i\geq 1$, by \eqref{A0A1},
\[
\varphi(t_i,x_i)\in A_1.
\]
Then \eqref{eq:chain} implies
\[
\text{dist}(x_{i+1},A_{1})<\varepsilon<d_\ast.
\]
By the definition of $d_\ast$ in \eqref{eq:distpos}, this is impossible if $x_{i+1}\in \omega(X)\setminus A_0$. Therefore $x_{i+1}\in A_0$. This proves \eqref{eq:allA0}.

In particular,
\[
p=x_n\in A_0,
\]
which contradicts \eqref{eq:Ainc}, where $p\notin A_0$.

This contradiction shows that the assumption $\omega(X)\not\subset Eq(F)$ is false. Hence
\[
\omega(X)\subset Eq(F).
\]
The proof is complete.
\end{proof}

 \appendix
 \section{Rigorous analysis of the algebraic equations}\label{}
 In this section, we study the algebraic equation (\ref{equ 1})
 \begin{equation*}
      \left\{\begin{aligned}
      & 2a_{k}=b_{k},\\
      & 3b_{k}=\kappa\sum_{j\neq k, 1\leq j\leq K}|z_{j}-z_{k}|^{-3}a_{k}^{\frac{1}{2}}a^{\frac{3}{2}}_{j},\\
      & a_{k}>0,\ b_{k}>0,\ \forall\ 1\leq k\leq K.
		\end{aligned}\right.
 \end{equation*}
 First, we simplify the aforementioned equation by substituting the first line into the second line and setting $x_{k}=\sqrt{a_{k}}$. The original equation is then equivalent to the following algebraic system:
\begin{equation}\label{alg}
     \left\{\begin{aligned}
      & 6x_{k}=\kappa\sum_{j\neq k, 1\leq j\leq K}|z_{j}-z_{k}|^{-3}x_{j}^{3},\\
      &x_{k}>0,\quad \forall\ 1\leq k\leq K.
		\end{aligned}\right.
\end{equation}
Next, we  prove that when $K=3$, the solutions to the above algebraic equation are isolated.
\begin{proposition}
When $K=3$, for any three distinct points $z_1, z_2, z_3$ in $\RR^5$, the solutions of equation \eqref{alg} are always isolated.
\end{proposition}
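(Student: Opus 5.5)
The plan is to prove more than isolatedness: the full complex solution set of the polynomial system underlying \eqref{alg} is finite. Set $w_{jk}:=\frac{\kappa}{6}|z_j-z_k|^{-3}>0$ and write $a:=w_{12}$, $b:=w_{13}$, $c:=w_{23}$. Then \eqref{alg} for $K=3$ reads
\begin{equation*}
x_1=a x_2^3+b x_3^3,\qquad x_2=a x_1^3+c x_3^3,\qquad x_3=b x_1^3+c x_2^3 .
\end{equation*}
I would drop the positivity constraint and consider the affine variety $V\subset\mathbb{C}^3$ cut out by these three cubic equations. If $V$ is finite, then the positive solutions of \eqref{alg} form a finite set, and in particular each of them is isolated.

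The key tool is the standard fact that an affine algebraic variety which is bounded in $\mathbb{C}^3$ is finite. Indeed, any irreducible component of positive dimension contains an affine curve, and the Zariski closure of that curve in $\mathbb{P}^3$ meets the hyperplane at infinity. Equivalently, by the projective dimension theorem (a curve and a hyperplane in $\mathbb{P}^3$ always intersect), such a component must be unbounded. So it suffices to show that $V$ has no points at infinity. Homogenize with a variable $x_0$, so the equations become $x_1x_0^2=a x_2^3+b x_3^3$ and so on. Setting $x_0=0$, a point at infinity $[0:x_1:x_2:x_3]$ of the projective closure must satisfy the top-degree system
\begin{equation*}
a x_2^3+b x_3^3=0,\qquad a x_1^3+c x_3^3=0,\qquad b x_1^3+c x_2^3=0 .
\end{equation*}
(The projective closure of $V$ is contained in the zero set of the homogenized equations, so this condition is necessary.)

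Put $u_k:=x_k^3$. The top-degree system becomes linear in $u=(u_1,u_2,u_3)$ with matrix
\begin{equation*}
\begin{pmatrix} 0&a&b\\ a&0&c\\ b&c&0\end{pmatrix},
\end{equation*}
whose determinant is $2abc\neq 0$ because $a,b,c>0$ for distinct $z_k$. Hence $u=0$, so $x_1=x_2=x_3=0$, which is not a projective point. Therefore $V$ has no points at infinity, so $V$ is bounded, hence finite, and the proposition follows.

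As a sanity check independent of the algebraic geometry, positive solutions also stay in a compact subset of the open orthant. The upper bound follows from $x_1\ge a x_2^3$ and $x_2\ge a x_1^3$, which give $x_1\ge a^4x_1^9$. The lower bound follows from $|x|\lesssim|x|^3$, which forces $|x|\gtrsim 1$. This is not needed for the proof.

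The main point requiring care is the justification that a positive-dimensional affine component must reach infinity, i.e.\ that it has a point at infinity satisfying the leading-form system. I would state this via the projective closure and the dimension theorem rather than through analytic arcs. The argument also explains why $K=3$ is special: for general $K$ the same reasoning needs the weighted adjacency matrix $(w_{jk})_{j\ne k}$, with zero diagonal, to admit no nontrivial vector $u$ with $Wu=0$ that is realizable as cubes. For $K=3$ this is guaranteed because $\det=2abc$, while for larger $K$ the determinant can vanish, consistent with the $K=10$ example.
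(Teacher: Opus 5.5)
Your proof is correct, and it takes a different route from the paper. The paper argues locally. At a positive solution it conjugates the Jacobian by $S=\mathrm{diag}(x_1,x_2,x_3)$ to obtain $6I_3-A$, where $A$ is symmetric with $\mathrm{tr}(A)=0$ and $\det A=2a_{12}a_{13}a_{23}>0$; hence $A$ has exactly one positive eigenvalue. The positive vector $(x_1^2,x_2^2,x_3^2)$ satisfies $Au=18u$, so that eigenvalue is $18$, and $0$ is not an eigenvalue of $6I_3-A$. The inverse function theorem then gives isolatedness. You argue globally instead. The leading forms reduce to $Wu=0$ with $u_k=x_k^3$ and $\det W=2abc\neq0$, so the projective closure has no points at infinity and the complex solution set is finite (at most $27$ points by B\'ezout).

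The paper's route buys nondegeneracy of every positive equilibrium, and hence smooth dependence of the equilibria on $z_1,z_2,z_3$ via the implicit function theorem. Finiteness alone does not rule out degenerate (multiple) roots. However, the paper's argument depends on positivity and on the special sign structure of the spectrum when $K=3$, and it does not extend to $K\ge4$.

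Your route uses neither. It applies verbatim for any $K$ whenever the zero-diagonal matrix $\bigl(\kappa|z_j-z_k|^{-3}\bigr)_{j\neq k}$ is nonsingular. This determinant is real-analytic on the connected configuration space. It is nonzero for configurations made of far-apart tight clusters of two or three points, since it is then dominated by a block-diagonal matrix with nonsingular blocks. So it is nonzero for generic centers, and your argument gives finiteness of $Eq(F)$ for generic $z_1,\dots,z_K$, which is the discreteness part of the conjecture in the introduction. The $K=10$ example, with $\lambda_4=\lambda_6=0$, is a case with $\det M=0$ where your argument does not apply.

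Finally, over $\mathbb{C}$ every number is a cube, so your ``realizable as cubes'' proviso is vacuous: the criterion is simply $\det W\neq0$.
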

\begin{proof}
 For simplicity, we denote
 \begin{equation*}
		d_{jk}=d_{kj}=\kappa|z_{j}-z_{k}|^{-3}>0,\quad \forall\ j\neq k, 1\leq j,k\leq 3.
	\end{equation*}
    Then the equation can be rewritten as
    \begin{equation*}
	F_{k}(x)=6x_{k}-\sum_{j\neq k}d_{jk}x_{j}^{3}=0,\quad k=1,2,3,\quad \text{and}\ x_{1},x_{2},x_{3}>0.
\end{equation*}
To prove that all solutions to the above equation are isolated, by the Implicit Function Theorem, it suffices to show that for any solution $x=(x_1,x_2,x_3)$ satisfying $x_1,x_2,x_3>0$, the Jacobian $J(x)=\nabla F(x)$ is invertible.\\
First, a direct computation yields, for any $k\neq l$,
 \begin{equation*}
 	\frac{\partial F_{k}}{\partial x_{l}}(x)=-3d_{kl}x^{2}_{l},\quad \frac{\partial F_{k}}{\partial x_{k}}(x)=6.
 \end{equation*}
Hence,
\begin{equation}
J(x)=\begin{pmatrix}
	6& -3d_{12}x_{2}^{2}& -3d_{13}x_{3}^{2}\\
	-3d_{12}x_{1}^{2}& 6& -3d_{23}x_{3}^{2}\\
	-3d_{13}x_{1}^{2}& -3d_{23}x_{2}^{2}& 6
	\end{pmatrix}.
\end{equation}
Let $S=\text{diag}(x_{1},x_{2},x_{3})$. Then, by assumption, $S$ is invertible, and $J(x)$ is similar to
\begin{equation*}
	SJ(x)S^{-1}=6I_{3}-A,
\end{equation*}
where $A$ is a symmetric matrix, and
\begin{equation*}
	A=\begin{pmatrix}
		0& a_{12}& a_{13}\\
		a_{12}& 0& a_{23}\\
		a_{13}& a_{23}& 0
	\end{pmatrix},\quad a_{ij}=3d_{ij}x_{i}x_{j}>0.
\end{equation*}
Then, 
\begin{equation*}
	J(x)\ \text{is invertible}\Leftrightarrow \text{det} (6I_{3}-A)\neq 0.
\end{equation*}
Since $A$ is a symmetric matrix, all its eigenvalues are real. We may assume without loss of generality that $\lambda_{1}\leq \lambda_{2}\leq \lambda_{3}$. On the other hand,
\begin{equation*}
	\text{det}(A)=2a_{12}a_{13}a_{23}>0,\quad \text{tr}(A)=0.
	\end{equation*}
    It follows that
    \[
\lambda_{1}\leq \lambda_{2}<0,\quad \lambda_{3}>0.
\]
Furthermore, since $x=(x_{1},x_{2},x_{3})$ satisfies the equation
\[
6x_{k}=\sum_{j\neq k}d_{jk}x_{j}^{3}.
\]
Multiplying both sides of the equation by $3x_{k}$ yields
\[
18x_{k}^{2}=3x_{k}\sum_{j\neq k}d_{kj}x_{j}^{3}=\sum_{j\neq k}(3d_{kj}x_{k}x_{j})x_{j}^{2}=\sum_{j\neq k}a_{kj}x_{j}^{2}.
\]
Then setting $u=(x_{1}^{2},x_{2}^{2},x_{3}^{2})^{\text{T}}\neq 0$, we have
\[
Au=18u.
\]
Thus, 18 is a positive eigenvalue of $A$, which implies that $\lambda_{3}=18$. Therefore, all eigenvalues of $6I_{3}-A $ are
\[
6-\lambda_{1}>0,\quad 6-\lambda_{2}>0,\quad 6-18=-12<0.
\]
It follows that $\det(6I_{3}-A)\neq 0$. This completes the proof.
\end{proof}
Subsequently, for $K=10$, we present a concrete example showing that the positive solutions of equation \eqref{alg} are not necessarily isolated.
\begin{proposition}[Non-isolated positive solutions]
    Let $K=10$, set \[
    \theta:=\frac{\pi}{5}.
    \]
    For $B>0$, we define ten points in $\RR^{5}$ by
    \begin{equation}\label{definition of zk}
        z_k(B):=\Bigl(\cos((k-1)\theta),\,\sin((k-1)\theta),\,\sqrt B\,\cos(2(k-1)\theta),\,\sqrt B\,\sin(2(k-1)\theta),\,0\Bigr),
    \end{equation}
    where $k=1,\cdots,10.$ Then there exists $B_{0}\in(4.7,4.71)$ such that if we take
    \[z_{k}=z_{k}(B_{0}),\quad k=1,\cdots,10, \]
    then there exist constants $a>b>0$ such that for every $t\in\RR$,
    \begin{equation}
        x_{k}(t):=a+b\cos (t+2(k-1)\theta),\quad k=1,\cdots 10,
    \end{equation}
    satisfies equation \eqref{alg}.
\end{proposition}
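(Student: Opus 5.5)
The proof is a direct verification. The configuration \eqref{definition of zk} is invariant under a cyclic shift, so the system \eqref{alg} becomes a circulant system. The proposed ansatz then contains only finitely many Fourier modes, and matching them reduces \eqref{alg} to one scalar equation for $B$ and a $2\times2$ linear system for $(a^{2},b^{2})$.

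\textbf{Step 1: circulant structure.} For $j=k+m$ (indices mod $10$), the definition \eqref{definition of zk} gives
\[
|z_j(B)-z_k(B)|^{2}=2\bigl(1-\cos(m\theta)\bigr)+2B\bigl(1-\cos(2m\theta)\bigr),
\]
which depends only on $m\in\{1,\dots,9\}$ and is symmetric under $m\mapsto 10-m$. For $m=5$ it equals $4$, and for $m\neq 0$ it is positive, so the points are distinct. Set $d_m(B):=\kappa|z_{k+m}-z_k|^{-3}$. Then \eqref{alg} reads
\[
6x_k=\sum_{m=1}^{9}d_m x_{k+m}^{3}.
\]

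\textbf{Step 2: Fourier reduction.} Write $\phi_k=t+2(k-1)\theta$ and $x_k=a+b\cos\phi_k$. Since $5\cdot 2\theta=2\pi$, we have $\phi_{k+m}=\phi_k+2m\theta$. Expanding the cube,
\[
x^{3}=a^{3}+\tfrac32ab^{2}+\bigl(3a^{2}b+\tfrac34b^{3}\bigr)\cos\phi+\tfrac32ab^{2}\cos2\phi+\tfrac14b^{3}\cos3\phi .
\]
Because $d_m=d_{10-m}$, summing against $d_m$ multiplies the mode $\cos(n\phi_k)$ by
\[
D_n(B):=\sum_{m=1}^{9}d_m(B)\cos(2nm\theta),
\]
and this holds for every $t$. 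Moreover $e^{6im\theta}=e^{-4im\theta}$, so $D_3=D_2$. Matching the coefficients of $1,\cos\phi_k,\cos2\phi_k,\cos3\phi_k$, \eqref{alg} holds for all $t$ if and only if
\[
6=D_0\bigl(a^{2}+\tfrac32 b^{2}\bigr),\qquad 6=D_1\bigl(3a^{2}+\tfrac34 b^{2}\bigr),\qquad D_2(B)=0 ,
\]
together with the positivity condition $x_k>0$, which follows from $a>b>0$.

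\textbf{Step 3: choosing $B_0$.} The map $B\mapsto D_2(B)$ is continuous on $(0,\infty)$ and is an explicit finite sum. I would evaluate it at $B=4.7$ and $B=4.71$ with rigorous error bounds, for instance exact closed forms of $\cos(m\pi/5)$ combined with interval arithmetic, and check that the two values have opposite signs. The intermediate value theorem then gives $B_0\in(4.7,4.71)$ with $D_2(B_0)=D_3(B_0)=0$.

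\textbf{Step 4: solving for $a,b$.} At $B_0$, the first two equations of Step 2 form a linear system in $(a^{2},b^{2})$ with determinant $\tfrac34D_0D_1-\tfrac92D_0D_1\neq0$ as long as $D_0D_1\neq0$. I would compute its solution numerically with certified bounds and verify that $a^{2}>b^{2}>0$, which gives $a>b>0$. This requires $D_0,D_1>0$ and a suitable ratio between them; $D_0>0$ is automatic since all $d_m>0$.

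\textbf{Main obstacle.} The main difficulty is the rigorous numerics of Steps 3 and 4: certifying the sign change of $D_2$ on such a narrow interval, and certifying the inequality $a>b>0$. Once these are done, the family $x_k(t)$ gives a non-constant curve of solutions of \eqref{alg}, since $b>0$.
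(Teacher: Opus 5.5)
Your proposal is correct and follows the paper's proof. Your Fourier coefficients $D_n(B)$ are exactly the circulant eigenvalues $\lambda_{2n}(B)$ used there, and your condition $D_2(B_0)=D_3(B_0)=0$ is the paper's choice $\lambda_4(B_0)=\lambda_6(B_0)=0$, obtained from a sign change on $[4.70,4.71]$. Your linear system for $(a^2,b^2)$ is solved explicitly in the paper, where $a>b>0$ reduces to $\lambda_0,\lambda_2>0$ and $\tfrac32<\lambda_0/\lambda_2<3$. The only difference is that you propose certified (interval-arithmetic) numerics where the paper simply reports floating-point values.
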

\begin{proof}
    We divide the proof into several steps.\\
    \textbf{Step 1: Computation of the coefficient matrix.} We define the cyclic distance
    \[\rho(j,k):=\min \{|j-k|,10-|j-k|\}\in\{1,2,3,4,5\}.\]
    By the definition of $z_{k}(B)$ in \eqref{definition of zk} and by direct computation, we have
    \begin{align*}
    \left|z_{j}(B)-z_{k}(B)\right|^{2}&=4\sin^{2}\frac{(j-k)\theta}{2}+4B\sin ^{2}\left((j-k)\theta\right)\\
    &=4\sin^{2}\frac{\rho(j,k)\pi}{10}+4B\sin^{2}\frac{\rho(j,k)\pi}{5}.
    \end{align*}
    Set 
    \begin{equation}
        \sigma_{r}^{2}(B):=4\sin^{2}\frac{r\pi}{10}+4B\sin^{2}\frac{r\pi}{5},\quad r=1,\cdots,5.
    \end{equation}
    Then 
    \[|z_{j}(B)-z_{k}(B)|=\sigma_{\rho(j,k)}(B).\]
    Using the fact 
    \[
    \cos\frac{\pi}{5}=\frac{1+\sqrt{5}}{4},\quad \cos\frac{2\pi}{5}=\frac{\sqrt{5}-1}{4},
    \]
    we obtain
    \begin{align*}
        &\sigma_{1}^{2}(B)=\frac{3-\sqrt{5}}{2}+\frac{5-\sqrt{5}}{2}B,\\
        &\sigma^{2}_{2}(B)=\frac{5-\sqrt{5}}{2}+\frac{5+\sqrt{5}}{2}B,\\
        &\sigma_{3}^{2}(B)=\frac{3+\sqrt{5}}{2}+\frac{5+\sqrt{5}}{2}B,\\
       & \sigma_{4}^{2}(B)=\frac{5+\sqrt{5}}{2}+\frac{5-\sqrt{5}}{2}B,\\
       & \sigma_{5}^{2}(B)=4.
    \end{align*}
    For $r=1,2,3,4,5$, we define
    \[\delta_{r}(B):=\kappa\sigma_{r}^{-3}(B).\]
    Then the coefficients
    \begin{equation*}
        M_{jk}(B)=\left\{\begin{aligned}
     &\kappa|z_{j}(B)-z_{k}(B)|^{-3}=\delta_{\rho(j,k)}(B),\quad &j\neq k,\\
     &0,\quad &j=k.
		\end{aligned}\right.
    \end{equation*}
    Hence, $M(B)=(M_{jk}(B))_{1\leq j,k\leq 10}$ is a real symmetric circulant matrix whose first row is 
    \[
    (0,\delta_{1}(B),\delta_{2}(B),\delta_{3}(B),\delta_{4}(B),\delta_{5}(B),\delta_{4}(B),\delta_{3}(B),\delta_{2}(B),\delta_{1}(B)).
    \]
    \textbf{Step 2: Eigenvalues and eigenvectors of $\boldsymbol{M(B)}$.}
For any integer $0\leq m\leq 9$, we define the vector
\[v_{m} := \left(e^{im(k-1)\theta}\right)_{k=1}^{10}.\]
By the standard theory of symmetric circulant matrices, $v_{m}$ is an eigenvector of $M(B)$ with eigenvalue
\begin{equation}
    \lambda_{m}(B) = 2\sum_{r=1}^{4}\delta_{r}(B)\cos(mr\theta) + (-1)^{m}\delta_{5}(B).
\end{equation}
Moreover, 
\[\lambda_{10-m} = \lambda_{m} \quad \text{for } 1\leq m\leq 9,\]
and
\[
v_{m}^{s} = \left(\sin\left(m(k-1)\theta\right)\right)_{k=1}^{10}, \quad v_{m}^{c} = \left(\cos\left(m(k-1)\theta\right)\right)_{k=1}^{10}
\]
are two real eigenvectors associated with the eigenvalue $\lambda_{m}$.\\
\textbf{Step 3: Choice of $\boldsymbol{B_{0}}$ and $\boldsymbol{a,b}$.}
In this step, we choose the corresponding parameters. First, a direct numerical computation gives
\[\lambda_{4}(4.70)\approx-1.7242975\times 10^{-3}<0,\]
\[\lambda_{4}(4.71)\approx 5.7146524\times 10^{-3}>0,\]
\[\lambda'_{4}(B)\in[0.7417451,0.7460485]\quad \text{for all}\ B\in[4.70,4.71].\]
Hence, by the intermediate value theorem, there exists a unique $B_{0}\in(4.70,4.71)$ such that
\begin{equation}\label{choice of B0}
    \lambda_{4}(B_{0})=\lambda_{6}(B_{0})=0.
\end{equation}
From now on, we fix $B_{0}$ and write
\[
M:=M(B_{0}),\quad M_{jk}(B_{0}):=M_{jk},\quad  z_{k}=z_{k}(B_{0}),\quad\lambda_{m}:=\lambda_{m}(B_{0}).
\]
Again, numerical computations give
\[\lambda_{0}\approx7.8069722,\qquad \lambda_{2}\approx3.1411361.\]
Therefore
\begin{equation}\label{lambda0lambda2}
    \lambda_{0}>0,\quad \lambda_{2}>0,\quad \frac{3}{2}<\frac{\lambda_{0}}{\lambda_{2}}<3.
\end{equation}
Now, we choose
\begin{equation}\label{definition of a and b}
    a=\sqrt{\frac{12}{5\lambda_{2}}-\frac{6}{5\lambda_{0}}},\quad b=\sqrt{-\frac{8}{5\lambda_{2}}+\frac{24}{5\lambda_{0}}}.
\end{equation}
From \eqref{lambda0lambda2}, $a$ and $b$ are well defined.
Moreover, since
\[
a^{2}-b^{2}=\left(\frac{12}{5\lambda_{2}}-\frac{6}{5\lambda_{0}}\right)-\left(-\frac{8}{5\lambda_{2}}+\frac{24}{5\lambda_{0}}\right)=\frac{2(2\lambda_{0}-3\lambda_{2})}{\lambda_{0}\lambda_{2}}>0,
\]
we have $a>b>0$.\\
\textbf{Step 4: Construction of a continuous family of positive solutions.}
For any $t\in\RR$, we define
\begin{equation}
    x_{k}(t):=a+b\cos \left( t+2(k-1)\theta\right),\quad k=1,\cdots, 10.
\end{equation}
For simplicity, we denote
\[
t_{k}:=t+2(k-1)\theta.
\]
Then $x_{k}(t)$ can be rewritten as
\[
x_{k}(t)=a+b\cos t_{k}.
\]
We now verify that, for the above choice of $z_k$, $a$, $b$, and for any $t\in\RR$, $x_k(t)$ satisfies equation \eqref{alg}.\\
First, by step 3 and the fact that $\cos t_{k}\geq -1$, we have
\begin{equation*}
    x_{k}(t)\geq a-b>0,\quad \text{for all }\ k=1,\cdots ,10\ \text{and }\ t\in\RR.
\end{equation*}
Using the identity
\[
(a+b\cos t)^{3}=a^{3}+\frac{3}{2}ab^{2}+\left(3a^{2}b+\frac{3}{4}b^{3}\right)\cos t+\frac{3}{2}ab^{2}\cos 2t+\frac{1}{4}b^{3}\cos 3t,
\]
we obtain
\begin{equation}
    x_{k}^{3}(t)=A_{0}+A_{1}\cos t_{k}+A_{2}\cos 2t_{k}+A_{3}\cos 3t_{k},
\end{equation}
where
\[A_{0}:=a^3+\frac{3}{2}ab^{2},\quad A_{1}:=3a^2b+\frac{3}{4}b^3,\quad A_{2}:=\frac{3}{2}ab^2,\quad A_{3}:=\frac{1}{4}b^3.\]
Since
\[v_{m}^{s} = \left(\sin\left(m(k-1)\theta\right)\right)_{k=1}^{10}, \quad v_{m}^{c} = \left(\cos\left(m(k-1)\theta\right)\right)_{k=1}^{10}\]
are two real eigenvectors associated with the eigenvalue $\lambda_{m}$, we see that the vectors
\[
\left(\cos(mt_{k}\right)_{k=1}^{10},\qquad m=0,1,2,3
\]
are  eigenvectors associated with the eigenvalue $\lambda_{2m}$. Hence, for any $1\leq k\leq 10$,
\begin{align*}
    \kappa\sum_{j\neq k, 1\leq j\leq 10}|z_{j}-z_{k}|^{-3}x_{j}^{3}(t)&=\sum_{j\neq k}M_{jk}x^{3}_{j}(t)\\
    &=\lambda_{0}A_{0}+\lambda_{2}A_{1}\cos t_{k}+\lambda_{4}A_{2}\cos 2t_{k}+\lambda_{6}A_{3}\cos 3t_{k}.
\end{align*}
It follows from \eqref{choice of B0} that
\begin{equation}\label{alg expand}
     \kappa\sum_{j\neq k, 1\leq j\leq 10}|z_{j}-z_{k}|^{-3}x_{j}^{3}(t)=\lambda_{0}A_{0}+\lambda_{2}A_{1}\cos t_{k}.
\end{equation}
We now compute $\lambda_{0}A_{0}$ and $\lambda_{2}A_{1}$.\\
First, by \eqref{definition of a and b},
\[
A_{0}=a\left(a^2+\frac{3}{2}b^2\right)=a\left[\left(\frac{12}{5\lambda_{2}}-\frac{6}{5\lambda_{0}}\right)+\frac{3}{2}\left(-\frac{8}{5\lambda_{2}}+\frac{24}{5\lambda_{0}}\right)\right]=\frac{6a}{\lambda_{0}}.
\]
Hence
\[
\lambda_{0}A_{0}=\lambda_{0}\frac{6a}{\lambda_{0}}=6a.
\]
A similar computation gives
\[
\lambda_{2}A_{1}=6b.
\]
Substituting this into \eqref{alg expand}, we get
\begin{equation*}
    \kappa\sum_{j\neq k, 1\leq j\leq 10}|z_{j}-z_{k}|^{-3}x_{j}^{3}(t)=6a+6b\cos t_{k}=6x_{k}(t).
\end{equation*}
Thus, we complete the construction of a continuous family of positive solutions to equation \eqref{alg}.
\end{proof}
\section{Asymptotic vanishing of the scaling parameters}
In this appendix, we show that the concentration condition
\[
        \mu_k(t)\to0,\qquad 1\leq k\leq K,
\]
in \eqref{asp} is in fact redundant.  More precisely, it follows from the
pure multi-bubble decomposition \eqref{Bub1}, the comparability of the scales,
and the convergence of the centers to distinct points.

\begin{lemma}[Asymptotic vanishing of the scaling parameters]
\label{lem:asymptotic-vanishing}
Let  $(u,\partial_tu):[0,+\infty)\to
        \dot H^1(\mathbb R^5)\times L^2(\mathbb R^5)$
be a solution of \eqref{NLW1}.  Assume that there exist continuous functions
$    \mu_k:[0,+\infty)\to(0,+\infty),
        \
        y_k:[0,+\infty)\to\mathbb R^5,
        \ 1\leq k\leq K,$
with $K\geq2$, such that the pure multi-bubble decomposition \eqref{Bub1}
holds.  Assume moreover that the scales are comparable and the centers have
distinct limits, namely
\begin{equation}\label{asp-without-vanishing}
\begin{aligned}
    &C_1\leq \frac{\mu_k(t)}{\mu_j(t)}\leq C_2,
    \qquad  1\leq k,j\leq K,\quad t\geq0,\\
    &\lim_{t\to+\infty}y_k(t)=z_k,
    \qquad  1\leq k\leq K,
\end{aligned}
\end{equation}
where $C_1,C_2>0$ and $z_1,\ldots,z_K$ are pairwise distinct points in
$\mathbb R^5$.  Then
\begin{equation}\label{muk 0}
    \lim_{t\to+\infty}\mu_k(t)=0,
    \qquad 1\leq k\leq K.
\end{equation}
\end{lemma}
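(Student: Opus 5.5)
We argue by contradiction: if $\mu_k(t)\not\to0$ for some $k$, then by comparability all $\mu_j(t_n)$ stay bounded below along a sequence $t_n\to+\infty$, and the target is a contradiction with the fact that $u$ is asymptotically a sum of bubbles with zero velocity.

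\textbf{Step 1: the scales are bounded above.} Suppose first that $\mu_1(t_n)\to+\infty$ along some sequence. By comparability every $\mu_k(t_n)\to+\infty$. Meanwhile $|y_j(t_n)-y_k(t_n)|\to|z_j-z_k|$ stays bounded, so $\delta((\mu_j,y_j),(\mu_k,y_k))$ stays bounded and the $K$ bubbles essentially coincide. After translation and rescaling by $\mu_1(t_n)$, the profile $\sum_k W_{\mu_k,y_k}$ converges in $\dot H^1$, up to a subsequence, to $\sum_k W_{r_k,a_k}$ with a common center $a_k=a$. This is a positive function whose $\dot H^1$ norm is at least $\sqrt{K}\,\|W\|_{\dot H^1}$, and it is not a stationary solution, since $K\ge2$ and a sum of several positive bubbles does not solve $\Delta U+U^{7/3}=0$.

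\textbf{Step 2: rule out nontrivial limits via compactness.} Now suppose $\mu_1(t_n)\to\mu_\infty\in(0,\infty]$, with the case $\mu_\infty=\infty$ already rescaled as in Step 1. Extract a subsequence so that the rescaled data $\vec u(t_n)$ converge strongly in $\dot H^1\times L^2$ to $(U_\infty,0)$. In the finite case $U_\infty=\sum_k W_{\mu_k^\infty,z_k}$; in the infinite case it is the limit from Step 1. By continuity of the flow (local well-posedness), the solution $\vec v$ with data $(U_\infty,0)$ is approximated on a fixed time interval $[0,\tau]$ by $\vec u(t_n+\cdot)$, after rescaling. Assumption \eqref{Bub1} forces $\|\partial_t u(t_n+s)\|_{L^2}\to0$ uniformly for $s\in[0,\tau]$. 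In the finite case no rescaling occurs and the time window is fixed. In the infinite case the rescaled windows are $[0,\tau/\mu_1]$, so one should instead take unrescaled windows of length $\tau\mu_1(t_n)$, which is still legitimate because \eqref{Bub1} holds for all large times. Either way we get $\partial_t v\equiv0$ on $[0,\tau]$, so $U_\infty$ solves $\Delta U_\infty+f(U_\infty)=0$.

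\textbf{Step 3: contradiction.} In the finite case $U_\infty=\sum_kW_{\mu_k^\infty,z_k}$ with distinct centers $z_k$ and $K\ge2$. A positive finite-energy solution of the elliptic equation must be a single $W_{\lambda,z}$ by the uniqueness recalled in the introduction. Alternatively, $\Delta U_\infty+U_\infty^{7/3}=U_\infty^{7/3}-\sum_k W_{\mu_k^\infty,z_k}^{7/3}>0$ pointwise, by strict superadditivity of $s\mapsto s^{7/3}$ when at least two summands are positive. Either way we have a contradiction. The infinite case is identical, since the limit is again a sum of $K\ge2$ positive bubbles. Hence $\mu_k(t)\to0$.

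The main obstacle is Step 2, specifically justifying that the time-derivative vanishing persists over a nontrivial time interval in the rescaled limit. This requires pairing \eqref{Bub1}, which is uniform in all large times, with the stability of the Cauchy problem in the energy space. I would handle it by using the rescaled Strichartz-based continuous dependence on windows of length comparable to $\mu_1(t_n)$.
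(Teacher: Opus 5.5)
Your proposal is correct and follows essentially the same route as the paper. Both argue by contradiction along a sequence with $\mu_1(t_n)$ bounded below and split into a bounded case and an unbounded case; in the unbounded case you rescale by $\mu_1(t_n)$ so the centers collapse to the origin. You then use continuous dependence together with $\|\partial_t u\|_{L^2}\to0$ to force the limiting profile to be stationary, and strict superadditivity of $s\mapsto s^{7/3}$ to reach the contradiction. The time-window issue you flag in Step 2 is not a real obstacle: $\|\partial_s v_n(s)\|_{L^2}=\|\partial_t u(t_n+\mu_1(t_n)s)\|_{L^2}\le\sup_{t\ge t_n}\|\partial_t u(t)\|_{L^2}\to0$ uniformly in $s\ge0$, and the rescaled $v_n$ are themselves solutions, so standard continuous dependence applies to them directly.
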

\begin{proof}
    From \eqref{asp-without-vanishing}, it suffices to prove that
    \begin{equation*}
        \mu_{1}(t)\rightarrow 0\quad \text{ as}\quad t\rightarrow+\infty.
    \end{equation*}
    We argue by contradiction and assume that $\mu_{1}(t)\nrightarrow 0$ as $t\rightarrow+\infty$. Then there exists a constant $\varepsilon_{0}>0$ and a sequence $t_{n}\rightarrow+\infty$ such that 
    \begin{equation}
        \mu_{1}(t_{n})\geq \varepsilon_{0}\quad \forall\ n\in\NN.
    \end{equation}
    We split the argument into two cases.\\
    \textbf{Case 1: $\boldsymbol{\mu_{1}(t_{n})}$ is bounded.} In this case, passing to a subsequence (which we still denote by $\mu_{1}(t_{n})$), we may assume 
    \begin{equation*}
        \mu_{1}(t_{n})\rightarrow \ell_{1}\quad \text{for some }\ \ell_{1}\in(0,+\infty).
    \end{equation*}
    Now, from \eqref{asp-without-vanishing}, we obtain for any $2\leq k\leq K$,
    \begin{equation*}
        C_1\mu_{1}(t_{n})\leq \mu_{k}(t_{n})\leq C_{2}\mu_{1}(t_{n}).
    \end{equation*}
    Hence, $\{\mu_{k}(t_{n})\}_{n\in\NN}$ is bounded and away from 0. Thus, passing to another subsequence, we may assume
    \begin{equation*}
        \mu_{k}(t_{n})\rightarrow\ell_{k}\quad \text{for some}\ \ell_{k}\in(0,+\infty),\quad 1\leq k\leq K.
    \end{equation*}
    From \eqref{asp-without-vanishing}, we also have
    \begin{equation*}
        y_{k}(t_{n})\rightarrow z_{k},\quad 1\leq k\leq K.
    \end{equation*}
 For simplicity, we define
 \begin{equation*}
     Q(x):=\sum_{k=1}^{K}W_{\ell_{k},z_{k}}(x).
 \end{equation*}
 Then, by \eqref{Bub1} and the continuity of the map
 \[(\lambda,z)\mapsto W_{\lambda,z}\]
 from $(0,+\infty)\times\RR^{5}$ to $\dot{H}^{1}(\RR^{5})$, we have
 \begin{equation}\label{asytn}
     \left\lVert u(t_{n})-Q\right\rVert_{\dot{H}^{1}(\RR^{5})}+\left\lVert\partial_{t}u(t_{n})\right\rVert_{L^{2}(\RR^{5})}\rightarrow 0.
 \end{equation}
 Now, define for $s\geq 0$,
 \begin{equation*}
     w_{n}(s,x):=u(t_{n}+s,x),
 \end{equation*}
Then, by the translation invariance of \eqref{NLW}, each $w_{n}$ solves
 \[
 \partial^{2}_{s} w_{n}-\Delta w_{n}=f(w_{n})
 \]
 with initial data
 \[
 \left( w_{n}(0),\partial_{s}w_{n}(0)\right)=\left(u(t_{n}),\partial_{t}u(t_{n})\right)\rightarrow(Q,0)\quad \text{in}\ \dot{H}^{1}(\RR^{5})\times L^{2}(\RR^{5})\quad \text{as}\ t_{n}\rightarrow+\infty
 \]
 by \eqref{asytn}. By the standard local well-posedness theory, there exists a unique local solution
 \[
 U\in C([0,\tau_1];\dot{H}^{1})\cap C^1([0,\tau_1];L^2(\RR^5))
\cap L^{7/3}\bigl([0,\tau_1];L^{14/3}(\RR^5)\bigr)
 \]
 of 
 \[
 \partial^{2}_{s}U-\Delta U=f(U),\quad (U(0),\partial_{s}U(0))=(Q,0)
 \]
 for some $\tau_{1}>0$. Moreover, by the continuous dependence of the solution, we have
 \begin{equation}\label{asytau1}
     (w_{n}(s),\partial_{s}w_{n}(s))\rightarrow(U(s),\partial_{s}U(s))\quad \text{in}\quad C([0,\tau_{1}];\dot{H}^{1}\times L^{2})\quad \text{as}\quad t_{n}\rightarrow+\infty.
 \end{equation}
 On the other hand, by the assumption \eqref{Bub1} and the definition of $w_{n}(s)$, we obtain for any fixed $s\in[0,\tau_{1}]$,
 \begin{equation*}
     \left\lVert \partial_{s}w_{n}(s)\right\rVert_{L^{2}(\RR^{5})}=\left\lVert\partial_{t}u(t_{n}+s)\right\rVert_{L^{2}(\RR^{5})}\rightarrow 0\quad \text{as}\quad t_{n}\rightarrow+\infty,
 \end{equation*}
 which combines with \eqref{asytau1} gives that
 \begin{equation*}
 \partial_s U(s)=0\quad \text{for all } s\in[0,\tau_{1}].
 \end{equation*}
 Hence,
 \begin{equation*}
     U(s)=Q\quad \text{in}\ L^{2}(\RR^{5}),\quad\forall s\in[0,\tau_1].
 \end{equation*}
Furthermore, the equation 
\begin{equation}\label{ell}
    \Delta Q + f(Q) = 0
\end{equation}
holds in $\mathcal{D}'(\mathbb{R}^{5})$. By definition, the function
\[
Q(x) := \sum_{k=1}^{K} W_{\ell_{k}, z_{k}}(x)
\]
is smooth, so equation \eqref{ell} actually holds in the classical sense. However, since each $W_{\ell_{k}, z_{k}}$ solves $\Delta W_{\ell_{k}, z_{k}} + W_{\ell_{k}, z_{k}}^{\frac{7}{3}} = 0$, we have
\begin{equation*}
    \Delta Q + f(Q) = -\sum_{k=1}^{K} W_{\ell_{k}, z_{k}}^{\frac{7}{3}} + \left( \sum_{k=1}^{K} W_{\ell_{k}, z_{k}} \right)^{\frac{7}{3}}.
\end{equation*}
On the other hand, since each $W_{\ell_{k}, z_{k}}(x) > 0$ for all $x \in \mathbb{R}^{5}$ and $K \geq 2$, it follows that for every $x \in \mathbb{R}^{5}$,
\[
\left( \sum_{k=1}^{K} W_{\ell_{k}, z_{k}}(x) \right)^{\frac{7}{3}} > \sum_{k=1}^{K} W_{\ell_{k}, z_{k}}(x)^{\frac{7}{3}},
\]
which contradicts \eqref{ell}.\\
\textbf{Case 2: $\boldsymbol{\mu_{1}(t_{n})}$ is unbounded.} In this case, we may assume that
\begin{equation*}
    \lambda_{n}:=\mu_{1}(t_{n})\rightarrow+\infty.
\end{equation*}
Now define the rescaled solutions
\[v_{n}(s,x):=\lambda_{n}^{\frac{3}{2}}u(t_{n}+\lambda_{n}s,\lambda_{n}x),\quad s\geq 0,\ x\in\RR^{5}.\]
Equivalently,
\[
v_{n}(s)=u(t_{n}+\lambda_{n}s)_{\lambda_{n}^{-1},0}.
\]
Since \eqref{NLW} is invariant under scaling, each $v_{n}$ is again a solution of
\begin{equation*}
    \partial_{s}^{2}v_{n}-\Delta v_{n}=f(v_{n})\quad \text{on}\ [0,+\infty)\times \RR^{5}.
\end{equation*}
For each $k$, define
\[\gamma_{k,n}:=\frac{\mu_{k}(t_{n})}{\lambda_{n}},\quad \sigma_{k,n}:=\frac{y_{k}(t_{n})}{\lambda_{n}}.\]
From \eqref{asp-without-vanishing}, we have
\[
C_{1}\leq \gamma_{k,n}\leq C_{2}\quad \text{for all}\ k,n.
\]
Hence, passing to a subsequence, we may assume that
\begin{equation*}
    \gamma_{k,n}\rightarrow\gamma_{k}\in [C_{1},C_{2}],\quad 1\leq k\leq K.
\end{equation*}
Moreover, since $y_{k}(t_{n})\rightarrow z_{k}$ and $\lambda_{n}\rightarrow+\infty$, for each $1\leq k\leq K$, we have
\[
\sigma_{k,n}\rightarrow0\quad \text{as}\ n\rightarrow+\infty.
\]
Then, from \eqref{Bub1} and the scaling invariance of the $\dot{H}^{1}$ and $L^{2}$ norm,
\begin{equation}\label{asy infty}
    \left\lVert v_{n}(0)-\sum_{k=1}^{K}W_{\gamma_{k,n},\sigma_{k,n}}\right\rVert_{\dot{H}^{1}(\RR^{5})}+\left\lVert \partial_{s}v_{n}(0)\right\rVert_{L^{2}(\RR^{5})}\rightarrow0.
\end{equation}
Now, we define
\[
Q_{\infty}(x):=\sum_{k=1}^{K}W_{\gamma_{k},0}(x).
\]
From \eqref{asy infty}, we have
\begin{equation*}
    \left(v_{n}(0),\partial_{s}v_{n}(0)\right)\rightarrow ( Q_{\infty},0)\quad \text{in}\ \dot{H}^{1}(\RR^{5})\times L^{2}(\RR^5).
\end{equation*}
Then by the continuous dependence of the solution again and arguing similar as in Case 1, we obtain
\begin{equation*}
    \Delta Q_{\infty}+Q_{\infty}^{\frac{7}{3}}=0\quad \text{in}\ \mathcal{D}'(\RR^{5}).
\end{equation*}
Since $Q_{\infty}$ is smooth, this identity holds in the classical sense. But this is impossible for $K\geq 2$ by the same argument as in Case 1.\\
Finally, combining the previous arguments, the desired result follows immediately.
\end{proof}

\end{document}